\documentclass[reqno, a4paper,12pt]{amsart}
\usepackage{natbib}
\usepackage{amsmath}
\renewcommand{\l}{\ell}
\usepackage{amssymb}
\usepackage{amsthm}
\usepackage{mathrsfs}
\usepackage{mathabx} 
\usepackage{mathrsfs}
 \usepackage{dsfont}
 \usepackage{bm}
 \usepackage[bmargin=4cm,tmargin=4cm,rmargin=2.7cm,lmargin=2.6cm]{geometry}
\newtheorem{theorem}{Theorem}[section]
\newtheorem{lemma}[theorem]{Lemma}
\newtheorem{example}[theorem]{Example}
\newtheorem{definition}[theorem]{Definition}
\newtheorem{corollary}[theorem]{Corollary}
\newtheorem{remark}[theorem]{Remark}
\newtheorem{proposition}[theorem]{Proposition}
\newtheorem{question}[theorem]{Question}
\newtheorem{problem}[theorem]{Problem}
\newcommand{\esssup}{\operatorname{ess\:sup}}
\DeclareBoldMathCommand\balpha{\alpha}
\DeclareSymbolFont{largesymbolsA}{U}{txexa}{m}{n}
\SetSymbolFont{largesymbolsA}{bold}{U}{txexa}{bx}{n}
\DeclareFontSubstitution{U}{txexa}{m}{n}
\DeclareMathSymbol{\varprod}{\mathop}{largesymbolsA}{16}

\title{The Grothendieck inequality revisited}

\author{Ron Blei}

\email{Ron Blei, blei@math.uconn.edu}

\address{Department of Mathematics, University of Connecticut, Storrs, CT 06269}

\date{\today}

\subjclass[2010]{Primary 46C05, 46E30, Secondary 47A30, 42C10}

\keywords{The Grothendieck inequality, Parseval-like formulas, integral representations, fractional Cartesian products, projective continuity, projective boundedness}

\begin{document}

\begin{abstract}
The classical Grothendieck inequality is viewed as a statement about representations of functions of two variables over discrete domains by integrals of two-fold products of functions of one variable.  An analogous statement is proved, concerning  continuous functions of two variables over general topological domains.  The main result is the construction of a continuous map $\Phi$ from  $l^2(A)$ into $L^2(\Omega_A, \mathbb{P}_A)$,  where $A$ is a set,  \  $\Omega_A = \{-1,1\}^A$, \ and $\mathbb{P}_A$ is the uniform probability measure on $\Omega_A$, such that\\
 \begin{equation} \label{ab1}
 \sum_{\alpha \in A} {\bf{x}}(\alpha) \overline{{\bf{y}}(\alpha)} = \int_{\Omega_A} \Phi({\bf{x}})\Phi(\overline{{\bf{y}}})d \mathbb{P}_A, \ \ \ {\bf{x}} \in l^2(A), \ {\bf{y}} \in l^2(A),\\
 \end{equation}
 and
 \begin{equation} \label{ab2}
  \|\Phi({\bf{x}})\|_{L^{\infty}} \leq K \|{\bf{x}}\|_2, \ \ \ {\bf{x}} \in l^2(A),\\
  \end{equation}
for an absolute constant $K > 1$. ($\Phi$ is non-linear, and does not commute with complex conjugation.) The Parseval-like formula in \eqref{ab1} is obtained by iterating the usual Parseval formula in a framework of harmonic analysis on dyadic groups.  A modified construction implies a similar integral representation of the dual action between $l^p$ and $l^q$, \ $\frac{1}{p} + \frac{1}{q} = 1$.

Variants of the Grothendieck inequality in higher dimensions are derived.  These variants involve representations of functions of $n$ variables in terms of functions of $k$ variables, $0 < k < n.$  Multilinear Parseval-like formulas are obtained, extending the bilinear formula in \eqref{ab1}.  The resulting formulas imply multilinear extensions of the Grothendieck inequality, and are used  to characterize the feasibility of integral representations of multilinear functionals on a Hilbert space.

\end{abstract}

\maketitle

\numberwithin{equation}{section}
\newpage
\tableofcontents 
\section{\bf{Introduction}}\label{s0}
 \subsection{The inequality} \ We start with an infinite-dimensional Euclidean space, whose coordinates are indexed by a set $A$,  \\\
\begin{equation}\label{i0}
l^2 =  l^2(A) := \big\{{\bf{x}} = \big({\bf{x}}(\alpha)\big)_{ \alpha \in A} \in \mathbb{C}^{A}:  \sum_{\alpha \in A} |{\bf{x}}(\alpha)|^2 < \infty \big\},
\end{equation}
equipped with the usual dot product 
\begin{equation*}
\langle {\bf{x}}, {\bf{y}} \rangle  \ :=  \ \sum_{\alpha \in A} {\bf{x}}(\alpha) \overline{{\bf{y}}(\alpha)}, \ \ \ {\bf{x}} \in l^2, \ {\bf{y}} \in l^2,
\end{equation*}
and the Euclidean norm
\begin{equation*}
\|{\bf{x}}\|_2 \ :=  \ \sqrt{\langle {\bf{x}}, {\bf{x}} \rangle} \ = \  \left ( \sum_{\alpha \in A} |{\bf{x}}(\alpha)|^2 \right )^{\frac{1}{2}}, \ \ \ {\bf{x}} \in l^2.
\end{equation*}
We let $B_{l^2}$ denote the closed unit ball,
\begin{equation*}
B_{l^2} := \big \{ {\bf{x}} \in l^2:  \|{\bf{x}}\|_2 \leq 1 \big \}.
\end{equation*}
The Grothendieck inequality in this setting is the assertion that there exists  $1 < K < \infty$ such that for every finite scalar array  $(a_{jk})$,\\\
 \begin{equation} \label{i1}
 \sup \bigg \{\big |\sum_{j,k}a_{jk}   \langle{\bf{x}}_j, {\bf{y}}_k \rangle \big |: ({\bf{x}}_j, {\bf{y}}_k) \in (B_{l^2})^2  \bigg \} \ \leq  \ K \sup \bigg \{\big |\sum_{j,k}a_{jk}s_jt_k \big |:  (s_j,t_k) \in [-1,1]^2 \bigg \}.\\\\
 \end{equation}\\

An assertion equivalent to \eqref{i1}, couched in a setting of topological tensor products,  had appeared first in Alexandre Grothendieck's landmark  \emph{Resum\'e}  \cite{Grothendieck:1956}, and had remained largely unnoticed until it was deconstructed and reformulated in ~\cite{Lindenstrauss:1968} -- another classic -- as the  inequality above.  Since its reformulation,  which became known as  \emph{the Grothendieck inequality}, it has been duly recognized as a fundamental statement, with diverse appearances and applications in functional, harmonic, and stochastic analysis, and recently also in theoretical physics and theoretical computer science.  (See ~\cite{pisier2012grothendieck}, and also Remark \ref{R2}.ii in this work.) 

The numerical value of the "smallest" $K$ in \eqref{i1}, denoted by $\mathcal{K}_G$ and dubbed \emph{the Grothendieck constant}, is an open problem that to this day continues to attract interest. (For the latest on $\mathcal{K}_G$, see ~\cite{braverman2011grothendieck}.)
 
\subsection{An integral representation} \ 
Consider the infinite product space 
\begin{equation} \label{i2}
\Omega_{B_{l^2}} := \{-1,1\}^{B_{l^2}},
\end{equation}
equipped with the usual product topology and the sigma field  generated by it, and consider the coordinate functions  $r_{{\bf{x}}}: \Omega_{B_{l^2}} \rightarrow \{-1,1\}$,  defined by
\begin{equation} \label{i4}
r_{{\bf{x}}}(\omega) = \omega({\bf{x}}), \ \ \ \omega \in  \{-1,1\}^{B_{l^2}}, \ \ \ {\bf{x}} \in B_{l^2}.
\end{equation}
We refer to $R_{B_{l^2}}  :=  \{r_{{\bf{x}}}: {\bf{x}} \in B_{l^2} \}$ as a \emph{Rademacher system indexed by} $B_{l^2}$, and to its members as \emph{Rademacher characters}; see \S \ref{s3} of this work. 
\begin{proposition} \label{P0}
The Grothendieck inequality \eqref{i1} is equivalent to the existence of a complex measure $$\lambda \in M\big (\Omega_{B_{l^2}} \times \Omega_{B_{l^2}} \big ) \  (= \big \{\text{complex measures on} \  \ \Omega_{B_{l^2}} \times \Omega_{B_{l^2}}\big \})$$ such that 

\begin{equation} \label{i6}
\langle {\bf{x}},{\bf{y}} \rangle = \int_{\Omega_{B_{l^2}} \times \Omega_{B_{l^2}}}r_{{\bf{x}}}(\omega_1)r_{{\bf{y}}}(\omega_2) \lambda(d\omega_1, d\omega_2), \ \ \ ({\bf{x}},{\bf{y}}) \in B_{l^2} \times B_{l^2},
\end{equation}\\
and the Grothendieck constant  $ \mathcal{K}_G$  is the infimum of $\|\lambda\|_M$ over all the representations of the dot product by \eqref{i6}.
($\|\lambda\|_M = $ total variation norm of $\lambda$.)
\end{proposition}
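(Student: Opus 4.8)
The plan is to prove the equivalence in two directions, translating between the finite-array formulation \eqref{i1} and the integral representation \eqref{i6}, with the Rademacher system playing the role of a ``universal'' family of $\pm 1$-valued functions indexed by the whole ball $B_{l^2}$.

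\medskip

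\emph{From the representation \eqref{i6} to the inequality \eqref{i1}.} Suppose a measure $\lambda \in M(\Omega_{B_{l^2}} \times \Omega_{B_{l^2}})$ with $\|\lambda\|_M \leq K$ represents the dot product. Given a finite scalar array $(a_{jk})$ and vectors $({\bf{x}}_j, {\bf{y}}_k) \in (B_{l^2})^2$, I would substitute \eqref{i6} into the left-hand side of \eqref{i1}, interchange the (finite) sum with the integral, and obtain
\begin{equation*}
\sum_{j,k} a_{jk} \langle {\bf{x}}_j, {\bf{y}}_k \rangle = \int_{\Omega_{B_{l^2}} \times \Omega_{B_{l^2}}} \Big( \sum_{j,k} a_{jk} r_{{\bf{x}}_j}(\omega_1) r_{{\bf{y}}_k}(\omega_2) \Big) \lambda(d\omega_1, d\omega_2).
\end{equation*}
Since each $r_{{\bf{x}}_j}(\omega_1)$ and $r_{{\bf{y}}_k}(\omega_2)$ takes values in $\{-1,1\}$, for every fixed $(\omega_1, \omega_2)$ the parenthesized sum is bounded in absolute value by $\sup\{|\sum_{j,k} a_{jk} s_j t_k| : (s_j, t_k) \in [-1,1]^2\}$. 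Bounding the integral by $\|\lambda\|_M$ times this sup then yields \eqref{i1} with constant $K$, and taking the infimum over all representing $\lambda$ gives one of the two inequalities relating $\mathcal{K}_G$ to the infimum of $\|\lambda\|_M$.

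\medskip

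\emph{From the inequality \eqref{i1} to the representation \eqref{i6}.} This is the direction I expect to be the main obstacle, and I would handle it by a weak-$*$ compactness argument. Fix a finite subset $F \subseteq B_{l^2}$. The inequality \eqref{i1}, applied to arrays supported on indices in $F$, says precisely that the bilinear functional $(a_{jk}) \mapsto \sum a_{jk}\langle {\bf{x}}_j, {\bf{y}}_k\rangle$ on $\ell^\infty(F) \otimes \ell^\infty(F)$ (equivalently, the function $({\bf{x}},{\bf{y}}) \mapsto \langle {\bf{x}}, {\bf{y}}\rangle$ on $F \times F$, viewed inside $C(\{-1,1\}^F \times \{-1,1\}^F)$ via the coordinate/Rademacher characters) extends to a bounded linear functional of norm $\leq \mathcal{K}_G$ on $C(\Omega_F \times \Omega_F)$, where $\Omega_F = \{-1,1\}^F$; this is essentially the statement that the ``$\vee$-norm'' of the relevant tensor is controlled, which is what \eqref{i1} encodes. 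By the Riesz representation theorem we get $\lambda_F \in M(\Omega_F \times \Omega_F)$ with $\|\lambda_F\|_M \leq \mathcal{K}_G$ representing $\langle \cdot, \cdot \rangle$ on $F \times F$ as in \eqref{i6}. Viewing $M(\Omega_F \times \Omega_F)$ canonically inside $M(\Omega_{B_{l^2}} \times \Omega_{B_{l^2}})$ (push-forward under the coordinate projection, or dually restriction of the duality to functions depending on finitely many coordinates), the net $(\lambda_F)$ over the directed set of finite subsets $F$ lies in the ball of radius $\mathcal{K}_G$ of $M(\Omega_{B_{l^2}} \times \Omega_{B_{l^2}}) = C(\Omega_{B_{l^2}} \times \Omega_{B_{l^2}})^*$, which is weak-$*$ compact by Banach--Alaoglu. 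Passing to a weak-$*$ convergent subnet with limit $\lambda$, I would check that $\lambda$ still represents $\langle {\bf{x}}, {\bf{y}}\rangle$ for every $({\bf{x}}, {\bf{y}}) \in B_{l^2} \times B_{l^2}$: indeed the function $r_{{\bf{x}}} \otimes r_{{\bf{y}}}$ is continuous on $\Omega_{B_{l^2}} \times \Omega_{B_{l^2}}$, and for $F$ eventually containing both ${\bf{x}}$ and ${\bf{y}}$ the integral against $\lambda_F$ equals $\langle {\bf{x}}, {\bf{y}}\rangle$ exactly, so the limit equals $\langle {\bf{x}}, {\bf{y}}\rangle$ as well. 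This produces a representing measure with $\|\lambda\|_M \leq \mathcal{K}_G$, giving the reverse inequality and completing the identification of $\mathcal{K}_G$ with the infimum of the total variation norms.

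\medskip

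The delicate points to get right are: (i) verifying carefully that \eqref{i1} is exactly the hypothesis needed for the Hahn--Banach/Riesz step at each finite level — i.e. that the $[-1,1]^2$-sup on the right of \eqref{i1} is the norm of the relevant element of $C(\Omega_F \times \Omega_F)^{**}$ restricted appropriately, equivalently that the linear functional on the subspace spanned by the $r_{{\bf{x}}} \otimes r_{{\bf{y}}}$, ${\bf{x}},{\bf{y}} \in F$, has the correct norm so that Hahn--Banach extends it without increasing the norm; and (ii) the bookkeeping of the embeddings $M(\Omega_F\times\Omega_F) \hookrightarrow M(\Omega_{B_{l^2}}\times\Omega_{B_{l^2}})$ so that the net is genuinely inside a single weak-$*$ compact ball and the limit integrates correctly against the continuous functions $r_{{\bf{x}}}\otimes r_{{\bf{y}}}$. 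Once these are in place, the infimum statement follows by combining the two norm inequalities already obtained in the two directions.
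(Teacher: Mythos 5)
Your proposal is correct and follows essentially the same route as the paper: the first direction is the identical averaging argument, and the second rests on the same key identification that the $[-1,1]^2$-supremum in \eqref{i1} equals the sup-norm of the Walsh polynomial $\sum_{j,k}a_{jk}\,r_{{\bf{x}}_j}\otimes r_{{\bf{y}}_k}$, followed by Hahn--Banach and Riesz representation. The only difference is cosmetic: the paper applies Hahn--Banach/Riesz once on the dense span of such polynomials in $C_{R_{B_{l^2}}\times R_{B_{l^2}}}(\Omega_{B_{l^2}}\times\Omega_{B_{l^2}})$, whereas you work at each finite level and then pass to a weak-$*$ limit via Banach--Alaoglu, a valid but unnecessary detour.
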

\begin{proof} \  We first verify \eqref{i6}  $\Rightarrow$  \eqref{i1}.  \  Let $a = (a_{jk})$ be a finite scalar array, and denote by $\|a\|_{\mathcal{F}_2}$ the supremum on the right side of \eqref{i1}.  Assuming \eqref{i6}, let $({\bf{x}}_j)$ and  $({\bf{y}}_k)$  be arbitrary sequences of vectors in $B_{l^2}$, and then estimate
\begin{equation} \label{i5}
\begin{split}
\big |\sum_{j,k}a_{jk}   \langle{\bf{x}}_j, {\bf{y}}_k \rangle \big | &= \big |\sum_{j,k}a_{jk}  \int_{\Omega_{B_{l^2}} \times \Omega_{B_{l^2}}}r_{{\bf{x}}_j}(\omega_1)r_{{\bf{y}}_k}(\omega_2) \lambda(d\omega_1, d\omega_2)  \big | \\\\
& \leq \   \int_{\Omega_{B_{l^2}} \times \Omega_{B_{l^2}}} \big | \sum_{j,k}a_{jk}r_{{\bf{x}}_j}(\omega_1)r_{{\bf{y}}_k}(\omega_2)\big | \  |\lambda |(d\omega_1, d\omega_2) \\\\
& \leq \ \|a\|_{\mathcal{F}_2} \|\lambda\|_M.
\end{split}
\end{equation}
We thus obtain \eqref{i1} with $\mathcal{K}_G \leq \|\lambda\|_M$.

To verify  \eqref{i1} $\Rightarrow$ \eqref{i6}, we first associate with a finite scalar array $a = (a_{jk})$, and sequences of vectors $({\bf{x}}_j)$ and  $({\bf{y}}_k)$  in $B_{l^2}$,  the \emph{Walsh polynomial}
\begin{equation}
\hat{a} = \sum_{j,k} a_{jk} \ r_{{\bf{x}}_j} \otimes r_{{\bf{y}}_k}. 
\end{equation} 
Note that $\hat{a}$ is  a continuous function on $\Omega_{B_{l^2}}\times \Omega_{B_{l^2}}$, and 
\begin{equation}\label{i7}
 \|\hat{a}\|_{\infty} = \|a\|_{\mathcal{F}_2},
 \end{equation}
 where $\|\hat{a}\|_{\infty}$  is the supremum of $\hat{a}$ over $\Omega_{B_{l^2}}\times \Omega_{B_{l^2}}$.  Such polynomials are norm-dense in the space of continuous functions on $\Omega_{B_{l^2}}\times \Omega_{B_{l^2}}$ with spectrum in $R_{B_{l^2}} \times R_{B_{l^2}}$, which is denoted by  $C_{R_{B_{l^2}}  \times R_{B_{l^2}}}(\Omega_{B_{l^2}}\times \Omega_{B_{l^2}})$;  e.g., see   ~\cite[Ch. VII, Corollary 9]{blei:2001}.  Then,  \eqref{i1} becomes the statement that
\begin{equation}
\sum_{j,k} a_{jk} \ r_{{\bf{x}}_j} \otimes r_{{\bf{y}}_k}  \ \mapsto \ \sum_{j,k}a_{jk}   \langle{\bf{x}}_j, {\bf{y}}_k \rangle
\end{equation}
determines a bounded linear functional on $C_{R_{B_{l^2}}  \times R_{B_{l^2}}}(\Omega_{B_{l^2}}\times \Omega_{B_{l^2}})$, with norm bounded by $\mathcal{K}_G$.  Therefore, by the Riesz Representation theorem and by the Hahn-Banach theorem, there exists $\lambda \in M\big (\Omega_{B_{l^2}} \times \Omega_{B_{l^2}})$ such that \eqref{i6} holds, and $\|\lambda\|_M \leq \mathcal{K}_G$.

\end{proof}

\noindent

The implication $\eqref{i6} \Rightarrow \eqref{i1}$  provides a template for direct proofs of the Grothendieck inequality:  first establish an integral representation of the dot product, like the one in \eqref{i6}, and then verify the inequality by an "averaging" argument, as in \eqref{i5};  e.g., ~\cite{Lindenstrauss:1968}, \cite{rietz1974proof}, \cite{Blei:1977uq}, \cite{Krivine:1977}.    
Whereas there are other equivalent formulations of the inequality (e.g., see  ~\cite[\S1, \S2]{pisier2012grothendieck}), a representation of the dot product by an integral with uniformly bounded integrands is, arguably, the "closest" to  it.  

Building on  ideas in \cite{Blei:1977uq}, \cite{Blei:1979fk}, and \cite{Blei:1980fk}, we establish here integral representations that go a little further than  \eqref{i6}, and imply a little more than \eqref{i1}.\

\noindent
\subsection{Parseval-like formulas}
Without a requirement that integrands be uniformly bounded, integral representations of the $l^2$-dot product are ubiquitous and indeed easy to produce:   let  $\{f_{\alpha}: \alpha \in A\}$ be an orthonormal system in  $L^2(\Omega, \mu)$, where  $(\Omega, \mu)$   is a probability space, and let $U$  be the map from  $l^2$   into  $L^2(\Omega, \mu)$  given by
 \begin{equation}
U({\bf{x}}) = \sum_{\alpha} {\bf{x}}(\alpha) f_{\alpha}, \ \ \ {\bf{x}} \in l^2, 
 \end{equation}
 whence (Parseval's formula),
\begin{equation}
\langle {\bf{x}}, {\bf{y}} \rangle = \int_{\Omega} U({\bf{x}})\overline{U({\bf{y}}}) d \mu, \ \ \ {\bf{x}} \in l^2, \ {\bf{y}} \in l^2,
\end{equation}
and
\begin{equation}
\|U({\bf{x}})\|_{L^2} = \|{\bf{x}}\|_2, \ \ \ \ {\bf{x}} \in l^2.
\end{equation}\\
The map  $U$ is linear and (therefore) continuous.  

The Grothendieck inequality (via Proposition \ref{P0})  is ostensibly the more stringent assertion,  that there exist a probability space  $(\Omega, \mu)$ and a map $\Phi$ from $l^2$ into  $L^{\infty}(\Omega, \mu),$ such that
\begin{equation} \label{par}
\sum_{\alpha} {\bf{x}}(\alpha) {\bf{y}}(\alpha) = \int_{\Omega} \Phi({\bf{x}}) \Phi({\bf{y}}) d \mu, \ \ \ {\bf{x}} \in l^2, \ {\bf{y}} \in l^2,
\end{equation} 
and
\begin{equation}
\|\Phi({\bf{x}})\|_{L^{\infty}} \leq K \|{\bf{x}}\|_2, \ \ \ \ {\bf{x}} \in l^2,
\end{equation}\\ 
 for some $K > 1$. Specifically, assuming \eqref{i1}, we take  $\lambda \in M(\Omega_{B_{l^2}} \times \Omega_{B_{l^2}})$ supplied by Proposition \ref{P0}, and  let $\psi$ be the Radon-Nikodym derivative of $\lambda$ with respect to its total variation measure $|\lambda|$.   For ${\bf{x}} \in l^2$, define \begin{equation} \label{nor}
 \sigma \textbf{x} := \left \{
\begin{array}{lcc}
\textbf{x}/\|\textbf{x}\|_2 & \quad  \text{if} \quad  \textbf{x} \not= \textbf{0}  \\
\textbf{0} & \quad   \text{ if} \quad  \textbf{x} = \textbf{0},
\end{array} \right.
\end{equation}\\ 
and let $\pi_i$ ($i =1, 2$) denote the canonical projections from $\Omega_{B_{l^2}} \times \Omega_{B_{l^2}}$ onto $\Omega_{B_{l^2}}$,
\begin{equation*}
\pi_i(\omega_1,\omega_2) = \omega_i, \ \ \ (\omega_1,\omega_2) \in \Omega_{B_{l^2}} \times \Omega_{B_{l^2}}.
\end{equation*}
Define \\  $$\phi_i: l^2 \rightarrow L^{\infty}\big(\Omega_{B_{l^2}} \times \Omega_{B_{l^2}},  |\lambda|/\|\lambda\|_M\big)$$  by
 \begin{equation}\label{rep}
 \phi_i({\bf{x}}) =(\|\lambda\|_M \ \psi)^{\frac{1}{2}} \ \|{\bf{x}}\|_2 \  r_{\sigma{\bf{x}}} \circ \pi_i, \ \ \ \ {\bf{x}} \in l^2, \ \ \ i =1, 2,
 \end{equation}\\
where $r_{\sigma{\bf{x}}}$ are the coordinate functions given by \eqref{i4}, whence
\begin{equation}
\sum_{\alpha} {\bf{x}}(\alpha) {\bf{y}}(\alpha) = \int_{\Omega} \phi_1({\bf{x}}) \phi_2({\bf{y}}) d \mu, \ \ \ {\bf{x}} \in l^2, \ {\bf{y}} \in l^2,
\end{equation}
with $\Omega = \Omega_{B_{l^2}} \times \Omega_{B_{l^2}}$ and $\mu = |\lambda|/\|\lambda\|_M.$  Now let
\begin{equation*}
\Phi = \frac{\phi_1 + \phi_2}{2} + i \frac{\phi_1 - \phi_2}{2},
\end{equation*}
which, notably, is neither linear nor continuous. \\\

The Grothendieck inequality \emph{proper} implies  \eqref{par} with a probability measure $\mu$ on $\Omega = \Omega_{B_{l^2}} \times \Omega_{B_{l^2}}$,  and a non-linear map $$\Phi: l^2 \rightarrow L^{\infty}(\Omega_{B_{l^2}} \times \Omega_{B_{l^2}}, \mu),$$ which is not continuous with respect to the ambient topologies of  $l^2$ and $L^{\infty}(\Omega,\mu)$.   A question arises: can we do better?  That is, can a Parseval-like formula be derived with a more wieldy and canonical $(\Omega, \mu)$, and with  a map $\Phi$ that somehow "detects" the structures of its domain and range?

In the first part (\S \ref{s1} - \S \ref{Snonh}), given sets $X$ and $Y$, we study representations of scalar-valued functions of two variables, $x \in X$ and  $y \in Y$,  by integrals whose integrands are two-fold products of functions of one variable, $x \in X$ and $y \in Y$ separately.   If $X$ and $Y$ are merely sets,  then the Grothendieck inequality  --  seen as an integral representation of an inner product -- plays a natural role. If $X$ and $Y$ are topological spaces, then to play  that same role, the Grothendieck inequality needs an upgrade.  The main result (Theorem \ref{T2}) is an integral representation of the dot product 
\begin{equation} \label{par2}
\sum_{\alpha \in A} {\bf{x}}(\alpha) {\bf{y}}(\alpha) = \int_{\Omega_A} \Phi({\bf{x}}) \Phi({\bf{y}}) d \mathbb{P}_A, \ \ \ {\bf{x}} \in B_{l^2(A)}, \ {\bf{y}} \in B_{l^2(A)},
\end{equation} \\
where $A$ is an infinite set,  
\begin{align*}
 \Omega_A &:=  \{-1,1\}^A,\\\
  \mathbb{P}_A &:=  \text{uniform product measure (normalized Haar measure)},
 \end{align*} 
   and the map  
\begin{equation}\label{map}
\Phi: B_{l^2(A)} \rightarrow L^{\infty}(\Omega_A,\mathbb{P}_A)
\end{equation}\\
 is uniformly bounded, and also continuous in a prescribed sense.

 The proof of Theorem \ref{T2} is carried out in a setting of harmonic analysis on dyadic groups.  The construction of  $\Phi$ implicitly uses $\Lambda(2)$-\emph{uniformizability} \cite{Blei:1977uq}, a property of sparse spectral sets manifested here through the use of Riesz products.  Facts and tools drawn from harmonic analysis are reviewed as we move along (e.g.,  \S4). 
 
  In  \S \ref{Snonh}, a modification of the proof of Theorem \ref{T2} yields a Parseval-like formula for  $\langle {\bf{x}}, {\bf{y}}\rangle, \  {\bf{x}} \in l^p, \ {\bf{y}} \in l^q$, $1 \leq p \leq  2 \leq q \leq \infty$, \ $\frac{1}{p} + \frac{1}{q} = 1$. (Theorem \ref{T2pq}).

\subsection{Multilinear Parseval-like formulas} \ In the second part (\S \ref{s6} - \S \ref{s8}),  we study representations of  functions of $n$ variables ($n \geq 2$) by integrals whose integrands involve functions of $k$ variables,  $k < n$.  In this context we consider extensions of the (two-dimensional) Grothendieck inequality to dimensions greater than two. 

 First, following the "dual" view of  \eqref{i1} as the Parseval-like formula in \eqref{par}, we derive analogous formulas in arbitrary dimensions.  The general result  (Theorem \ref{T4}) is cast in a framework of \emph{fractional Cartesian products}.  It is proved by induction, with key steps provided by a variant of Theorem \ref{T2}. 

  We illustrate the multilinear formulas of Theorem \ref{T4} with two archetypal instances.  In the first,  we take a simple extension of the (bilinear) dot product in $l^2(A)$, 
\begin{equation}
\Delta_n({\bf{x}}_1, \ldots, {\bf{x}}_n) = \sum_{\alpha \in A} {\bf{x}}_1(\alpha) \cdots {\bf{x}}_n(\alpha), \ \ \ {\bf{x}}_i \in l^2(A), \ i = 1, \ldots, n.
\end{equation}
With no additional requirements, integral representations of $\Delta_n$, which extend the usual Parseval formula, arise typically as follows.  Consider the bounded linear map
\begin{equation}\label{repo}
U: l^2(A) \rightarrow L^2(\Omega_A,\mathbb{P}_A),
\end{equation} 
 defined by
\begin{equation}
U({\bf{x}}) = \sum_{\alpha \in A} {\bf{x}}(\alpha) r_{\alpha}, \ \ \ {\bf{x}} \in l^2(A),
\end{equation}
where the $r_{\alpha}$ are Rademacher characters,
\begin{equation}
r_{\alpha}(\omega) = \omega(\alpha), \ \ \ \omega \in \Omega_A := \{-1,1\}^A, \ \ \alpha \in A.
\end{equation}
Then,
\begin{equation} \label{dn}
\begin{split}
\Delta_n({\bf{x}}_1, \ldots, {\bf{x}}_n) &= \int_{(\omega_1, \ldots, \omega_{n-1}) \in \Omega_A^{n-1}} \bigg(\prod_{i=1}^{n-1}U({\bf{x}}_i)\big(\omega_i\big) \bigg)U({\bf{x}}_n)\big(\omega_1\cdots\omega_{n-1}\big)\ d\omega_1 \cdots d\omega_{n-1}\\\\
&= \big(U({\bf{x}}_1) \convolution \cdots \convolution U({\bf{x}}_n) \big)(\boldsymbol{\omega}_0), \ \ \ \ {\bf{x}}_1 \in l^2(A), \ldots, {\bf{x}}_n \in l^2(A),
\end{split}
\end{equation}\\
where the integral above is performed with respect to the $(n-1)$-fold product of the Haar measure $\mathbb{P}_A$  of the compact group $\Omega_A$, and is the $n$-fold convolution of $U({\bf{x}}_1), \ldots, U({\bf{x}}_n)$, evaluated at the identity element $\boldsymbol{\omega}_0 \in \Omega_A$,
$$\boldsymbol{\omega}_0(\alpha) = 1, \ \ \ \alpha \in A.$$
 Adding the requirement that integrands be uniformly bounded, we obtain (Lemma \ref{L6}) 
 \begin{equation} \label{gn}
\begin{split}
\Delta_n({\bf{x}}_1, \ldots, {\bf{x}}_n) &= \int_{(\omega_1, \ldots, \omega_{n-1}) \in \Omega_A^{n-1}} \bigg(\prod_{i=1}^{n-1}\Phi_n({\bf{x}}_i)\big(\omega_i\big) \bigg)\Phi_n({\bf{x}}_n)\big(\omega_1\cdots\omega_{n-1}\big)\ d\omega_1 \cdots d\omega_{n-1}\\\\
&= \big(\Phi_n({\bf{x}}_1) \convolution \cdots \convolution \Phi_n({\bf{x}}_n) \big)(\boldsymbol{\omega}_0), \ \ \ \ {\bf{x}}_1 \in l^2(A), \ldots, {\bf{x}}_n \in l^2(A),
\end{split}
\end{equation}\\
where 
 \begin{equation}\label{mapn}
\Phi_n: l^2(A) \rightarrow L^{\infty}(\Omega_A,\mathbb{P}_A)
\end{equation} \\ 
is $(l^2 \rightarrow L^2)$-continuous, and
\begin{equation} \label{mapnbd}
\|\Phi_n({\bf{x}})\|_{L^{\infty}} \leq K\|{\bf{x}}\|_2, \ \ \ {\bf{x}} \in l^2(A),
\end{equation} 
for an absolute constant $K > 0$.

The second instance is the trilinear functional $\eta$ on $l^2(A^2) \times l^2(A^2) \times l^2(A^2)$,
\begin{equation} \label{tri}
\eta({\bf{x}}, {\bf{y}},{\bf{z}}) = \sum_{(\alpha_1, \alpha_2, \alpha_3) \in A^3}{\bf{x}}(\alpha_1,\alpha_2) {\bf{y}}(\alpha_2,\alpha_3){\bf{z}}(\alpha_1,\alpha_3), \ \ \ \ ({\bf{x}}, {\bf{y}},{\bf{z}}) \in l^2(A^2) \times l^2(A^2) \times l^2(A^2).
\end{equation}\\\
To obtain a generic integral representation of $\eta$, with no restrictions on integrands, we take the bounded linear map $U_2$ from $l^2(A^2)$ into $L^2(\Omega_A^2, \mathbb{P}_A^2)$ given by\\
\begin{equation}
U_2({\bf{x}}) = \sum_{(\alpha_1,\alpha_2) \in A^2} {\bf{x}}(\alpha_1,\alpha_2)r_{\alpha_1}\otimes r_{\alpha_2}, \ \  \ \ {\bf{x}} \in l^2(A^2),
\end{equation}
and observe\\
\begin{equation}
\begin{split}
\eta({\bf{x}}, {\bf{y}},{\bf{z}}) = \int_{(\omega_1,\omega_2,\omega_3)\in \Omega_A^3}U_2({\bf{x}})(\omega_1,\omega_2) & U_2({\bf{y}})(\omega_2,\omega_3)U_2({\bf{z}})(\omega_1,\omega_3)\ d\omega_1d\omega_2 d\omega_3,\\\
& {\bf{x}} \in l^2(A^2), \  {\bf{y}} \in l^2(A^2), \  {\bf{z}} \in l^2(A^2).
\end{split}
\end{equation}\\\
Adding the requirement that integrands be uniformly bounded, we have 
\begin{equation*}
\begin{split}
\eta({\bf{x}}, {\bf{y}},{\bf{z}}) = \int_{(\omega_1,\omega_2,\omega_3)\in \Omega_A^3}\Phi_{(2,2)}({\bf{x}})(\omega_1,\omega_2) & \Phi_{(2,2)}({\bf{y}})(\omega_2,\omega_3)\Phi_{(2,2)}({\bf{z}})(\omega_1,\omega_3) \ d\omega_1 d\omega_2 d\omega_3,\\\
& {\bf{x}} \in l^2(A^2), \  {\bf{y}} \in l^2(A^2), \ {\bf{z}} \in l^2(A^2),\\\
\end{split}
\end{equation*}
so that  
\begin{equation}
\Phi_{(2,2)}: l^2(A^2) \rightarrow L^{\infty}(\Omega_A^2, \mathbb{P}_A^2)
\end{equation} 
 is  $\big(l^2(A^2) \rightarrow L^2(\Omega_A^2, \mathbb{P}_A^2)\big)$ - continuous, 
 and
 \begin{equation}
 \|\Phi_{(2,2)}({\bf{x}})\|_{L^{\infty}} \leq K^2\|{\bf{x}}\|_2, \ \ \ {\bf{x}} \in l^2(A^2),
 \end{equation}
where $K > 1$ is the absolute constant in \eqref{mapnbd}.  
The map $\Phi_{(2,2)}$ is obtained by a two-fold iteration of Lemma \ref{L6} in the base case $n = 2$.   (See Remark \ref{R7}.i.)

An $n$-linear version ($n \geq 3$) of the trilinear $\eta$ in \eqref{tri} is given by \\
\begin{equation}\label{hsp}
\begin{split}
\eta_n({\bf{x}}_1, {\bf{x}}_2, \ldots, {\bf{x}}_n) = \sum_{(\alpha_1, \alpha_2, \ldots, \alpha_n) \in A^n} {\bf{x}}_1(\alpha_1,\alpha_2) & {\bf{x}}_2(\alpha_2,\alpha_3) \cdots {\bf{x}}_n(\alpha_n,\alpha_1),\\\\
& ({\bf{x}}_1, {\bf{x}}_2, \ldots, {\bf{x}}_n) \in l^2(A^2)\times l^2(A)^2 \times \cdots \times  l^2(A^2),\\\
\end{split}
\end{equation}\\
whose subsequent integral representation is given by\\
\begin{equation} \label{hst}
\begin{split}
\eta_n&({\bf{x}}_1, {\bf{x}}_2,  \ldots, {\bf{x}}_n) \\\\
&= \int_{(\omega_1,\omega_2,\ldots,\omega_n)\in \Omega_A^n}\Phi_{(2,2)}({\bf{x}}_1)(\omega_1,\omega_2)  \Phi_{(2,2)}({\bf{x}}_2)(\omega_2,\omega_3)\cdots\Phi_{(2,2)}({\bf{x}}_n)(\omega_n,\omega_1) \ d\omega_1 d\omega_2 \cdots d\omega_n,\\\\
& \ \ \ \ \ \ \ \ \ \ \ \ \ \ \ \ \ \ \ \ ({\bf{x}}_1, {\bf{x}}_2, \ldots, {\bf{x}}_n) \in l^2(A^2)\times l^2(A) \times  \cdots \times  l^2(A^2).
\end{split}
\end{equation}\\\\
The $n$-linear functional in \eqref{hsp} together with its integral representation in \eqref{hst} play key roles in the Banach algebra of Hilbert-Schmidt operators.  (See Remark \ref{R10}.)\\\

\subsection{Projective boundedness and projective continuity} \ The Grothendieck inequality implies (via Proposition \ref{P0}) that if $\eta$ is any bounded bilinear functional on a Hilbert space $H$, then there
exist  bounded maps $\phi_1$ and $\phi_2$ from $B_H$ (= closed unit ball in $H$) into $L^{\infty}(\Omega,\mu)$, for some probability space $(\Omega, \mu)$, such that
\begin{equation} \label{bil}
\eta({\bf{x}},{\bf{y}}) = \int_{\Omega} \phi_1({\bf{x}})\phi_2({\bf{y}}) d \mu, \ \ \ \ \ ({\bf{x}},{\bf{y}}) \in B_H \times B_H.
\end{equation}
Theorem \ref{T2} provides $\phi_1$ and $\phi_2$ that are also ($H \rightarrow L^2$)-continuous.  

In the three-dimensional case, there exist nontrivial trilinear functionals on an infinite-dimensional Hilbert space that admit integral representations with uniformly bounded and continuous integrands (e.g.,  $\Delta_3$ in \eqref{dn} and  $\eta$ in \eqref{tri}).  But there exist also bounded trilinear functionals  $\eta$ (produced first in \cite{varopoulos1974inequality}) that fail a trilinear version of \eqref{i1} (with $\eta$  in the role of the dot product), and therefore do not admit integral representations with uniformly bounded integrands.  (See Remark \ref{R8}.)

 We are led to a definition:  an $n$-linear functional  $\eta$ on a Hilbert space $H$ is \emph{projectively bounded} or (the stronger property) \emph{projectively continuous}, if there exist a probability space $(\Omega, \mu)$ and bounded maps or, respectively (the more stringent requirement), bounded and  $\big(H \rightarrow L^2(\Omega,\mu)$\big)-continuous maps, $$\phi_i: B_H \rightarrow L^{\infty}(\Omega,\mu), \ \ \ i = 1, \ldots, n,$$ such that
\begin{equation}
\eta({\bf{x}}_1, {\bf{x}}_2, \ldots, {\bf{x}}_n) = \int_{\Omega} \phi_1({\bf{x}}_1) \cdots \phi_n({\bf{x}}_n) \ d \mu, \ \ \ \ \ ({\bf{x}}_1, {\bf{x}}_2, \ldots, {\bf{x}}_n) \in (B_H)^n.
\end{equation}
(See Definition \ref{Q7}.) 

 All bounded linear functionals and all bounded bilinear functionals on a Hilbert space are projectively continuous, and \emph{a fortiori} projectively bounded (Proposition \ref{P8}, Theorem \ref{T2}).  But for $n > 2$,  how to distinguish between bounded, projectively bounded, and projectively continuous $n$-linear functionals are open-ended questions.  We tackle these here through the analysis of kernels of multilinear functionals.  

Let $H$ be a Hilbert space, and let $A$ be an orthonormal basis in it.   Given a bounded $n$-linear functional $\eta$ on  $H$, we consider its \emph{kernel relative to} $A$,
 \begin{equation} \label{ker}
 \theta_{A, \eta}(\alpha_1, \ldots,\alpha_n) := \eta(\alpha_1, \ldots, \alpha_n), \ \ \ \ \ (\alpha_1, \ldots, \alpha_n) \in A^n,
 \end{equation}
 and ask:
\begin{question} \label{Q0}
  What properties of $\theta_{A, \eta}$ imply that $\eta$ is projectively continuous,  or projectively bounded?
  \end{question} 
\noindent
If $\eta$ is projectively bounded, then its kernel relative to a basis $A$ can be represented as an integral of $n$-fold products of uniformly bounded functions of one variable, but whether this condition alone is also sufficient is an open question (Problem \ref{q11}). 

 Building on multilinear Parseval-like formulas (Theorem \ref{T4}),  we show that if the support of  $\theta_{A,\eta}$ is a \emph{fractional Cartesian product} of a certain type, and $\theta_{A,\eta}$ has an integral representation with uniformly bounded integrands, then $\eta$ is projectively continuous  (Theorem \ref{T3}).  In this case, the respective spaces of projectively bounded and projectively continuous functionals are equal --  distinguished, possibly, by different numerical constants (Problem \ref{num}).   I do not know whether this is always true, that a multilinear functional is projectively bounded if and only if it is projectively continuous (Problem \ref{q10}).

We list some unanswered questions  in  \S \ref{s11}.   Some are stated in prior sections, and others will almost surely occur to the reader in the course of the narrative.  The subject is open, and also open-ended, with loose ends and different directions to follow.  

Unless stated otherwise, the underlying scalars will be the \emph{complex numbers}.  There will be places in the work where distinctions are made between the \emph{real} field and the \emph{complex} field, and other places where \emph{reals}, and only  \emph{reals} are used; these places are clearly marked.  

The essential prerequisites for the work are relatively minimal: basic knowledge of classical functional analysis and classical harmonic analysis should suffice.  To the extent possible, I tried to be inclusive and self-contained (with apologies to experts). \

\subsection{A personal note and acknowledgements} \ My involvement with the Grothendieck inequality began in 1975, in connection with its roles in harmonic analysis.  Since then, over the decades, I gradually meandered away from the subject.  In 2005, I learnt of  the interest in the Grothendieck inequality in the computer science community, and in particular of the results in ~\cite{Charikar:2004}.  My own interest in the inequality was rekindled, and reinforced by a subsequent result in ~\cite{Alon:2006}, that the Grothendieck-like inequalities in ~\cite{Charikar:2004}  were in a certain sense optimal.  The optimality, which at first blush appeared counter-intuitive to me, made crucial use of a phenomenon that had been first verified in  ~\cite{Kashin:2003}. (See Remark \ref{R2}.ii.)  I learnt more about these results from Noga Alon's lecture, and also from separate conversations with Assaf Naor and Boris Kashin, at the Lindenstrauss-Tzafriri retirement conference in June 2005.  The integral representation formula in \eqref{e20}  was then derived, without the continuity of $\Phi$, and noted specifically as a counterpoint to the result in ~\cite{Kashin:2003}.  Soon after my seminar at IAS in February 2006, where the aforementioned counterpoint was presented, I received an email from Noga Alon, that a similar formula could be deduced also from the proof of the Grothendieck inequality in ~\cite{Krivine:1977}. 

Two and a half years later, following a seminar about integral representations of multilinear functionals (Paris, June 2008), Gilles Pisier pointed out to me the implication $\eqref{i1} \Rightarrow \eqref{i6}$ (with a proof different from the argument given here), underlining that the integral representation of the dot product in \eqref{i6} and the Grothendieck inequality are in fact equivalent. These two post-seminar remarks, by Alon and Pisier, sent me back to the drawing board, to study and further analyze integral representations of general functions, and indeed in large part determined the direction of this work. 

Finally, I thank my son David Blei, first for pointing out to me, in 2005,  the interest in the Grothendieck inequality in the computer science community, and then for his help with the optimization problems in \S \ref{sf} concerning "best" constants.

 \section{\bf{Integral representations: the case of discrete domains}} \label{s1} 
 \subsection{First question}\ \  
  Whether it is feasible to represent a given function in terms of simpler functions is a recurring theme that appears in various guises throughout mathematics.  The precise formulation of the problem of course depends on the context.  Here we consider the issue in a quasi-classical setting of functional analysis.
  
  We begin with a question that -- mainly during the 1960's and 1970's -- motivated studies of tensor products in harmonic analysis;  e.g.,~\cite{Varopoulos:1967},~\cite[Ch. VIII]{Katznelson:1976},~\cite[Ch. 11]{Graham&McGehee:1979}.

\begin{question} \label{Q1} Let $X$ and $Y$ be sets, and let $f$ be a bounded scalar-valued function of two variables,  $x\in X$ and $y\in Y.$   Are there families of scalar-valued functions of one variable,  $\{g_{\omega}\}_{\omega \in \Omega}$ and $\{h_{\omega}\}_{\omega \in \Omega}$  defined respectively on $X$ and $Y,$  such that 
\begin{equation} \label{Q1-1}
\sum_{\omega \in \Omega} \|g_{\omega}\|_{\infty}\|h_{\omega}\|_{\infty} < \infty,
\end{equation}
and
\begin{equation} \label{Q1-2}
f(x,y) = \sum_{\omega \in \Omega} g_{\omega}(x)h_{\omega}(y), \ \ \ \  (x,y) \in X \times Y?
\end{equation}
($\|\cdot\|_\infty$ denotes, here and throughout, the usual sup-norm over an underlying domain.)
\end{question}

\noindent We refer to the functions $g_{\omega}$ and  $h_{\omega},$ indexed by $\omega \in \Omega$ and defined respectively on $X$ and $Y,$  as \emph{representing functions} of $f.$ \    In the next section  $X$ and $Y$ will have additional structures, but at this point, and until further notice, these domains are viewed merely as sets.  The constraint  in \eqref{Q1-1}  implies that the sum on the right side of \eqref{Q1-2} converges absolutely,  and also that the indexing set $\Omega$  may as well be at most countable.  Later we will consider more general representations and constraints, wherein sums in  \eqref{Q1-1} and \eqref{Q1-2} are replaced by integrals, and indexing sets are measure spaces.  If constraints on representing functions are removed altogether, then the answer to Question \ref{Q1} will always be affirmative (and also uninteresting):  for, in the absence of constraints, given a scalar-valued function $f$ on  $X \times Y$, we can take the indexing set  $\Omega$ to be $Y$, and then let
\begin{equation*}
  g_y(x) = f(x,y),  \ \ \ x \in X, \ y \in Y,
  \end{equation*}
   and for $y  \in Y$ and  $y^{\prime} \in Y,$
   \begin{equation*}
 h_{y^{\prime}}(y) = \left \{
\begin{array}{ccc}
0 & \quad  \text{if} \quad  y^{\prime}  \not= y  \\
1 & \quad   \text{ if} \quad  y^{\prime} = y.
\end{array} \right.
\end{equation*} \\\

Nowadays it is practically folklore  -- arguably originating in Littlewood's classic paper ~\cite{Littlewood:1930}  --  that if  $X$ and $Y$  are infinite, then there is an abundance of functions that cannot be represented by \eqref{Q1-2} under the constraint in \eqref{Q1-1}.  To illustrate how such functions can arise,  we take
\begin{equation} \label{V1}
  f(x,k) = e^{ixk}, \ \ \ (x,k) \in[0,2\pi] \times \mathbb{Z}.
     \end{equation}
An affirmative answer here to Question \ref{Q1} would mean that  there exist families of representing functions $\{g_n\}_{n\in\mathbb{N}}$ and $\{h_n\}_{n\in\mathbb{N}}$ defined respectively on $[0,2\pi]$  and $\mathbb{Z},$ and indexed by $\mathbb{N}$,  such that

  \begin{equation}\label{A1}
 e^{ixk} = \sum_{n  \in \mathbb{N}} g_n(x) h_n(k), \ \ \ \  (x,n) \in [0,2\pi] \times \mathbb{Z},
 \end{equation}
 and
 \begin{equation}\label{A2}
 \sum_{n  \in \mathbb{N}}  \|g_n\|_{\infty} \|h_n \|_{\infty} = c < \infty.
 \end{equation}
 With this assumption,  if a finite scalar array  $(a_{jk})_{j,k}$  satisfies\\\
 \begin{equation} \label{e1}
 \sup \big \{\big |\sum_{j,k}a_{jk}s_jt_k \big |: |s_j| \leq 1, |t_k| \leq 1 \big \} \leq 1,
 \end{equation}
then for  all $x_j \in [0, 2\pi], \ j = 1, \ldots,$
 \begin{equation} \label{e2}
 \big |\sum_{j,k}a_{jk}e^{ix_jk} \big | \leq c.
\end{equation}
 Now observe that for every positive integer $N$, the array (a rescaled Fourier matrix)
\begin{equation} \label{e17}
a_{jk} = N^{-\frac {3}{2}}e^{\frac{-2\pi i j k}{N}}, \ \ \ (j,k) \in[N]^2, 
\end{equation}
 satisfies \eqref{e1}.  Taking $N > c^2$ and $x_j = 2\pi i j/N$ for $ j\in [N]$, we have
\begin{equation*}
\sum_{j,k = 1}^N a_{jk}e^{ix_jk} =  N^{\frac{1}{2}} > c,
\end{equation*}
which contradicts \eqref{e2}, thus proving that $f$ in \eqref{V1}  cannot be represented by \eqref{A1} under the constraint in \eqref{A2}. 

The preceding argument, verifying a negative answer to Question \ref{Q1},  made implicit use of a duality between two  norms.  The first norm, $\|f\|_{\mathcal{V}_2(X \times Y)}$ for a given  scalar-valued function  $f$ on $X\times Y$, is defined to be the infimum of the left side of \eqref{Q1-1} taken over all representations in \eqref{Q1-2}.  (Here and throughout, in the absence of ambiguity, the underlying $X$ and $Y$ will be suppressed from the notation; e.g.,  $\|f\|_{\mathcal{V}_2}$ will stand for $\|f\|_{\mathcal{V}_2(X \times Y)}$.)  With this norm and point-wise multiplication on  $X \times Y,$  the resulting space    
\begin{equation*}
\mathcal{V}_2(X \times Y) = \big\{f:  \|f\|_{\mathcal{V}_2} < \infty\big\}
\end{equation*} \\
is a Banach algebra.  ($\mathcal{V}$ is for Varopoulos; e.g., see ~\cite{Varopoulos:1967},  ~\cite[ Ch. 11]{Graham&McGehee:1979}.)   The second norm, $\|a\|_{\mathcal{F}_2}$ for a given  scalar array  $a = (a_{xy})_{x \in X, y \in Y}$, is defined as the supremum of 
\begin{equation*}
\big |\sum _{(x,y) \in S \times T}a_{xy} g(x)h(y) \big|
\end{equation*}
taken over all finite rectangles $S \times T \subset  X\times Y$,  $g \in B_{l^{\infty}(X)}$  and $h \in B_{l^{\infty}(Y)}.$  (Here and throughout,  $B_E$  denotes the closed unit ball in a normed linear space $E$, and $l^{\infty}(D)$ denotes the space of bounded scalar-valued functions on a domain $D$.)  Equipped with the $\mathcal{F}_2$-norm and point-wise multiplication on $X \times Y$,
\begin{equation} \label{b1}
\mathcal{F}_2(X \times Y) := \big\{a = (a_{xy})_{x \in X, y \in Y}: \|a\|_{\mathcal{F}_2} < \infty \big\}
\end{equation}
is a Banach algebra.  ($\mathcal{F}$ is for Fr\'echet; e.g., see ~\cite{Frechet:1915},  ~\cite[Ch. I]{blei:2001}.)
 Within the broader context of topological tensor products, the $\mathcal{V}_2$-norm  and the $\mathcal{F}_2$-norm are instances, respectively,  of \textit{projective} and \textit{injective} cross-norms; e.g., see~\cite{Schatten:1950},~\cite{Grothendieck:1955}.     

 The duality between the $\mathcal{V}_2$-norm and the $\mathcal{F}_2$-norm is expressed by Lemma \ref{L1} below, and is at the foundation of a large subject.  Noted and investigated first by Schatten in the 1940's, e.g., ~\cite{Schatten:1943}, this duality was extended and expansively studied by Grothendieck during the 1950's, e.g., ~\cite{Grothendieck:1956}, ~\cite{Grothendieck:1955}.  Since the mid-1960's, Lemma \ref{L1}  has reappeared in various forms and settings, and has become folklore.  I learnt it nearly four decades ago in a specific context of harmonic analysis, e.g., ~\cite{Blei:1975}. 
\begin{lemma} \label{L1}
If  $f \in \mathcal{V}_2(X \times Y)$, then

\begin{equation} \label{e3}
 \|f\|_{\mathcal{V}_2}  = \sup \big \{ \big |\sum_{(x,y) \in S\times T}a_{xy}f(x,y)\big |:  a \in B_{\mathcal{F}_2}, \  \text{\rm{finite rectangles}} \  \ S \times T \subset  X\times Y \big \}. \ \ \  \ \ \ \ \ \ 
  \end{equation}
\end{lemma}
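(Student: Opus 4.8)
The plan is to prove the two inequalities separately, writing $N(f)$ for the supremum on the right side of \eqref{e3}. The inequality $N(f) \le \|f\|_{\mathcal{V}_2}$ is the easy, "averaging" direction, and it mirrors the estimate in \eqref{i5}: given any representation $f = \sum_{\omega} g_{\omega} h_{\omega}$ with $\sum_{\omega} \|g_{\omega}\|_{\infty}\|h_{\omega}\|_{\infty} < \infty$, one fixes $a \in B_{\mathcal{F}_2}$ and a finite rectangle $S \times T$, interchanges the (absolutely convergent) double sum, and bounds each term directly from the definition of $\|\cdot\|_{\mathcal{F}_2}$ by $|\sum_{(x,y) \in S \times T} a_{xy} g_{\omega}(x) h_{\omega}(y)| \le \|a\|_{\mathcal{F}_2}\|g_{\omega}\|_{\infty}\|h_{\omega}\|_{\infty} \le \|g_{\omega}\|_{\infty}\|h_{\omega}\|_{\infty}$; summing over $\omega$ and taking the infimum over representations gives $N(f) \le \|f\|_{\mathcal{V}_2}$.

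For the reverse inequality $\|f\|_{\mathcal{V}_2} \le N(f)$, the plan is to pass through finite rectangles. First I would record the elementary fact that restricting a representation of $f$ to a finite rectangle $R = S \times T$ can only decrease the defining sum, so $\|f|_R\|_{\mathcal{V}_2(R)} \le \|f\|_{\mathcal{V}_2}$; conversely, extension by zero shows that an array on $R$ of $\mathcal{F}_2(R)$-norm at most $1$ has $\mathcal{F}_2(X \times Y)$-norm at most $1$, hence $N_R(f|_R) \le N(f)$, where $N_R$ denotes the analogue of $N$ for the finite domain $R$. Thus it suffices to establish (a) the finite-dimensional duality $\|g\|_{\mathcal{V}_2(R)} = N_R(g)$ for every $g \in \mathbb{C}^R$ and every finite rectangle $R$, and (b) $\|f\|_{\mathcal{V}_2} \le \sup_R \|f|_R\|_{\mathcal{V}_2(R)}$, the supremum taken over finite rectangles $R \subseteq X \times Y$. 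Part (a) is a clean application of the bipolar theorem in the finite-dimensional space $\mathbb{C}^R$: the closed unit ball of $\mathcal{V}_2(R)$ equals the convex hull of $\{g \otimes h : g \in B_{l^{\infty}(S)},\ h \in B_{l^{\infty}(T)}\}$, which, being the convex hull of a compact set in finite dimensions, is compact, convex and balanced, and whose polar under the pairing $\langle a, \phi \rangle = \sum_R a_{xy}\phi(x,y)$ is exactly $B_{\mathcal{F}_2(R)}$ by the very definition of $\|\cdot\|_{\mathcal{F}_2}$; the bipolar theorem then identifies $B_{\mathcal{V}_2(R)}$ with $B_{\mathcal{F}_2(R)}^{\circ}$, i.e. with the unit ball of $N_R$.

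Part (b) — recovering a global representation of $f$ from its finite sections with no loss in norm — is where I expect the real work to lie. The plan is a compactness argument: encode a norming representation of each $f|_R$ as a positive Radon measure $\nu_R$ of total mass at most $\sup_R\|f|_R\|_{\mathcal{V}_2(R)}$ on the compact space $K := \prod_{x \in X}\overline{\mathbb{D}} \times \prod_{y \in Y}\overline{\mathbb{D}}$ (with $\overline{\mathbb{D}}$ the closed unit disk), identified with $B_{l^{\infty}(X)} \times B_{l^{\infty}(Y)}$ under the product topology (compact by Tychonoff), and arranged so that $\int_K g(x)h(y)\,\nu_R(dg,dh) = f(x,y)$ for $(x,y) \in R$. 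Directing the finite rectangles by inclusion and invoking weak-$*$ compactness of the ball of measures on $K$, one extracts a subnet $\nu_{R_{\alpha}} \to \nu$; since each evaluation map $(g,h) \mapsto g(x)h(y)$ lies in $C(K)$ and the corresponding net of integrals is eventually constant once $R_{\alpha}$ contains $(x,y)$, this yields $f(x,y) = \int_K g(x)h(y)\,\nu(dg,dh)$ with $\nu \ge 0$ of mass at most $\sup_R\|f|_R\|_{\mathcal{V}_2(R)} \le N(f)$, and one then converts this integral representation into a countable sum $\sum_n g_n \otimes h_n$ with $\sum_n \|g_n\|_{\infty}\|h_n\|_{\infty}$ controlled by the mass of $\nu$ (using that $f \in \mathcal{V}_2$, or by approximating $\nu$ weak-$*$ by finitely supported measures). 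The delicate points are precisely the extraction of a single representing object valid simultaneously over the (possibly uncountable) domain $X \times Y$ and the passage back from a representing measure to a genuine countable-sum representation; by contrast the easy direction and the finite-dimensional bipolar computation are routine.
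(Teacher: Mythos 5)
Your easy direction and your finite-dimensional bipolar computation are both correct, and the weak-$*$ compactness argument does legitimately produce a single representing measure $\nu$ on $B_{l^{\infty}(X)}\times B_{l^{\infty}(Y)}$ of mass at most $N(f)$. But what that gives you is $\|f\|_{\tilde{\mathcal{V}}_2}\leq N(f)$, i.e.\ membership in the \emph{integral}-representation algebra $\tilde{\mathcal{V}}_2$, not in $\mathcal{V}_2$ — and the step you defer to the end, converting the representing measure into a countable sum with $\sum_n\|g_n\|_{\infty}\|h_n\|_{\infty}\leq N(f)$, is precisely the nontrivial content of the lemma, and neither of your two suggested fixes closes it. Approximating $\nu$ weak-$*$ by finitely supported measures only gives functions converging to $f$ \emph{pointwise}, with no control in the $\mathcal{V}_2$-norm, so you cannot pass the sum representation to the limit; if that route worked it would prove $\tilde{\mathcal{V}}_2(X\times Y)=\mathcal{V}_2(X\times Y)$ with equal norms, contradicting \eqref{e11} and Remark \ref{R1}.iii. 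And "using that $f\in\mathcal{V}_2$" as stated is circular: the hypothesis hands you a sum representation of norm about $\|f\|_{\mathcal{V}_2}$, which is the quantity you are trying to bound by $N(f)$. Note also that your intermediate claim (b), $\|f\|_{\mathcal{V}_2}\leq\sup_R\|f|_R\|_{\mathcal{V}_2(R)}$, is false for general $f\in l^{\infty}(X\times Y)$ — by your own part (a) the right side equals $N(f)$, and Remark \ref{R1}.iii exhibits $f$ with $N(f)<\infty$ but $f\notin\mathcal{V}_2$ — so any correct completion must use the hypothesis $f\in\mathcal{V}_2$ in an essential, non-circular way.

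Here is how to use it. Given $\varepsilon>0$, take a representation $f=\sum_n g_n\otimes h_n$ with $\sum_n\|g_n\|_{\infty}\|h_n\|_{\infty}<\infty$, truncate to $f_N=\sum_{n\leq N}g_n\otimes h_n$ with $\|f-f_N\|_{\mathcal{V}_2}<\varepsilon$, and approximate each $g_n,h_n$ ($n\leq N$) uniformly by simple functions. This replaces $f_N$, up to another $\varepsilon$ in the $\mathcal{V}_2$-norm, by a function $\tilde{f}$ of the form $F\circ(\pi\times\rho)$, where $\pi,\rho$ map $X,Y$ onto \emph{finite} sets and $F$ lives on a finite rectangle. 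Pulling back representations of $F$ shows $\|\tilde{f}\|_{\mathcal{V}_2(X\times Y)}\leq\|F\|_{\mathcal{V}_2}$; choosing one point from each fiber realizes $F$ as a finite section $\tilde{f}|_R$, so your part (a) gives $\|F\|_{\mathcal{V}_2}=N_R(\tilde{f}|_R)\leq N(\tilde{f})\leq N(f)+N(\tilde{f}-f)\leq N(f)+2\varepsilon$, using $N\leq\|\cdot\|_{\mathcal{V}_2}$ from the easy direction. Hence $\|f\|_{\mathcal{V}_2}\leq N(f)+4\varepsilon$. This discretization is, in substance, the "elementary argument (involving measure theory)" that the paper invokes in Remark \ref{R1}.ii to get $\|f\|_{\mathcal{V}_2}=\|f\|_{\tilde{\mathcal{V}}_2}$ on $\mathcal{V}_2$; the paper's route then obtains \eqref{e3} from the isometric duality $\tilde{\mathcal{V}}_2=B(R_X\times R_Y)$ of Proposition \ref{P1} (in its Steinhaus form), whereas yours would obtain it from the finite-dimensional bipolar theorem — once the gap above is filled, both are legitimate.
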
 

\noindent  (For proof, see Remark \ref{R1}.ii.)
\ \

\subsection{Second question} \ \    The converse to Lemma \ref{L1} is false:   if $X$ and $Y$  are infinite, then there exist $f \in l^{\infty}(X \times Y)$ for which the right side of \eqref{e3} is finite, but $f \notin \mathcal{V}_2(X \times Y)$.  (How these $f$ arise is briefly explained in Remark \ref{R1}.iii below.) 
However, if we broaden Question \ref{Q1} (but  keep to its original intent),  then we are led exactly to those functions $f$  for which the right side of \eqref{e3} is finite.

\begin{question} \label{Q2}
 Let  $f \in l^{\infty}(X \times Y)$.  Can we find a probability space $(\Omega,  \mu)$ and representing functions indexed by it, $\{g_{\omega}\}_{ \in \Omega}$ and $\{h_{\omega}\}_{\omega \in \Omega}$  defined respectively on $X$ and $Y,$  such that for every  $(x,y) \in X \times Y,$

\begin{equation} \label{e7}
\omega \mapsto g_{\omega}(x), \ \ \ \omega \mapsto h_{\omega}(y), \ \ \  \omega \mapsto \|g_{\omega}\|_{\infty}, \ \ \  \omega \mapsto \|h_{\omega}\|_{\infty}, \ \ \ \omega \in \Omega,
\end{equation} \\\
determine $\mu$-measurable functions on $\Omega,$ 

\begin{equation} \label{e4}
\int_{\Omega} \|g_{\omega}\|_{\infty} \|h_{\omega}\|_{\infty} \mu(d\omega) < \infty,
\end{equation}\\\
and

\begin{equation} \label{e5}
f(x,y) = \int_{\Omega}g_{\omega}(x)h_{\omega}(y) \mu(d \omega)?
\end{equation}
(We refer to  the probability space $(\Omega, \mu)$ as an indexing space, and to the probability measure $\mu$ as an indexing measure.)
\end{question}

\noindent  There exist functions of two variables that cannot be represented by \eqref{e5} under the constraint in \eqref{e4};  e.g.,  the function $f$ in \eqref{V1}, with the same proof.  Question \ref{Q2} obviously subsumes Question \ref{Q1}:  an affirmative to Question \ref{Q1} implies an affirmative to Question \ref{Q2}.  But the converse is false, as per Remark \ref{R1}.iii  below.  To formalize matters,  given  $f \in l^{\infty}(X \times Y),$  we define $\|f\|_{\tilde{\mathcal{V}}_2}$  to be the infimum of the left side of \eqref{e4} over all representations of $f$ by \eqref{e5}, and then let
\begin{equation*}
\tilde{\mathcal{V}}_2(X \times Y) = \{f \in l^{\infty}(X \times Y): \|f\|_{\tilde{\mathcal{V}}_2} < \infty \}.
\end{equation*}
One can verify directly that  $\| \cdot \|_{\tilde{\mathcal{V}}_2}$ is a norm,  and that $(\tilde{\mathcal{V}}_2(X \times Y), \| \cdot \|_{\tilde{\mathcal{V}}_2})$  is a Banach algebra with point-wise multiplication on $X \times Y$.  (E.g., see Proposition \ref{P2} in the next section.)  This follows also from a characterization of  $\tilde{\mathcal{V}}_2(X \times Y) $ as the dual space of $\mathcal{F}_2(X \times Y).$   To state and prove this characterization,  we use an alternate definition of $\mathcal{F}_2(X \times Y)$ cast in a framework of harmonic analysis. 

 Given a set $A$, we define the Rademacher system indexed by $A$,
  \begin{equation*}
  R_A =\{r_{\alpha} \}_{\alpha \in A},
  \end{equation*}
  to be the set of coordinate functions on $\{-1,1\}^A := \Omega_{A}$,

  \begin{equation*}
 r_{\alpha}(\omega) = \omega(\alpha), \ \ \ \omega \in \{-1,1\}^A , \ \ \ \alpha \in A.
  \end{equation*}
 We view the product space $\Omega_A$ as a compact Abelian group with the product topology and coordinate-wise multiplication, and  view $R_A$ as  a set of independent characters on it.  (E.g., see~\cite[ p. 139]{blei:2001}.)
 
 Next we consider the set of characters  $$R_{X} \times R_{Y} = \{r_x \otimes  r_y \}_{(x,y) \in X \times Y}$$  on the compact Abelian group $\Omega_{X} \times \Omega_{Y}$, where $r_x \otimes r_y$ is the $\{-1,1\}$-valued function on  $\Omega_{X} \times \Omega_{Y}$, defined by $$r_x \otimes r_y(\omega_1,\omega_2) = r_x(\omega_1)r_y(\omega_2), \ \ \ (\omega_1,\omega_2) \in \Omega_{X} \times \Omega_{Y}.$$  Given a scalar array $a =(a_{xy})_{(x,y) \in X \times Y}$,  we  identify it formally with the Walsh series
   \begin{equation*}
   \hat{a}  \ \sim  \sum_{(x,y) \in X \times Y}a_{xy} r_x \otimes r_y
   \end{equation*}
   (a distribution on $\Omega_{X} \times \Omega_{Y}$), and observe that
   \begin{equation} \label{e6}
   a \in \mathcal{F}_2(X \times Y) \ \Longleftrightarrow \  \hat{a} \in  C_{R_{X} \times R_{Y}}(\Omega_{X} \times \Omega_{Y}), 
   \end{equation}
  where $C_{R_{X} \times R_{Y}}(\Omega_{X} \times \Omega_{Y})$ =   \{continuous functions on $\Omega_{X} \times \Omega_{Y}$ with spectra in $R_{X} \times R_{Y}$\};  e.g., see~\cite[Ch. IV,  Ch. VII]{blei:2001}, and also \S \ref{s3} in this work.  Specifically, 
   \begin{equation} \label{e8}
    \|\hat{a}\|_{\infty} \leq  \|a\|_{\mathcal{F}_2} \leq  4 \|\hat{a}\|_{\infty} .
    \end{equation}
    Therefore, the dual space of  $\mathcal{F}_2(X \times Y)$ can be identified (via \eqref{e6} and Parseval's formula) with the dual space of $C_{R_X \times R_Y}(\Omega_{X} \times \Omega_{Y})$, which (via the Riesz representation theorem and the Hahn-Banach theorem) is the algebra of restrictions to $R_X \times R_Y$ of transforms of regular complex measures on $\Omega_{X} \times \Omega_{Y}$.  This algebra is denoted by 
    \begin{equation*} 
  B(R_X \times R_Y)  := \widehat{M}(\Omega_{X} \times \Omega_{Y})/\{\hat{\lambda} \in \widehat{M}(\Omega_{X} \times \Omega_{Y}): \hat{\lambda} = 0 \ \text{on} \ R_X \times R_Y\}.  
    \end{equation*}\
     
    \begin{proposition} \label{P1}
 
  \begin{equation*}
    \tilde{\mathcal{V}}_2(X \times Y) = B(R_{X} \times R_{Y}).
   \end{equation*}\     
    \end{proposition}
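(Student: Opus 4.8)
The plan is to prove the two-sided inclusion $\tilde{\mathcal{V}}_2(X\times Y)=B(R_X\times R_Y)$ by combining the dual-space characterization already set up in the excerpt with a direct identification of the integral representation \eqref{e5}--\eqref{e4} with a measure on $\Omega_X\times\Omega_Y$. The cleanest route is to establish that $\tilde{\mathcal{V}}_2(X\times Y)$ is isometrically (or at least isomorphically) the dual of $\mathcal{F}_2(X\times Y)$, and then invoke the identification of that dual with $B(R_X\times R_Y)$ that precedes the statement. So I would first show the easy direction $B(R_X\times R_Y)\subseteq\tilde{\mathcal{V}}_2(X\times Y)$: given $\lambda\in M(\Omega_X\times\Omega_Y)$, for each $(x,y)$ set $g_\omega(x)=r_x(\omega_1)$ and $h_\omega(y)=r_y(\omega_2)$ where $\omega=(\omega_1,\omega_2)$, and use $\mu=|\lambda|/\|\lambda\|_M$ as the indexing measure together with the Radon--Nikodym derivative $\psi=d\lambda/d|\lambda|$ absorbed into one of the representing families (as in the computation following \eqref{rep} in the introduction). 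Since $\|g_\omega\|_\infty=\|h_\omega\|_\infty=1$ pointwise, the constraint \eqref{e4} holds and one reads off $\|\hat\lambda\restriction_{R_X\times R_Y}\|_{\tilde{\mathcal{V}}_2}\le\|\lambda\|_M$; Parseval's formula gives that the resulting integral reproduces $a_{xy}=\hat\lambda(r_x\otimes r_y)$, i.e.\ $f(x,y)=\hat\lambda\restriction_{R_X\times R_Y}(x,y)$.

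For the reverse inclusion $\tilde{\mathcal{V}}_2(X\times Y)\subseteq B(R_X\times R_Y)$, I would argue that any $f$ admitting a representation \eqref{e5} under \eqref{e4} defines a bounded linear functional on $C_{R_X\times R_Y}(\Omega_X\times\Omega_Y)$, hence (Riesz + Hahn--Banach) is the restriction of a measure transform, placing $f$ in $B(R_X\times R_Y)$. Concretely: take a finite rectangle $S\times T$ and scalars $a=(a_{xy})$ with $\|a\|_{\mathcal{F}_2}\le 1$; pairing $a$ with $f$ and substituting \eqref{e5},
\begin{equation*}
\Bigl|\sum_{(x,y)\in S\times T}a_{xy}f(x,y)\Bigr|
=\Bigl|\int_\Omega\Bigl(\sum_{(x,y)\in S\times T}a_{xy}g_\omega(x)h_\omega(y)\Bigr)\mu(d\omega)\Bigr|
\le\int_\Omega\|g_\omega\|_\infty\|h_\omega\|_\infty\,\mu(d\omega),
\end{equation*}
using the definition of $\|\cdot\|_{\mathcal{F}_2}$ on the inner sum (with the functions $x\mapsto g_\omega(x)/\|g_\omega\|_\infty$ and $y\mapsto h_\omega(y)/\|h_\omega\|_\infty$ in the respective unit balls of $l^\infty$). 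Taking the infimum over representations, this is $\le\|f\|_{\tilde{\mathcal{V}}_2}$, so $\|f\|_{\mathcal{F}_2^{\,*}}\le\|f\|_{\tilde{\mathcal{V}}_2}$, where I regard $f$ as a functional on $\mathcal{F}_2$. Via \eqref{e6} and \eqref{e8} this is a bounded functional on $C_{R_X\times R_Y}(\Omega_X\times\Omega_Y)$, and the Riesz/Hahn--Banach identification preceding the proposition then yields $f\in B(R_X\times R_Y)$ with comparable norm.

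The main obstacle I anticipate is the measurability bookkeeping: Question \ref{Q2} builds the measurability of $\omega\mapsto g_\omega(x)$, $\omega\mapsto h_\omega(y)$, $\omega\mapsto\|g_\omega\|_\infty$, $\omega\mapsto\|h_\omega\|_\infty$ into the hypothesis, so I must check that the inner sum $\sum_{S\times T}a_{xy}g_\omega(x)h_\omega(y)$ is genuinely $\mu$-measurable (it is a finite sum of products of measurable functions, so this is routine) and that the dominating function $\|g_\omega\|_\infty\|h_\omega\|_\infty$ is integrable (hypothesis \eqref{e4}), legitimizing the interchange of sum and integral. A secondary subtlety is that, as phrased, the two directions give only a norm \emph{equivalence} rather than an isometry, because \eqref{e8} costs a factor of $4$; if an isometric statement is wanted one should work directly with the functional-analytic pairing against $\mathcal{F}_2$ (bypassing the detour through $\|\hat a\|_\infty$) and verify that the infimum defining $\|\cdot\|_{\tilde{\mathcal{V}}_2}$ is attained, or at least approximated, by representations coming from measures — this is where one uses that $C_{R_X\times R_Y}(\Omega_X\times\Omega_Y)$ is the norm-closure of Walsh polynomials together with weak-$*$ compactness of the unit ball of $M(\Omega_X\times\Omega_Y)$. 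Everything else — that $\tilde{\mathcal{V}}_2$ is a norm and a Banach algebra — then follows for free from the dual-space identification, as the statement's surrounding text anticipates.
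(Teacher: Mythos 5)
Your proposal is correct and follows essentially the same route as the paper: the forward inclusion by taking $(\Omega_X\times\Omega_Y,|\lambda|/\|\lambda\|)$ as indexing space with the Radon--Nikodym derivative absorbed into one representing family, and the reverse inclusion by the duality estimate against Walsh polynomials (via \eqref{e8}) followed by Riesz representation and Hahn--Banach. Your remark about the factor of $4$ is also consistent with the paper, which claims only equality of spaces here and notes in Remark \ref{R1}.i that the isometric version is obtained by replacing the Rademacher system with the Steinhaus system.
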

  \begin{proof}
    If $f \in B(R_{X} \times R_{Y}),$  then (by definition) there exists a regular complex measure $\lambda$ on $ \Omega_{X} \times \Omega_{Y}$ such that
   \begin{equation*}
   f(x,y) = \int_{\Omega_{X} \times \Omega_{Y}}r_x(\omega_1)r_y(\omega_2) \lambda(d\omega_1,d\omega_2), \ \  (x,y) \in X \times Y,
  \end{equation*} 
   which implies $f \in \tilde{\mathcal{V}}_2(X \times Y)$:   the indexing space is $\big(\Omega_{X} \times \Omega_{Y},\frac{ |\lambda|}{\|\lambda\|}\big)$; \  $|\lambda|$ is the \textit{total variation measure} of $\lambda$; \  $\|\lambda\| = |\lambda|(\Omega_X \times \Omega_Y)$, and representing functions are given by
 \begin{align*}
  g_{\omega_1\omega_2}(x) & = r_x(\omega_1) \\
  h_{\omega_1\omega_2}(y)  & =  \|\lambda\| \frac{d \lambda}{d|\lambda|}(\omega_1,\omega_2) r_y(\omega_2), \ \ \ \ (\omega_1,\omega_2) \in   \Omega_{X} \times \Omega_{Y}, \ \ (x,y) \in X \times Y.
  \end{align*} 
  
   Suppose  $f \in \tilde{\mathcal{V}}_2(X \times Y)$.  Then, there exist an indexing space $(\Omega, \mu)$ and families of representing functions  $\{g_{\omega}\}_{ \in \Omega}$ and $\{h_{\omega}\}_{\omega \in \Omega}$  defined respectively on $X$ and $Y,$  such that \eqref{e7},  \eqref{e4},  and \eqref{e5} hold.  Let $\hat{a}$ be a Walsh polynomial in $C_{R_{X}\times R_{Y}}(\Omega_{X} \times \Omega_{Y})$,
   \begin{equation*}
   \hat{a} = \sum_{(x,y) \in X \times Y} a_{xy}r_x \otimes r_y,
   \end{equation*}   
  where $(a_{xy})_{(x,y) \in X \times Y}$ is a scalar array with finite support in $X \times Y$.  Then, by \eqref{e5}, \eqref{e4}, and  \eqref{e8},
  \begin{equation}\label{b2}
   \begin{split}
  \big|\sum_{(x,y) \in X \times Y} a_{xy}f(x,y)\big|  & = \big|\sum_{(x,y) \in X \times Y} a_{xy} \int_{\Omega} g_{\omega}(x) h_{\omega}(y) \mu(d\omega)\big| \\
   & \leq \int_{\Omega}\ \big{|}\sum_{(x,y) \in X \times Y}a_{xy}g_{\omega}(x) h_{\omega}(y)\big{|}\mu(d\omega) \\
   & \leq 4\|\hat{a}\|_{\infty} \int_{\Omega} \|g_{\omega}\|_{\infty}\|h_{\omega}\|_{\infty} \mu(d\omega) \ \leq \ 4\|\hat{a}\|_{\infty} \|f\|_{\mathcal{V}_2}.
   \end{split}
   \end{equation}
  Because Walsh polynomials are norm-dense in $C_{R_{X}\times R_{Y}}(\Omega_{X} \times \Omega_{Y})$, the estimate in \eqref{b2} implies that $f$ determines a bounded linear functional on $C_{R_X \times R_Y}(\Omega_{X} \times \Omega_{Y})$, and hence $f \in B(R_{X} \times R_{Y})$.
   
    \end{proof}

\begin{remark} \label{R1} \ \ \  \\\
\em{\textbf{i.} \  The simple proof of Proposition \ref{P1} is similar to the proof of Proposition \ref{P0}.  The compact Abelian group $ \{-1,1\}^A$  in the proof above can be replaced, with nearly identical effect, by the compact Abelian group $$\{e^{it}: t \in [0, 2\pi) \}^A$$ (equipped with coordinate-wise multiplication, and the usual topology on each coordinate).  The Rademacher system becomes the Steinhaus system (cf.~\cite[p. 29]{blei:2001}), and the norm equivalence in \eqref{e8} becomes the isometry
$$\|\hat{a}\|_{\infty} = \|a\|_{\mathcal{F}_2}.$$
A modified proof of Proposition \ref{P1} in this setting yields
\begin{equation*}
   \|f\|_{\tilde{\mathcal{V}}_2} = \sup \big \{ \big |\sum_{(x,y) \in S \times T} a_{xy}f(x,y) \big |:   \|a\|_{\mathcal{F}_2} \leq 1, \  \text{finite}  \ S \times T \subset X \times Y \big \}.   
   \end{equation*}\
\noindent
\textbf{ii.} \  By an elementary argument (involving measure theory),  
\begin{equation} \label{e9}
\|f\|_{ \mathcal{V}_2(X \times Y)} = \|f\|_{ \tilde{\mathcal{V}}_2(X \times Y)},  \ \ \  f \in  \mathcal{V}_2(X \times Y),
\end{equation}
and hence Lemma \ref{L1}.  The equality of norms in \eqref{e9} can be observed also by noting that $\mathcal{V}_2(X \times Y)$  equals  $B_d(R_X \times R_Y)$, the algebra of restrictions to $R_X \times R_Y$ of transforms of \textit{discrete} measures on $\Omega_{X} \times \Omega_{Y}.$ (See~\cite{Varopoulos:1969}.)\\\

\noindent
\textbf{iii.} \   In the language of harmonic analysis (via Proposition \ref{P1}),  the proper inclusion 
\begin{equation} \label{e10} 
 \tilde{\mathcal{V}}_2(X \times Y) \subsetneq l^{\infty}(X \times Y)
 \end{equation}
 for infinite $X$ and $Y$ is the assertion that $R_{X} \times R_{Y}$ is not a Sidon set, a classical fact that can be verified in several ways;  e.g., via the example in \eqref{V1}.  Also in this language, the proper inclusion 
  \begin{equation*}
  \mathcal{V}_2(X \times Y) \subsetneq  \tilde{\mathcal{V}}_2(X \times Y)
  \end{equation*}
  becomes the assertion
  \begin{equation}\label{e11}
  B_d(R_X \times R_Y) \subsetneq B(R_X \times R_Y),
  \end{equation}
 which is proved in ~\cite{Varopoulos:1969}, via~\cite{Varopoulos:1968}.  The proper inclusion in \eqref{e11} can be deduced also from \eqref{e10} and a characterization of Sidonicity found in~\cite[p. 118]{Bourgain:1984}. \\\\
 \noindent 
 \textbf{iv.} \  By Proposition \ref{P1}, if $f \in \tilde{\mathcal{V}}_2(X \times Y),$ then there exists $\lambda \in M(\Omega_X \times \Omega_Y)$ such that
\begin{equation*}
f(x,y) = \hat{\lambda}(r_x \otimes r_y), \ \ \ (x,y) \in X \times Y.
\end{equation*}\\\
This implies that in the definition of the $\|\cdot\|_{\tilde{\mathcal{V}}_2}$-norm, the constraint in \eqref{e4} (absolute integrability) can be replaced by the stronger condition ($L^{\infty}$-boundedness)
  \begin{equation} \label{e12}
  \sup_{x \in X, y \in Y} \esssup_{\omega \in \Omega}|g_{\omega}(x) h_{\omega}(y)| < \infty. 
  \end{equation}
    Specifically, the $\|\cdot\|_{\tilde{\mathcal{V}}_2}$-norm is equivalent to the norm obtained by taking the infimum of the left side of \eqref{e12} over all representations in \eqref{e5}.}
\end{remark} 
\  
   \subsection{Third question and Grothendieck's  \textit{th\'eor\`eme fondamental}} \ \    Next we replace \eqref{e4} in Question \ref{Q2} with a weaker constraint:

  \begin{question} \label{Q3} Let  $f \in l^{\infty}(X \times Y)$.  Can we find a general measure space  $(\Omega,  \mu)$, and representing functions indexed by it,  $\{g_{\omega}\}_{ \in \Omega}$ and $\{h_{\omega}\}_{\omega \in \Omega}$ defined respectively on $X$ and $Y,$  so that for every $(x,y) \in X \times Y,$
   \begin{equation*}
\omega \mapsto g_{\omega}(x), \ \ \ \omega \mapsto h_{\omega}(y), \ \ \ \ \omega \in \Omega,
\end{equation*} \\\
determine elements in  $L^2(\Omega,\mu),$ 
\begin{equation} \label{e13}
\sup_{x\in X, y\in Y} \left( \int_{\Omega} |g_{\omega}(x)|^2 \mu(d \omega) \right)^{\frac{1}{2}}   \left( \int_{\Omega} |h_{\omega}(y)|^2 \mu(d \omega) \right)^{\frac{1}{2}}  < \infty,
\end{equation}
and   
\begin{equation} \label{e14}
f(x,y) = \int_{\Omega}g_{\omega}(x)h_{\omega}(y) \mu(d \omega)?
\end{equation}
\end{question}

\noindent 
 The constraint in \eqref{e13}  insures that the right side of \eqref{e14} is well defined, and appears weaker  (via Remark \ref{R1}.iv above)  than the condition in \eqref{e4}.  The two conditions  \eqref{e4}  and  \eqref{e13}  are, respectively,  the "maximal" and "minimal" constraints that guarantee convergence of the respective integral representations in \eqref{e5} and \eqref{e14}.  (See also Remark \ref{R2}.i below.)  

Grothendieck's celebrated theorem, dubbed  \textit{le th\'eor\`eme fondamental de la th\'eorie metrique des produit tensoriels}  in ~\cite[p. 59]{Grothendieck:1956},  is in effect the statement that the aforementioned constraints are equivalent.  That is, for every function on $X \times Y,$ the respective answers to Questions \ref{Q2} and \ref{Q3} are identical:   a function of  two variables can be represented by \eqref{e5} under the constraint in \eqref{e4}  if and only if it can be represented by \eqref{e14} under the constraint in \eqref{e13}.  
 To state this precisely, given $f \in l^{\infty}(X \times Y)$,  we define  $\|f\|_{\mathcal{G}_2}$  to be the infimum of the left side of \eqref{e13} over all representations of $f$ by \eqref{e14}, and let 
\begin{equation*}
\mathcal{G}_2(X \times Y) = \{f \in l^{\infty}(X \times Y): \|f\|_{\mathcal{G}_2} < \infty \}.
\end{equation*}
(We omit the verification that $(\mathcal{G}_2(X \times Y), \|\cdot\|_{\mathcal{G}_2})$ is a Banach algebra;  see the theorem below, and also Proposition \ref{P2} in the next section.)
\vskip 0.5 cm

\begin{theorem}  [\text{a version of Grothendieck's} \textit{th\'eor\`eme fondamental}] \label{T1}
\begin{equation*}
\mathcal{G}_2(X \times Y) = \tilde{\mathcal{V}}_2(X , Y).
\end{equation*}
\end{theorem}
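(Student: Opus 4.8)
The plan is to establish the two inclusions $\tilde{\mathcal{V}}_2(X\times Y)\subseteq\mathcal{G}_2(X\times Y)$ and $\mathcal{G}_2(X\times Y)\subseteq\tilde{\mathcal{V}}_2(X\times Y)$, of which only the second is substantive. The first inclusion is an immediate norm comparison: if $f$ is represented by \eqref{e5} with indexing measure $\mu$ and representing functions $g_\omega, h_\omega$, then the pointwise bounds $|g_\omega(x)|\le\|g_\omega\|_\infty$ and $|h_\omega(y)|\le\|h_\omega\|_\infty$ give $\big(\int_\Omega|g_\omega(x)|^2\,\mu(d\omega)\big)^{1/2}\le\big(\int_\Omega\|g_\omega\|_\infty^2\,\mu(d\omega)\big)^{1/2}$, and similarly for $h$; combined with Cauchy--Schwarz on the product this shows the same representation witnesses $f\in\mathcal{G}_2(X\times Y)$ with $\|f\|_{\mathcal{G}_2}\le\|f\|_{\tilde{\mathcal{V}}_2}$. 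So $\tilde{\mathcal{V}}_2\subseteq\mathcal{G}_2$ and the point is to reverse this.

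For the hard direction, I would argue by duality, mirroring the proof of Proposition \ref{P1}. One first checks that $\mathcal{G}_2(X\times Y)$ embeds into the dual of $\mathcal{F}_2(X\times Y)$: given $f\in\mathcal{G}_2$ represented by \eqref{e14} under \eqref{e13}, and a finitely supported array $a=(a_{xy})$, estimate
\begin{equation*}
\Big|\sum_{(x,y)}a_{xy}f(x,y)\Big|=\Big|\int_\Omega\sum_{(x,y)}a_{xy}g_\omega(x)h_\omega(y)\,\mu(d\omega)\Big|\le\int_\Omega\Big|\sum_{(x,y)}a_{xy}g_\omega(x)h_\omega(y)\Big|\,\mu(d\omega),
\end{equation*}
and then bound the integrand by $\|a\|_{\mathcal{F}_2}$ times $\|g_\omega(\cdot)\|_{l^\infty(X)}\|h_\omega(\cdot)\|_{l^\infty(Y)}$ --- but this is exactly where the naive $\mathcal{F}_2$-norm estimate fails, since $g_\omega$ and $h_\omega$ need only be in $L^2$, not $L^\infty$, so $\|g_\omega(\cdot)\|_{l^\infty(X)}$ is typically infinite. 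This is the main obstacle, and it is precisely the content of Grothendieck's inequality: one cannot avoid invoking \eqref{i1} (equivalently Proposition \ref{P0}) in some form. The resolution is to take the representation \eqref{e14} with its $L^2$-valued representing functions $x\mapsto(g_\omega(x))_\omega\in L^2(\Omega,\mu)$, $y\mapsto(h_\omega(y))_\omega\in L^2(\Omega,\mu)$ of norms bounded (by \eqref{e13}) by $\|f\|_{\mathcal{G}_2}$ up to $\varepsilon$; normalize to land in $B_{L^2}$; and observe that $f(x,y)=\langle G(x),\overline{H(y)}\rangle$ is an inner product of Hilbert-space-valued functions. Then apply the Grothendieck inequality in the form \eqref{i1}/\eqref{i6}: for any finitely supported $a\in B_{\mathcal{F}_2}$,
\begin{equation*}
\Big|\sum_{(x,y)}a_{xy}f(x,y)\Big|=\Big|\sum_{(x,y)}a_{xy}\langle G(x),\overline{H(y)}\rangle\Big|\le\mathcal{K}_G\,\|a\|_{\mathcal{F}_2}\,\sup\|G(x)\|\,\sup\|H(y)\|\le\mathcal{K}_G\,\|f\|_{\mathcal{G}_2}\,\|a\|_{\mathcal{F}_2}.
\end{equation*}
Hence $f$ extends (by density of Walsh polynomials and Hahn--Banach) to a bounded linear functional on $C_{R_X\times R_Y}(\Omega_X\times\Omega_Y)$ of norm at most $\mathcal{K}_G\|f\|_{\mathcal{G}_2}$, so by the Riesz representation theorem $f\in B(R_X\times R_Y)=\tilde{\mathcal{V}}_2(X\times Y)$, with $\|f\|_{\tilde{\mathcal{V}}_2}\le\mathcal{K}_G\|f\|_{\mathcal{G}_2}$. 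Combined with the trivial inequality from the first paragraph this gives the set equality asserted in the theorem (indeed with $\|f\|_{\mathcal{G}_2}\le\|f\|_{\tilde{\mathcal{V}}_2}\le\mathcal{K}_G\|f\|_{\mathcal{G}_2}$, so the two norms are equivalent).

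A remark on packaging: since the theorem is explicitly billed as "a version of Grothendieck's \textit{th\'eor\`eme fondamental}," I expect the intended proof really is just this reduction --- the substance is all imported from \eqref{i1} --- and the write-up should make clear that the two constraints \eqref{e4} and \eqref{e13} being equivalent is the \emph{content} of the Grothendieck inequality rather than something proved from scratch here. One should also double-check the handling of complex conjugates (the paper's $\tilde{\mathcal{V}}_2$ and $\mathcal{G}_2$ use $g_\omega(x)h_\omega(y)$ without conjugation, whereas \eqref{i6} has $r_{\mathbf x}r_{\mathbf y}$ unconjugated too, so this is consistent), and note that passing from the $l^2$-indexed ball $B_{l^2}$ in Proposition \ref{P0} to a general $L^2(\Omega,\mu)$ is harmless since any separable-or-not Hilbert space of the relevant vectors embeds isometrically into some $l^2(A)$.
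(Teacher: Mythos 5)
Your hard direction is exactly the paper's argument: the paper likewise observes that $\|f\|_{\mathcal{G}_2}\leq 1$ means $f(x,y)=\langle \textbf{u}_x,\textbf{v}_y\rangle_H$ for maps of $X$ and $Y$ into $B_H$, and then identifies the inclusion $\mathcal{G}_2\subset\tilde{\mathcal{V}}_2$, via the duality of Proposition \ref{P1}, with the Grothendieck inequality itself (restated as Theorem \ref{G2}); like you, it imports rather than reproves that inequality at this point, so your ``packaging'' remark is accurate.

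One slip in your easy direction: from $\int_\Omega\|g_\omega\|_\infty\|h_\omega\|_\infty\,\mu(d\omega)<\infty$ you cannot conclude that $\big(\int\|g_\omega\|_\infty^2\,d\mu\big)^{1/2}\big(\int\|h_\omega\|_\infty^2\,d\mu\big)^{1/2}$ is controlled --- Cauchy--Schwarz bounds the former by the latter, not the reverse (take $\|g_\omega\|_\infty=1/\omega$, $\|h_\omega\|_\infty=\omega$). The fix is either to renormalize the representation first, replacing $g_\omega$ by $g_\omega\sqrt{\|h_\omega\|_\infty/\|g_\omega\|_\infty}$ and $h_\omega$ by $h_\omega\sqrt{\|g_\omega\|_\infty/\|h_\omega\|_\infty}$ so that $\|g_\omega\|_\infty=\|h_\omega\|_\infty$ and the two quantities coincide, or to do what the paper does: route through Proposition \ref{P1} and Remark \ref{R1}.iv, which replace the $\tilde{\mathcal{V}}_2$-constraint \eqref{e4} by the uniformly bounded representation coming from a measure $\lambda$, after which \eqref{e13} is immediate.
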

\vskip 0.5 cm
\noindent To verify  $\tilde{\mathcal{V}}_2(X , Y) \subset  \mathcal{G}_2(X \times Y),$  note that if $f \in \tilde{\mathcal{V}}_2(X \times Y),$ then (by Proposition \ref{P1}) there exists $\lambda \in M(\Omega_X \times \Omega_Y),$ such that
\begin{equation*}
f(x,y) = \hat{\lambda}(r_x \otimes r_y), \ \ \ (x,y) \in X \times Y,
\end{equation*}
and (by Remark \ref{R1}.iv) we have the norm estimate
\begin{equation*}
\|f\|_{\mathcal{G}_2} \leq \|f\|_{\tilde{\mathcal{V}}_2},  \ \ \ f \in  \tilde{\mathcal{V}}_2(X , Y).
\end{equation*}
  The reverse inclusion
\begin{equation*}
\mathcal{G}_2(X \times Y) \subset  \tilde{\mathcal{V}}_2(X , Y), 
\end{equation*}
or the assertion equivalent to it, that 
\begin{equation} \label{e15}
 \|f\|_{\mathcal{G}_2} \leq 1 \ \Rightarrow \ \|f\|_{\tilde{\mathcal{V}}_2} \leq K, 
\end{equation}
 for a universal constant $1 < K < \infty,$  is the essence of Grothendieck's theorem.  

 Focusing on \eqref{e15}, observe first that  $\|f\|_{\mathcal{G}_2} \leq 1$ means that  there exist maps from $X$ and $Y$  into the unit ball  $B_H$  of a Hilbert space $H$,
  \begin{equation}\label{e27}
\begin{array} {cc}
 x \mapsto \textbf{u}_x, &  x\in X, \ \textbf{u}_x \in B_H, \\
 y \mapsto \textbf{v}_y, &  y\in Y, \ \textbf{v}_y \in B_H,
\end{array}
\end{equation}
 such that
\begin{equation}\label{e28}
 f(x,y) = \langle \textbf{u}_x, \textbf{v}_y \rangle_H, \ \ \ (x,y) \in X \times Y,
  \end{equation}\\
  where $\langle \cdot, \cdot \rangle_H$ is the inner product on $H$.  Then  \eqref{e15} becomes, via Proposition \ref{P1},  the Grothendieck inequality,  
  \begin{equation*}
  |\sum_{x,y} a_{xy} \langle \textbf{u}_x,\textbf{v}_y \rangle_H| \leq K\|a\|_{\mathcal{F}_2}\\\\
  \end{equation*}
for all finitely supported scalar arrays $(a_{xy})_{x \in X, y \in Y}$.   Therefore,  \eqref{e15} asserts  (again through Proposition \ref{P1})  that the inner product on $H$, viewed as a function on   $B_H \times B_H$,  is in $\tilde{\mathcal{V}}_2(B_H,B_H)$.   
  
\begin{theorem}[\text{yet another version of} \textit{le th\'eor\`eme fondamental}] \label{G2}
  Let $H$ be an infinite-dimensional Hilbert space with inner product $\langle \cdot, \cdot \rangle_H$.  Then,
 \begin{equation} \label{e16}
 \|\langle \cdot, \cdot \rangle_H\|_{\tilde{\mathcal{V}}_2(B_H,B_H)}  :=   \mathcal{K}_G < \infty,
  \end{equation}
  where $\mathcal{K}_G$ is independent of $H.$
  \end{theorem}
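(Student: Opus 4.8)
The plan is to read Theorem \ref{G2} directly off the th\'eor\`eme fondamental in the form of Theorem \ref{T1}, using the dual description of $\tilde{\mathcal{V}}_2$ from Proposition \ref{P1} to pin down the constant. Nothing essentially new is proved here: the finiteness assertion in \eqref{e16} \emph{is} the Grothendieck inequality, and the content of the statement is that the optimal constant there is an $H$-free, dimension-free invariant of Hilbert space.

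First I would observe that $f:=\langle\,\cdot\,,\,\cdot\,\rangle_H$, viewed as a function on $B_H\times B_H$, satisfies $\|f\|_{\mathcal{G}_2(B_H\times B_H)}\le 1$. This is immediate from \eqref{e27}--\eqref{e28}: take $X=Y=B_H$ and let both maps in \eqref{e27} be the identity inclusion $x\mapsto x$ of $B_H$ into $B_H$, so that $f(x,y)=\langle x,y\rangle_H$ with $\mathbf{u}_x,\mathbf{v}_y\in B_H$. (Equivalently, fixing an orthonormal basis $\{e_\alpha\}_{\alpha\in A}$ of $H$, take $\Omega=A$ with counting measure and $g_\alpha(x)=\langle x,e_\alpha\rangle$, $h_\alpha(y)=\overline{\langle y,e_\alpha\rangle}$; then $\sup_{x\in B_H}(\sum_{\alpha}|\langle x,e_\alpha\rangle|^2)^{1/2}=1$ and Parseval gives \eqref{e14}.) By Theorem \ref{T1} we conclude at once that $f\in\tilde{\mathcal{V}}_2(B_H,B_H)$ with $\|f\|_{\tilde{\mathcal{V}}_2}<\infty$ — which is precisely \eqref{e15}, i.e. the Grothendieck inequality.

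To identify the \emph{value} of $\|f\|_{\tilde{\mathcal{V}}_2(B_H,B_H)}$, I would invoke Proposition \ref{P1} in its Steinhaus form (Remark \ref{R1}.i), which furnishes the isometric identity $\|f\|_{\tilde{\mathcal{V}}_2}=\sup\{\,|\sum_{(x,y)\in S\times T}a_{xy}f(x,y)|:\|a\|_{\mathcal{F}_2}\le 1,\ S\times T\subset B_H\times B_H\ \text{finite}\,\}$. Substituting $f=\langle\,\cdot\,,\,\cdot\,\rangle_H$ and reindexing the finite sets $S,T$ by $j,k$, the right-hand side is exactly the least constant $K$ for which \eqref{i1} holds with the vectors $\mathbf{x}_j,\mathbf{y}_k$ restricted to $B_H$. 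Since $H$ is infinite-dimensional, it contains an isometric copy of every finite-dimensional Euclidean space, so every finite configuration of vectors in $B_{l^2}$ already occurs in $B_H$, giving $\|f\|_{\tilde{\mathcal{V}}_2}\ge\mathcal{K}_G$; conversely any finite configuration in $B_H$ spans a finite-dimensional Euclidean subspace and hence transports isometrically into $B_{l^2}$, giving $\|f\|_{\tilde{\mathcal{V}}_2}\le\mathcal{K}_G$. These two inequalities together prove both that the value is independent of $H$ and that it equals the constant $\mathcal{K}_G$ of \eqref{i1} (equivalently, of Proposition \ref{P0}).

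The one genuinely hard ingredient is the one being imported rather than derived: the finiteness of this common supremum, i.e. $\mathcal{K}_G<\infty$ (Theorem \ref{T1}, \eqref{e15}). For that I would cite any of the standard proofs of the Grothendieck inequality (\cite{Lindenstrauss:1968}, \cite{rietz1974proof}, \cite{Blei:1977uq}, \cite{Krivine:1977}), or the self-contained construction established later in this paper (Theorem \ref{T2}, via Proposition \ref{P0}). Once finiteness is in hand, everything above is bookkeeping with the $\mathcal{V}_2/\mathcal{F}_2$ duality of Propositions \ref{P0} and \ref{P1}; the only remaining care is to match normalizations between the complex $\mathcal{F}_2$-norm and the $[-1,1]$-valued test functions on the right of \eqref{i1}, which is what decides whether the constant one lands on is the real or the complex Grothendieck constant for the scalar field at hand.
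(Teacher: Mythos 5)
Your proposal is correct and follows essentially the same route as the paper: the statement is read off as a reformulation of the Grothendieck inequality, by first noting $\|\langle\cdot,\cdot\rangle_H\|_{\mathcal{G}_2(B_H\times B_H)}\le 1$ and then passing through Theorem \ref{T1} and the duality of Proposition \ref{P1}, with the hard finiteness imported from the classical theorem. The only thing you add beyond the paper's own (very brief) discussion is the explicit verification that the constant is independent of $H$ via isometric transport of finite configurations, which is a correct and welcome piece of bookkeeping.
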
 
 \noindent The constant in \eqref{e16} is  \textit{the Grothendieck constant}.  Determinations of its two distinct values $\mathcal{K}_G^{\mathbb{C}}$  and $\mathcal{K}_G^{\mathbb{R}}$, which correspond to  a choice of a scalar field in the definition of $\tilde{\mathcal{V}}_2$,  are  open problems. To date, the best published estimates of  $\mathcal{K}_G^{\mathbb{C}}$ and $\mathcal{K}_G^{\mathbb{R}}$  are found in ~\cite{Haagerup:1987} and ~\cite{braverman2011grothendieck}, respectively.\\\
 \begin{remark} \label{R2}  \ \ \\\
{\bf{i.}} \ \em{The $L^2-L^2$ constraint in \eqref{e13}, which guarantees (via Cauchy-Schwarz) the existence of the integral in \eqref{e14}, can be replaced by an $L^p-L^q$ constraint ($\frac{1}{p} + \frac{1}{q} = 1, \ \  2 \geq p \leq \infty$), which produces (via H\"older) the same effect.  Then, modifying accordingly Question \ref{Q3}, we look for integral representations of $f \in l^{\infty}(X \times Y)$ under the constraint 
\begin{equation*}
\sup_{x\in X, \ y\in Y} \|g_{\omega}(x)\|_{L^p(\mu)} \|h_{\omega}(y)\|_{L^q(\mu)}  < \infty.
\end{equation*}
We consider the space $\mathcal{G}_{(p,q)}(X\times Y)$, whose definition is analogous to that of $\mathcal{G}_2(X \times Y)$, and ask whether a Grothendieck-type theorem holds here as well.  That is, do we have $$\mathcal{G}_{(p,q)}(X\times Y) = \mathcal{V}_2(X\times Y)?$$
(Cf. Theorem \ref{T1}.)}

For $p > 2$, and infinite $X$ and $Y,$  the answer is  \emph{no}.  The proof,  like that of $$\mathcal{V}_2(X \times Y)  \subsetneq l^{\infty}(X \times Y),$$ makes use of the finite Fourier matrix.  For an integer  $N > 0$, we take  $X = Y = [N]$, and  indexing space $\Omega = [N]$ with the uniform probability measure $\mu$  on it, i.e.,  $$\mu(\{\omega\}) = \frac{1}{N},  \ \ \omega \in [N].$$  We take representing functions
\begin{equation*}
g_{\omega}(j) = e^{\frac{-2\pi  i j \omega}{N}}, \ \ \ \omega \in \Omega, \  j \in X,
\end{equation*}\
\begin{equation*}
\begin{split}
 h_{\omega}(k) =  \left \{
\begin{array}{ccc}
0 & \quad  \text{if} \quad  \omega  \not= k  \\
N^{\frac{1}{q}} & \quad   \text{ if} \quad  \omega = k, 
\end{array} \right.\\\
& \ \     \omega \in \Omega, \   k \in Y,
\end{split} 
\end{equation*} 
 and define 
\begin{equation*}
f(j,k) := \int_{\Omega}g_{\omega}(j) h_{\omega}(k) \mu(d \omega) = N^{-\frac{1}{p}}e^{\frac{-2\pi  i j k}{N}}, \ \ (j,k) \in X \times Y.
\end{equation*}
Because
\begin{equation*}
\sup_{j\in X, \  k\in Y} \left( \int_{\Omega} |g_{\omega}(j)|^p \mu(d \omega) \right)^{\frac{1}{p}}   \left( \int_{\Omega} |h_{\omega}(k)|^q \mu(d \omega) \right)^{\frac{1}{q}}  = 1,\\\
\end{equation*}\\ 
 we have $\|f\|_{\mathcal{G}_{(p,q)}} \leq 1.$  By duality, via Proposition \ref{P1} and the rescaled Fourier matrix in \eqref{e17}, we also have 
\begin{equation*}
\|f\|_{\tilde{\mathcal{V}}_2} \geq  \frac{1}{N^{\frac{3}{2}}}\sum_{j,k = 1}^N f(j,k) e^{\frac{2\pi  i j k}{N}} =  N^{\frac{1}{2} - \frac{1}{p}} \ \underset{N \to \infty}{\longrightarrow} \infty, 
\end{equation*}
and thus conclude that if $X$ and $Y$ are infinite sets, then 
\begin{equation*}
\mathcal{V}_2(X\times Y) \subsetneq \mathcal{G}_{(p,q)}(X\times Y).
\end{equation*}\\\
\noindent
{\bf{ii.}} \ \em{\em{Grothendieck's \textit{th\'eor\`eme fondamental} had been stated first as a "factorization" theorem in a framework of topological tensor products~\citep{Grothendieck:1956}, and later was reformulated  as an elementary inequality in a context of Banach space theory  ~\citep{Lindenstrauss:1968}.  Since its   reformulation, which made \textit{le th\'eor\`eme fondamental} accessible to a larger public, Grothendieck's theorem has evolved, migrating further and farther into various diverse settings.  (See ~\cite{pisier2012grothendieck}.)

Recently, variants of the Grothendieck inequality have appeared in studies of algorithmic complexity in a context of theoretical computer science; e.g.,  ~\cite{Alon:2004},~\cite{Charikar:2004},~\cite{Alon:2006},  \cite{so2007approximating}, \cite{khot2011grothendieck}.  These studies began with the following inequality  ~\cite{Charikar:2004}.}   Let  $(a_{jk})_{(j,k) \in \mathbb{N}^2}$ be an infinite matrix with real-valued entries, and $a_{jj} = 0$  for all $j \in \mathbb{N}.$ Then, for each  $n \in \mathbb{N},$ and $\textbf{v}_j \in \mathbb{R}^n,$  $\|\textbf{v}_j\| \leq 1$ for $j \in [n]$,
  \begin{equation} \label{e18}
   \sum_{(j,k) \in [n]^2} a_{jk}  \langle \textbf{v}_j,\textbf{v}_k \rangle  \ \ \leq \  K_n \max_{\epsilon_j = \pm 1, \  j \in [n] }\sum_{(j,k) \in [n]^2} a_{jk} \epsilon_j \epsilon_k,
  \end{equation}
 where $\langle \cdot, \cdot \rangle$ denotes the standard dot product in $\mathbb{R}^n,$ and $K_n = \mathcal{O}(\log n).$  Notice that if the "absolute value" is applied to both sides of \eqref{e18}, then (by the Grothendieck inequality),  $K_n = \mathcal{O}(1)$.  Notice also that if we take the "decoupled" version of \eqref{e18},
 \begin{equation*}
 \sum_{(j,k) \in [n]^2} a_{jk}  \langle \textbf{v}_j,\textbf{v}_k \rangle  \ \ \leq \  K_n \max_{\epsilon_j = \pm 1, \ \delta_k = \pm 1 \  j \in [n], \ k \in [n] }\sum_{(j,k) \in [n]^2} a_{jk} \epsilon_j \delta_k,
  \end{equation*}
 then (again by the Grothendieck inequality), $K_n = \mathcal{O}(1)$. 

 That $K_n = \mathcal{O}(\log n)$ is indeed optimal in \eqref{e18} was proved in ~\cite{Alon:2006} by sharpening the following result in ~\cite{Kashin:2003}:  for every $n \in \mathbb{N},$ there exist vectors $\textbf{v}_1, \ldots, \textbf{v}_n$ in the unit ball of $\mathbb{R}^n,$ such that whenever \textit{real-valued} functions $f_1, \ldots, f_n$  in $L^{\infty}([0,1])$ verify
   \begin{equation} \label{e19}
    \langle \textbf{v}_j,\textbf{v}_k \rangle \ = \int_{[0,1]}f_j(t)f_k(t)dt, \ \ \ 1 \leq j < k \leq n
   \end{equation}
 ($dt$  = Lebesgue measure), then
 
 \begin{equation*}
 \max_{j \in [n]}\|f_j\|_{L^{\infty}} \geq C(\log n)^{\frac{1}{4}},
 \end{equation*}
where $C$ is an absolute constant.  Notably, if  \textit{complex-valued} $f_1, \ldots, f_n$  in $L^{\infty}([0,1])$ are allowed in \eqref{e19}, then the opposite phenomenon holds;  cf. \eqref{par},  \eqref{e20}, and Remark \ref{R4e}.ii.}
 \end{remark}

\section{\bf{Integral representations: the case of topological domains}}\label{s2}
\subsection{$L^2$-continuous families} \ \
  If $H$ is a Hilbert space, then the Grothendieck inequality, through an application of Proposition \ref{P1}, guarantees that there exists a regular complex measure $\mu$ on the compact Abelian group
 \begin{equation*}
  \Omega_{B_H} \times \Omega_{B_H} := \{-1,1\}^{B_H} \times  \{-1,1\}^{B_H},
 \end{equation*}
  such that 
 \begin{align} \label{e21}
 \langle \textbf{u}, \textbf{v} \rangle_H \  & = \int_{\Omega_{B_H} \times \Omega_{B_H}} r_{\textbf{u}} \otimes r_{\textbf{v}} d\mu, \ \ \ (\textbf{u}, \textbf{v}) \in B_H \times B_H.
  \end{align}
  (Cf. Proposition \ref{P0}.)   However, whereas the left side of \eqref{e21}  is continuous on $B_H \times B_H$  (separately in each variable with respect to the weak topology, and jointly with respect to the norm topology), continuity cannot be independently inferred on the right side of \eqref{e21}.  To wit, if  $\lambda \in M(\Omega_{B_H} \times \Omega_{B_H})$ is arbitrary,  then the function on  $B_H \times B_H$    defined by
 \begin{equation*}
  \hat{\lambda}(r_{\textbf{u}} \otimes r_{\textbf{v}}), \ \ \ (\textbf{u}, \textbf{v}) \in B_H \times B_H,
 \end{equation*} 
  does not \textit{a priori} ``see" any of the structures (linear or topological) in  $H$.   This is inconsequential in the case of Theorem \ref{T1}:  in the definitions of   $\mathcal{V}_2(X \times Y)$,   $\tilde{\mathcal{V}}_2(X \times Y)$,  and $\mathcal{G}_2(X \times Y)$,  the underlying domains $X$ and $Y$  are merely sets.
 
  But now suppose that in Questions \ref{Q1}, \ref{Q2}, and \ref{Q3},  $X$ and $Y$ are topological spaces, and that $f$  is continuous on $X \times Y$ -- either jointly, or separately in each variable.  Then, in the search for integral representations of $f$, we would want to focus precisely on those representing functions that "respect" the underlying topologies of their domains;  specifically, that we search only amongst representations in \eqref{e5} and \eqref{e14} that \emph{a priori} determine continuous functions on $X \times Y$ -- either jointly, or separately in each variable. 
  
 \begin{definition} \label{D2}  \  Let  $\textbf{g} = \{g_{\omega} \}_{\omega \in \Omega}$ be a family of scalar-valued functions defined on a topological space $X$ and indexed by a finite measure space $(\Omega, \mu)$.\\  

\noindent 
 {\bf{i.}} \ \  $\textbf{g}$ \ is $L^2(\mu)$-\textit{continuous}  if for each $x \in X,$
\begin{equation*}
\omega \ \mapsto g_{\omega}(x), \ \ \omega \in \Omega,
\end{equation*}
determines an element of $L^2(\Omega,\mu),$  and the resulting map 
$\textbf{g} :  X  \rightarrow  L^2(\Omega,\mu)$  defined by
\begin{equation*}
\textbf{g}(x)(\omega) = g_{\omega}(x), \ \ \omega \in \Omega, \ x \in X,
\end{equation*}
is continuous with respect to the topology on $X$ (its domain), and the norm topology on $L^2(\Omega,\mu)$ (its range).\\

\noindent
{\bf{ii.}} \ \   $\textbf{g}$ \  is  \textit{weakly}-$L^2(\mu)$-\textit{continuous}  if for each $x \in X$,
\begin{equation*}
\omega \ \mapsto g_{\omega}(x), \ \ \omega \in \Omega,
\end{equation*}
determines an element of $L^2(\Omega,\mu),$  and the resulting map 
$\textbf{g} :  X  \rightarrow  L^2(\Omega,\mu)$  defined by
\begin{equation*}
\textbf{g}(x)(\omega) = g_{\omega}(x), \ \ \omega \in \Omega, \ x \in X,
\end{equation*}
is continuous with respect to the topology on $X$ and the weak topology on $L^2(\Omega,\mu)$. 
\end{definition}  
\noindent
If in Questions \ref{Q2} and \ref{Q3},  $X$ and $Y$ are topological spaces, and  families of representing functions are $L^2$-continuous, then subsequent integral representations in \eqref{e5} and \eqref{e14} determine functions that are jointly continuous on $X \times Y.$  Similarly, if families are weakly $L^2$-continuous, then corresponding integral representations determine functions that are continuous on $X \times Y$ separately in each coordinate. 
\vskip0.7cm  
     
  \begin{definition} \label{D1}
 Let $X$ and $Y$ be topological Hausdorff spaces, and let  $f$ be a scalar-valued function on $X \times Y.$ \\\\
  \noindent {\bf{i.}}    $f \in \tilde{V}_2(X \times Y)$ if there exist a probability space $(\Omega, \mu)$ and $L^2(\mu)$-continuous families of functions indexed by it, $\emph{\textbf{g}} = \{g_{\omega} \}_{\omega \in \Omega}$ \  and \ $\emph{\textbf{h}} = \{h_{\omega} \}_{\omega \in \Omega}$ defined on $X$ and $Y$ respectively, such that
 \begin{equation} \label{e22}
 \sup_{x \in X, y \in Y}\|\emph{\textbf{g}}(x)\|_{L^{\infty}(\mu)} \|\emph{\textbf{h}}(y)\|_{L^{\infty}(\mu)} < \infty,
 \end{equation}
and
\begin{equation} \label{e23}
f(x,y) = \int_{\Omega}g_{\omega}(x)h_{\omega}(y) \mu(d \omega), \ \ \ (x,y) \in X \times Y.
\end{equation}
If \  $\emph{\textbf{g}}$ and \ $\emph{\textbf{h}}$  are weakly-$L^2(\mu)$-continuous, and \eqref{e22} and \eqref{e23} are satisfied,  then $f$ is said to be in  $ \widetilde{\mathcal{W}V}_2(X \times Y).$\\

\noindent $\|f\|_{\tilde{V}_2(X \times Y)} = \|f\|_{\tilde{V}_2}$ denotes the infimum of the left side in \eqref{e22} over all indexing spaces $(\Omega, \mu),$ and $L^2(\mu)$-continuous families of functions that represent $f$  by \eqref{e23}.  Similarly, $\|f\|_{\widetilde{\mathcal{W}V}_2(X \times Y)} = \|f\|_{\widetilde{\mathcal{W}V}_2}$ denotes  the infimum of the left side in \eqref{e22}  over all weakly-$L^2(\mu)$-continuous families that represent $f$ by \eqref{e23}.\\\\
\noindent {\bf{ii.}}  $f \in G_2(X \times Y)$ if there exist a probability space $(\Omega, \mu)$ and $L^2(\mu)$-continuous families of functions indexed by it, $\emph{\textbf{g}} =\{g_{\omega} \}_{\omega \in \Omega}$  and $\emph{\textbf{h}} =\{h_{\omega} \}_{\omega \in \Omega}$ defined on $X$ and $Y$ respectively, such that 
\begin{equation} \label{e24}
\sup_{x \in X, y \in Y}\|\emph{\textbf{g}}(x)\|_{L^2(\mu)} \|\emph{\textbf{h}}(y)\|_{L^2(\mu)} < \infty,
\end{equation}
and   
\begin{equation} \label{e25}
f(x,y) = \int_{\Omega}g_{\omega}(x)h_{\omega}(y) \mu(d \omega), \ \ \ (x,y) \in X \times Y.
\end{equation}
If $\emph{\textbf{g}}$ and \ $\emph{\textbf{h}}$  are weakly-$L^2(\mu)$-continuous, and  \eqref{e24} and \eqref{e25} are satisfied, then $f$ is said to be in  $ \mathcal{W}G_2(X \times Y).$ \\

\noindent $\|f\|_{G_2(X \times Y)} = \|f\|_{G_2}$ is the infimum of the left side of \eqref{e24} taken over all indexing spaces $(\Omega, \mu),$ and $L^2(\mu)$-continuous families of functions that represent $f$  by \eqref{e25}, and $\|f\|_{\mathcal{W}G_2(X \times Y)} = \|f\|_{\mathcal{W}G_2}$ is the infimum taken over weakly-$L^2(\mu)$-continuous families.\\\\
 \end{definition}

 In the remainder of the section, $X$ and $Y$ denote topological Hausdorff  spaces.  We denote the space of bounded scalar-valued continuous functions on $X\times Y$ by  $C_b(X \times Y)$, and the space of bounded scalar-valued functions on $X\times Y$ continuous separately in each variable by  $C_b(X,Y)$.  Both $C_b(X \times Y)$ and $C_b(X,Y)$ are Banach algebras with sup-norm and point-wise multiplication on $X \times Y.$ 
 \begin{proposition} \label{P2}
 $\|\cdot\|_{\tilde{V}_2}$,   $\|\cdot\|_{G_2}$, $\|\cdot\|_{\widetilde{\mathcal{W}V}_2}$,  and  $\|\cdot\|_{\mathcal{W}G_2}$  are norms on $\tilde{V}_2(X \times Y)$,  $G_2(X \times Y),$  $\widetilde{\mathcal{W}V}_2(X \times Y)$, and \ $\mathcal{W}G_2(X \times Y),$ respectively.  With these norms and point-wise multiplication on $X \times Y$,  the spaces $\tilde{V}_2(X \times Y)$, $G_2(X \times Y)$,  $\widetilde{\mathcal{W}V}_2(X \times Y)$, and \  $\mathcal{W}G_2(X \times Y)$ are Banach algebras, and 
  \begin{equation} \label{e26}
  \begin{split}
  \tilde{V}_2(X \times Y) &\subset  G_2(X \times Y) \subset C_b(X \times Y)\\\
   \widetilde{\mathcal{W}V}_2(X \times Y) &\subset \mathcal{W}G_2(X \times Y) \subset C_b(X,Y),
   \end{split}
 \end{equation}
 where inclusions are norm-decreasing.
 \end{proposition}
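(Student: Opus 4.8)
The plan is to reduce all four assertions --- the norm axioms, submultiplicativity, completeness, and the inclusions of \eqref{e26} --- to a handful of elementary ``gluing'' operations on integral representations, carried out in detail for $\tilde{V}_2(X\times Y)$ and then transcribed with only cosmetic changes to $G_2(X\times Y)$, $\widetilde{\mathcal{W}V}_2(X\times Y)$, and $\mathcal{W}G_2(X\times Y)$. Two normalizing remarks are used throughout. First, by rescaling a representation $f=\int_\Omega g_\omega h_\omega\,d\mu$ via $g_\omega\mapsto tg_\omega$, $h_\omega\mapsto t^{-1}h_\omega$ ($t>0$), one may always assume $\sup_x\|\mathbf{g}(x)\|_{L^\infty(\mu)}=\sup_y\|\mathbf{h}(y)\|_{L^\infty(\mu)}$, both equal to the square root of the supremum in \eqref{e22} (the cases in which one factor vanishes force $f\equiv 0$ and are trivial). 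Second, since $\mu$ is a probability measure, for any representation $|f(x,y)|=|\int_\Omega g_\omega(x)h_\omega(y)\,d\mu|\le\|\mathbf{g}(x)\|_{L^\infty(\mu)}\|\mathbf{h}(y)\|_{L^\infty(\mu)}$, and (by Cauchy--Schwarz) also $\le\|\mathbf{g}(x)\|_{L^2(\mu)}\|\mathbf{h}(y)\|_{L^2(\mu)}$; taking the infimum over representations yields $\|f\|_\infty\le\|f\|_{G_2}\le\|f\|_{\tilde{V}_2}$ (and the analogues for the weak spaces), which disposes of ``$\|f\|=0\Rightarrow f=0$'' and supplies the norm-decreasing sup-bound needed for the inclusions into $C_b$.

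For the triangle inequality I would glue two representations of $f_1,f_2$ over probability spaces $(\Omega_i,\mu_i)$, normalized so that both sups for $f_i$ equal $a_i:=(\|f_i\|_{\tilde{V}_2}+\varepsilon)^{1/2}$: set $\Omega=\Omega_1\sqcup\Omega_2$ with $\nu=p_1\mu_1\oplus p_2\mu_2$, $p_1+p_2=1$, and $G|_{\Omega_i}=\alpha_ig^i$, $H|_{\Omega_i}=\beta_ih^i$ with $p_i\alpha_i\beta_i=1$, so that $\int_\Omega GH\,d\nu=f_1+f_2$. Taking $\alpha_i=1/a_i$, $\beta_i=a_i/p_i$ and optimizing $p_i=a_i^2/(a_1^2+a_2^2)$ gives $\|\mathbf{G}(x)\|_{L^\infty(\nu)}\le 1$ and $\|\mathbf{H}(y)\|_{L^\infty(\nu)}\le a_1^2+a_2^2$, whence $\|f_1+f_2\|_{\tilde{V}_2}\le\|f_1\|_{\tilde{V}_2}+\|f_2\|_{\tilde{V}_2}$; homogeneity is the same rescaling. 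One must verify the glued family is still $L^2(\nu)$-continuous: identifying $L^2(\Omega,\nu)$ with the Hilbert direct sum $L^2(\Omega_1,p_1\mu_1)\oplus L^2(\Omega_2,p_2\mu_2)$, the map $x\mapsto\mathbf{G}(x)=(\alpha_1\mathbf{g}^1(x),\alpha_2\mathbf{g}^2(x))$ is norm-continuous because each coordinate is; for the weak spaces one uses that weak convergence in a direct sum is coordinatewise weak convergence plus a uniform norm bound, which holds here.

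Submultiplicativity is handled by the product construction: with representations over $(\Omega_i,\mu_i)$, Fubini gives $(f_1f_2)(x,y)=\int_{\Omega_1\times\Omega_2}\bigl(g^1_{\omega_1}(x)g^2_{\omega_2}(x)\bigr)\bigl(h^1_{\omega_1}(y)h^2_{\omega_2}(y)\bigr)\,d(\mu_1\times\mu_2)$, and $\|\mathbf{g}^1(x)\otimes\mathbf{g}^2(x)\|_{L^\infty(\mu_1\times\mu_2)}\le\|\mathbf{g}^1(x)\|_{L^\infty}\|\mathbf{g}^2(x)\|_{L^\infty}$ (resp., by Fubini, the exact $L^2$-identity for $G_2$), so after the balancing normalization $\|f_1f_2\|_{\tilde{V}_2}\le(\|f_1\|_{\tilde{V}_2}+\varepsilon)(\|f_2\|_{\tilde{V}_2}+\varepsilon)$. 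The continuity check here is the main technical point: $x\mapsto\mathbf{g}^1(x)\otimes\mathbf{g}^2(x)$ is norm-continuous by the estimate $\|\mathbf{u}\otimes\mathbf{v}-\mathbf{u}'\otimes\mathbf{v}'\|\le\|\mathbf{u}\|\,\|\mathbf{v}-\mathbf{v}'\|+\|\mathbf{u}-\mathbf{u}'\|\,\|\mathbf{v}'\|$ together with the uniform bounds $\sup_x\|\mathbf{g}^i(x)\|_{L^2}<\infty$; for the weak spaces, weak continuity first holds on elementary tensors $\psi_1\otimes\psi_2$ (a product of two bounded continuous scalar functions of $x$) and then extends to every $\psi\in L^2(\mu_1\times\mu_2)$ since $\sup_x\|\mathbf{G}(x)\|_{L^2}<\infty$ makes $x\mapsto\langle\mathbf{G}(x),\psi\rangle$ a uniform limit of the elementary-tensor case.

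For completeness I would invoke the ``absolutely convergent series'' criterion: given $\sum_n\|f_n\|_{\tilde{V}_2}<\infty$, normalize representations with balanced sups $a_n$ ($\sum_na_n^2<\infty$), glue over $\bigsqcup_n\Omega_n$ with $\nu=\sum_np_n\mu_n$, $p_n=a_n^2/\sum_ma_m^2$, and $G|_{\Omega_n}=a_n^{-1}g^n$, $H|_{\Omega_n}=(a_n/p_n)h^n$; interchanging sum and integral is justified by $\nu$-integrability of the integrand, and the glued families are $L^2(\nu)$-continuous (resp.\ weakly continuous) by dominated convergence over the index $n$ with summable dominating terms. Applying the same estimate to the tails gives $\|\sum_{n>N}f_n\|_{\tilde{V}_2}\to 0$, so $\sum_nf_n$ converges, proving completeness. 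Finally, for the inclusions: $\tilde{V}_2\subset G_2$ and $\widetilde{\mathcal{W}V}_2\subset\mathcal{W}G_2$ are norm-decreasing because the identical representation works and $\|\cdot\|_{L^2(\mu)}\le\|\cdot\|_{L^\infty(\mu)}$ for probability $\mu$; $G_2\subset C_b(X\times Y)$ follows by reading $f(x,y)=\int_\Omega g_\omega(x)h_\omega(y)\,d\mu$ as the value of the bounded bilinear pairing $(\mathbf{u},\mathbf{v})\mapsto\int_\Omega\mathbf{u}\mathbf{v}\,d\mu$ on $L^2(\mu)\times L^2(\mu)$ at $(\mathbf{g}(x),\mathbf{h}(y))$, with $\mathbf{g}:X\to L^2(\mu)$ and $\mathbf{h}:Y\to L^2(\mu)$ norm-continuous and the pairing jointly norm-continuous, plus $\|f\|_\infty\le\|f\|_{G_2}$ from the first paragraph; and $\mathcal{W}G_2\subset C_b(X,Y)$ follows identically, since for each fixed argument that pairing is a bounded linear functional of the other and hence continuous when the other variable carries the weak topology. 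The main obstacle throughout is the bookkeeping of the weights $p_i,\alpha_i,\beta_i$ in the gluing steps and the verification that the glued and product families remain within the prescribed (weak) $L^2$-continuity class.
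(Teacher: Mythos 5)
Your proposal is correct and follows the same route as the paper's own (sketched) proof: disjoint-union gluing for the triangle inequality, a countable version of the same gluing for completeness, product spaces for submultiplicativity, and the $L^\infty$-vs-$L^2$ comparison plus Cauchy--Schwarz for the norm-decreasing inclusions. You have simply supplied the normalization bookkeeping and the continuity verifications that the paper leaves to the reader, and these details check out.
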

 \begin{proof}[Sketch of proof] 
We only outline the arguments verifying assertions involving the first two spaces in first line in \eqref{e26};  the arguments verifying assertions involving the corresponding "weak" spaces in the second line are similar.

  In order to verify that $\tilde{V}_2(X \times Y)$  and  $G_2(X \times Y)$ are linear spaces, and that the triangle inequality holds for  $\|\cdot\|_{\tilde{V}_2}$ and $\|\cdot\|_{G_2},$  note that if  functions $f_1$ and $f_2$ on $X \times Y$  are represented, respectively,  by  $L^2(\Omega_1,\mu_1)$- and $L^2(\Omega_2,\mu_2)$-continuous families, 
  then $f_1 + f_2$ can be represented by "sums" of these families, appropriately indexed by the disjoint union of $(\Omega_1, \mu_1)$ and $(\Omega_2, \mu_2),$ properly defined and normalized.
  
  To prove completeness, verify that absolutely summable sequences in  $\tilde{V}_2(X \times Y)$  and  $G_2(X \times Y)$ are summable in their respective spaces, by applying a "countably infinite" version of the argument used to verify the triangle inequality.
  
  To prove that $\tilde{V}_2(X \times Y)$  and  $G_2(X \times Y)$ are Banach algebras under point-wise multiplication on $X \times Y,$  in the proof of the triangle inequality replace "sums" of $L^2$-continuous families by "products," and replace the disjoint union of $(\Omega_1, \mu_1)$ and $(\Omega_2, \mu_2)$ by the product $(\Omega_1 \times \Omega_2, \mu_1 \times  \mu_2).$ 
  
  The constraint in \eqref{e22} is stronger than \eqref{e24}, and thus the norm-decreasing left inclusion in \eqref{e26}.  The $L^2$-continuity of the representing families implies (via Cauchy-Schwarz) the norm-decreasing right inclusion.\\\
   \end{proof}
 
  \begin{remark} \label{R3}
 \em{\    The two left inclusions in \eqref{e26} are in fact equalities; this is the gist of the "upgraded" Grothendieck theorem that we prove here.  The right inclusions in \eqref{e26} are strict; e.g., see Remark \ref{R1}.iii.
 
 If $X$ and $Y$ are discrete, then by Remark \ref{R1}.iv,
 \begin{equation*}
 \tilde{V}_2(X \times Y) = \tilde{\mathcal{V}}_2(X \times Y).
  \end{equation*} 
  In general, if $X$ and $Y$ are topological Hausdorff spaces, then I know only the obvious inclusion 
\begin{equation*}
\tilde{V}_2(X \times Y) \subset C_b(X \times Y) \cap \tilde{\mathcal{V}}_2(X \times Y).
\end{equation*} 
  Specifically, if $X$ and $Y$ are compact Hausdorff spaces, and  $\tilde{V}(X \times Y)$ is the tilde algebra defined in~\cite{Varopoulos:1968} (and ~\cite[Ch. 11.9]{Graham&McGehee:1979}), then (by Proposition \ref{P1}, and the comment on top of p. 26 in~\cite{Varopoulos:1968}) we have $$\tilde{V}(X \times Y) = C(X \times Y) \cap \tilde{\mathcal{V}}_2(X \times Y),$$ and therefore
  \begin{equation*}
 \tilde{V}_2(X \times Y) \subset \tilde{V}(X \times Y).
 \end{equation*}
 I do not know whether the reverse inclusion holds.}
  \end{remark}

   \subsection{A "continuous" version of \textit{le th\'eor\`eme fondamental}} \ \ 
 The two pairs of integral representations in Definition \ref{D1} are equivalently feasible;  that  is,
 \begin{equation}
 G_2(X \times Y) = \tilde{V}_2(X \times Y),
 \end{equation}
 and
 \begin{equation}
 \mathcal{W}G_2(X \times Y) = \widetilde{\mathcal{W}V}_2(X \times Y),
 \end{equation}
  which (modulo "best constants") supersede Theorem \ref{T1}.  Specifically, we prove here that there exist absolute constants  $K_1 > 0$  and $K_2 > 0$, such that
  \begin{equation} \label{e29}
 \|f\|_{\tilde{V}_2} \leq K_1 \ \ \text{for all} \  \ f \in B_{G_2(X \times Y)} ,
  \end{equation} \   
 and
  \begin{equation} \label{e29a}
 \|f\|_{\widetilde{\mathcal{W}V}_2} \leq K_2 \ \ \text{for all} \  \ f \in B_{\mathcal{W}G_2(X \times Y)}.
  \end{equation} \

 To start, we observe that  $f \in B_{G_2(X \times Y)}$   means that there exist a Hilbert space $H$,  and functions
  \begin{equation} \label{zz1}
\begin{array} {ccc}
 \textbf{g}: X   \rightarrow   B_H, \\
 \textbf{h}: Y   \rightarrow   B_H,
\end{array}
\end{equation}
that are continuous with respect to the topologies on their respective domains $X$ and  $Y$, and the norm topology on  $B_H$,  such that
 \begin{equation*}
 f(x,y) = \langle \textbf{g}(x),\textbf{h}(y)\rangle_H, \ \ \ x \in X, \ y \in Y.
 \end{equation*}\\
 Similarly, $f \in B_{WG_2(X \times Y)}$ means that the functions in \eqref{zz1} are continuous with respect to the topologies on $X$ and $Y$, and the weak topology on $B_H$.   (Cf. \eqref{e27} and \eqref{e28}.)  Therefore,  \eqref{e29} is equivalent to the assertion that, when viewed as a function on $B_H \times B_H$ with the norm topology on $B_H$,  the inner product $ \langle \cdot, \cdot \rangle_H$ is in $\tilde{V}_2(B_H \times B_H)$.   Similarly, \eqref{e29a} is equivalent to the assertion that, when viewed as a function on $B_{\mathcal{W}H} \times B_{\mathcal{W}H}$, where $B_{\mathcal{W}H}$ denotes the  unit ball in $H$ with the weak topology,  the inner product  is in $\widetilde{\mathcal{W}V}_2(B_{\mathcal{W}H} \times B_{\mathcal{W}H})$.   Succinctly put, \eqref{e29} and \eqref{e29a} are equivalent to\\
 \begin{equation} \label{e30}
 \|\langle \cdot, \cdot \rangle_H\|_{\tilde{V}_2(B_H \times B_H)} \  \leq \ K_1\\\\ 
 \end{equation}\\
 and
 \begin{equation} \label{e30a}
 \|\langle \cdot, \cdot \rangle_H\|_{\widetilde{\mathcal{W}V}_2(B_{\mathcal{W}H} \times B_{\mathcal{W}H})} \ \leq \ K_2,
 \end{equation}\\
 with absolute constants $K_1 > 0 $ and $K_2 > 0$, respectively.

 To establish \eqref{e30} and \eqref{e30a}, we take an infinite-dimensional Hilbert space $H$,  and fix an orthonormal basis $A$ in it .  We then take  $H$  to be  $l^2(A)$ with the usual dot product, and construct (algorithmically) $L^2$-continuous as well as weakly-$L^2$-continuous families of functions, defined on $B_{l^2(A)}$ and  indexed by
 \begin{equation}
 (\Omega_{A}, \mathbb{P}_{A}),  \ \ \ \ \mathbb{P}_{A} = \text{\rm{Haar measure}},
 \end{equation}
  specifically intended as  integrands in integral representations of the dot product. 
  
   The main result is stated below.  Tools to prove it are collected in \S4, and the proof is given in \S5.   
  \begin{theorem} \label{T2}
  Let $A$ be an infinite set.  There exists a one-one map
  \begin{equation} \label{e32}
\Phi : B_{l^2(A)} \rightarrow L^{\infty}(\Omega_{A}, \mathbb{P}_{A}),
  \end{equation}
  
 with the following properties: \\
 
 \noindent
 {\bf{(i)}} \ there is an absolute constant $K > 0$, such that\\
  \begin{equation} \label{e31}
  \|\Phi(\textbf{\emph{x}})\|_{L^{\infty}} \leq K, \ \ \ \textbf{\emph{x}} \in B_{l^2(A)};
  \end{equation}\\
  
 \noindent
 {\bf{(ii)}} \ for all  \ ${\bf{x}} \in B_{l^2(A)}$, \\
 \begin{equation} \label{e43}
  \Phi(-\textbf{\emph{x}}) = - \Phi(\textbf{\emph{x}});
  \end{equation}\\

  \noindent
 {\bf{(iii)}}  \ for all \ $({\bf{x}}, {\bf{y}})\in  B_{l^2(A)} \times B_{l^2(A)}$,\\
  \begin{equation} \label{e20}
  \sum_{\alpha \in A} \textbf{\emph{x}}(\alpha)\overline{\textbf{\emph{y}}(\alpha)} = \int_{\Omega_{A}} \Phi(\textbf{\emph{x}})\Phi(\overline{\textbf{\emph{y}}})d\mathbb{P}_A;
  \end{equation}\\
 
 \noindent
 {\bf{(iv)}} \  $\Phi$  is continuous with the weak topology on $B_{l^2(A)}$ and the weak* topology on $L^{\infty}(\Omega_{A}, \mathbb{P}_{A})$;\\
  
 \noindent
 {\bf{(v)}} \  $\Phi$  is continuous with the $l^2$-norm on $B_{l^2(A)}$ and the $L^2(\Omega_{A}, \mathbb{P}_{A})$-norm on its range.\\
 \end{theorem}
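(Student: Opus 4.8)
The plan is to upgrade the trivial linear Parseval representation. The Rademacher system $\{r_\alpha\}_{\alpha\in A}$ is orthonormal in $L^2(\Omega_A,\mathbb P_A)$, so $U\mathbf x=\sum_\alpha \mathbf x(\alpha)r_\alpha$ gives $\langle\mathbf x,\mathbf y\rangle=\int_{\Omega_A}(U\mathbf x)\overline{(U\mathbf y)}\,d\mathbb P_A$; the issue is only that $U$ lands in $L^2$, not $L^\infty$. I would replace $U$ by a nonlinear, uniformly bounded $\Phi$ inducing the same pairing (now written $\int\Phi(\mathbf x)\Phi(\overline{\mathbf y})$), the engine being $\Lambda(2)$-uniformizability of a suitable sparse spectral set, realized through Riesz products as in \cite{Blei:1977uq,Blei:1979fk,Blei:1980fk}. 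Concretely: identify $H$ with $l^2(A)$; partition $A$ into consecutive finite blocks; use the isomorphism of $\Omega_A$ with a countable power of itself, so that $\Phi(\mathbf x)$ may be assembled from countably many mutually independent Rademacher systems, one per block; to the piece of $\mathbf x$ supported in the $m$th block assign a uniformly bounded "uniformizing" function on that block's own copy of the group, whose Fourier transform restricted to a designated sparse set of characters equals that piece of $\mathbf x$ (this is precisely where an $L^\infty$-bound of the form $\|F_{\mathbf x}\|_\infty\le C\|\mathbf x\|_2$ — $\Lambda(2)$-uniformizability — is used); and finally combine the blocks into a single function by a Riesz-type product, whose positivity and unit mass are automatic from dissociateness of Rademacher characters, and whose overall $L^\infty$ bound is finite because the block data are $\ell^2$-summable.

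Granting such a construction, (i)–(iii) should fall out as bookkeeping. For (iii), expand the two Riesz products defining $\Phi(\mathbf x)$ and $\Phi(\overline{\mathbf y})$ into Walsh series and integrate termwise, using that a nonidentity Walsh character has zero integral against $\mathbb P_A$; the block structure kills all genuinely "mixed" modes, and the surviving modes reproduce, block by block and then summed, the orthogonality relations of the uniformizing characters — this is the "iterated Parseval" of the abstract — yielding $\sum_\alpha\mathbf x(\alpha)\overline{\mathbf y(\alpha)}$. Property (i) is the $L^\infty$ bound built into the construction; (ii) holds because the uniformizing pieces, and hence $\Phi$, can be arranged to be odd in $\mathbf x$ (only odd-degree Walsh terms appear), with a complex twist that simultaneously forces $\Phi(\overline{\mathbf y})\ne\overline{\Phi(\mathbf y)}$, as the statement anticipates; and injectivity follows by taking $\mathbf y=e_\alpha$ in (iii), which recovers each coordinate as $\mathbf x(\alpha)=\int_{\Omega_A}\Phi(\mathbf x)\Phi(e_\alpha)\,d\mathbb P_A$.

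For the two continuity statements I would work with the coordinate representation of the Walsh coefficients of $\Phi(\mathbf x)$. Since each block carries only finitely many coordinates and the uniformizing assignment can be taken to depend polynomially on finitely many coordinates at a time, every map $\mathbf x\mapsto\widehat{\Phi(\mathbf x)}(r_w)$ is a polynomial in finitely many coordinates of $\mathbf x$, hence continuous on $B_{l^2(A)}$ for the weak topology (weak convergence forces coordinatewise convergence, and coordinates stay bounded on the ball). Combined with the uniform bound (i) and the density of Walsh polynomials in $L^1(\Omega_A,\mathbb P_A)$, an $\varepsilon/3$ argument gives (iv). For (v), split $\Phi(\mathbf x)$ into a "head" built from finitely many blocks — which depends $L^2$-continuously, indeed $L^q$-continuously for every $q<\infty$, on $\mathbf x$ — and a "tail" whose $L^2$-norm is bounded, uniformly over $B_{l^2(A)}$, by a quantity that is small once enough blocks are in the head (because the $\ell^2$ mass of $\mathbf x$ beyond the first $M$ blocks is small); another $\varepsilon/3$ argument then yields $l^2$-to-$L^2$ continuity.

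The hard part is the construction itself: producing one family $\{\Phi(\mathbf x)\}_{\mathbf x\in B_{l^2(A)}}$ that is \emph{simultaneously} uniformly $L^\infty$-bounded, induces the Parseval pairing \emph{exactly} (every spurious Walsh coefficient must vanish identically, not merely approximately), and varies continuously in both prescribed senses. These demands pull against one another: the $L^\infty$ bound forces genuine nonlinearity, which threatens continuity; the weak-to-weak* continuity (iv) is especially restrictive because $\|\cdot\|_2$ is only weakly lower semicontinuous, so the construction is \emph{not} permitted to use $\|\mathbf x\|_2$ as a parameter — in particular one cannot simply define $\Phi$ on the unit sphere and extend it by degree-one homogeneity, as that would destroy (iv) — it must be built coordinatewise yet stay bounded; and exact cancellation of all off-diagonal modes competes with retaining enough freedom to stay bounded. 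Threading this needle is exactly what the Riesz-product / $\Lambda(2)$-uniformizability machinery is for, and arranging it to additionally deliver (iv) and (v) is the new content beyond the classical Grothendieck inequality of Proposition \ref{P0}.
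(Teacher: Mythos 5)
Your overall strategy --- replace the linear $U$ by a nonlinear, uniformly bounded map built from imaginary Riesz products, keep only odd-order Walsh characters to get \eqref{e43}, and let a complex twist break conjugation-equivariance --- is the right one, and your observation that weak continuity forbids normalizing by $\|\mathbf{x}\|_2$ is exactly the point of Remark \ref{notw}. But the architecture you propose has a gap I do not think can be repaired as stated. If you partition $A$ into finite blocks $B_m$, uniformize each piece $\mathbf{x}_m:=\mathbf{x}|_{B_m}$ by a bounded $F_m$ on an independent copy of the group, and then combine the blocks by a Riesz-type \emph{product}, independence forces $\int\Phi(\mathbf{x})\Phi(\mathbf{y})\,d\mathbb{P}_A=\prod_m\int(\cdots)_m$, a product over blocks --- and no product of block pairings equals the sum $\sum_m\langle\mathbf{x}_m,\mathbf{y}_m\rangle$. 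Replacing the product by a sum $\sum_m F_m$ restores additivity but destroys the $L^{\infty}$ bound, since $\sum_m\|F_m\|_{\infty}\lesssim\sum_m\|\mathbf{x}_m\|_2$ requires $\ell^1$-summability of the block norms, which fails for general $\mathbf{x}\in B_{l^2}$. Relatedly, your demand that ``every spurious Walsh coefficient must vanish identically'' is not achievable by uniformization alone and is not what the paper does.

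The missing idea is \emph{recursive cancellation} rather than blockwise exactness. The paper takes one global imaginary Riesz product: $Q_F(\mathbf{x})=\mathrm{Im}\prod_{\alpha\in F}(r_0+i\,\mathbf{x}(\alpha)r_{\alpha})$ satisfies $\|Q_F(\mathbf{x})\|_{L^{\infty}}\le e^{\|\mathbf{x}\|_2^2/2}$, $\widehat{Q_F(\mathbf{x})}(r_{\alpha})=\mathbf{x}(\alpha)$, and its spurious coefficients (on odd characters of order $\ge 3$) have $\ell^2$-norm at most $\delta:=\sqrt{\sinh 1-1}<1$ when $\|\mathbf{x}\|_2\le 1$. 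These spurious coefficients are not killed; they are transplanted, via a bijection from the higher-order odd characters over $A_1$ onto a fresh copy $A_2$ of $A$, to become the input $\mathbf{x}^{(2)}$ of a second stage, and so on, with $\|\mathbf{x}^{(j)}\|_2\le\delta^{j-1}$. The map is the \emph{sum} $\Phi(\mathbf{x})=\sum_j(i\delta)^{j-1}Q_{A_j}(\mathbf{x}^{(j)}/\delta^{j-1})$; the factor $i^{j-1}$ turns $\int\Phi(\mathbf{x})\Phi(\mathbf{y})\,d\mathbb{P}_A$ into an alternating telescoping series in which the $j$th stage's error term is exactly cancelled by the $(j+1)$st stage's Rademacher term, yielding \eqref{e20} exactly; boundedness is the geometric series $\sqrt{e}/(1-\delta)$. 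Both continuity statements then follow by induction on the stages from the $(l^2\to L^2)$-Lipschitz estimate and the weak-to-weak* continuity of $Q_A$ on bounded sets (Lemmas \ref{L3} and \ref{L3W}) --- not from finite-coordinate dependence of the Walsh coefficients, which fails here: for $w\in W_{A_j}$ with $j\ge 2$, $\widehat{\Phi(\mathbf{x})}(w)$ depends on all coordinates of $\mathbf{x}$.
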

 
 \skip 1.5 cm
  
   \begin{corollary}  \label{C1}\ \ \\\
   (1)   If $H$ is a Hilbert space with inner product $\langle \cdot, \cdot \rangle_H,$ then\\
 \begin{equation} \label{G2-1}
 \|\langle \cdot, \cdot \rangle_H\|_{\tilde{V}_2(B_H \times B_H)}  := K_H  \leq  K^2 ,
  \end{equation}
   \begin{equation} \label{G2-11}
 \|\langle \cdot, \cdot \rangle_H\|_{\widetilde{\mathcal{W}V}_2(B_{\mathcal{W}H} \times B_{\mathcal{W}H})}  := K_{\mathcal{W}H}  \leq  K^2 ,
  \end{equation}\\
  where $K$ is the absolute constant in \eqref{e31}.\\
  
  \noindent
  (2)  If $X$ and  $Y$  are topological Hausdorff spaces, then
 \begin{equation} \label{G2-2}
 G_2(X \times Y) = \tilde{V}_2(X \times Y),
 \end{equation}
 and
 \begin{equation} \label{G2-22}
 \mathcal{W}G_2(X \times Y) = \widetilde{\mathcal{W}V}_2(X \times Y).
 \end{equation}
   \end{corollary}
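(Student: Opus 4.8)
The plan is to deduce everything from Theorem \ref{T2} together with Proposition \ref{P2}. I would first prove part (1). Fix a Hilbert space $H$; choosing an infinite set $A$ of cardinality at least $\max(\dim H,\aleph_0)$ and an isometric linear embedding $\iota:H\hookrightarrow l^2(A)$, I reduce to $H=l^2(A)$ with $A$ infinite, since $\iota$ carries $B_H$ into $B_{l^2(A)}$, intertwines the inner products, and is continuous for both the norm and the weak topologies. Let $\Phi$ be the map of Theorem \ref{T2}, and, indexed by the probability space $(\Omega_A,\mathbb{P}_A)$, set $g_\omega(\mathbf{u})=\Phi(\mathbf{u})(\omega)$ and $h_\omega(\mathbf{v})=\Phi(\overline{\mathbf{v}})(\omega)$, with associated maps $\mathbf{g}(\mathbf{u}),\mathbf{h}(\mathbf{v})\in L^\infty(\Omega_A,\mathbb{P}_A)$. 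By \eqref{e20} these represent the inner product, $\langle\mathbf{u},\mathbf{v}\rangle_H=\int_{\Omega_A}g_\omega(\mathbf{u})h_\omega(\mathbf{v})\,d\mathbb{P}_A$; by \eqref{e31}, $\|\mathbf{g}(\mathbf{u})\|_{L^\infty}\le K$ and $\|\mathbf{h}(\mathbf{v})\|_{L^\infty}=\|\Phi(\overline{\mathbf{v}})\|_{L^\infty}\le K$, so the supremum in \eqref{e22} is at most $K^2$. The map $\mathbf{u}\mapsto\mathbf{g}(\mathbf{u})=\Phi(\mathbf{u})$ is $L^2$-continuous by property (v), and $\mathbf{v}\mapsto\mathbf{h}(\mathbf{v})=\Phi(\overline{\mathbf{v}})$ is $L^2$-continuous because coordinate-wise conjugation is a norm isometry of $l^2(A)$; this yields $\langle\cdot,\cdot\rangle_H\in\tilde{V}_2(B_H\times B_H)$ and \eqref{G2-1}. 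For \eqref{G2-11}, property (iv) says $\Phi$ is continuous from the weak topology on $B_{l^2(A)}$ to the weak* topology on $L^\infty(\Omega_A,\mathbb{P}_A)$; since $\mathbb{P}_A$ is a probability measure, $L^2\subset L^1$, so testing against $L^2$-functions shows weak* convergence in $L^\infty$ forces weak convergence in $L^2$. Hence $\Phi$ is weakly-$L^2$-continuous, and since conjugation is also weakly continuous on $l^2(A)$, both $\mathbf{g}$ and $\mathbf{h}$ are weakly-$L^2$-continuous, giving $\langle\cdot,\cdot\rangle_H\in\widetilde{\mathcal{W}V}_2(B_{\mathcal{W}H}\times B_{\mathcal{W}H})$ with the same bound $K^2$.

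For part (2), the inclusions $\tilde{V}_2\subset G_2$ and $\widetilde{\mathcal{W}V}_2\subset\mathcal{W}G_2$ are already contained in Proposition \ref{P2}, so only the reverse inclusions require proof; I treat $G_2\subset\tilde{V}_2$, the weak case being identical with ``weak'' inserted throughout. Let $f\in G_2(X\times Y)$ and put $c:=\|f\|_{G_2}$; we may assume $c>0$. Then $f/c\in B_{G_2(X\times Y)}$, so, as recorded in the discussion after \eqref{zz1}, there are a Hilbert space $H$ and maps $\mathbf{g}:X\to B_H$, $\mathbf{h}:Y\to B_H$ continuous for the given topologies on $X,Y$ and the norm topology on $B_H$, with $f(x,y)/c=\langle\mathbf{g}(x),\mathbf{h}(y)\rangle_H$. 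Apply part (1): its explicit construction provides a probability space $(\Omega,\mu)$ and $L^2(\mu)$-continuous families $\{G_\omega\},\{H_\omega\}$ on $B_H$ with $\langle\mathbf{u},\mathbf{v}\rangle_H=\int_\Omega G_\omega(\mathbf{u})H_\omega(\mathbf{v})\,d\mu$ and $\sup_{\mathbf{u},\mathbf{v}\in B_H}\|\mathbf{G}(\mathbf{u})\|_{L^\infty}\|\mathbf{H}(\mathbf{v})\|_{L^\infty}\le K^2$. Composing, set $\tilde{g}_\omega(x)=c\,G_\omega(\mathbf{g}(x))$ and $\tilde{h}_\omega(y)=H_\omega(\mathbf{h}(y))$; then $f(x,y)=\int_\Omega\tilde{g}_\omega(x)\tilde{h}_\omega(y)\,d\mu$, the families $\{\tilde{g}_\omega\}$ and $\{\tilde{h}_\omega\}$ are $L^2(\mu)$-continuous as compositions of the norm-continuous maps $\mathbf{g},\mathbf{h}$ with the $L^2(\mu)$-continuous families $\{G_\omega\},\{H_\omega\}$, and, using $\mathbf{g}(x),\mathbf{h}(y)\in B_H$, the supremum in \eqref{e22} for this representation is at most $cK^2=K^2\|f\|_{G_2}$. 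Thus $f\in\tilde{V}_2(X\times Y)$ with $\|f\|_{\tilde{V}_2}\le K^2\|f\|_{G_2}$, proving \eqref{G2-2}; the identical argument with the weak topology on $B_H$ and weakly-$L^2$-continuous families (using the weak-topology part of (1), and that scalar multiplication is weakly continuous) gives \eqref{G2-22}.

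The genuinely hard work is already done in Theorem \ref{T2}; within this corollary the only points needing care are the elementary topological bookkeeping — that coordinate-wise conjugation is both norm- and weakly continuous on $l^2(A)$, that weak* convergence in $L^\infty$ against $L^1$-test functions specializes to weak convergence in $L^2$ because the measure is finite, and that composing an $X\to B_H$ continuous map with an $L^2(\mu)$-continuous (respectively weakly-$L^2(\mu)$-continuous) family again produces a family of the same type — together with the routine normalization reducing a member of $B_{G_2}$ to an inner-product pairing of $B_H$-valued maps. I do not expect any of these to present a real obstacle.
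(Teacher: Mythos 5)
Your proposal is correct and follows essentially the same route as the paper: both parts rest on the map $\Phi$ of Theorem \ref{T2}, with part (1) reading off \eqref{e20}, \eqref{e31} and properties (iv)--(v), and part (2) realizing $f \in G_2$ as an inner-product pairing of $B_H$-valued continuous maps and composing with $\Phi$ (the paper writes $\Phi\circ\mathbf{g}$ and $\Phi\circ\bar{\mathbf{h}}$). The extra bookkeeping you supply --- the isometric embedding into $l^2(A)$, the continuity of coordinate-wise conjugation, and the passage from weak* convergence in $L^{\infty}$ to weak convergence in $L^2$ over a probability space --- is sound and is left implicit in the paper.
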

   \begin{proof} \ \ \\\\
   (1) \ To verify \eqref{G2-1},  we take the indexing space $(\Omega_A, \mathbb{P}_A)$, and then, to produce an integral representation of $\langle \cdot, \cdot \rangle_H$, we let $\Phi = \{\Phi(\cdot)(\omega)\}_{\omega \in \Omega_A}$  be the $L^2(\Omega_A, \mathbb{P}_A)$-continuous family in each coordinate.   Here we view $\langle \cdot, \cdot \rangle_H$ as a continuous function on $B_H \times B_H.$
   
 Similarly,  \eqref{G2-11} is verified by taking a weakly continuous $\Phi = \{\Phi(\cdot)(\omega)\}_{\omega \in \Omega_A}$  in a representation of $\langle \cdot, \cdot \rangle_H$,  viewed as a function on $B_{\mathcal{W}H} \times B_{\mathcal{W}H}$ that is separately continuous in each variable.\\
 
\noindent
(2)  \ To prove \eqref{G2-2}, suppose $f$ is an arbitrary scalar-valued function on $X \times Y$, and $\|f\|_{G_2(X \times Y)}  \leq 1$.  Then, by definition, there exist a probability space $(\Omega, \mu)$, and $L^2(\mu)$-continuous families of functions  $\textbf{g} =\{g_{\omega} \}_{\omega \in \Omega}$  and $\textbf{h} =\{h_{\omega} \}_{\omega \in \Omega}$ on $X$ and $Y$,  respectively, such that 
\begin{equation*}
\sup_{x \in X}\|\textbf{g}(x)\|_{L^2}  \leq 1, \ \ \sup_{ y \in Y} \|\textbf{h}(y)\|_{L^2}  \leq 1, 
\end{equation*}
and
\begin{equation*}
f(x,y) = \int_{\Omega}g_{\omega}(x)h_{\omega}(y) \mu(d \omega) \ = \  \langle \textbf{g}(x),\overline{\textbf{h}(y)}\rangle_H,  \ \ (x,y) \in X \times Y,
\end{equation*}
where $H = L^2(\Omega, \mu)$.   Fix an orthonormal basis  $A$ for $H$, and let $\Phi$ be the map in \eqref{e32}.  Then, the $(l^2 \rightarrow L^2)$-continuity of $\Phi$ implies that $\Phi \circ \textbf{g}$ and $\Phi \circ \bar{\textbf{h}}$ are $L^2(\Omega_A, \mathbb{P}_A)$-continuous families that represent $f$, with the estimate 
\begin{equation*}
 \sup_{x \in X,   y \in Y}\|(\Phi \circ \textbf{g})(x)\|_{L^{\infty}(\Omega_A, \mathbb{P}_A)} \  \|(\Phi \circ \bar{\textbf{h}})(y)\|_{L^{\infty}(\Omega_A, \mathbb{P}_A)} \ \leq K^2,
 \end{equation*}
 which verifies $\|f\|_{\tilde{V}_2(X \times Y)} \leq K^2$.
 
  The equality in \eqref{G2-22} is proved similarly, via the weak continuity of  $\Phi$.
   \end{proof} \  \\
     
  \begin{remark} \label {R4e}\ \ \\\
 \em{\textbf{i.} 
  \ To obtain an  $L^{\infty}(\Omega_A, \mathbb{P}_A)$-valued map analogous to $\Phi$,  defined on the full Hilbert space $l^2(A)$,  we can take \\
 \begin{equation} \label{e31e}
 {\bf{x}}   \ \mapsto \ \|{\bf{x}}\|_2 \ \Phi\big({\bf{x}}/\|{\bf{x}}\|_2\big), \ \ \ {\bf{x}}  \in l^2(A),  \ \ {\bf{x}} \neq 0,
 \end{equation}\\
 which is ($l^2 \rightarrow L^2$)-continuous, but, notably,  \emph{not}  continuous with respect to the weak topologies on $l^2(A)$ and  $L^2(\Omega_A, \mathbb{P}_A)$;  see Remark \ref{notw}.  On the upside, the map in \eqref{e31e} is homogeneous on $l^2_{\mathbb{R}}(A)$. 
 
 A map similar to \eqref{e31e}, with the same properties but sharper bounds, can be obtained also by slightly altering the construction of $\Phi$ in the proof of Theorem \ref{T2}; see Corollary \ref{full}.
 \\\\
 \textbf{ii.} \   It follows from \eqref{e20} and the result in~\cite{Kashin:2003} (stated in Remark \ref{R2}.ii) that  $\Phi$ does not commute with complex conjugation.  Specifically,  the image of $B_{l^2_{\mathbb{R}}(A)}$ under $\Phi$  contains elements in $L^{\infty}(\Omega_{A}, \mathbb{P}_{A})$ with non-zero imaginary parts.\\\\
\textbf{iii.} \ We have
\begin{equation} \label{e33}
\mathcal{K}_{G} \leq \mathcal{K}_{CG} := \sup\{K_H: \text{Hilbert space} \ H\} \leq K^2,
\end{equation}
as well as 
\begin{equation} \label{e33e}
\mathcal{K}_{G} \leq \mathcal{K}_{\mathcal{W}CG} := \sup\{K_{\mathcal{W}H}: \text{Hilbert space} \ H\} \leq K^2,
\end{equation}\\
where $\mathcal{K}_{G}$ is the Grothendieck constant defined in \eqref{e16}, $K_H$ and  $K_{\mathcal{W}H}$ are the constants in \eqref{G2-1} and \eqref{G2-11}, and  $K$ is the constant in \eqref{e31}, 
with the usual distinction between the real and complex cases.  The best estimates to date of $\mathcal{K}_{G}$ can be found in ~\cite{braverman2011grothendieck} and ~\cite{Haagerup:1987}.  In \S \ref{sf}, we note that   $K^2$ on the right sides of \eqref{e33} and \eqref{e33e} can in fact be replaced by a smaller bound, which is strictly greater than $\mathcal{K}_{G}$.  But, otherwise, I do not know which of the inequalities in \eqref{e33} and \eqref{e33e} are strict.}
\end{remark}

\section{\bf{Tools}}\label{s3}
Proofs of Theorem \ref{T2} and its extensions use harmonic analysis on dyadic groups. 
\subsection{The framework} \ \ Let $A$ be a set, and consider the product $$\Omega_A := \{-1,1\}^A,$$  equipped with the usual product topology.  The Borel field $\mathscr{B}_A$ in $\Omega_A$ is generated by the cylindrical sets
\begin{equation*}
C(F;\eta) = \big \{\omega \in \Omega_A: \omega(\alpha) = \eta(\alpha), \ \alpha \in F \big \}, \ \  \text{finite} \ F \subset A,\  \eta \in \Omega_F,
\end{equation*}
whence the uniform probability measure $\mathbb{P}_A$ on $(\Omega_A, \mathscr{B}_A)$ is determined by
\begin{equation*}
\mathbb{P}_A(C(F;\eta)) = \big(\frac{1}{2}\big)^{|F|}, \ \  \text{finite} \ F \subset A,
\end{equation*}\\
where $|F|$ denotes the cardinality of $F.$  Multiplication in $\Omega_A$ is defined by 

\begin{equation} \label{e34}
(\omega \cdot \omega')(\alpha) = \omega(\alpha) \omega'(\alpha), \ \ \ \omega \in \Omega_A, \  \omega' \in \Omega_A, \ \alpha \in A.
\end{equation}\\
We thus have a compact Abelian group $\Omega_A$ with normalized Haar measure $\mathbb{P}_A$.

\subsection{Rademacher and Walsh characters}  \ \ Let $\widehat{\Omega}_A$ denote the group of characters of $\Omega_A,$  wherein group operation is point-wise multiplication of functions.  Let $r_0$ denote the character that is identically $1$ on $\Omega_A$ (multiplicative identity in $\widehat{\Omega}_A$).  For   $\alpha \in A$, let $r_{\alpha}$ be the $\alpha^{th}$ coordinate function on $\Omega_A,$
\begin{equation*}
r_{\alpha}(\omega) = \omega(\alpha), \ \ \omega \in \Omega_A.
\end{equation*}
The set $R_A := \{r_{\alpha}\}_{\alpha \in A}$ is a subset of $\widehat{\Omega}_A,$ and is independent in the following two senses:

\noindent (i)  (\textit{Statistical independence}).  $R_A$ is a system of identically distributed independent random variables on the probability space $(\Omega_A, \mathscr{B}_A, \mathbb{P}_A).$

\noindent (ii)  (\textit{Algebraic independence}) $R_A$ is algebraically independent in $\widehat{\Omega}_A:$ \ for  $F \subset A \cup \{0\},$\\
 $$\prod_{\alpha \in F} r_{\alpha} = r_0  \ \  \Rightarrow \ \ F = \{0\}.$$\\

 \noindent  The system  $R_A$ generates the full character group $\widehat{\Omega}_A$.  Specifically, let $W_{A,0} = \{r_0\},$ and 
\begin{equation*}
W_{A,k} = \big\{\prod_{\alpha \in F}r_{\alpha}: F\subset A, \ |F| = k\big\}, \ \ k \in \mathbb{N}.
\end{equation*}
Then,
\begin{equation*}
W_A := \bigcup_{k=0}^{\infty} W_{A,k} = \widehat{\Omega}_A.
\end{equation*}
We refer to members of $W_A$ as \emph{Walsh characters}, to members of $R_A$ (=  $W_{A,1}$) as \emph{Rademacher characters}, and to members of $W_{A,k},$   $ k \geq 1,$ as \emph{Walsh characters of order $k.$}

\subsection{Walsh series}  \ \ At the very outset, \textit{Walsh series} \ \ 
\begin{equation*}
S  \sim  \sum_{w \in W_A} a_w w, \ \ \ (a_w)_{w \in W_A} \in \mathbb{C}^{W_A},
\end{equation*}
are merely formal objects.  We write $\hat{S}(w) = a_w,$ and 
\begin{equation*}
\text{spect}(S) := \{w \in W_A: \hat{S}(w) \ne 0 \}
\end{equation*}
(spectrum of $S$).  Let $M(\Omega_A)$ denote the space of regular complex measures on $(\Omega_A, \mathscr{B}_A)$ with the total variation norm.  The \textit{Walsh transform} of  $\mu \in M(\Omega_A)$  is
\begin{equation*}
\hat{\mu}(w) = \int_{\Omega_A} w(\omega) \mu(d\omega), \ \ \ w \in W_A,
\end{equation*}
and its Walsh series is
\begin{equation*}
S[\mu] \sim \sum_{w \in W_A} \hat{\mu}(w) w.
\end{equation*}
If $f \in L^1(\Omega_A, \mathbb{P}_A),$ then $\hat{f}  = \widehat{fd\mathbb{P}_A}.$\\\

 If $f$ is a Walsh polynomial,
\begin{equation*}
f = \sum_{w \in F} a_w w, \ \ \ F \subset A,  \ \  |F| < \infty, \ \ (a_w)_{w \in F} \in \mathbb{C}^F,
\end{equation*}
then $\text{spect}(f) \subset F,$ and $\hat{f}(w) = a_w$ for $w \in F.$  Moreover (Parseval's formula),
\begin{equation} \label{Par}
\int_{\Omega_A}f d \mu = \sum_{w \in F} \hat{f}(w) \hat{\mu}(w), \ \ \ \mu \in M(\Omega_A).
\end{equation}
Therefore, because Walsh polynomials are norm-dense in $C(\Omega_A)$, if $\mu \in M(\Omega_A)$ and $\hat{\mu} = 0$ on $\widehat{\Omega}_A,$ then $\mu = 0.$  

If $\mu \in M(\Omega_A),$ and  $$\sum_{w \in W_A} |\hat{\mu}(w)|^2 < \infty,$$ then  $\mu \ll \mathbb{P}_A,$ and $$\frac{d\mu}{d \mathbb{P}_A} \in L^2(\Omega_A, \mathbb{P}_A).$$  In particular, $W_A$ is a complete orthonormal system in $L^2(\Omega_A, \mathbb{P}_A),$ and (Plancherel's formula)
\begin{equation*}
\int_{\Omega_A}|f|^2 d \mathbb{P}_A = \sum_{w \in W_A} |\hat{f}(w)|^2, \ \ \ f \in L^2(\Omega_A, \mathbb{P}_A).
\end{equation*}

\subsection{Riesz products} \label{SS4} \ \  We define the Riesz product\\
\begin{equation*}
\mathfrak{R}_F(\textbf{x}) \sim \prod_{\alpha \in F} \big(r_0 + \textbf{x}(\alpha) r_{\alpha} \big), \ \ F \subset A, \ \ \textbf{x} \in \mathbb{C}^F,
\end{equation*}
to be the Walsh series
\begin{equation} \label{e35}
\mathfrak{R}_F(\textbf{x}) \sim \sum_{k=0}^{\infty} \left(\sum_{w \in W_{F,k}, \ w = r_{\alpha_1} \cdots r_{\alpha_k}} \textbf{x}(\alpha_1) \cdots  \textbf{x}(\alpha_k) r_{\alpha_1} \cdots r_{\alpha_k}\right).
\end{equation}\\
  We detail below two classical scenarios (cf. \cite{Riesz:1918}, ~\cite{Zygmund:1947}), which play key roles in this work.\\

\noindent \textbf{i.} \  \textit{If \ $\emph{\bf{x}} \in l^{\infty}_{\mathbb{R}}(F)$ ( $= \mathbb{R}^F$ with the supremum norm) and $\|\emph{\bf{x}}\|_{\infty} \leq 1$ (i.e., ${\bf{x}} \in B_{l^{\infty}_{\mathbb{R}}(F)}$), then $\mathfrak{R}_F(\emph{\bf{x}})$ represents a probability measure on $(\Omega_A, \mathscr{B}_A).$} \\[3mm]
\noindent
To verify this, let $f$ be a Walsh polynomial with $\text{spect}(f) \subset W_E,$  where $E \subset A$ is finite.  Then (by Parseval's formula),
\begin{equation} \label{e36}
\sum_{w \in W_A} \hat{f}(w) \widehat{\mathfrak{R}_F(\textbf{x})}(w) = \int_{\Omega_A} f(\omega) \ \mathfrak{R}_{E \cap F}(\textbf{x})(\omega) \ \mathbb{P}_A(d \omega).
\end{equation}
Note that $\mathfrak{R}_{E \cap F}(\emph{\bf{x}})$ is a positive Walsh polynomial, and therefore
\begin{equation*}
\|\mathfrak{R}_{E  \cap F}\|_{L^1} = \int_{\Omega_A} \mathfrak{R}_{E \cap F} (\emph{\bf{x}}) \ d \mathbb{P}_A  = 1.
\end{equation*}
Then, from \eqref{e36},
\begin{equation*}
\big |\sum_{w \in W_A} \hat{f}(w) \widehat{\mathfrak{R}_F(\textbf{x})}(w) \big | \leq \|f\|_{L^{\infty}}.
\end{equation*}\\
Therefore, by the Riesz representation theorem and the density of Walsh polynomials in $C(\Omega_A)$,  \eqref{e35} is the Walsh series of a probability measure $\mathfrak{R}_F(\textbf{x})$  on $(\Omega_A, \mathscr{B}_A).$ \\

Let 
\begin{equation} \label{e39a}
\begin{split}
P_F({\bf{x}}) &= \mathfrak{R}_F\big(\frac{{\bf{x}}}{2}\big) - \mathfrak{R}_F\big(\frac{-{\bf{x}}}{2}\big) \\\\
&\sim   \sum_{k=0}^{\infty}\frac{1}{4^k} \bigg(\sum_{{w \in W_{F,2k+1}} \atop{w = r_{\alpha_1} \cdots r_{\alpha_{2k+1}}}}{\bf{x}}(\alpha_1) \cdots  {\bf{x}}(\alpha_{2k+1}) r_{\alpha_1} \cdots r_{\alpha_{2k+1}}\bigg).
\end{split}
\end{equation}\\
\begin{lemma} \label{L2a}
Let ${\bf{x}} \in B_{l^{\infty}_{\mathbb{R}}(F)}$.   Then,
\begin{equation} \label{e39e}
 \emph{spect} \big( P_F({\bf{x}}) \big)    \subset \bigcup_{k=0}^{\infty} W_{F, 2k+1} \ ;
 \end{equation}\\ 
\begin{equation} \label{e40aa}
\|P_F({\bf{x}})\|_{M(\Omega_A)}   \leq 2; 
\end{equation}\\
\begin{equation} \label{e41aa}
 \widehat{P_F({\bf{x}})}(r_{\alpha})   =   {\bf{x}}(\alpha), \ \ \alpha \in F, \ 
 \end{equation}\\
and
 \begin{equation} \label{e43aa}
 \|\widehat{P_F({\bf{x}})}|_{R_F^c} \|_{\infty} \leq \frac{1}{4},
 \end{equation} \\
where $\widehat{P_F({\bf{x}})}|_{R_F^c}$ is the restriction of $\widehat{P_F({\bf{x}})}$ to  $R_F^c := W_F \setminus R_F$.\\\\
\end{lemma}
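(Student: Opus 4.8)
The plan is to derive all four conclusions directly from the first scenario of \S\ref{SS4} — namely, that $\mathfrak{R}_F$ applied to an element of $B_{l^\infty_{\mathbb R}(F)}$ represents a probability measure — together with the explicit Walsh expansion \eqref{e35}; no further machinery is needed.

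I would begin with \eqref{e40aa}. Since $\|{\bf x}\|_\infty \le 1$, both $\tfrac{1}{2}{\bf x}$ and $-\tfrac{1}{2}{\bf x}$ lie in $B_{l^\infty_{\mathbb R}(F)}$, so by scenario \textbf{i} of \S\ref{SS4} the Walsh series $\mathfrak{R}_F(\tfrac{{\bf x}}{2})$ and $\mathfrak{R}_F(\tfrac{-{\bf x}}{2})$ are probability measures on $(\Omega_A,\mathscr{B}_A)$. Hence $P_F({\bf x}) = \mathfrak{R}_F(\tfrac{{\bf x}}{2}) - \mathfrak{R}_F(\tfrac{-{\bf x}}{2}) \in M(\Omega_A)$, and the triangle inequality for $\|\cdot\|_{M(\Omega_A)}$ gives $\|P_F({\bf x})\|_{M(\Omega_A)} \le 1 + 1 = 2$. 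Next I would compute the Walsh transform: by the very definition of the Riesz product via \eqref{e35}, for ${\bf y} \in B_{l^\infty_{\mathbb R}(F)}$ one has $\widehat{\mathfrak{R}_F({\bf y})}(w) = {\bf y}(\alpha_1)\cdots{\bf y}(\alpha_k)$ when $w = r_{\alpha_1}\cdots r_{\alpha_k} \in W_{F,k}$, and $\widehat{\mathfrak{R}_F({\bf y})}(w) = 0$ for every $w \in W_A \setminus W_F$. Taking ${\bf y} = \tfrac{{\bf x}}{2}$ and ${\bf y} = -\tfrac{{\bf x}}{2}$, subtracting, and using that $1 - (-1)^k$ is $0$ for even $k$ and $2$ for odd $k$, I obtain, for $w = r_{\alpha_1}\cdots r_{\alpha_k} \in W_{F,k}$,
\begin{equation*}
\widehat{P_F({\bf x})}(w) =
\begin{cases}
4^{-(k-1)/2}\,{\bf x}(\alpha_1)\cdots{\bf x}(\alpha_k), & k \ \text{odd},\\
0, & k \ \text{even},
\end{cases}
\end{equation*}
and $\widehat{P_F({\bf x})}(w) = 0$ for $w \in W_A \setminus W_F$. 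This is exactly the series displayed in \eqref{e39a}, and it shows $\text{spect}\big(P_F({\bf x})\big) \subset \bigcup_{k=0}^\infty W_{F,2k+1}$, which is \eqref{e39e}; specializing to $k=1$ gives $\widehat{P_F({\bf x})}(r_\alpha) = {\bf x}(\alpha)$ for $\alpha \in F$, which is \eqref{e41aa}. Finally, if $w \in R_F^c = W_F \setminus R_F$, then either $\widehat{P_F({\bf x})}(w) = 0$ or $w \in W_{F,2k+1}$ with $k \ge 1$, in which case $|\widehat{P_F({\bf x})}(w)| = 4^{-k}\,|{\bf x}(\alpha_1)\cdots{\bf x}(\alpha_{2k+1})| \le 4^{-k} \le \tfrac14$ since $\|{\bf x}\|_\infty \le 1$; hence $\|\widehat{P_F({\bf x})}|_{R_F^c}\|_\infty \le \tfrac14$, giving \eqref{e43aa}.

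The argument is routine, and there is no real obstacle; the only point worth a word of care is the identification of the formal series \eqref{e35} with the genuine Walsh transform of the Riesz-product \emph{measure}, and the fact that this transform vanishes on $W_A \setminus W_F$ — but both of these are exactly what the computation around \eqref{e36} in \S\ref{SS4} already delivers.
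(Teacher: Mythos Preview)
Your proof is correct and matches the paper's approach exactly: the paper states Lemma~\ref{L2a} without proof, immediately after displaying the expansion \eqref{e39a} and having just verified in scenario~\textbf{i} that $\mathfrak{R}_F$ of a real vector in the closed $l^\infty$-ball is a probability measure. All four assertions are meant to be read off from these two facts, precisely as you do.
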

 
 \noindent \textbf{ii.} \  \textit{If \ $\emph{\bf{x}} \in l^2_{\mathbb{R}}(F)$ (i.e., ${\bf{x}} \in \mathbb{R}^F$ and $\|\emph{\bf{x}}\|_2 := \big( \sum_{\alpha \in F}|\emph{\bf{x}}(\alpha)|^2 \big)^{\frac{1}{2}} < \infty$) then $\mathfrak{R}_F(i \emph{\bf{x}}) \in L^{\infty}(\Omega_A, \mathbb{P}_A)$,  and
\begin{equation} \label{e36a}
\|\mathfrak{R}_F(i\emph{\bf{x}})\|_{L^{\infty}} \leq e^{\frac{\|\emph{\bf{x}}\|_2^2}{2}},
\end{equation}
where  $i = \sqrt{-1}$.}\\

\noindent
To prove this, observe first that if $F \subset A$ is finite, then
\begin{equation} \label{e37}
\|\mathfrak{R}_F(i\emph{\bf{x}})\|_{L^{\infty}} = e^{\frac{1}{2} \sum_{\alpha \in F} \log(1 + |\emph{\bf{x}}(\alpha)|^2)} \leq e^{\frac{\|\emph{\bf{x}}\|_2^2}{2}}. 
\end{equation}\\
If  $F \subset A$ is arbitrary and (without loss of generality) countably infinite, then take  $F_n \subset F_{n+1},$ $n = 1, \dots,$ to be an increasing sequence of finite sets, such that $\bigcup_{n=1}^{\infty} F_n = F$.   Then, 
\begin{equation} \label{e38}
\lim_{n \rightarrow \infty} \widehat{\mathfrak{R}_{F_n}(i\textbf{x})}(w) = \widehat{\mathfrak{R}_F(i\textbf{x})}(w), \ \ w \in W_A.
\end{equation}\\
Therefore, by \eqref{e37} and \eqref{e38},  the sequence $$\mathfrak{R}_{F_n}(i\textbf{x}), \ \ n=1, \dots,$$ converges in the weak* topology of $L^{\infty}(\Omega_A, \mathbb{P}_A)$ to an element in $L^{\infty}(\Omega_A, \mathbb{P}_A),$ whose Walsh series is
\begin{equation*}
 \mathfrak{R}_F(i\textbf{x}) \sim  \sum_{k=0}^{\infty}i^k \left(\sum_{w \in W_{F,k}  \ w = r_{\alpha_1} \cdots r_{\alpha_k}}\textbf{x}(\alpha_1) \cdots  \textbf{x}(\alpha_k) r_{\alpha_1} \cdots r_{\alpha_k}\right),
 \end{equation*}
 and whose $L^{\infty}$-norm is bounded by  $e^{\frac{\|\emph{\bf{x}}\|_2^2}{2}}$.\\ 
 \begin{remark}
   \emph{The random variables $$\mathfrak{R}_{F_n}(i\textbf{x}) = \prod_{\alpha \in F_n}\big(r_0 + i \textbf{x}(\alpha) r_{\alpha} \big), \ \ n = 1, \ldots, $$ form an $L^{\infty}$-bounded martingale sequence, and therefore, by the martingale convergence theorem, the numerical sequence $$\big(\mathfrak{R}_{F_n}(i\textbf{x})\big)(\omega), \ \ n=1, \dots,$$ converges to  $\big(\mathfrak{R}_F(i\textbf{x})\big)(\omega)$ for almost all $\omega \in (\Omega_A, \mathbb{P}_A)$. (E.g., see ~\cite{williams1991}.)} 
   
  \emph{Otherwise, the sequence $\mathfrak{R}_{F_n}(i\textbf{x}), \ n = 1, \ldots,$ converges in the $L^{\infty}$-norm if and only if ${\bf{x}} \in l^1(F)$, i.e., $\sum_{\alpha \in F} |{\bf{x}}(\alpha)| < \infty$.  (See Proposition \ref{P3}.)} \\\
   \end{remark}

 For our purposes here, we take the imaginary part of $\mathfrak{R}_F(i\textbf{x}),$
 \begin{equation} \label{e45}
 Q_F(\textbf{x}) := \text{Im}{\mathfrak{R}_F(i\textbf{x})} \sim   \sum_{k=0}^{\infty}(-1)^k \bigg(\sum_{{w \in W_{F,2k+1}} \atop{w = r_{\alpha_1} \cdots r_{\alpha_{2k+1}}}}\textbf{x}(\alpha_1) \cdots  \textbf{x}(\alpha_{2k+1}) r_{\alpha_1} \cdots r_{\alpha_{2k+1}}\bigg),
  \end{equation}\\
 and estimate the $l^2$-norm of the restriction of $\widehat{Q_F(\textbf{x})}$ to  $R_F^c$  (complement of $R_F$ in $W_F$):  for each $k \geq 2$, 
 \begin{equation} \label{e44}
 \sum_{w \in W_{F,k}, \ w = r_{\alpha_1} \cdots r_{\alpha_k} } |\textbf{x}(\alpha_1) \cdots \textbf{x}(\alpha_k)|^2 \  \leq \ \frac{\|x\|_2^{2k}}{k!}, 
 \end{equation}
 and therefore
 \begin{equation} \label{e44a}
 \begin{split}
 \|\widehat{Q_F(\textbf{x})}|_{R_F^c} \|_2 \ &\leq  \left(\sum_{k=1}^{\infty}\frac{\|\textbf{x}\|_2^{2(2k+1)}}{(2k+1)!}\right)^{\frac{1}{2}}\\\\
 & =  \sqrt{\sinh(\|{\bf{x}}\|_2^2) - \|{\bf{x}}\|_2^2 }.\\
 \end{split}
 \end{equation}\\
 We summarize, and record for future use:
 
 \begin{lemma} \label{L2}
 Let $F \subset A,$ and ${\bf{x}} \in l^2_{\mathbb{R}}(F)$.  Let $ Q_F(\rm{\bf{x}})$ be the imaginary part of the Riesz product $\mathfrak{R}_F(i\rm{\bf{x}}).$  Then
 \begin{equation} \label{e39}
 \emph{spect} \big( Q_F({\bf{x}}) \big)    \subset \bigcup_{k=0}^{\infty} W_{F, 2k+1} \ ;
 \end{equation}\\\
\begin{equation} \label{e40}
\|Q_F({\bf{x}})\|_{L^{\infty}}   \leq e^{\frac{\|{\bf{x}}\|_2^2}{2}} \ ;
\end{equation}\\\
\begin{equation} \label{e41}
 \widehat{Q_F(\rm{\bf{x}})}(r_{\alpha})   =   \rm{\bf{x}}(\alpha), \ \ \alpha \in F \ ;
 \end{equation}\\\
 \begin{equation} \label{e42}
  \|\widehat{Q_F(\rm{\bf{x}})}|_{R_F^c} \|_2   \leq  \sqrt{\sinh \|{\bf{x}}\|_2^2 - \|{\bf{x}}\|_2^2}.
 \end{equation}\\\
 \end{lemma}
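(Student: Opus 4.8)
The plan is to extract all four assertions directly from the Walsh expansion of $\mathfrak{R}_F(i{\bf{x}})$ established in \S\ref{SS4}.ii. Recall from that discussion that $\mathfrak{R}_F(i{\bf{x}}) \in L^{\infty}(\Omega_A,\mathbb{P}_A)$ with $\|\mathfrak{R}_F(i{\bf{x}})\|_{L^{\infty}} \leq e^{\|{\bf{x}}\|_2^2/2}$ (the passage from finite $F$ to arbitrary $F$ being handled by the weak* convergence argument given there, under which the Walsh coefficients converge coordinatewise), and that its Walsh series is $\sum_{k=0}^{\infty} i^k \sum_{w \in W_{F,k},\, w = r_{\alpha_1}\cdots r_{\alpha_k}} {\bf{x}}(\alpha_1)\cdots {\bf{x}}(\alpha_k)\, r_{\alpha_1}\cdots r_{\alpha_k}$. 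Since ${\bf{x}}$ is real-valued, each coefficient ${\bf{x}}(\alpha_1)\cdots{\bf{x}}(\alpha_k)$ is real, so passing to the imaginary part annihilates the even-$k$ blocks and replaces $i^{2k+1}$ by $(-1)^k$ in the odd-$k$ blocks; this is precisely the series \eqref{e45}. Assertion \eqref{e39} on the spectrum is then immediate, and \eqref{e40} follows from $|\text{Im}\,z| \leq |z|$ applied $\mathbb{P}_A$-almost everywhere.

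For \eqref{e41}, I would observe that in \eqref{e45} the coefficient of a single Rademacher character $r_\alpha$ (a Walsh character of order $1$) can only arise from the lowest block $k=0$: by algebraic independence of $R_A$, no product of $2k+1 \geq 3$ distinct Rademacher characters equals $r_\alpha$. Since that block is $\sum_{\alpha \in F} {\bf{x}}(\alpha) r_\alpha$, we get $\widehat{Q_F({\bf{x}})}(r_\alpha) = {\bf{x}}(\alpha)$ for $\alpha \in F$.

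For \eqref{e42}, Plancherel's formula identifies the square of the left side with $\sum_{k=1}^{\infty} \sum_{w \in W_{F,2k+1},\, w = r_{\alpha_1}\cdots r_{\alpha_{2k+1}}} |{\bf{x}}(\alpha_1)\cdots{\bf{x}}(\alpha_{2k+1})|^2$. The one remaining ingredient is the elementary estimate recorded as \eqref{e44}: for every $m \geq 1$, $\sum_{\{\alpha_1,\dots,\alpha_m\} \subset F} |{\bf{x}}(\alpha_1)|^2 \cdots |{\bf{x}}(\alpha_m)|^2 \leq \frac{1}{m!}\big(\sum_{\alpha \in F}|{\bf{x}}(\alpha)|^2\big)^m = \|{\bf{x}}\|_2^{2m}/m!$, which holds since in the multinomial expansion of $\big(\sum_\alpha |{\bf{x}}(\alpha)|^2\big)^m$ every squarefree monomial over an $m$-element subset of $F$ occurs with coefficient $m!$ while all other terms are nonnegative. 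Applying this with $m = 2k+1$ and summing over $k \geq 1$ gives the bound $\sum_{k\geq 1} \|{\bf{x}}\|_2^{2(2k+1)}/(2k+1)! = \sinh(\|{\bf{x}}\|_2^2) - \|{\bf{x}}\|_2^2$ (the subtracted term being the $k=0$ summand), and taking square roots yields \eqref{e42}. I do not expect a genuine obstacle here: all the analytic content already sits in the $L^{\infty}$-bound for $\mathfrak{R}_F(i{\bf{x}})$ proved in \S\ref{SS4}.ii, and the only point meriting care is the interchange of "imaginary part" with the weak* limit in the infinite-$F$ case, which is legitimate because it reduces to a termwise statement about Walsh coefficients.
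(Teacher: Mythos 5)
Your proposal is correct and follows essentially the same route as the paper: Lemma \ref{L2} is stated there as a summary of the immediately preceding computations, namely the $L^{\infty}$-bound \eqref{e36a} for $\mathfrak{R}_F(i{\bf{x}})$, the odd-order Walsh expansion \eqref{e45} of its imaginary part, and the multinomial/Plancherel estimates \eqref{e44}--\eqref{e44a}. Your added remarks (the $|\mathrm{Im}\,z|\leq|z|$ step, algebraic independence for \eqref{e41}, and the termwise justification of interchanging imaginary part with the weak* limit) are exactly the implicit steps the paper relies on.
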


 \subsection{Continuity} \    Viewed as a function from $l^2_{\mathbb{R}}(A)$ into $L^{\infty}(\Omega_A, \mathbb{P}_A),$ $Q_A$ is ($l^2 \rightarrow L^2$)-continuous; i.e., $Q_A$ is continuous with respect to the $l^2(A)$-norm on its domain and the $L^2(\Omega_A, \mathbb{P}_A)$-norm on its range.  (It is \emph{not} continuous with respect to the $L^{\infty}(\Omega_A, \mathbb{P}_A)$-norm; see \S \ref{contQ}.)  This is   a consequence of the following. 
\begin{lemma} \label{L3}
For  $\textbf{\emph{x}} \in l_{\mathbb{R}}^2(A), \ \textbf{\emph{y}} \in l_{\mathbb{R}}^2(A),$\\
\begin{equation}\label{g10}
\|Q_A({\bf{x}}) - Q_A({\bf{y}}) \|_{L^2(\Omega_A,\mathbb{P}_A)} \ \leq \  \sqrt{2 \cosh(2\rho^2)} \ \|{\bf{x}} - {\bf{y}}\|_2,
\end{equation}\\
where $\rho = \max\{\|{\bf{x}}\|_2, \|{\bf{y}}\|_2\}$.
\end{lemma}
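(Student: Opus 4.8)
\emph{Proof proposal.} The plan is to prove \eqref{g10} by differentiating the Riesz product $\mathfrak{R}_A(i{\bf{x}})$ along the segment joining ${\bf{y}}$ to ${\bf{x}}$ and integrating the resulting estimate. Since $Q_A = \operatorname{Im}\mathfrak{R}_A(i\,\cdot\,)$ and the map $h\mapsto\operatorname{Im} h$ is $\mathbb{R}$-linear and norm-decreasing on $L^2(\Omega_A,\mathbb{P}_A)$, it suffices to show
\[
\|\mathfrak{R}_A(i{\bf{x}})-\mathfrak{R}_A(i{\bf{y}})\|_{L^2(\Omega_A,\mathbb{P}_A)}\ \leq\ e^{\rho^2/2}\sqrt{1+\rho^2}\ \|{\bf{x}}-{\bf{y}}\|_2 ,
\]
because $e^{\rho^2}(1+\rho^2)\leq e^{2\rho^2}\leq e^{2\rho^2}+e^{-2\rho^2}=2\cosh(2\rho^2)$, using $1+t\leq e^t$. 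A routine reduction lets one assume ${\bf{x}}$ and ${\bf{y}}$ have finite support: with finite sets $F_n\uparrow$ whose union contains $\operatorname{supp}{\bf{x}}\cup\operatorname{supp}{\bf{y}}$ and ${\bf{x}}_n={\bf{x}}\,\mathbf{1}_{F_n}$, ${\bf{y}}_n={\bf{y}}\,\mathbf{1}_{F_n}$, one has $\|{\bf{x}}_n\|_2,\|{\bf{y}}_n\|_2\leq\rho$ and $\|{\bf{x}}_n-{\bf{y}}_n\|_2\to\|{\bf{x}}-{\bf{y}}\|_2$; moreover $Q_A({\bf{x}}_n)=Q_{F_n}({\bf{x}}|_{F_n})$ is exactly the partial sum of the Walsh series of $Q_A({\bf{x}})$ over characters supported in $F_n$, and $\widehat{Q_A({\bf{x}})}$ is square-summable by Lemma \ref{L2} (see \eqref{e41}, \eqref{e42}), so $Q_A({\bf{x}}_n)\to Q_A({\bf{x}})$ and $Q_A({\bf{y}}_n)\to Q_A({\bf{y}})$ in $L^2$; letting $n\to\infty$ in the finitely supported case then yields \eqref{g10}.

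So assume $\operatorname{supp}{\bf{x}}\cup\operatorname{supp}{\bf{y}}\subset F$ with $F$ finite; put ${\bf{d}}={\bf{x}}-{\bf{y}}$, ${\bf{z}}(t)=(1-t){\bf{y}}+t{\bf{x}}$ for $t\in[0,1]$, so $\|{\bf{z}}(t)\|_2\leq\max\{\|{\bf{x}}\|_2,\|{\bf{y}}\|_2\}=\rho$ by convexity of the norm. Since $\mathfrak{R}_F(i{\bf{z}}(t))=\prod_{\alpha\in F}\big(r_0+i{\bf{z}}(t)(\alpha)r_\alpha\big)$ is a finite product and a polynomial in $t$ with coefficients in $L^\infty(\Omega_A,\mathbb{P}_A)$, the curve $t\mapsto\mathfrak{R}_F(i{\bf{z}}(t))$ is smooth with values in $L^\infty$, a fortiori in $L^2$; differentiating the product and using $r_\beta^2=r_0$ to rewrite $r_\beta/\big(r_0+i{\bf{z}}(\beta)r_\beta\big)=\big(r_\beta-i{\bf{z}}(\beta)r_0\big)/(1+{\bf{z}}(\beta)^2)$, one obtains
\[
\frac{d}{dt}\,\mathfrak{R}_F(i{\bf{z}}(t))=i\,\mathfrak{R}_F(i{\bf{z}}(t))\,g(t),\qquad g(t)=\sum_{\beta\in F}\frac{{\bf{d}}(\beta)\,\big(r_\beta-i{\bf{z}}(t)(\beta)r_0\big)}{1+{\bf{z}}(t)(\beta)^2}.
\]
The crux is the bound $\|g(t)\|_{L^2}^2\leq\big(1+\|{\bf{z}}(t)\|_2^2\big)\|{\bf{d}}\|_2^2$: the Walsh characters $r_0$ and $\{r_\beta\}_{\beta\in F}$ are orthonormal in $L^2$, so $\|g(t)\|_{L^2}^2=\sum_\beta {\bf{d}}(\beta)^2(1+{\bf{z}}(t)(\beta)^2)^{-2}+\big(\sum_\beta{\bf{d}}(\beta){\bf{z}}(t)(\beta)(1+{\bf{z}}(t)(\beta)^2)^{-1}\big)^2$, and the first sum is $\leq\|{\bf{d}}\|_2^2$ while the second term is $\leq\|{\bf{d}}\|_2^2\|{\bf{z}}(t)\|_2^2$ by Cauchy--Schwarz together with $s(1+s)^{-2}\leq s$. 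Combining this with $\|\mathfrak{R}_F(i{\bf{z}}(t))\|_{L^\infty}\leq e^{\|{\bf{z}}(t)\|_2^2/2}\leq e^{\rho^2/2}$ from \eqref{e36a} gives $\big\|\tfrac{d}{dt}\mathfrak{R}_F(i{\bf{z}}(t))\big\|_{L^2}\leq e^{\rho^2/2}\sqrt{1+\rho^2}\,\|{\bf{d}}\|_2$ for every $t$, and integrating the $L^2$-valued derivative over $[0,1]$ produces the displayed estimate, hence \eqref{g10}.

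The step requiring care is the passage from the honest finite-dimensional differentiation identity to general ${\bf{x}},{\bf{y}}\in l^2_{\mathbb{R}}(A)$: the curve $t\mapsto\mathfrak{R}_A(i{\bf{z}}(t))$ need not be differentiable in the $L^\infty$-norm (it is so only when ${\bf{z}}(t)\in l^1$), so one must work throughout in $L^2$ and justify the limit by the approximation argument above. An alternative that bypasses the calculus is to use Parseval to evaluate $\|\mathfrak{R}_A(i{\bf{x}})-\mathfrak{R}_A(i{\bf{y}})\|_{L^2}^2=\prod_\alpha(1+{\bf{x}}(\alpha)^2)+\prod_\alpha(1+{\bf{y}}(\alpha)^2)-2\prod_\alpha(1+{\bf{x}}(\alpha){\bf{y}}(\alpha))$ and to estimate this difference of products by telescoping them against one another; this is elementary but bookkeeping-heavy, and does not recover the constant $2\cosh(2\rho^2)$ as cleanly as the path-integral argument.
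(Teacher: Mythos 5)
Your argument is correct, but it is genuinely different from the one in the paper. The paper works entirely on the Fourier side: it applies Plancherel to the spectral decomposition of $Q_A({\bf{x}})-Q_A({\bf{y}})$ into Walsh characters of order $2k+1$, and for each degree runs the telescoping estimate \eqref{g12} ($2k$ times), replacing one coordinate of ${\bf{x}}$ by the corresponding coordinate of ${\bf{y}}$ at a time; summing the resulting bounds $\epsilon^2\,2^{2k+1}\rho^{4k}/(2k)!$ over $k$ produces exactly $2\cosh(2\rho^2)\,\epsilon^2$. You instead dominate $Q_A=\operatorname{Im}\mathfrak{R}_A(i\,\cdot)$ by the full Riesz product, differentiate $t\mapsto\mathfrak{R}_F(i{\bf{z}}(t))$ along the segment from ${\bf{y}}$ to ${\bf{x}}$, and split the derivative as (bounded factor) $\times$ ($L^2$-small factor): the identity $\tfrac{d}{dt}\mathfrak{R}_F=i\,\mathfrak{R}_F\,g(t)$, the orthonormality computation giving $\|g(t)\|_{L^2}^2\le(1+\rho^2)\|{\bf{d}}\|_2^2$, and the bound $\|\mathfrak{R}_F(i{\bf{z}}(t))\|_{L^\infty}\le e^{\rho^2/2}$ from \eqref{e36a} are all verified correctly, as is the truncation argument that transfers the finite-support estimate to general ${\bf{x}},{\bf{y}}\in l^2_{\mathbb{R}}(A)$ (square-summability of $\widehat{Q_A({\bf{x}})}$ via \eqref{e41}--\eqref{e42} is exactly what is needed there). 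Your route even yields the slightly sharper Lipschitz constant $e^{\rho^2/2}\sqrt{1+\rho^2}\le\sqrt{2\cosh(2\rho^2)}$ and is conceptually cleaner (a derivative bound integrated along a path). What it gives up is the degree-by-degree information: the paper's estimate \eqref{g15} is stated per spectral block $W_{A,2k+1}$ precisely so that it can be reused in Lemma \ref{L4}, where the hypercontractive $L^2$--$L^p$ inequalities are applied to each block $(Q_A({\bf{x}})-Q_A({\bf{y}}))_k$ separately; your global bound on $\|\mathfrak{R}_A(i{\bf{x}})-\mathfrak{R}_A(i{\bf{y}})\|_{L^2}$ would not feed into that argument without reinstating the decomposition.
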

\vskip0.3cm
\begin{proof}
For ${\bf{y}} = {\bf{0}}$,  by the estimate in \eqref{e44a},
\begin{equation}
 \|Q_A({\bf{x}})\|_{L^2} \leq \sqrt{\sinh(\|{\bf{x}}\|_2^2)},
 \end{equation} 
which implies \eqref{g10} in this instance.  Now suppose  ${\bf{x}} \neq {\bf{0}},$ and ${\bf{y}} \neq {\bf{0}},$  and  write $\|\textbf{x} - \textbf{y} \|_2 = \epsilon$.  By Plancherel's theorem and  spectral analysis of Riesz products,
\begin{equation} \label{g11}
\|Q_A(\textbf{x}) - Q_A(\textbf{y}) \|_{L^2}^2 = \sum_{k=0}^{\infty}   \bigg (\sum_{\substack{w \in W_{A,2k+1} \\[1mm] w = r_{\alpha_1} \cdots \ r_{\alpha_{2k+1}}}}|\textbf{x}(\alpha_1) \cdots \textbf{x}(\alpha_{2k+1}) - \textbf{y}(\alpha_1) \cdots \textbf{y}(\alpha_{2k+1})|^2 \bigg ).
\end{equation}\\\\
For $k \geq 1$ (cf. \eqref{e44}),\\\
\begin{equation*}
\sum_{\substack{w \in W_{A,2k+1} \\[1mm] w = r_{\alpha_1} \cdots \ r_{\alpha_{2k+1}}}}|\textbf{x}(\alpha_1) \cdots \textbf{x}(\alpha_{2k+1}) - \textbf{y}(\alpha_1) \cdots \textbf{y}(\alpha_{2k+1})|^2 \
\end{equation*}
\begin{equation*}
\leq \ \frac{1}{(2k+1)!} \sum_{\alpha_1, \ldots, \alpha_{2k+1}} |\textbf{x}(\alpha_1) \cdots \textbf{x}(\alpha_{2k+1}) - \textbf{y}(\alpha_1) \cdots \textbf{y}(\alpha_{2k+1})|^2 \ 
\end{equation*}\
\begin{equation*}
\leq \ \frac{2}{(2k+1)!} \sum_{\alpha_1, \ldots, \alpha_{2k+1}}  |\textbf{x}(\alpha_1) \textbf{x}(\alpha_2) \cdots \textbf{x}(\alpha_{2k+1}) - \textbf{y}(\alpha_1) \textbf{x}(\alpha_2) \cdots \textbf{x}(\alpha_{2k+1})|^2  \   
\end{equation*}
\begin{equation*}
+ \  \frac{2}{(2k+1)!} \sum_{\alpha_1, \ldots, \alpha_{2k+1}}  | \textbf{y}(\alpha_1) \textbf{x}(\alpha_2) \cdots \textbf{x}(\alpha_{2k+1}) -  \textbf{y}(\alpha_1) \cdots \textbf{y}(\alpha_{2k+1})|^2 \ 
\end{equation*}
\begin{equation} \label{g12}
 =  \ \frac{2 \epsilon^2}{(2k+1)!} \ \|\textbf{x}\|_2^{4k} \ + \  \frac{2}{(2k+1)!} \ \|\textbf{y}\|_2^2  \sum_{\alpha_2, \ldots, \alpha_{2k+1}} |\textbf{x}(\alpha_2) \cdots \textbf{x}(\alpha_{2k+1}) - \textbf{y}(\alpha_2) \cdots \textbf{y}(\alpha_{2k+1})|^2.
\end{equation}\\\\
By a recursive application of these estimates ($2k$ times) to the sum on the right side of \eqref{g12}, we obtain that the left side of \eqref{g12} is bounded by
\begin{equation} \label{g15}
 \frac{\epsilon^2 2^{2k+1}}{(2k+1)!}  \sum_{\l = 0}^{2k} \|\textbf{x}\|_2^{2(2k- \l)} \|\textbf{y}\|_2^{2\l}  \ <  \ \epsilon^2 \left(  \frac{2^{2k+1}}{(2k)!} \right) (\max\{\|\textbf{x}\|^2_2, \|\textbf{y}\|^2_2\})^{2k}.
\end{equation}\\\
By applying \eqref{g15} to each of the summands on the right side of \eqref{g11}, we deduce \eqref{g10}.\\\
\end{proof} 

When restricted to bounded sets in $l^2(A),$  the vector-valued function $Q_A$ is also weakly continuous.  

\begin{lemma} \label{L3W} \ $Q_A : B_{l_{\mathbb{R}}^2(A)} \rightarrow L^{\infty}(\Omega_A,\mathbb{P}_A)$ is continuous with respect to the weak topology on $l^2(A)$, and the weak* topology on $L^{\infty}(\Omega_A,\mathbb{P}_A)$ and (therefore) the weak topology on $L^2(\Omega_A,\mathbb{P}_A)$.
\end{lemma}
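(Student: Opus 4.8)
The plan is to establish the stated continuity into the weak* topology of $L^{\infty}(\Omega_A,\mathbb{P}_A)$; continuity into the weak topology of $L^{2}(\Omega_A,\mathbb{P}_A)$ is then automatic, since $\mathbb{P}_A$ is a probability measure, so $L^2(\Omega_A,\mathbb{P}_A)\subset L^1(\Omega_A,\mathbb{P}_A)$ and every uniformly bounded weak*-convergent family in $L^{\infty}$ tests correctly against each $g\in L^2$. Because the weak topology on $B_{l^2(A)}$ is not metrizable when $A$ is uncountable, I work throughout with nets: let $({\bf x}_\delta)$ be a net in $B_{l^2_{\mathbb{R}}(A)}$ converging weakly to ${\bf x}\in B_{l^2_{\mathbb{R}}(A)}$; we must show that $Q_A({\bf x}_\delta)\to Q_A({\bf x})$ weak* in $L^{\infty}(\Omega_A,\mathbb{P}_A)$.

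The first step is a uniform bound: by \eqref{e40} of Lemma \ref{L2}, $\|Q_A({\bf y})\|_{L^{\infty}}\le e^{\|{\bf y}\|_2^2/2}\le e^{1/2}$ for every ${\bf y}\in B_{l^2_{\mathbb{R}}(A)}$, so the net $\big(Q_A({\bf x}_\delta)\big)$ together with $Q_A({\bf x})$ lies in the closed ball of radius $e^{1/2}$ in $L^{\infty}$. A routine $3\varepsilon$ argument then reduces weak* convergence against an arbitrary $g\in L^1(\Omega_A,\mathbb{P}_A)$ to weak* convergence against a norm-dense subset of $L^1$; since Walsh polynomials are sup-norm dense in $C(\Omega_A)$, which is $L^1$-dense, it suffices to verify, for every Walsh character $w\in W_A$, that
\[
\int_{\Omega_A} Q_A({\bf x}_\delta)\,w\;d\mathbb{P}_A \ \longrightarrow\ \int_{\Omega_A} Q_A({\bf x})\,w\;d\mathbb{P}_A,
\]
that is, by Parseval's formula \eqref{Par} (using that Walsh characters are $\{-1,1\}$-valued, hence self-conjugate), that $\widehat{Q_A({\bf x}_\delta)}(w)\to\widehat{Q_A({\bf x})}(w)$ for each $w\in W_A$. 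For the second step I invoke the spectral description of $Q_A=\operatorname{Im}\mathfrak{R}_A(i\,\cdot)$ in \eqref{e45} and \eqref{e39}: $\widehat{Q_A({\bf y})}(w)=0$ on Walsh characters of even order, while for $w=r_{\alpha_1}\cdots r_{\alpha_{2k+1}}\in W_{A,2k+1}$ (with distinct $\alpha_j$) one has $\widehat{Q_A({\bf y})}(w)=(-1)^{k}\,{\bf y}(\alpha_1)\cdots{\bf y}(\alpha_{2k+1})$, a fixed monomial in finitely many coordinates of ${\bf y}$. Testing the weak convergence ${\bf x}_\delta\to{\bf x}$ against the basis vectors $e_\alpha$ yields coordinatewise convergence ${\bf x}_\delta(\alpha)\to{\bf x}(\alpha)$ for every $\alpha\in A$, and a finite product of convergent scalar nets converges to the product of the limits, so $\widehat{Q_A({\bf x}_\delta)}(w)\to\widehat{Q_A({\bf x})}(w)$ for every $w$, which completes the proof.

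The only delicate point is organizational rather than computational: one must handle nets (not sequences), and funnel the reduction of weak* convergence in $L^{\infty}=(L^1)^*$ to convergence of Walsh coefficients through the uniform bound \eqref{e40} and the density of Walsh polynomials in $L^1$. Once these are in place the passage to the limit is immediate, precisely because each Walsh coefficient of $Q_A$ depends on only finitely many coordinates. (Equivalently, one may argue via Banach--Alaoglu: every subnet of $\big(Q_A({\bf x}_\delta)\big)$ has a weak*-convergent sub-subnet, any cluster point has the same Walsh coefficients as $Q_A({\bf x})$ by the coordinatewise convergence above, hence equals $Q_A({\bf x})$, forcing the whole net to converge weak* to $Q_A({\bf x})$.)
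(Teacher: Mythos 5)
Your proof is correct and follows essentially the same route as the paper's: the uniform bound $\|Q_A(\cdot)\|_{L^\infty}\le e^{1/2}$ from \eqref{e40}, pointwise convergence of Walsh coefficients (each a finite monomial in the coordinates, which converge by weak convergence), and density of Walsh polynomials to upgrade to weak* convergence. Your explicit treatment of nets and the $3\varepsilon$ reduction merely fills in details the paper leaves implicit.
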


 \begin{proof}  If ${\bf{x}}_j \rightarrow {\bf{x}}$ weakly in 
$l^2(A)$ and \ ${\bf{x}}_j \in B_{l^2_{\mathbb{R}}(A)}$, then
\begin{equation}
\widehat{Q_A({\bf{x}}_j)}(w) \ \underset{j \to \infty}{\longrightarrow} \ \widehat{Q_A({\bf{x}})}(w), \ \ \ w \in W_A,
\end{equation}
 and $$\sup_j \|Q_A({\bf{x}}_j\|_{L^{\infty}} \leq e^{\frac{1}{2}}.$$  Therefore, 
\begin{equation}
\int_{\Omega_A} Q_A({\bf{x}}_j) f d \mathbb{P}_A \ \underset{j \to \infty}{\longrightarrow} \ \int_{\Omega_A} Q_A({\bf{x}}) f d \mathbb{P}_A, \ \ \ \text{Walsh polylnomials} \  f,
\end{equation}\\\
verifying that $Q_A({\bf{x}}_j)$ converge to $Q_A({\bf{x}})$ in the weak* topology of  $L^{\infty}(\Omega_A,\mathbb{P}_A)$.\\
\end{proof}

 \vskip 0.5cm
 \section{\bf{Proof of Theorem \ref{T2}}}\label{s4}
  
\subsection{The construction of  $\Phi$} \  
Let   $\{A_j: j \in \mathbb{N} \}$  be a partition of $A$, such that   $A_j$ and $A$ have the same cardinality for each $j \in \mathbb{N}$.  We fix a bijection     
\begin{equation} \label{g19}
\tau_{0}: \  A \rightarrow A_1.
\end{equation}
 For $j \in \mathbb{N},$ denote
\begin{equation*}
 C_j := \bigcup_{k=1}^{\infty}W_{A_j,2k+1},
\end{equation*}
and then fix a bijection 
\begin{equation*}
\tau_j: \ C_j \rightarrow A_{j+1}.
\end{equation*}
(If $A$ is infinite, then $R_A$ and its complement $R_A^c$ in $W_A$ have the same cardinality.)

Let $\textbf{x} \in B_{l^2_{\mathbb{R}}(A)}$,  and define $\textbf{x}^{(1)} \in l^2_{\mathbb{R}}(A_1)$  by
\begin{equation} \label{g1}
\textbf{x}^{(1)}(\tau_0\alpha) = \textbf{x}(\alpha), \ \ \ \alpha \in A.
\end{equation}\
Let   
\begin{equation}\label{e42e}
\begin{split}
\delta &:= \  \sqrt{\sinh 1 - 1} \ \approx \ 0.419 \\\\
& \geq \   \bigg \|\bigg (Q_{A_1}\big({\bf{x}}^{(1)}\big) \big |_{R_{A_1}^c}\bigg )^{\wedge}  \bigg \|_2 \ , \ \ \ \ \ \ \ \text{by \eqref{e42}.}
\end{split}
\end{equation}\\\\
We continue recursively:  for  $ j = 2, \dots,$  define $\textbf{x}^{(j)} \in l^2_{\mathbb{R}}(A_j)$   by \\   
\begin{equation} \label{g2}
\textbf{x}^{(j)}(\tau_{j-1}w) =   \delta^{j-2}   \bigg(Q_{A_{j-1}}\big( \frac{\textbf{x}^{(j-1)}}{\delta^{j-2}}\big) \bigg)^{\wedge}(w), \ \ \ w \in C_{j-1}.
\end{equation} \\
 By Lemma \ref{L2} and induction on $j,$ we obtain
\begin{equation} \label{g4}
\|\textbf{x}^{(j)}\|_2 \leq \delta^{j-1}, \ \ \ j \geq 1.
\end{equation}
Then, by \eqref{e39}, \eqref{e40}, and \eqref{e41} (in Lemma \ref{L2}), 
\begin{align}
\text{spect}\bigg(Q_{A_j}\big(\frac{ \textbf{x}^{(j)}}{\delta^{j-1}}\big)\bigg)  & \subset W_{A_j}, \label{g6}\\\
\big \|Q_{A_j}\big(\frac{ \textbf{x}^{(j)}}{\delta^{j-1}}\big) \big \|_{L^{\infty}} & \leq \sqrt{e},  \label{g3}
\end{align}
and
\begin{equation} \label{g7}
\delta^{j-1}   \bigg(Q_{A_j}\big(\frac{ \textbf{x}^{(j)}}{\delta^{j-1}}\big)\bigg)^{\wedge} (r_{\alpha})  = \textbf{x}^{(j)}(\alpha), \ \ \ \  \alpha \in A_j, \ \ j = 1, \ldots \ .
\end{equation}\\\
Define
\begin{equation} \label{g5}
\Phi(\textbf{x}) = \sum_{j=1}^{\infty} (i\delta)^{j-1} \ Q_{A_j}\big(\frac{ \textbf{x}^{(j)}}{\delta^{j-1}}\big),
\end{equation}\\
where $i := \sqrt{-1}$.   The series in \eqref{g5} is absolutely convergent in the $L^{\infty}$-norm, and
\begin{equation} \label{approx}
\begin{split}
\|\Phi(\textbf{x})\|_{L^{\infty}} \ & \leq \ \frac{\sqrt{e}}{1 - \delta} \ = \ \frac{\sqrt{2e}}{\sqrt{2} - \sqrt{e - e^{-1} - 2}} \\\\
& \approx \ 2.836 \ .
\end{split}
\end{equation}\\

For arbitrary $\textbf{x} \in B_{l^2(A)},$ write $\textbf{x} = \textbf{u} + i \textbf{v},$ \  $\textbf{u} \in B_{l^2_{\mathbb{R}}(A)}, \   \textbf{v} \in B_{l^2_{\mathbb{R}}(A)},$ and define\\
\begin{equation} \label{com2}
\begin{split}
\Phi(\textbf{x}) &:= \Phi(\textbf{u}) + i \Phi(\textbf{v})\\\
&=  \sum_{j=1}^{\infty}(i\delta)^{j-1} \  \bigg( Q_{A_j}\big (\frac{ \textbf{u}^{(j)}}{\delta^{j-1}} \big) +  iQ_{A_j}\big (\frac{ \textbf{v}^{(j)}}{\delta^{j-1}}\big) \bigg)\\\
&:= \sum_{j=1}^{\infty}  \Theta_j(\textbf{x}).
\end{split}
\end{equation}\\
Then,
\begin{equation}\label{a45}
\|\Theta_j(\textbf{x})\|_{L^{\infty}} \leq \ 2 \delta^{j-1} \sqrt{e},
\end{equation}
which verifies  \eqref{e31} with 
\begin{equation} \label{const}
K = \frac{2\sqrt{e}}{1 - \delta} \ .
\end{equation}

  If ${\bf{x}}_1 \in B_{l^2}, \ {\bf{x}}_2 \in B_{l^2}$, and ${\bf{x}}_1 \neq {\bf{x}}_2$, then $\widehat{\Phi({\bf{x}}_1)} \neq \widehat{\Phi({\bf{x}}_2)},$  which verifies that $\Phi$ is one-one. 
 
\subsection{$\Phi$  is odd} \ \ To verify \eqref{e43}, note that for $ \textbf{x} \in l^2_{\mathbb{R}}(A),$ the Walsh series of $Q_A(\textbf{x})$  contains only characters of odd order, and that
\begin{align*}
\widehat{Q_A(-\textbf{x})}(r_{\alpha_1} \cdots r_{\alpha_{2k+1}}) & = (-1)^{2k+1} \prod_{\l=1}^{2k+1}\textbf{x}(\alpha_{\l}) \\
&  = -\widehat{Q_A(\textbf{x})}(r_{\alpha_1} \cdots r_{\alpha_{2k+1}}).
\end{align*}\\
Therefore, by induction based on \eqref{g2}, 
\begin{equation*}
(-\textbf{x})^{(j)} = -\textbf{x}^{(j)}, \ \ j \geq 1, \ \ {\bf{x}} \in B_{l^2_{\mathbb{R}}(A)},
\end{equation*}\\
which, by the definition of $\Phi$ in \eqref{g5}, implies
\begin{equation*}
\Phi(-\textbf{x}) = -\Phi(\textbf{x}).
\end{equation*}\\

\subsection{An integral representation of the dot product} \label{SS5} \ \    It suffices to verify  \eqref{e20} for $\textbf{x} \in B_{l_{\mathbb{R}}^2(A)}$ and  $\textbf{y} \in B_{l_{\mathbb{R}}^2(A)}$. For every integer $N \geq 1$,\\
\begin{equation} \label{g8}
\int_{\Omega_A} \left (\sum_{j=1}^N  (i\delta)^{j-1}  \ Q_{A_j}\big(\frac{ \textbf{x}^{(j)}}{ \delta^{j-1}}\big)\right ) \left(\sum_{j=1}^N   (i\delta)^{j-1}  \ Q_{A_j}\big(\frac{ \textbf{y}^{(j)}}{ \delta^{j-1}}\big)\right ) d \mathbb{P}_A  
\end{equation}\\
\begin{equation*}
= \ \sum_{j=1}^N  (-\delta^2)^{j-1} \sum_{w \in W_{A_j}} \bigg(Q_{A_j}\big(\frac{ \textbf{x}^{(j)}}{ \delta^{j-1}}\big)\bigg)^{\wedge}(w) \  \bigg(Q_{A_j}\big(\frac{ \textbf{y}^{(j)}}{ \delta^{j-1}}\big)\bigg)^{\wedge}(w) 
\end{equation*}
(by Parseval's formula, by \eqref{g6}, and because $W_{A_j}$ are pairwise disjoint)\\
\begin{equation*}
=  \sum_{j=1}^N  (-1)^{j-1}  \left ( \sum_{\alpha \in A_j}\textbf{x}^{(j)}(\alpha)   \textbf{y}^{(j)}(\alpha)   +    \sum_{\alpha \in C_j} \delta^{2(j-1)} \bigg(Q_{A_j}\big(\frac{ \textbf{x}^{(j)}}{ \delta^{j-1}}\big)\bigg)^{\wedge}(w)   \bigg(Q_{A_j}\big(\frac{ \textbf{y}^{(j)}}{ \delta^{j-1}}\big)\bigg)^{\wedge}(w)  \right )
\end{equation*}
(by \eqref{g7})
\begin{equation*}
= \ \sum_{j=1}^N  (-1)^{j-1}  \left ( \sum_{\alpha \in A_j}\textbf{x}^{(j)}(\alpha)  \textbf{y}^{(j)}(\alpha)  \ + \   \sum_{\alpha \in A_{j+1}}\textbf{x}^{(j+1)}(\alpha)  \textbf{y}^{(j+1)}(\alpha) \right )
\end{equation*}
(by \eqref{g2})
\begin{equation*} 
= \ \sum_{\alpha \in A}\textbf{x}(\alpha)  \textbf{y}(\alpha)  \  + \ (-\delta^2)^{N-1}  \sum_{\alpha \in C_N}\bigg(Q_{A_N}\big(\frac{ \textbf{x}^{(N)}}{ \delta^{N-1}}\big)\bigg)^{\wedge}(w)   \bigg(Q_{A_N}\big(\frac{ \textbf{y}^{(N)}}{ \delta^{N-1}}\big)\bigg)^{\wedge}(w)\\\\\  
\end{equation*}
(by "telescoping" and by \eqref{g1}).\\\\
Letting $N \rightarrow \infty$ in \eqref{g8}, we deduce
\begin{equation*}
 \int_{\Omega_{A}} \Phi(\textbf{x})\Phi(\textbf{y})d\mathbb{P}_{A} \ = \ \sum_{\alpha \in A}\textbf{x}(\alpha)  \textbf{y}(\alpha).
\end{equation*}\\

\subsection{$\Phi$ \ is weakly continuous}  \ Suppose $${\bf{x}}_k \underset{k \to \infty}{\longrightarrow} {\bf{x}} \ \ \ \ \text{weakly in} \  B_{l^2_{\mathbb{R}}(A)}.$$ 
  By Lemma \ref{L3W},
 \begin{equation}
 \bigg(Q_{A_1}\big( \textbf{x}^{(1)}_k \big)\bigg)^{\wedge}(w) \ \underset{k \to \infty}{\longrightarrow} \ \bigg(Q_{A_1}\big( \textbf{x}^{(1)} \big)\bigg)^{\wedge}(w), \ \ \ \ w \in W_{A_1}, 
 \end{equation}
 and then by induction based on \eqref{g2},
 \begin{equation}
 \bigg(Q_{A_j}\big(\frac{ \textbf{x}^{(j)}_k}{\delta^{j-1}}\big)\bigg)^{\wedge}(w) \ \underset{k \to \infty}{\longrightarrow} \ \bigg(Q_{A_j}\big(\frac{ \textbf{x}^{(j)}}{\delta^{j-1}}\big)\bigg)^{\wedge}(w), \ \ \ \ w \in W_{A_j}, \ \ j = 2, \ldots \ .
 \end{equation}\\
 Therefore,
 \begin{equation} \label{g14w}
 \big(\Phi({\bf{x}}_k)\big)^{\wedge}(w)  \ \underset{k \to \infty}{\longrightarrow} \ \big(\Phi({\bf{x}})\big)^{\wedge}(w), \ \ \ \ w \in W_A,
 \end{equation}\\
 and the assertion in Theorem \ref{T2}(iv) now follows from\\  
 \begin{equation}
 \sup_k \big \|\Phi({\bf{x}}_k) \big\|_{L^{\infty}} \leq K.
 \end{equation}\\
 
 \subsection{$\Phi$ \ is $(l^2 \rightarrow L^2)$-continuous} \  Suppose  ${\bf{x}}_k \in B_{l^2_{\mathbb{R}}(A)}, \ {\bf{x}} \in B_{l^2_{\mathbb{R}}(A)}$, and  
 \begin{equation*}
 \|{\bf{x}}_k - {\bf{x}}\|_2  \ \underset{k \to \infty}{\longrightarrow} \ 0 .
 \end{equation*}
 By Lemma \ref{L3},  we have 
 \begin{equation}
 \big \| Q_A\big({\bf{x}}^{(1)}_k\big) -  Q_A\big({\bf{x}}^{(1)}\big) \big \|_{L^2} \ \underset{k \to \infty}{\longrightarrow} \ 0.
 \end{equation}
 Then, by induction and Lemma \ref{L3}, 
 \begin{equation}
 \big \| Q_A\big({\bf{x}}^{(j)}_k\big) -  Q_A\big({\bf{x}}^{(j)}\big) \big \|_{L^2} \ \underset{k \to \infty}{\longrightarrow} \ 0 , \ \ \ \ \ j = 2, \ldots \ ,
 \end{equation} 
which implies
\begin{equation}
\big \| \Phi({\bf{x}}_k) -  \Phi({\bf{x}}) \big \|_{L^2} \ \underset{k \to \infty}{\longrightarrow} \ 0 .
\end{equation}
\\ 

\section{\bf{Variations on a theme}} \label{sss5}  
To obtain multidimensional extensions of the Grothendieck inequality, and specifically multilinear analogs of Theorem \ref{T2}, we will need a  $\Phi$-like map defined on the \emph{entire}  Hilbert space $l^2(A)$.  To this end, we could simply take the map defined in \eqref{e31e}.   However,  an "on site" modification of the algorithm that produced $\Phi$ in \eqref{g5} seems more natural, and also yields sharper bounds. 
\subsection{The map \ $\Phi_2$}\label{se} \ As before, we start with a partition  $\{A_j: j \in \mathbb{N} \}$ of $A$, with the property that  $A_j$ and $A$ have the same cardinality for each $j \in \mathbb{N}$, and bijections     
\begin{equation*} 
\tau_{0}: \  A \rightarrow A_1,
\end{equation*} 
\begin{equation*}
\tau_j:  \ C_j \rightarrow A_{j+1}, \ \ j \geq 1,
\end{equation*}
where
\begin{equation*}
 C_j := \bigcup_{k=1}^{\infty}W_{A_j,2k+1}.
\end{equation*}
 For $t > 0$, and $\textbf{x} \in l^2(A),$ define 
\begin{equation} \label{nor1}
 \sigma_t \textbf{x} = \left \{
\begin{array}{lcc}
\textbf{x}/{t \|\textbf{x}\|_2} & \quad  \text{if} \quad  \textbf{x} \not= \textbf{0}  \\\\
\textbf{0} & \quad   \text{ if} \quad  \textbf{x} = \textbf{0}.
\end{array} \right.
\end{equation}
For  $t = 1$,  we write $\sigma$ for  $\sigma_1$, as per \eqref{nor}.  \  Fix $t > c > 0$, where 
\begin{equation} \label{endpt}
2 - c^2 \sinh(1/c^2) = 0
\end{equation}
($c \approx 0.6777$), and let $\textbf{x} \in l^2_{\mathbb{R}}(A)$ be arbitrary.  Define $\textbf{x}^{(1)} \in l^2_{\mathbb{R}}(A_1)$  by
\begin{equation} \label{gg1}
\textbf{x}^{(1)}(\tau_0\alpha) = \textbf{x}(\alpha), \ \ \alpha \in A,
\end{equation}\
and then produce $\textbf{x}^{(j)} \in l^2_{\mathbb{R}}(A_j)$,  $ j = 2, \dots,$   by   
\begin{equation} \label{gg2}
\textbf{x}^{(j)}(\tau_{j-1}w) = t \|\textbf{x}^{(j-1)}\|_2 \  \big({Q_{A_{j-1}}(\sigma_t \textbf{x}^{(j-1)})}\big)^{\wedge}(w), \ \ w \in C_{j-1}.
\end{equation}\
By \eqref{e39}, \eqref{e40}, and \eqref{e41} in Lemma \ref{L2}, we have
\begin{align}
\text{spect}(Q_{A_j}(\sigma_t \textbf{x}^{(j)}))  & \subset W_{A_j} , \label{gg6}\\\
\|Q_{A_j}(\sigma_t \textbf{x}^{(j)}) \|_{L^{\infty}} & \leq e^{1/2t^2}, \label{gg3}\\\
t \| \textbf{x}^{(j)}\|_2  \  \big(Q_{A_j}(\sigma_t \textbf{x}^{(j)})\big)^{\wedge} (r_{\alpha}) & = \textbf{x}^{(j)}(\alpha), \ \ \alpha \in A_j, \ \  j \geq 1. \label{gg7}
\end{align} \ \\
By \eqref{e42}, 
\begin{equation} \label{gg4}
  \|\big(Q_{A_j}(\sigma_t\rm{\bf{x}})\big)^{\wedge}|_{C_j} \|_2   \leq  \sqrt{\sinh (1/t^2) -  1/t^2} \ := \ \delta_t, \ \ \ j \geq 1.
 \end{equation}\\\
 ($\delta_1$ is the $\delta$ in  \eqref{e42e}.)  Then, by induction on $j$,  \eqref{gg1}, \eqref{gg2}, and \eqref{gg4}, we obtain
\begin{equation} \label{ggg4}
\|\textbf{x}^{(j)}\|_2 \leq (t \delta_t)^{j-1}\|\textbf{x}\|_2, \ \ j \geq 1.
\end{equation}
Note that \eqref{endpt} is exactly the requirement that $t \delta_t < 1$.  Now define
\begin{equation} \label{gg5}
\Phi_2(\textbf{x};t) = \sum_{j=1}^{\infty}i^{j-1}t \|\textbf{x}^{(j)}\|_2 \  Q_{A_j}(\sigma_t \textbf{x}^{(j)}).
\end{equation}
 (Cf. \eqref{g5}.)  Then, from \eqref{gg3} and \eqref{ggg4}, we obtain\\
\begin{equation} \label{ggg5}
\|\Phi_2(\textbf{x};t) \|_{L^{\infty}} \leq \ t e^{1/2t^2} \sum _{j=1}^{\infty} \|\textbf{x}^{(j)}\|_2 \ \leq \ K(t) \|\textbf{x}\|_2,
\end{equation}\\
where
 \begin{equation} \label{obj1}
K(t) = \frac{t e^{1/2t^2}}{1-t \sqrt{\sinh(1/t^2) - 1/t^2}} \ , \  \ \ \ \ t > c,
\end{equation}\\
and $c > 0$ is the solution to \eqref{endpt}.  For  $\textbf{x} \in l^2(A),$  write $\textbf{x} = \textbf{u} + i \textbf{v},$ \  $\textbf{u} \in l^2_{\mathbb{R}}(A), \   \textbf{v} \in l^2_{\mathbb{R}}(A),$ and define\\
\begin{equation} \label{com2}
\Phi_2(\textbf{x};t) := \Phi_2(\textbf{u};t) + i \Phi_2(\textbf{v};t).
\end{equation} \\

\begin{corollary} \label{full}
For $t >  c$,  let $$\Phi_2(\cdot \ ;t): \ l^2(A) \ \rightarrow \ L^{\infty}(\Omega_A, \mathbb{P}_A)$$ be the map defined by \eqref{gg5} and \eqref{com2}.  Then, 
\begin{equation}
\|\Phi_2({\bf{x}};t)\|_{L^{\infty}} \ \leq 2K(t)\|{\bf{x}}\|_2, \ \ \ {\bf{x}} \in l^2(A),
\end{equation}
where $K(t)$ is given by \eqref{obj1}, \  $\Phi_2(\cdot \ ;t)$ is $\big(l^2 \rightarrow L^2\big)$-continuous, and
\begin{equation} \label{gg8}
\begin{split}
&\sum_{\alpha \in A}{\bf{x}}(\alpha)  {\bf{y}}(\alpha) =  \int_{\Omega_{A}} \Phi_2({\bf{x}};t)\Phi_2({\bf{y}};t)d\mathbb{P}_{A} \\\
&= \ \sum_{j=1}^{\infty}  (-1)^{j-1} \sum_{w \in W_{A_j}} t^2 \|{\bf{x}}^{(j)} \|_2 \  \|{\bf{y}}^{(j)} \|_2 \ \big(Q_{A_j}(\sigma_t {\bf{x}}^{(j)})\big)^{\wedge}(w) \  \big(Q_{A_j}(\sigma_t {\bf{y}}^{(j)})\big)^{\wedge}(w), \\\
& \  \ \ \ \ \ \ \ \ \ \ \ \ \ \ \ \ \ \ \ \ \ \ \ \ \ \ \ \ \ \ \ \ \ \ \ \ \ \ \ \ \ \ \ \ \ \ \ \ \ \ \ \ \ \ \ \ \ \ \ \ \ \ \ \ {\bf{x}} \in l^2(A), \ \ {\bf{y}}  \in l^2(A).
\end{split}
\end{equation}
Moreover, the map 
\begin{equation} \label{gg9}
\Phi_2({\bf{x}}) := \Phi_2({\bf{x}};1), \ \ \ {\bf{x}} \in l^2(A), 
\end{equation}
is homogeneous on $l^2_{\mathbb{R}}(A)$, i.e.,
\begin{equation} \label{homo}
\Phi_2(\xi{\bf{x}}) = \xi \Phi_2({\bf{x}}), \ \ \xi \in \mathbb{C}, \ \  {\bf{x}} \in l_{\mathbb{R}}^2(A).
\end{equation}
\end{corollary}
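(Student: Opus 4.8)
The plan is to reduce every clause of the corollary to the real‑scalar constructions already set up in the paragraphs preceding the statement (displays \eqref{gg1}–\eqref{ggg5}), together with the three ingredients that powered Theorem \ref{T2}: the telescoping Parseval computation of \S\ref{SS5}, the continuity Lemma \ref{L3}, and the oddness of $Q$ verified in the proof of \eqref{e43}. The norm estimate is essentially free: display \eqref{ggg5} already gives $\|\Phi_2(\mathbf{x};t)\|_{L^\infty}\leq K(t)\,\|\mathbf{x}\|_2$ for $\mathbf{x}\in l^2_{\mathbb R}(A)$, so for $\mathbf{x}=\mathbf{u}+i\mathbf{v}$ with $\mathbf{u},\mathbf{v}\in l^2_{\mathbb R}(A)$ the triangle inequality in $L^\infty$ applied to the complex‑extension definition \eqref{com2}, together with $\|\mathbf{u}\|_2,\|\mathbf{v}\|_2\leq\|\mathbf{x}\|_2$, produces the stated bound $2K(t)\,\|\mathbf{x}\|_2$.

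For the integral representation I would run verbatim the computation of \S\ref{SS5}, now with coefficients $i^{j-1}t\|\mathbf{x}^{(j)}\|_2$ and $i^{j-1}t\|\mathbf{y}^{(j)}\|_2$ in place of $(i\delta)^{j-1}$. Fix real $\mathbf{x},\mathbf{y}$ and $N\geq1$; Parseval's formula, the pairwise disjointness of the spectra $W_{A_j}$ from \eqref{gg6}, and orthonormality of the Walsh system reduce $\int_{\Omega_A}\bigl(\sum_{j\leq N}i^{j-1}t\|\mathbf{x}^{(j)}\|_2\,Q_{A_j}(\sigma_t\mathbf{x}^{(j)})\bigr)\bigl(\sum_{j\leq N}i^{j-1}t\|\mathbf{y}^{(j)}\|_2\,Q_{A_j}(\sigma_t\mathbf{y}^{(j)})\bigr)\,d\mathbb{P}_A$ to $\sum_{j\leq N}(-1)^{j-1}t^2\|\mathbf{x}^{(j)}\|_2\|\mathbf{y}^{(j)}\|_2\sum_{w\in W_{A_j}}(Q_{A_j}(\sigma_t\mathbf{x}^{(j)}))^\wedge(w)(Q_{A_j}(\sigma_t\mathbf{y}^{(j)}))^\wedge(w)$, which is exactly the second line of \eqref{gg8}. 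Since by \eqref{e39} each $Q_{A_j}(\cdot)$ is spectrally supported in $R_{A_j}\cup C_j$, I split the inner sum: the $R_{A_j}$‑part equals $\sum_{\alpha\in A_j}\mathbf{x}^{(j)}(\alpha)\mathbf{y}^{(j)}(\alpha)$ by \eqref{gg7}, and the $C_j$‑part equals $\sum_{\alpha\in A_{j+1}}\mathbf{x}^{(j+1)}(\alpha)\mathbf{y}^{(j+1)}(\alpha)$ by the recursion \eqref{gg2}. The alternating sum telescopes (using \eqref{gg1}) to $\sum_{\alpha\in A}\mathbf{x}(\alpha)\mathbf{y}(\alpha)$ plus a remainder of the form $(-1)^{N-1}t^2\|\mathbf{x}^{(N)}\|_2\|\mathbf{y}^{(N)}\|_2\,\rho_N$ with $|\rho_N|\leq1$; by \eqref{ggg4} and the fact that \eqref{endpt} is exactly $t\delta_t<1$, this remainder tends to $0$, and letting $N\to\infty$ gives \eqref{gg8} for real vectors. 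For complex $\mathbf{x}=\mathbf{u}+i\mathbf{v}$, $\mathbf{y}=\mathbf{p}+i\mathbf{q}$ one expands $\Phi_2$ via \eqref{com2}, applies the real identity to each of the four products $\int\Phi_2(\mathbf{u};t)\Phi_2(\mathbf{p};t)\,d\mathbb{P}_A$, etc., and matches real and imaginary parts with the expansion of $\sum_\alpha\mathbf{x}(\alpha)\mathbf{y}(\alpha)$.

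For $(l^2\to L^2)$‑continuity I would argue as in \S5: the single‑stage map $\mathbf{z}\mapsto t\|\mathbf{z}\|_2\,Q_F(\sigma_t\mathbf{z})$ is $(l^2\to L^2)$‑continuous — away from $\mathbf{0}$ it is the composition of the continuous normalization $\sigma_t$, the $(l^2\to L^2)$‑continuous map $Q$ (Lemma \ref{L3}), and scalar multiplication by $t\|\mathbf{z}\|_2$, while at $\mathbf{0}$ its value has $L^2$‑norm $\leq t\|\mathbf{z}\|_2\sqrt{\sinh(1/t^2)}\to0$ by \eqref{e42} — and composing with the norm‑decreasing Plancherel coordinate map $g\mapsto(\hat g(w))_w$ shows $\mathbf{x}^{(j-1)}\mapsto\mathbf{x}^{(j)}$ is continuous, hence each term of \eqref{gg5} depends continuously on $\mathbf{x}$; since by \eqref{gg3} and \eqref{ggg4} the $j$‑th term has $L^2$‑norm $\leq t\,e^{1/2t^2}(t\delta_t)^{j-1}\|\mathbf{x}\|_2$ uniformly on bounded sets, the series converges uniformly there and $\Phi_2(\cdot;t)$ is $(l^2\to L^2)$‑continuous, first on $l^2_{\mathbb R}(A)$ and then on $l^2(A)$ via \eqref{com2}. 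For homogeneity at $t=1$, I would show by induction on $j$ that $(\xi\mathbf{x})^{(j)}=\xi\,\mathbf{x}^{(j)}$ for $\xi\in\mathbb R$ and $\mathbf{x}\in l^2_{\mathbb R}(A)$: the base case is \eqref{gg1}, and in the inductive step one uses $\sigma_1(\xi\mathbf{z})=\operatorname{sgn}(\xi)\,\sigma_1\mathbf{z}$ (for $\xi\neq0$; the vanishing cases being trivial), the oddness $Q_F(-\mathbf{z})=-Q_F(\mathbf{z})$ (proved as in the verification of \eqref{e43}), and $\|\xi\mathbf{z}\|_2=|\xi|\,\|\mathbf{z}\|_2$, so that in \eqref{gg2} the factor $|\xi|$ and the sign $\operatorname{sgn}(\xi)$ combine to $\xi$; substituting into \eqref{gg5} gives $\Phi_2(\xi\mathbf{x})=\xi\,\Phi_2(\mathbf{x})$ for real $\xi$. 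For $\xi=a+ib\in\mathbb C$ and $\mathbf{x}\in l^2_{\mathbb R}(A)$, the real and imaginary parts of $\xi\mathbf{x}$ are the real vectors $a\mathbf{x}$ and $b\mathbf{x}$, so \eqref{com2} and the real case give $\Phi_2(\xi\mathbf{x})=\Phi_2(a\mathbf{x})+i\,\Phi_2(b\mathbf{x})=a\,\Phi_2(\mathbf{x})+ib\,\Phi_2(\mathbf{x})=\xi\,\Phi_2(\mathbf{x})$.

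The only genuinely delicate point I anticipate is the bookkeeping in the telescoping step of the integral representation — in particular, confirming that the residual term really carries the product $\|\mathbf{x}^{(N)}\|_2\|\mathbf{y}^{(N)}\|_2$, so that the geometric decay in \eqref{ggg4} (available precisely because \eqref{endpt} forces $t\delta_t<1$) annihilates it in the limit. Everything else is a routine transcription of the arguments already carried out for Lemma \ref{L3} and Theorem \ref{T2}.
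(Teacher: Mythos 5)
Your proposal is correct and follows essentially the same route as the paper's own (sketched) proof: the norm bound from \eqref{ggg5} plus the triangle inequality for the complex extension, the telescoping Parseval computation \eqref{ggg8}--\eqref{ggg9} with the remainder killed by $t\delta_t<1$, continuity by induction on the stages via Lemma \ref{L3}, and homogeneity from positive homogeneity of the recursion combined with the oddness of $Q$ (the paper separates these as \eqref{gg10} and \eqref{gg11}, while you fold the sign into a single induction, which is equivalent). One cosmetic slip: the residual factor $\rho_N$ is bounded by $\delta_t^2$ (via Cauchy--Schwarz and \eqref{gg4}), which exceeds $1$ for $t$ near $c$, but since it is a constant independent of $N$ the geometric decay of $\|\mathbf{x}^{(N)}\|_2\|\mathbf{y}^{(N)}\|_2$ still annihilates the remainder exactly as you say.
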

\begin{proof}[Sketch of proof]
Arguments are similar to those in the proof of Theorem \ref{T2}. 

 To verify $\big(l^2 \rightarrow L^2\big)$-continuity,  we use induction based on Lemma \ref{L3} together with  \eqref{gg2} to show that if $\textbf{x}_k \rightarrow \textbf{x}$ in $l_{\mathbb{R}}^2(A)$, then
\begin{equation*}
\textbf{x}^{(j)}_k \ \underset{k \to \infty}{\longrightarrow} \ \textbf{x}^{(j)} \ \ \text{in} \ l_{\mathbb{R}}^2(A_j), \ \ \ \ j = 1, \ldots \ ,
\end{equation*}
and therefore, again by Lemma \ref{L3},
\begin{equation*}
\|\textbf{x}_k^{(j)}\|_2 \  Q_{A_j}(\sigma_t \textbf{x}_k^{(j)}) \ \underset{k \to \infty}{\longrightarrow} \ \|\textbf{x}^{(j)}\|_2 \  Q_{A_j}(\sigma_t \textbf{x}_k^{(j)}) \ \ \text{in} \ L^2(\Omega_A,\mathbb{P}_A), \ \ \ \ j = 1, \ldots \ .
\end{equation*}

To verify the Parseval-like formula in \eqref{gg8}, it suffices to take $\textbf{x} \in l_{\mathbb{R}}^2(A),$ $\textbf{y} \in l_{\mathbb{R}}^2(A)$. For every integer $N \geq 1$, we have (by the usual Parseval formula)
\begin{equation} \label{ggg8}
\int_{\Omega_A} \left (\sum_{j=1}^N  i^{j-1} t \|\textbf{x}^{(j)} \|_2 \ Q_{A_j}(\sigma_t \textbf{x}^{(j)})\right ) \left(\sum_{j=1}^N  i^{j-1} t \|\textbf{y}^{(j)} \|_2 \ Q_{A_j}(\sigma_t \textbf{y}^{(j)})\right ) d \mathbb{P}_A  
\end{equation}
\begin{equation*}
= \ \sum_{j=1}^N  (-1)^{j-1} \sum_{w \in W_{A_j}} t^2 \|\textbf{x}^{(j)} \|_2 \  \|\textbf{y}^{(j)} \|_2 \ \big(Q_{A_j}(\sigma_t \textbf{x}^{(j)})\big)^{\wedge}(w) \  \big(Q_{A_j}(\sigma_t \textbf{y}^{(j)})\big)^{\wedge}(w) 
\end{equation*}

\begin{equation*}
= \ \sum_{j=1}^N  (-1)^{j-1}  \left ( \sum_{\alpha \in A_j}\textbf{x}^{(j)}(\alpha) \  \textbf{y}^{(j)}(\alpha)  \ + \   \sum_{\alpha \in C_j} t^2 \|\textbf{x}^{(j)} \|_2 \ \|\textbf{y}^{(j)} \|_2 \ \big(Q_{A_j}(\sigma_t \textbf{x}^{(j)})\big)^{\wedge}(w) \  \big(Q_{A_j}(\sigma_t \textbf{y}^{(j)})\big)^{\wedge}(w)  \right )
\end{equation*}

\begin{equation*}
= \ \sum_{j=1}^N  (-1)^{j-1}  \left ( \sum_{\alpha \in A_j}\textbf{x}^{(j)}(\alpha) \  \textbf{y}^{(j)}(\alpha)  \ + \   \sum_{\alpha \in A_{j+1}}\textbf{x}^{(j+1)}(\alpha) \  \textbf{y}^{(j+1)}(\alpha) \right )
\end{equation*}

\begin{equation} \label{ggg9}
= \ \sum_{\alpha \in A}\textbf{x}(\alpha) \  \textbf{y}(\alpha)  \  + \ (-1)^{N-1}t^2 \|\textbf{x}^{(N)}\|_2 \  \|\textbf{y}^{(N)}\|_2 \  \sum_{\alpha \in C_N} \big(Q_{A_N}(\sigma_t \textbf{x}^{(N)})\big)^{\wedge}(w) \  \big(Q_{A_N}(\sigma_t \textbf{y}^{(N)})\big)^{\wedge}(w). 
\end{equation}\\\\
We deduce \eqref{gg8} by letting $N \rightarrow \infty$ in \eqref{ggg8} and \eqref{ggg9}.

To verify homogeneity in the instance $t = 1$, note that for $\textbf{x} \in l^2_{\mathbb{R}}(A),$ by the recursive definition of $\textbf{x}^{(j)}$ in \eqref{gg2}, 
\begin{equation} \label{ggg10}
\textbf{x}^{(j)} = \|\textbf{x}\|_2 \ (\sigma \textbf{x})^{(j)}, \ \ j \geq 1,
\end{equation}
and therefore,  
\begin{equation} \label{gg10}
\Phi_2(\textbf{x}) = \|\textbf{x}\|_2 \  \Phi_2(\sigma \textbf{x}).
\end{equation}
Also, by the same argument that verified \eqref{e43},  
\begin{equation} \label{gg11}
\Phi_2(-\textbf{x}) = -\Phi_2(\textbf{x}).
\end{equation}
Now deduce \eqref{homo} by combining \eqref{gg10} and \eqref{gg11}.
\end{proof} \ \

\begin{remark} \label{notw}
\emph{The map $\Phi_2$ (unlike $\Phi$ of Theorem \ref{T2}) is not continuous with respect to the weak topologies on $l^2(A)$ and  $L^2(\Omega_A, \mathbb{P}_A)$, and \emph{a fortiori} the weak*-topology on $L^{\infty}(\Omega_A, \mathbb{P}_A)$.    For, if \   $\textbf{x}_j \rightarrow \textbf{x}$ weakly in $l^2(A)$, \ $\textbf{x} \neq \textbf{0}$, \ and  $$\liminf_j \|\textbf{x}_j\|_2  > \|\textbf{x}\|_2,$$  then  $$\widehat{Q_A(\textbf{x}_j)}   \rightarrow \widehat{Q_A(\textbf{x})} \  \ \text{on} \  \  W_A  \setminus R_A,$$ and therefore
  $$\|\textbf{x}_j\|_2\widehat{Q_A(\textbf{x}_j)}   \not\rightarrow \|\textbf{x}\|_2\widehat{Q_A(\textbf{x})} \  \ \text{on} \  \  W_A  \setminus R_A.$$\\}
\end{remark}

 \subsection{Sharper bounds} \label{sf} \ The bound  $K(1)  \approx \ 2.836$ \ is the same as the bound obtained from the construction  in the previous section; see \eqref{approx}.  The minimum of  $K(t)$ over $(c, \infty)$ is at $t_{\text{min}} \approx  1.429,$ and $K(t_{\text{min}}) \approx 2.285.$ 

  In Corollary \ref{C1},  the bounds on $K_H$  and  $K_{\mathcal{W}H}$ were obtained through a norm estimate of $\Phi$.   Sharper bounds can be obtained through norm estimates of the summands in  \eqref{gg5} and  \eqref{g5}.  Specifically, for ${\bf{x}}$ and ${\bf{y}}$ in $l^2_{\mathbb{R}}(A)$,
 \begin{equation} \label{gg12}
 \sum_{\alpha \in A} \textbf{x}(\alpha) \  \textbf{y}(\alpha)   = \sum_{j=1}^{\infty}  (-1)^{j-1}  \int_{\Omega_A}   t \|\textbf{x}^{(j)} \|_2 \ Q_{A_j}(\sigma_t \textbf{x}^{(j)}) \ t \|\textbf{y}^{(j)} \|_2 \ Q_{A_j}(\sigma_t \textbf{y}^{(j)}) d \mathbb{P}_A,  
\end{equation}
where ${\bf{x}}^{(j)}$ and ${\bf{y}}^{(j)}$ are given by \eqref{gg2}.   Therefore, by \eqref{gg3} and \eqref{ggg4},
\begin{equation}\label{imp}
K_H \leq 2t^2 e^{1/t^2} \sum_{j=1}^{\infty} (t \delta_t)^{2(j-1)} = \frac{2t^2 e^{1/t^2}}{2 - t^2 \sinh(1/t^2)} := 2L(t), \ \ \ t \ > c > 0,
\end{equation}
where $c > 0$ satisfies \eqref{endpt}

Similarly, we can achieve the same bounds on $K_{\mathcal{W}H}$ via a "$t$-perturbation" of the definition of $\Phi$ in \eqref{g5}.  (The summands in \eqref{gg5} are \emph{not} $\big(\text{weak} \ l^2 \rightarrow \text{weak} \ L^2\big)$-continuous on $B_{l^2_{\mathbb{R}}(A)},$ and hence cannot be used to estimate $K_{\mathcal{W}H}$.)  Given $t > c$, and ${\bf{x}} \in B_{l^2_{\mathbb{R}}(A)},$ we first modify the definition of ${\bf{x}}^{(j)}$ in \eqref{g2},
\begin{equation} \label{ggg2}
\textbf{x}^{(j)}(\tau_{j-1}w) =   t(t \delta_t)^{j-2}   \bigg(Q_{A_{j-1}}\big( \frac{\textbf{x}^{(j-1)}}{t(t \delta_t)^{j-2}}\big) \bigg)^{\wedge}(w), \ \ \ w \in C_{j-1}, \ \ \  \ j = 2, \ldots \ .
\end{equation} \\
Then, by induction,
\begin{equation*}
\|\textbf{x}^{(j)}\|_2 \leq (t \delta_t)^{j-1}, \ \ \ j = 1, \ldots \  .
\end{equation*}
Now define
\begin{equation} \label{modif}
\Phi({\bf{x}}; t) = \sum_{j=1}^{\infty} \ i ^{j-1}t(t \delta_t)^{j-1}  Q_{A_j}\big(\frac{ \textbf{x}^{(j)}}{t (t \delta_t)^{j-1}}\big), \ \ \ {\bf{x}} \in B_{l^2_{\mathbb{R}}(A)}.
\end{equation}
Each of the summands in \eqref{modif}  is $\big(\text{weak} \ l^2 \rightarrow \text{weak} \ L^2\big)$-continuous on $B_{l^2_{\mathbb{R}}(A)},$   and therefore, through an integral representation similar to  \eqref{gg12},  
\begin{equation}
K_{\mathcal{W}H} \leq 2L(t), \ \ \ t \ > c > 0,
\end{equation} 
where $c > 0$ satisfies \eqref{endpt}.
The minimum of $L(t)$ over $(c, \infty)$ is at  $t_{\text{min}} \approx  1.141 $, and $L(t_{\text{min}}) \approx  3.123 $ \ .

\begin{remark} \label{constants} \ \emph{Improved bounds come at a price.  Norm estimates involving  $\Phi_2(\cdot \ ; t)$ improve when   $t$ \ is "dialed" away from  $t = 1$, but then homogeneity is lost. (The would be analog of \eqref{ggg10}, with $\sigma_t$ replacing $\sigma$, is false.)  Sharper bounds not withstanding, we will keep to the case $t = 1$, mainly because homogeneity, as stated in  \eqref{homo}, will be needed later in \S \ref{s8}; see Remark \ref{R7}.ii.}
\end{remark}

  \section{\bf{More about $\Phi$}}\label{s5}
  
  \subsection{Spectrum} \ \  For ${\bf{x}} \in B_{l^2_{\mathbb{R}}(A)}$, the construction of $\Phi(\textbf{x})$  is based on  the spectral analysis of $Q_{A_j}$ and the recursive definition of $\textbf{x}^{(j)} \in B_{l^2_{\mathbb{R}}(A_j)}$.  The construction starts with the observation that for infinite $A$, $$\{S \in 2^A:  |S| < \infty\}$$
and $A$  have the same cardinality, and therefore that also 
\begin{equation*}
\text{spect} \Phi := \bigcup _{j, k=1}^{\infty}  W_{A_j,2k-1}
\end{equation*}
and $A$ have the same cardinality.  If ${\bf{x}}$ has infinite support, then $\text{spect} \Phi({\bf{x}})$ is countably infinite.
For finitely supported ${\bf{x}}$, \emph{except} for two-dimensional vectors  $\textbf{x}$,  the cardinalities of the supports of $\textbf{x}^{(j)}$  grow iterative-exponentially (very fast!) to infinity.  To make this precise, we denote  
  \begin{equation*}
 \nu(\textbf{x}) := |\{\alpha: \textbf{x}(\alpha) \neq 0\}|, 
   \end{equation*}\\
 and  let
 \begin{equation*}
 f(k) = 2^k - k - 1, \ \ \  k \in \mathbb{N}.
 \end{equation*}
  If $\nu(\textbf{x}) < \infty$,  then\\
  \begin{equation*}
  \nu(\textbf{x}^{(j)}) = f^{j-1}\big(\nu(\textbf{x})\big), \ \ j \geq 2,
  \end{equation*}
  where $f^{j-1}$ denotes the $(j-1)^{\text{st}}$ iterate of $f$. Therefore, $\textbf{x}^{(j)} = \textbf{0}$ for all $j \geq 2$ only if $\nu(\textbf{x}) \leq 2$.  If $ 2 < \nu(\textbf{x}) < \infty $,
  then $$\nu(\textbf{x}^{(j)}) = f^{j-1}\big(\nu(\textbf{x})\big) \underset{j \to \infty}{\longrightarrow} \infty.$$  That is,  $$\nu(\textbf{x}) \leq 2 \  \Rightarrow \  |\text{spect}\big(\Phi(\textbf{x})\big)| \leq 2,$$  but otherwise, $$\nu(\textbf{x}) >  2 \  \Rightarrow \  |\text{spect}\big(\Phi(\textbf{x})\big)|  = \infty.$$ \
   
  \subsection{($l^2$ $ \rightarrow$ $L^p$)-continuity} \ \  Lemma \ref{L3} has the following extension.
  \begin{lemma} \label{L4}
For all $p > 2$, there exist $K_p >0$ such that\\
\begin{equation} \label{g16}
\|Q_A(\textbf{\emph{x}}) - Q_A(\textbf{\emph{y}}) \|_{L^p(\Omega_A,\mathbb{P}_A)} \ \leq \ K_p \|\textbf{\emph{x}} - \textbf{\emph{y}} \|_2, \ \ \ \ \ \textbf{\emph{x}} \in B_{l_{\mathbb{R}}^2(A)},  \ \ \textbf{\emph{y}} \in B_{l_{\mathbb{R}}^2(A)}.
\end{equation}
\end{lemma}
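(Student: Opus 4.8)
The plan is to mimic the $L^2$ estimate in Lemma \ref{L3}, replacing Plancherel's identity by a Khinchin-type inequality adapted to the Riesz-product structure. First I would recall that, by Lemma \ref{L2}, for $\mathbf{x}\in B_{l^2_{\mathbb{R}}(A)}$ the difference $Q_A(\mathbf{x})-Q_A(\mathbf{y})$ has Walsh series supported in $\bigcup_{k\ge 0}W_{A,2k+1}$, and that the $k$-th homogeneous block has coefficients $\mathbf{x}(\alpha_1)\cdots\mathbf{x}(\alpha_{2k+1})-\mathbf{y}(\alpha_1)\cdots\mathbf{y}(\alpha_{2k+1})$ on the character $r_{\alpha_1}\cdots r_{\alpha_{2k+1}}$ (with $\alpha_1<\cdots<\alpha_{2k+1}$). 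The goal is to control the $L^p$ norm of this series in terms of the $\ell^2$ norm of its coefficient sequence, i.e. in terms of the quantity already estimated in \eqref{g15}.

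The key step is a hypercontractivity / Nelson-type bound on the dyadic group $\Omega_A$: on the subspace of Walsh polynomials of degree at most $d$, the $L^p$ norm is dominated by $C(p,d)$ times the $L^2$ norm, with $C(p,d)\le(p-1)^{d/2}$ (the Bonami--Beckner inequality for $\pm1$ random variables). Applying this degree by degree: write $Q_A(\mathbf{x})-Q_A(\mathbf{y})=\sum_{k\ge 0}D_k$, where $D_k$ is the block of degree $2k+1$; then $\|D_k\|_{L^p}\le (p-1)^{(2k+1)/2}\|D_k\|_{L^2}$, and by \eqref{g11}--\eqref{g15} we have $\|D_k\|_{L^2}^2\le \epsilon^2\,2^{2k+1}/(2k)!$ where $\epsilon=\|\mathbf{x}-\mathbf{y}\|_2$. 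Summing via the triangle inequality in $L^p$,
\begin{equation*}
\|Q_A(\mathbf{x})-Q_A(\mathbf{y})\|_{L^p}\ \le\ \epsilon\sum_{k=0}^{\infty}(p-1)^{(2k+1)/2}\,\frac{2^{(2k+1)/2}}{\sqrt{(2k)!}},
\end{equation*}
and the series on the right converges (it is dominated by a convergent power series in $2(p-1)$, since $\sqrt{(2k)!}$ grows super-exponentially), so we may set $K_p$ equal to its sum. This gives \eqref{g16} with a constant depending only on $p$, as required. One should note that this argument in fact works for all $p\ge 2$ and recovers Lemma \ref{L3} up to the value of the constant; alternatively, for $p\ge 2$ one can use the cruder Khinchin-type bound $\|D_k\|_{L^p}\le\sqrt{p-1}^{\,2k+1}\|D_k\|_{L^2}$ only on each fixed homogeneous block and argue identically.

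The main obstacle I anticipate is not the summation — that is routine once the per-block bound is in hand — but justifying the interchange of the (infinite) sum over $k$ with the $L^p$ norm, and making sure hypercontractivity is being applied to genuinely low-degree polynomials. For the first point, one truncates at $k\le N$, applies the finite triangle inequality, and passes to the limit using that the partial sums converge to $Q_A(\mathbf{x})-Q_A(\mathbf{y})$ in $L^2$ (hence a subsequence converges a.e.) together with Fatou's lemma on the $L^p$ side. For the second point, one works first with finitely supported $\mathbf{x},\mathbf{y}$ supported on a finite $F\subset A$, so that each $D_k$ is a bona fide Walsh polynomial of degree $2k+1$ on $\Omega_F$ to which Bonami--Beckner applies verbatim, and then removes the finiteness assumption by the same weak-$*$ / martingale approximation used in \S\ref{SS4} for $\mathfrak{R}_F(i\mathbf{x})$. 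With these two routine reductions the estimate closes.
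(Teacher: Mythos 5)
Your proposal is correct and follows essentially the same route as the paper: decompose $Q_A(\textbf{x})-Q_A(\textbf{y})$ into its homogeneous blocks of degree $2k+1$, bound each block in $L^2$ via \eqref{g15}, upgrade to $L^p$ by the Bonami hypercontractive inequality (the paper uses the constant $p^{(2k+1)/2}$ where you use $(p-1)^{(2k+1)/2}$, an immaterial difference), and sum the resulting series, which converges because $\sqrt{(2k)!}$ dominates any exponential. The extra care you take with the truncation and the reduction to finitely supported vectors is sound but routine, and the paper omits it.
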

\vskip0.3cm
\begin{proof}  Consider the spectral decomposition
\begin{equation} \label{g18}
Q_A(\textbf{x}) - Q_A(\textbf{y}) = \sum_{k=0}^{\infty} (Q_A(\textbf{x}) - Q_A(\textbf{y}))_k,
\end{equation}
where
\begin{equation*}
\text{spect}\big((Q_A(\textbf{x}) - Q_A(\textbf{y}))_k\big) \subset W_{A,2k+1}, \ \ k = 0, 1, \ldots  \ .
\end{equation*}\\
By the estimate in \eqref{g15}, and by an application of the $L^2-L^p$ inequalities in ~\cite{bonami1970etude} to $(Q_A(\textbf{x}) - Q_A(\textbf{y}))_k$, we obtain
\begin{equation} \label{g17}
\|(Q_A(\textbf{x}) - Q_A(\textbf{y}))_k \|_{L^p(\Omega_A,\mathbb{P}_A)} \leq  \|\textbf{x} - \textbf{y} \|_2 \left(  \frac{2^{2k+1}}{(2k)!} \right)^{\frac{1}{2}} p^{\frac{2k+1}{2}}.
\end{equation}
The estimate in   \eqref{g16}  now follows from an application of the estimate in \eqref{g17} to each of the summands in \eqref{g18}.\\\
\end{proof}

\begin{corollary} The maps $\Theta_j$ defined in \eqref{com2}, and (therefore)  $\Phi$,  are  $(l^2 \rightarrow L^p)$-continuous for every $p \geq 2.$\\\
\end{corollary}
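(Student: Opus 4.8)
The plan is to deduce $(l^2\to L^p)$-continuity of $\Phi$ from that of its summands $\Theta_j$, and to obtain the latter by composing continuity facts already established. For the reduction: since $\mathbb{P}_A$ is a probability measure, $\|\Theta_j(\textbf{x})\|_{L^p}\le\|\Theta_j(\textbf{x})\|_{L^\infty}$, so the bound \eqref{a45} gives $\|\Theta_j(\textbf{x})\|_{L^p}\le 2\sqrt{e}\,\delta^{j-1}$ \emph{uniformly} over $\textbf{x}\in B_{l^2(A)}$. Because $\delta<1$, the series $\Phi=\sum_j\Theta_j$ converges in the $L^p$-norm uniformly on $B_{l^2(A)}$; hence, once each $\Theta_j$ is shown to be $(l^2\to L^p)$-continuous, $\Phi$ is too, being a uniform limit of continuous maps between metric spaces (the partial sums $\sum_{j\le N}\Theta_j$, each a finite sum of continuous maps).

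Next I would fix $p>2$ (the case $p=2$ is already contained in the $(l^2\to L^2)$-continuity part of the proof of Theorem \ref{T2}) and a single $j\ge 1$, and record the key intermediate fact: the map $\textbf{x}\mapsto\textbf{x}^{(j)}$ is continuous from $B_{l^2_{\mathbb{R}}(A)}$ into $l^2_{\mathbb{R}}(A_j)$ for the $l^2$-norms. This I would prove by induction on $j$, just as in the proof of Theorem \ref{T2}: the base case is the isometry \eqref{g1}; for the inductive step, the recursion \eqref{g2} writes the coordinates of $\textbf{x}^{(j)}$ as the Walsh coefficients, restricted to $C_{j-1}$, of $Q_{A_{j-1}}(\textbf{x}^{(j-1)}/\delta^{j-2})$, so Plancherel (restriction of coefficients is an $l^2$-contraction) together with the $(l^2\to L^2)$-Lipschitz estimate of Lemma \ref{L3} — applicable since by \eqref{g4} the arguments $\textbf{x}^{(j-1)}/\delta^{j-2}$ lie in $B_{l^2_{\mathbb{R}}(A_{j-1})}$ — shows that $\textbf{x}\mapsto\textbf{x}^{(j)}$, and hence $\textbf{x}\mapsto\textbf{x}^{(j)}/\delta^{j-1}\in B_{l^2_{\mathbb{R}}(A_j)}$, is $l^2$-continuous.

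Finally I would invoke Lemma \ref{L4}: $Q_{A_j}$ is $(l^2\to L^p)$-continuous on $B_{l^2_{\mathbb{R}}(A_j)}$. Composing it with the $l^2$-continuous map $\textbf{x}\mapsto\textbf{x}^{(j)}/\delta^{j-1}$ of the previous paragraph, and with the (contractive, hence continuous) maps $\textbf{x}\mapsto\textbf{u}$ and $\textbf{x}\mapsto\textbf{v}$ sending $\textbf{x}\in B_{l^2(A)}$ to its real and imaginary parts $\textbf{u},\textbf{v}\in B_{l^2_{\mathbb{R}}(A)}$, one reads off from $\Theta_j(\textbf{x})=(i\delta)^{j-1}\bigl(Q_{A_j}(\textbf{u}^{(j)}/\delta^{j-1})+iQ_{A_j}(\textbf{v}^{(j)}/\delta^{j-1})\bigr)$ that $\Theta_j$ is $(l^2\to L^p)$-continuous; together with the first paragraph this yields the corollary. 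I do not expect a genuine obstacle: the only step needing care is the induction for $\textbf{x}\mapsto\textbf{x}^{(j)}$, and that is the very induction already used for the $L^2$ case in the proof of Theorem \ref{T2}, now fed into Lemma \ref{L4} rather than Lemma \ref{L3} at the final step.
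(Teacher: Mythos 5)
Your proposal is correct and follows the route the paper intends: the corollary is stated without proof precisely because it is the argument of \S5.5 (induction on $j$ via \eqref{g2}, Plancherel, and Lemma \ref{L3} to get $l^2$-continuity of ${\bf{x}}\mapsto{\bf{x}}^{(j)}$, then the Lipschitz estimate for $Q_{A_j}$ at the last step) with Lemma \ref{L4} substituted for Lemma \ref{L3}, plus the uniform $L^p$-convergence of $\sum_j\Theta_j$ coming from \eqref{a45}. Your explicit checks — that \eqref{g4} keeps the arguments ${\bf{x}}^{(j)}/\delta^{j-1}$ inside $B_{l^2_{\mathbb{R}}(A_j)}$ where Lemma \ref{L4} applies, and that the real/imaginary-part maps are contractive — are exactly the points that need verifying, and they go through.
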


  \subsection{($l^2$ $ \rightarrow$ $L^{\infty}$)-continuity?} \label{contQ}\ \ 
Although bounded in the $L^{\infty}(\Omega_{A}, \mathbb{P}_{A})$-norm,  $\Phi$ is \emph{not} continuous with respect to it. To verify this, we use the following dichotomy.
 
 \begin{proposition} \label{P3} \ 
 For $\textbf{\emph{x}} \in B_{l^2_{\mathbb{R}}(A)}$,
 \begin{equation*}
 Q_A(\textbf{\emph{x}}) \in C(\Omega_A)  \ \Longleftrightarrow  \ \textbf{\emph{x}} \in l^1_{\mathbb{R}}(A).
  \end{equation*}
 \end{proposition}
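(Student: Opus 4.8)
The plan is to prove the two implications separately, the easy one first.

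For the direction $\textbf{x}\in l^1_{\mathbb R}(A)\Rightarrow Q_A(\textbf{x})\in C(\Omega_A)$, I would argue that on $l^1$ the Riesz product converges in $L^\infty$-norm, not merely weak${}^*$. Concretely, if $F_n\subset F_{n+1}$ is an increasing sequence of finite sets with union the support of $\textbf{x}$, then the telescoping estimate
\begin{equation*}
\big\|\mathfrak R_{F_{n+1}}(i\textbf{x})-\mathfrak R_{F_n}(i\textbf{x})\big\|_{L^\infty}
\ \leq\ \Big(\prod_{\alpha\in F_{n+1}\setminus F_n}(1+|\textbf{x}(\alpha)|)-1\Big)\,\big\|\mathfrak R_{F_n}(i\textbf{x})\big\|_{L^\infty}
\end{equation*}
together with the uniform bound $\|\mathfrak R_{F_n}(i\textbf{x})\|_{L^\infty}\le e^{\|\textbf{x}\|_2^2/2}$ from \eqref{e36a} and the convergence of $\prod_\alpha(1+|\textbf{x}(\alpha)|)$ (equivalent to $\sum|\textbf{x}(\alpha)|<\infty$) shows $(\mathfrak R_{F_n}(i\textbf{x}))$ is Cauchy in $L^\infty$. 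Each $\mathfrak R_{F_n}(i\textbf{x})$ is a Walsh polynomial, hence continuous, so the limit $\mathfrak R_A(i\textbf{x})$ lies in $C(\Omega_A)$, and therefore so does its imaginary part $Q_A(\textbf{x})$.

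For the converse, $Q_A(\textbf{x})\in C(\Omega_A)\Rightarrow\textbf{x}\in l^1_{\mathbb R}(A)$, I would argue by contraposition: assuming $\sum_\alpha|\textbf{x}(\alpha)|=\infty$, I want to exhibit a point of discontinuity, equivalently to show the Walsh series of $Q_A(\textbf{x})$ is not the Walsh series of a continuous function. The natural tool is a Riesz-product / lacunarity argument testing against the measures $\mathfrak R_G(\textbf{s})$ for suitable sign patterns $\textbf{s}\in B_{l^\infty_{\mathbb R}(G)}$, $G$ finite: by Lemma \ref{L2a} (or directly by Parseval) one computes $\int_{\Omega_A}Q_A(\textbf{x})\,\mathfrak R_G(\textbf{s})\,d\mathbb P_A$ and reads off the linear term $\tfrac12\sum_{\alpha\in G}\textbf{x}(\alpha)\textbf{s}(\alpha)$ plus higher-order corrections of size $O(\|\textbf{x}|_G\|_2^{\,3})$ that are controlled when $\|\textbf{x}|_G\|_2$ is small; choosing $\textbf{s}(\alpha)=\operatorname{sgn}\textbf{x}(\alpha)$ makes the linear term equal to $\tfrac12\sum_{\alpha\in G}|\textbf{x}(\alpha)|$. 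Applied to a sequence of disjoint blocks $G_m$ on which $\sum_{\alpha\in G_m}|\textbf{x}(\alpha)|$ is bounded below but $\|\textbf{x}|_{G_m}\|_2\to 0$ (possible precisely because $\textbf{x}\in l^2\setminus l^1$), this produces translates or modulations of $Q_A(\textbf{x})$ that do not converge uniformly, contradicting continuity; alternatively one packages the same computation as: the Walsh coefficients of $Q_A(\textbf{x})$ restricted to $R_A$ are $\textbf{x}(\alpha)$, and a continuous function whose spectrum meets $R_A$ in a set $\{r_\alpha\}$ must have those Rademacher coefficients absolutely summable — a Sidon-type estimate, since $R_A$ is a Sidon set and the projection onto a Sidon spectrum of a continuous function has absolutely convergent transform.

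The main obstacle is the converse direction, and specifically making rigorous the passage from "Rademacher part of the transform not in $l^1$" to "not continuous." The cleanest route is to invoke Sidonicity of $R_A$: for $f\in C(\Omega_A)$ the partial sum / projection onto $R_A$ need not itself be continuous, so one cannot directly say $\widehat f|_{R_A}\in\ell^1$; instead I would combine the $\Lambda(p)$ behaviour of $Q_A$ from Lemma \ref{L4} with a Riesz-product test function as above, which sidesteps the need for boundedness of the Rademacher projection on $C(\Omega_A)$. I expect this block-decomposition estimate — choosing the blocks $G_m$ and bounding the higher-order Walsh terms of $Q_{G_m}(\textbf{x}|_{G_m})$ uniformly — to be the only genuinely delicate computation; everything else is bookkeeping with the formulas already established in \S\ref{s3}.
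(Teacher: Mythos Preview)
Your forward direction is correct; the paper does it slightly more directly by observing that the Walsh series of $Q_A(\textbf{x})$ is absolutely convergent with $\sum_w|\widehat{Q_A(\textbf{x})}(w)|\le\sinh\|\textbf{x}\|_1$, but your Cauchy-in-$L^\infty$ argument works equally well.

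The converse has a genuine gap. Your claimed bound on the higher-order corrections in $\int Q_A(\textbf{x})\,\mathfrak R_G(\textbf{s})\,d\mathbb P_A$ is wrong: those terms are controlled by the $l^1$-norm of $\textbf{x}|_G$, not the $l^2$-norm. Concretely, take $|G|=n^2$ and $\textbf{x}(\alpha)=1/n$ on $G$; then $\|\textbf{x}|_G\|_2=1$ is small in your sense, but the third-order contribution is of size roughly $\binom{n^2}{3}n^{-3}\sim n^3$, not $O(1)$. So on blocks with $\sum_{\alpha\in G}|\textbf{x}(\alpha)|\ge c$ you cannot simultaneously make the higher-order part negligible, and the contradiction does not materialise. (Your instinct that the Sidon route is blocked because the Rademacher projection is unbounded on $C(\Omega_A)$ is correct.)

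The paper's route for the converse avoids this entirely and is much shorter. Assume without loss of generality $|\textbf{x}(\alpha)|<1$. If $Q_A(\textbf{x})\in C(\Omega_A)$, then the finite partial products $Q_{E_n}(\textbf{x})$ (conditional expectations onto the sub-$\sigma$-fields generated by finite $E_n\uparrow\operatorname{supp}\textbf{x}$) converge to $Q_A(\textbf{x})$ uniformly; this is a general fact about continuous functions on $\Omega_A$. Since the Riesz products have constant modulus $\prod(1+\textbf{x}(\alpha)^2)^{1/2}$, one can pass to logarithms: $\sum_{\alpha\in E_n}\log(1+i\textbf{x}(\alpha)r_\alpha)$ converges uniformly. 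The Taylor remainder $\sum_\alpha\bigl(\log(1+i\textbf{x}(\alpha)r_\alpha)-i\textbf{x}(\alpha)r_\alpha\bigr)$ is absolutely and uniformly summable because $\textbf{x}\in l^2$, so one concludes that $\sum_{\alpha\in E_n}\textbf{x}(\alpha)r_\alpha$ converges uniformly on $\Omega_A$, which forces $\sum_\alpha|\textbf{x}(\alpha)|<\infty$.
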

 \begin{proof}
 If $\textbf{x} \in l^1_{\mathbb{R}}(A)$, then
 \begin{equation*}
 Q_A(\textbf{x}) \sim  \sum_{k=0}^{\infty}(-1)^k \bigg(\sum_{\substack{w \in W_{A,2k+1} \\[1mm] w = r_{\alpha_1} \cdots \ r_{\alpha_{2k+1}}}}\textbf{x}(\alpha_1) \cdots  \textbf{x}(\alpha_{2k+1}) r_{\alpha_1} \cdots r_{\alpha_{2k+1}}\bigg),
 \end{equation*}
 where
 \begin{equation*}
 \sum_{\substack{w \in W_{A,2k+1} \\[1mm] w = r_{\alpha_1} \cdots \ r_{\alpha_{2k+1}}}} 
 |\textbf{x}(\alpha_1) \cdots \textbf{x}(\alpha_{2k+1)}| \  \leq \ \frac{\|\textbf{x}\|_1^{2k+1}}{(2k+1)!}, \ \  \ k \geq 0
 \end{equation*}\\
 (cf. \eqref{e45} and \eqref{e44}),  which implies
 \begin{equation} \label{a1}
 \sum_{w \in W_A} |\widehat{Q_A(\textbf{x})}(w)| \ \leq \ \sinh(\|\textbf{x}\|_1),
 \end{equation} 
 and thus $ Q_A(\textbf{x}) \in C(\Omega_A)$.
 
 Now suppose $ Q_A(\textbf{x}) \in C(\Omega_A)$, and without loss of generality assume $|\textbf{x}(\alpha)| < 1$ for all $\alpha \in A$.   Let $E_n, \ n = 1, \dots, $ be a sequence of finite subsets of $A$, monotonically increasing to $E := \text{support} (\textbf{x})$. Then, $Q_A(\textbf{x}) = Q_E(\textbf{x})$, and (e.g., by~\cite[Corollary VII.9]{blei:2001}) $Q_{E_n}(\textbf{x}) \underset{n \to \infty}{\longrightarrow}Q_E(\textbf{x})$ uniformly on $\Omega_A$.  Therefore,
 \begin{equation*}
 \sum_{\alpha \in E_n}\log \big (1+\textbf{x}(\alpha)r_{\alpha} \big) \ \underset{n \to \infty}{\longrightarrow}  \ \sum_{\alpha \in E}\log \big (1+\textbf{x}(\alpha)r_{\alpha} \big)
 \end{equation*}
 uniformly on $\Omega_A$, which implies
  \begin{equation*}
 \sum_{\alpha \in E_n} \textbf{x}(\alpha)r_{\alpha} \ \underset{n \to \infty}{\longrightarrow}  \ \sum_{\alpha \in E}\textbf{x}(\alpha)r_{\alpha}
 \end{equation*}
 uniformly on $\Omega_A$, and hence $\sum_{\alpha \in A} |\textbf{x}(\alpha)| < \infty$.\\
 \end{proof}
 
 \begin{corollary} \label{C3} \ 
 For $\textbf{\emph{x}} \in B_{l^2(A)}$,
  \begin{equation*}
 \Phi(\textbf{\emph{x}}) \in C(\Omega_{A}) \Longleftrightarrow \textbf{\emph{x}} \in l^1(A).
  \end{equation*}
 \end{corollary}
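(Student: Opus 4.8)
The plan is to reduce the complex case to the real-valued situation already handled by Proposition \ref{P3}, and to exploit the direct-sum structure of $\Phi$ described in the construction of \S\ref{s4}. First I would treat the real case: let $\textbf{x}\in B_{l^2_{\mathbb R}(A)}$. By definition \eqref{g5}, $\Phi(\textbf{x})=\sum_{j=1}^{\infty}(i\delta)^{j-1}Q_{A_j}\!\big(\textbf{x}^{(j)}/\delta^{j-1}\big)$, and since the $A_j$ are pairwise disjoint the summands have mutually disjoint spectra inside $W_{A_j}$. Hence $\Phi(\textbf{x})\in C(\Omega_A)$ forces each summand $Q_{A_j}\!\big(\textbf{x}^{(j)}/\delta^{j-1}\big)$ to be continuous — one can see this by noting that the conditional expectation onto the sub-$\sigma$-field generated by $\{r_\alpha:\alpha\in A_j\}$ maps $C(\Omega_A)$ into $C(\Omega_{A_j})$, and this projection picks out exactly the $j$-th summand. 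Conversely, if every summand lies in $C(\Omega_{A_j})$ and the series converges in $L^\infty$-norm (which, by \eqref{a45}-type estimates, it does whenever each term is continuous and bounded by $2\delta^{j-1}\sqrt e$), the sum is a uniform limit of continuous functions, hence continuous.

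Next I would apply Proposition \ref{P3} termwise. The first summand is continuous iff $\textbf{x}^{(1)}\in l^1(A_1)$, i.e.\ iff $\textbf{x}\in l^1(A)$ by \eqref{g1}. It remains to check that once $\textbf{x}\in l^1(A)$, all the higher $\textbf{x}^{(j)}$ are automatically in $l^1$ as well, so that every summand is continuous and the whole series converges uniformly. This follows by induction using the recursive definition \eqref{g2}: by \eqref{a1} in the proof of Proposition \ref{P3}, $\sum_{w\in W_{A_j}}|\widehat{Q_{A_j}(\textbf{x}^{(j)}/\delta^{j-1})}(w)|\le\sinh(\|\textbf{x}^{(j)}\|_1/\delta^{j-1})<\infty$ whenever $\textbf{x}^{(j)}\in l^1(A_j)$, and the entries of $\textbf{x}^{(j+1)}$ are exactly (rescaled) such Walsh coefficients restricted to $C_j$; thus $\|\textbf{x}^{(j+1)}\|_1\le\delta^{j-1}\sinh(\|\textbf{x}^{(j)}\|_1/\delta^{j-1})$, and $\textbf{x}^{(j)}\in l^1\Rightarrow\textbf{x}^{(j+1)}\in l^1$. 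Combined with the bound $\|\textbf{x}^{(j)}\|_2\le\delta^{j-1}$ from \eqref{g4}, which controls the $L^\infty$-norms of the summands, uniform convergence of \eqref{g5} is secured.

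For arbitrary $\textbf{x}\in B_{l^2(A)}$, write $\textbf{x}=\textbf{u}+i\textbf{v}$ with $\textbf{u},\textbf{v}\in B_{l^2_{\mathbb R}(A)}$, so that $\Phi(\textbf{x})=\Phi(\textbf{u})+i\Phi(\textbf{v})$ by \eqref{com2}. Then $\Phi(\textbf{x})\in C(\Omega_A)$ iff both $\Phi(\textbf{u})$ and $\Phi(\textbf{v})$ are in $C(\Omega_A)$ (take real and imaginary parts, noting $Q_{A_j}$ is real-valued so the decomposition into $\Phi(\textbf{u})$ and $\Phi(\textbf{v})$ is not literally into real and imaginary parts — here I would instead invoke the disjointness of the $A_j$-spectra and project onto the relevant coordinates to separate $\Phi(\textbf{u})$ from $i\Phi(\textbf{v})$ at each level $j$). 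By the real case this holds iff $\textbf{u}\in l^1(A)$ and $\textbf{v}\in l^1(A)$, i.e.\ iff $\textbf{x}\in l^1(A)$. The main obstacle I anticipate is the bookkeeping in the "$\Rightarrow$" direction — rigorously justifying that continuity of the total sum $\Phi(\textbf{x})$ forces continuity of each individual summand $Q_{A_j}(\cdot)$; the cleanest route is the conditional-expectation (spectral projection) argument sketched above, using that these projections are contractions on $C(\Omega_A)$ because they are convolutions against nonnegative kernels (finite products of $\frac12(r_0+r_\alpha\otimes r_\alpha)$-type factors), or equivalently citing the Walsh-analytic fact that restriction of spectrum to $W_{A_j}$ preserves $C(\Omega_A)$.
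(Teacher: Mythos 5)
Your proposal is correct and follows essentially the same route as the paper: the forward direction by induction on the recursive definition together with the absolute-convergence bound \eqref{a1} and uniform convergence of the series, and the converse by projecting onto the spectrum $W_{A_1}$ (which the paper justifies tersely by the independence of the $W_{A_j}$, and which you flesh out via a conditional-expectation argument) followed by Proposition \ref{P3}. Note that for the converse only the first summand need be extracted, and in the complex case the separation of $\Phi(\textbf{u})$ from $i\Phi(\textbf{v})$ at level $j=1$ is literally a real/imaginary-part decomposition since $Q_{A_1}$ is real-valued, so the complication you anticipate there does not arise.
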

 \begin{proof}
 If $ \textbf{x} \in l^1_{\mathbb{R}}(A)$, then by the recursive definition of $\textbf{x}^{(j)}$ and \eqref{a1}, $$\bigg(Q_{A_j}\big(\frac{\textbf{x}^{(j)}}{\delta^{j-1}}\big)\bigg)^{\wedge} \in l^1(W_{A_j}),$$ and therefore $$Q_{A_j}\big(\frac{\textbf{x}^{(j)}}{\delta^{j-1}}\big) \in C(\Omega_A),  \ \ j \geq 1.$$ Therefore, $\Phi(\textbf{x}) \in C(\Omega_{A})$.
 
 Conversely, if $\Phi(\textbf{x}) \in C(\Omega_{A})$, then $Q_{A_1}(\textbf{x}^{(1)}) \in C(\Omega_{A_1})$ (because the $W_{A_j}$ are independent), and therefore $\textbf{x} \in l^1(A)$ (by Proposition \ref{P3}).\\
   \end{proof}
   
   \begin{corollary} \label{C4}
    $\Phi$ is not $(l^2 \to L^{\infty})$-continuous.   
\end{corollary}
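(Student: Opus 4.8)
The plan is to exhibit a single sequence $\{{\bf x}_k\}$ in $B_{l^2(A)}$ converging in $l^2$-norm to a limit ${\bf x}$, for which $\Phi({\bf x}_k)$ fails to converge to $\Phi({\bf x})$ in $L^\infty(\Omega_A,\mathbb{P}_A)$. The natural candidate is a sequence whose limit lies in $l^2(A)\setminus l^1(A)$: take, say, ${\bf x}(\alpha_n) = c/n$ for a fixed countable subset $\{\alpha_n\} \subset A$ (with a normalizing constant $c$ so that ${\bf x} \in B_{l^2}$) and ${\bf x} = 0$ elsewhere, and let ${\bf x}_k$ be the truncation of ${\bf x}$ to its first $k$ coordinates. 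Then each ${\bf x}_k$ is finitely supported, hence in $l^1(A)$, so by Corollary \ref{C3} we have $\Phi({\bf x}_k) \in C(\Omega_A)$; and $\|{\bf x}_k - {\bf x}\|_2 \to 0$ since ${\bf x} \in l^2(A)$.

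The key step is the following contrapositive argument. Suppose, for contradiction, that $\Phi$ were $(l^2 \to L^\infty)$-continuous. Then $\Phi({\bf x}_k) \to \Phi({\bf x})$ in $L^\infty$-norm. Since each $\Phi({\bf x}_k)$ is continuous on the compact group $\Omega_A$ and $C(\Omega_A)$ is closed in $L^\infty(\Omega_A,\mathbb{P}_A)$ (uniform limits of continuous functions are continuous, and the $L^\infty$-norm agrees with the sup-norm on $C(\Omega_A)$), it would follow that $\Phi({\bf x}) \in C(\Omega_A)$. But ${\bf x} \notin l^1(A)$, so by Corollary \ref{C3} we have $\Phi({\bf x}) \notin C(\Omega_A)$ — a contradiction. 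Hence $\Phi$ is not $(l^2 \to L^\infty)$-continuous.

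The only point requiring a word of care is that $C(\Omega_A)$ is genuinely a closed subspace of $L^\infty(\Omega_A,\mathbb{P}_A)$: one must note that for a continuous function on $\Omega_A$ the essential supremum relative to Haar measure $\mathbb{P}_A$ coincides with the true supremum (since $\mathbb{P}_A$ has full support on $\Omega_A$), so $L^\infty$-convergence of the $\Phi({\bf x}_k)$ is genuine uniform convergence on $\Omega_A$, and the uniform limit of continuous functions is continuous. This is the "main obstacle," though it is entirely routine; with it in hand the corollary is immediate from Corollary \ref{C3}. In fact the argument shows more: $\Phi$ fails to be $(l^2 \to L^\infty)$-continuous at \emph{every} point of $B_{l^2(A)} \cap l^1(A)$, since each such point is an $l^2$-limit of finitely supported vectors while the $L^\infty$-neighborhoods of $\Phi$ of that point are constrained to stay within $C(\Omega_A)$, whereas nearby $l^2$-points outside $l^1(A)$ map outside $C(\Omega_A)$.
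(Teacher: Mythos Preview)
Your main argument is correct and essentially identical to the paper's: approximate a fixed ${\bf x}\in B_{l^2}\setminus l^1$ by finitely supported truncations, use Corollary~\ref{C3} to place each $\Phi({\bf x}_k)$ in $C(\Omega_A)$ and $\Phi({\bf x})$ outside it, and invoke closedness of $C(\Omega_A)$ in $L^\infty(\Omega_A,\mathbb{P}_A)$ for the contradiction. The paper's proof is the same, only more terse; your explicit remark that $\mathbb{P}_A$ has full support (so the $L^\infty$-norm restricts to the sup-norm on $C(\Omega_A)$) is a welcome clarification.

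One correction to your final paragraph: the argument actually exhibits discontinuity at the \emph{limit} point ${\bf x}\notin l^1(A)$, not at points of $B_{l^2}\cap l^1(A)$. Your sketched reasoning for the latter---that nearby $l^2$-points outside $l^1$ map outside $C(\Omega_A)$---does not produce a contradiction: the complement of $C(\Omega_A)$ in $L^\infty$ is open, so a sequence lying outside $C(\Omega_A)$ can converge in $L^\infty$ to an element of $C(\Omega_A)$ without any obstruction. Drop that last sentence, or reword it to say discontinuity occurs at every point of $B_{l^2}\setminus l^1$.
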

 \begin{proof}
Every $\textbf{x} \in l^2(A)$ is an $l^2$-norm limit of finitely supported members of $l^2(A)$.  But if $\textbf{x} \in B_{l^2(A)}$ has finite support, then  $\Phi(\textbf{x}) \in C(\Omega_{A})$, and   therefore, $(l^2 \to L^{\infty})$-continuity of $\Phi$  would contradict Corollary \ref{C3}.
 
 \end{proof}

\subsection{Linearization} \label{Lin} \ \ Consider the map  $$\Phi_2: l^2(A) \rightarrow L^{\infty}(\Omega_{A}, \mathbb{P}_{A})$$  defined in \eqref{e31e}.  This map is non-linear, but its image   $$\Phi_2[l^2] := \{\Phi_2(\textbf{x}): \textbf{x} \in l^2(A)\}$$ is norm-closed in $L^2(\Omega_{A} \mathbb{P}_{A})$ (via $(l^2 \to L^2)$-continuity), and (therefore) also norm-closed in  $L^{\infty}(\Omega_{A}, \mathbb{P}_{A})$.  We take the linear span of $\Phi_2[l^2]$, with the following equivalence relation in it:   for $f$ and  $g$  in  $\text{span}\big( \Phi_2[l^2]\big)$, 
 \begin{equation} \label{a8}
 f \equiv g \ \Longleftrightarrow  \  \  \int_{\Omega_{A}}f \ \Phi_2(\textbf{z}) d \mathbb{P}_A =  \int_{\Omega_{A}}g \ \Phi_2(\textbf{z}) d \mathbb{P}_A,  \ \ \  \forall \textbf{z}  \in l^2(A).
 \end{equation} 
 By \eqref{e20}, 
 \begin{equation*}
 f \equiv g \ \Longleftrightarrow \ \hat{f}|_{R_{A_1}} = \hat{g}|_{R_{A_1}}.
 \end{equation*}
 In particular, if $f  \in \text{span}\big(\Phi_2[l^2]\big)$, then there is a unique $\textbf{x} \in l^2(A)$ such that $f \equiv \Phi_2(\textbf{x}).$  We take the quotient space  
\begin{align*}
   H & :=\text{span}( \Phi_2[l^2])/ \equiv \\
   & = \text{span}( \Phi_2[l^2])/\{f \in  \text{span}( \Phi_2[l^2]): \hat{f}|_{R_{A_1}} = 0 \},
   \end{align*}   
  and consider the quotient map $\Tilde{\Phi}$,  
  \begin{equation*}
 \Tilde{\Phi}: l^2(A) \rightarrow H,
  \end{equation*}
where  $\Tilde{\Phi}(\textbf{x})$ is the equivalence class in $H$ whose class representative is $\Phi_2(\textbf{x})$.  The dot product $ \langle \cdot, \cdot \rangle_{l^2(A)}$ in $l^2(A)$ becomes the inner product $\gamma$ in $H$,  
 \begin{align} \label{a9}
 \gamma(f,g) & := \int_{\Omega_A}\Tilde{\Phi}(\textbf{x}_f)\Tilde{\Phi}(\bar{\textbf{x}}_g) d\mathbb{P}_A, \ \ f \in H, \ \ g \in H,\\\
 & =  \langle \textbf{x}_f, \textbf{x}_g \rangle_{l^2(A)},
 \end{align}
 where $\textbf{x}_f \in l^2(A)$ is given by
 \begin{equation*}
 \textbf{x}_f(\alpha) = \hat{f}(r_{\tau_0 \alpha}), \ \ \alpha \in A,
 \end{equation*}
 and  $\tau_0$ is defined in \eqref{g19}.   Equipped with $\gamma$, the resulting Hilbert space $H$ is unitarily equivalent (via $\Tilde{\Phi}$) to $l^2(A)$: \ for 
 $\textbf{x}$ and $\textbf{y}$ in $l^2(A)$, 
 \begin{equation}\label{a6}
 \begin{split}
 \langle \textbf{x},\textbf{y} \rangle  &\ =  \int_{\Omega_A}\Tilde{\Phi}(\textbf{x})\Tilde{\Phi}(\bar{\textbf{y}}) d\mathbb{P}_A \\\
 & \ \ \ \ \ \ \ \ \ \ \ \ \ \ \ \ \ \ \ \  (\text{by \eqref{e20}}) \\\\
 & \ =  \ \gamma \big(\Tilde{\Phi}(\textbf{x}),\Tilde{\Phi}(\textbf{y})\big)\\\
 & \ \ \ \ \ \ \ \ \ \  \ \ \ \ \ \ \ \ \ \ \ (\text{by \eqref{a9}}).
 \end{split}
 \end{equation}
 Moreover,  the Hilbert space norm induced by $\gamma$ on $H$ is equivalent (also via $\Tilde{\Phi}$) to the quotient $L^{\infty}$-norm on $H$, which is defined by
  \begin{equation}\label{a10}
 \|f\|_{\Tilde{L}^{\infty}} :=  \inf\{\|g\|_{L^{\infty}}: g \in \text{span}(\Tilde{\Phi}[l^2]), \ g \equiv f\}, \ \  f \in H.
  \end{equation}
  
  In summary,  the quotient map $\Tilde{\Phi}: l^2(A) \rightarrow H$ is linear, and satisfies 
  \begin{equation}\label{a7}
  \langle \textbf{x}, \textbf{y} \rangle \ =  \ \int_{\Omega_A}\Tilde{\Phi}(\textbf{x})\Tilde{\Phi}(\bar{\textbf{y}}) d\mathbb{P}_A, \ \ \ \ \ {\bf{x}} \in l^2(A), \ {\bf{y}} \in l^2(A), 
  \end{equation}
  where integrands are equivalence class representatives, and the integral is well defined by   \eqref{a8}.  Like $\Phi_2$,  the quotient map $\Tilde{\Phi}$ does not commute with complex conjugation.  In particular, the norm induced on $H$ by $\gamma$ is \emph{not} the same as the norm induced by the usual inner product in $L^2(\Omega_A,\mathbb{P}_A)$.  (See Remark \ref{R4e}.ii.)  Following \eqref{e31}, we also have 
  
  \begin{equation}\label{a11}
 \| \Tilde{\Phi}(\textbf{x})\|_{\Tilde{L}^{\infty}} \leq K \|\textbf{x}\|_2, \ \ \ \textbf{x} \in l^2(A).
 \end{equation}\\
 Finally,  the quotient map $\Tilde{\Phi}$ is continuous with respect to the norm topologies in the Hilbert spaces $l^2(A)$ and $H$, via the $(l^2 \rightarrow L^2)$-continuity of  $\Phi_2$ defined in \eqref{e31e}, and  continuous with respect to their weak topologies, via the unitary equivalence in \eqref{a6}.\\\ 

\section{\bf{Integrability}}\label{sI}    Next we consider vector-valued integrability of $\Phi$ and $\Phi \otimes \Phi$  on  $B_{l^2}$ and $B_{l^2} \times B_{l^2}$,  where $\Phi$ (of Theorem \ref{T2}) is viewed as an  $L^{\infty}(\Omega_A, \mathbb{P}_A)$-valued function on the unit ball $B_{l^2} := B_{l^2(A)}$ with the weak topology.  
 
 \subsection{Integrability of $\Phi$} \ We let  $M(B_{l^2})$ be the space of complex measures on $\mathscr{B}_{B_{l^2}}$, where $\mathscr{B}_{B_{l^2}}$ is the Borel field in  $B_{l^2}$ generated by the weak topology on $l^2(A)$.  We proceed to define      
 \begin{equation}\label{b3}
 \int_{B_{l^2}} \Phi({\bf{x}})\mu(d{\bf{x}}), \ \ \ \mu \in M(B_{l^2}),
 \end{equation}
 as $L^{\infty}$-valued integrals, specifically as weak* limits in $L^{\infty}(\Omega_A, \mathbb{P}_A)$.  
 
 For certain $\mu \in M(B_{l^2})$, e.g., for discrete measures, or measures supported on $l^1(A)$, the integrals in \eqref{b3} can be defined numerically, point-wise almost surely on $(\Omega_A, \mathbb{P}_A)$.  But for arbitrary $\mu \in M(B_{l^2})$, because the integrands $\Phi(\cdot)(\omega)$ in \eqref{b3} cannot be shown to be $\mathscr{B}_{B_{l^2}}$-measurable, say for almost all $\omega \in (\Omega_A,\mathbb{P}_A)$, these integrals in general can be obtained only "weakly." 

\begin{proposition} \label{P6}
For $\mu \in M(B_{l^2})$, the Walsh series
\begin{equation}\label{a12}
\begin{split}
\int_{B_{l^2}} \Phi({\bf{x}}) \mu (d{\bf{x}}) &:=  \sum_{w \in W_A} \bigg ( \int_{B_{l^2}} \widehat{\Phi({\bf{x}})}(w) \mu(d{\bf{x}}) \bigg)w\\\
& =  \sum_{j = 1}^{\infty}  \sum_{w \in W_{A_j}} \bigg ( \int_{B_{l^2}} \widehat{\Theta_j({\bf{x}}})(w) \mu(d{\bf{x}}) \bigg )w
\end{split} 
\end{equation}
represents an element of $L^{\infty}(\Omega_A, \mathbb{P}_A)$, such that
\begin{equation} \label{a13}
 \big \| \int_{B_{l^2}} \Phi({\bf{x}}) \mu (d{\bf{x}})   \big \|_{L^{\infty}} \leq \big ( \frac{2\sqrt{e}}{1 - \delta} \big) \| \mu \|_M,
\end{equation}
where $\Theta_j$ is given in \eqref{com2}, $\|\cdot\|_M$ is the usual total variation norm, and $\delta$ is given in \eqref{e42e}.\\\

\end{proposition}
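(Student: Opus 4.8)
The plan is to read the Walsh series in \eqref{a12} as the Walsh transform of a bounded linear functional on $L^{1}(\Omega_A,\mathbb{P}_A)$, and then to invoke the duality between $L^{1}(\Omega_A,\mathbb{P}_A)$ and $L^{\infty}(\Omega_A,\mathbb{P}_A)$ to recover the asserted element of $L^{\infty}$ with the stated norm. Throughout set $K := \frac{2\sqrt e}{1-\delta}$, the constant of \eqref{e31} and \eqref{const}.

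First I would check that the coefficients $a_{w} := \int_{B_{l^2}}\widehat{\Phi({\bf{x}})}(w)\,\mu(d{\bf{x}})$ are well defined. By Theorem \ref{T2}(iv), $\Phi$ is continuous from $B_{l^2}$ with its weak topology into $L^{\infty}(\Omega_A,\mathbb{P}_A)$ with its weak* topology; since each Walsh character $w$ belongs to $L^{1}(\Omega_A,\mathbb{P}_A)$, the scalar function ${\bf{x}}\mapsto\widehat{\Phi({\bf{x}})}(w)=\int_{\Omega_A}\Phi({\bf{x}})\,w\,d\mathbb{P}_A$ is weakly continuous on $B_{l^2}$, hence $\mathscr{B}_{B_{l^2}}$-measurable, and it is bounded by $K$ by \eqref{e31}. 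Thus $a_{w}$ exists and $|a_{w}|\le K\|\mu\|_{M}$ for every $w\in W_A$. The identification of the two series in \eqref{a12} is then immediate from \eqref{com2}: $\Phi({\bf{x}})=\sum_{j}\Theta_{j}({\bf{x}})$ with $\text{spect}\,\Theta_{j}({\bf{x}})\subset W_{A_j}$ and the $W_{A_j}$ pairwise disjoint, so $\widehat{\Phi({\bf{x}})}(w)=\widehat{\Theta_{j}({\bf{x}})}(w)$ for $w\in W_{A_j}$ and $\widehat{\Phi({\bf{x}})}(w)=0$ otherwise.

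Next, for a Walsh polynomial $g$ on $\Omega_A$ the spectrum of $g$ is finite, so interchanging a finite sum with $\int_{B_{l^2}}d\mu$ and applying Parseval's formula \eqref{Par} to $\Phi({\bf{x}})\in L^{\infty}\subset L^{1}$ gives
\begin{equation*}
\sum_{w}a_{w}\,\hat g(w)=\int_{B_{l^2}}\Big(\sum_{w}\widehat{\Phi({\bf{x}})}(w)\,\hat g(w)\Big)\mu(d{\bf{x}})=\int_{B_{l^2}}\Big(\int_{\Omega_A}g\,\Phi({\bf{x}})\,d\mathbb{P}_A\Big)\mu(d{\bf{x}}),
\end{equation*}
whence, by \eqref{e31},
\begin{equation*}
\Big|\sum_{w}a_{w}\,\hat g(w)\Big|\le\int_{B_{l^2}}\|g\|_{L^{1}}\,\|\Phi({\bf{x}})\|_{L^{\infty}}\,|\mu|(d{\bf{x}})\le K\,\|\mu\|_{M}\,\|g\|_{L^{1}}.
\end{equation*}
Since Walsh polynomials are norm-dense in $C(\Omega_A)$ and hence in $L^{1}(\Omega_A,\mathbb{P}_A)$, the map $g\mapsto\sum_{w}a_{w}\hat g(w)$ extends to a bounded linear functional on $L^{1}(\Omega_A,\mathbb{P}_A)$ of norm at most $K\|\mu\|_{M}$; as $\mathbb{P}_A$ is a probability measure, the dual of $L^{1}(\Omega_A,\mathbb{P}_A)$ is $L^{\infty}(\Omega_A,\mathbb{P}_A)$, so there is $f\in L^{\infty}(\Omega_A,\mathbb{P}_A)$ with $\|f\|_{L^{\infty}}\le K\|\mu\|_{M}$ representing it. Taking $g=w$ yields $\hat f(w)=a_{w}$ for every $w\in W_A$, so $f$ is the element of $L^{\infty}(\Omega_A,\mathbb{P}_A)$ whose Walsh series is \eqref{a12}, and \eqref{a13} follows.

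There is no genuinely hard step here: the entire content is that the coefficientwise ("weak") prescription \eqref{a12} actually produces an $L^{\infty}$ function, and this is forced by the uniform bound \eqref{e31} together with $L^{1}$--$L^{\infty}$ duality. The one point requiring care is the $\mathscr{B}_{B_{l^2}}$-measurability of ${\bf{x}}\mapsto\widehat{\Phi({\bf{x}})}(w)$, which is exactly where the weak continuity of $\Phi$ (Theorem \ref{T2}(iv)) enters; without it the integrals defining the $a_{w}$ would not obviously be meaningful.
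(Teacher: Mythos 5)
Your proof is correct, and it reaches the conclusion by a route that differs in its mechanics from the paper's. The paper proves Proposition \ref{P6} as a corollary of Lemma \ref{L8}: it treats each block $\Theta_j$ separately, truncates ${\bf{x}}$ to the finitely supported $\rho_n{\bf{x}}$ of \eqref{a24} so that $\int_{B_{l^2}}\Theta_j(\rho_n{\bf{x}})\,\mu(d{\bf{x}})$ is an honest Walsh polynomial bounded by $2\delta^{j-1}\sqrt{e}\,\|\mu\|_M$, passes to the weak* limit in $n$, and then sums the geometric series over $j$. You instead bypass truncation entirely: you test the coefficient array $(a_w)$ against Walsh polynomials $g$, use Parseval plus the uniform bound \eqref{e31} to get $|\sum_w a_w\hat g(w)|\le K\|\mu\|_M\|g\|_{L^1}$, and invoke $(L^1)^*=L^\infty$. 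Both arguments ultimately rest on the same two ingredients --- the weak continuity of ${\bf{x}}\mapsto\widehat{\Phi({\bf{x}})}(w)$ (needed for measurability of the scalar integrands, a point you correctly flag) and the uniform bound \eqref{e31} --- and the weak* limit in the paper is of course just duality against $L^1$ in disguise; but your version is shorter and avoids the two-stage limiting procedure. What the paper's blockwise route buys is Lemma \ref{L8} itself, with its individual bounds $2\delta^{j-1}\sqrt{e}\,\|\mu\|_M$ on $\int\Theta_j\,d\mu$: these per-$j$ estimates (and the truncation device $\rho_n$) are reused in Lemmas \ref{L9} and \ref{L10} and Proposition \ref{P7} for the bilinear integrals of $\Phi\otimes\Phi$, where a single global duality argument would not suffice because of the convolution structure on the diagonal blocks. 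So your argument fully proves Proposition \ref{P6} with the stated constant, but it does not by itself supply the auxiliary lemma the paper needs downstream.
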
 
\noindent
Proposition \ref{P6} is a consequence of the following.\\\

\begin{lemma} \label{L8}
For $\mu \in M(B_{l^2})$ and  $j \in \mathbb{N}$, the Walsh series\\\
\begin{equation}\label{a17}
 \int_{B_{l^2}} \Theta_j({\bf{x}}) \mu (d{\bf{x}}) :=  \sum_{w \in W_{A_j}} \left ( \int_{B_{l^2}} \widehat{\Theta_j({\bf{x}}})(w) \mu(d{\bf{x}}) \right )w
\end{equation}\\\
represents an element of $L^{\infty}(\Omega_A, \mathbb{P}_A)$, and 
\begin{equation} 
 \big \| \int_{B_{l^2}} \Theta_j({\bf{x}}) \mu (d{\bf{x}})    \big \|_{L^{\infty}} \ \leq \  2 \delta^{j-1} \sqrt{e} \  \| \mu \|_M .
\end{equation}
\end{lemma}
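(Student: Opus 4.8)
The plan is to bound the $L^\infty$-norm of the Walsh series in \eqref{a17} by testing it against Walsh polynomials and integrating the pointwise estimate \eqref{a45} against $|\mu|$. Concretely, I would first recall that $\Theta_j$ maps $B_{l^2(A)}$ into the closed subspace of $L^\infty(\Omega_A,\mathbb P_A)$ consisting of functions with spectrum in $W_{A_j}$ (this follows from \eqref{com2} together with \eqref{g6}, since $\Theta_j(\mathbf x) = (i\delta)^{j-1}\big(Q_{A_j}(\mathbf u^{(j)}/\delta^{j-1}) + iQ_{A_j}(\mathbf v^{(j)}/\delta^{j-1})\big)$ and $Q_{A_j}$ has spectrum in $\bigcup_k W_{A_j,2k+1}\subset W_{A_j}$). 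Hence for each $w\in W_A$ the coefficient $\widehat{\Theta_j(\mathbf x)}(w)$ vanishes unless $w\in W_{A_j}$, and $\mathbf x\mapsto\widehat{\Theta_j(\mathbf x)}(w)$ is bounded by $\|\Theta_j(\mathbf x)\|_{L^\infty}\le 2\delta^{j-1}\sqrt e$ by \eqref{a45}; in particular it is a bounded function on $B_{l^2}$, and I must check it is $\mathscr B_{B_{l^2}}$-measurable so that the scalar integrals $\int_{B_{l^2}}\widehat{\Theta_j(\mathbf x)}(w)\,\mu(d\mathbf x)$ make sense. Measurability holds because $\widehat{\Theta_j(\cdot)}(w)$ is, by \eqref{g14w} in \S5.4, continuous on $B_{l^2}$ with the weak topology (each Walsh coefficient of $\Phi$, hence of $\Theta_j$, is weak-continuous), and $\mathscr B_{B_{l^2}}$ is exactly the Borel field of that topology.

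Next I would verify that the formal Walsh series in \eqref{a17} is the Walsh series of a genuine $L^\infty$ function with the asserted norm bound. The clean way is duality: for any Walsh polynomial $f = \sum_{w\in F} \hat f(w)\,w$ with $F\subset W_A$ finite, Parseval's formula \eqref{Par} gives
\begin{equation*}
\Big\langle f,\ \textstyle\sum_{w\in W_{A_j}}\big(\int_{B_{l^2}}\widehat{\Theta_j(\mathbf x)}(w)\,\mu(d\mathbf x)\big)w\Big\rangle
= \sum_{w\in F\cap W_{A_j}} \hat f(w)\int_{B_{l^2}}\widehat{\Theta_j(\mathbf x)}(w)\,\mu(d\mathbf x).
\end{equation*}
Interchanging the finite sum and the integral (Fubini is trivial here, finitely many bounded measurable summands against a finite measure), this equals $\int_{B_{l^2}}\big(\int_{\Omega_A} f\,\Theta_j(\mathbf x)\,d\mathbb P_A\big)\mu(d\mathbf x)$, whose absolute value is at most $\|f\|_{L^\infty}\int_{B_{l^2}}\|\Theta_j(\mathbf x)\|_{L^\infty}\,|\mu|(d\mathbf x)\le 2\delta^{j-1}\sqrt e\,\|\mu\|_M\,\|f\|_{L^\infty}$ by \eqref{a45}. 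Since Walsh polynomials are norm-dense in $C(\Omega_A)$, the series in \eqref{a17} defines a bounded linear functional on $C(\Omega_A)$ of norm at most $2\delta^{j-1}\sqrt e\,\|\mu\|_M$, i.e. a measure $\nu_j\in M(\Omega_A)$ with $\|\nu_j\|_M\le 2\delta^{j-1}\sqrt e\,\|\mu\|_M$ and $\widehat{\nu_j}$ supported on $W_{A_j}$. To upgrade from $M(\Omega_A)$ to $L^\infty(\Omega_A,\mathbb P_A)$ I invoke the $\Lambda(2)$-type control already available: by \eqref{e42} (or \eqref{g15}) the $k$-th spectral block of $Q_{A_j}$ has $\ell^2$-norm summable in a way that forces $\sum_{w\in W_{A_j}}|\widehat{\Theta_j(\mathbf x)}(w)|^2 \le C\,\delta^{2(j-1)}$ uniformly in $\mathbf x\in B_{l^2}$ (Plancherel applied to the bounded functions $Q_{A_j}(\cdots)$, whose $L^2$-norms are at most their $L^\infty$-norms), hence $\sum_{w}|\widehat{\nu_j}(w)|^2 = \sum_w|\int \widehat{\Theta_j(\mathbf x)}(w)\mu(d\mathbf x)|^2 \le \|\mu\|_M\int \sum_w|\widehat{\Theta_j(\mathbf x)}(w)|^2\,|\mu|(d\mathbf x)<\infty$ by Jensen/Cauchy--Schwarz; by the criterion recorded in \S4.3 ("if $\mu\in M(\Omega_A)$ and $\sum|\hat\mu(w)|^2<\infty$ then $\mu\ll\mathbb P_A$ with density in $L^2$"), $\nu_j$ is absolutely continuous, so $\int_{B_{l^2}}\Theta_j(\mathbf x)\,\mu(d\mathbf x):=d\nu_j/d\mathbb P_A$ is at worst in $L^2$; but the uniform norm bound from the duality step says its action on $C(\Omega_A)$ has norm $\le 2\delta^{j-1}\sqrt e\,\|\mu\|_M$, which is precisely $\|d\nu_j/d\mathbb P_A\|_{L^\infty}\le 2\delta^{j-1}\sqrt e\,\|\mu\|_M$. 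This gives the claimed inequality.

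The step I expect to be the genuine obstacle is the passage from "the series defines a measure in $M(\Omega_A)$ of controlled total-variation norm" to "it defines an element of $L^\infty(\Omega_A,\mathbb P_A)$ of the same norm" — in other words, ruling out that the weak* integral only lands in $M(\Omega_A)\setminus L^\infty$. The duality argument bounds the $C(\Omega_A)^*$-norm, which a priori is only the total-variation norm of a measure, and one must genuinely use the sparseness of $W_{A_j}$ (via the $\Lambda(2)$/Riesz-product estimate \eqref{e42}, i.e. $\Lambda(2)$-uniformizability) to conclude absolute continuity and then bootstrap to $L^\infty$. Everything else — measurability of the coefficient functions, Fubini, density of Walsh polynomials, and then Proposition \ref{P6} itself (sum the geometric series $\sum_j 2\delta^{j-1}\sqrt e = 2\sqrt e/(1-\delta)$ and note the spectra $W_{A_j}$ are pairwise disjoint so there is no cancellation issue in assembling \eqref{a12} from the blocks \eqref{a17}) — is routine.
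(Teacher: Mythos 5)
Your route is genuinely different from the paper's (the paper truncates to the finitely supported vectors $\rho_n{\bf{x}}$, observes via \eqref{a45} that the resulting objects are Walsh polynomials with the uniform bound $2\delta^{j-1}\sqrt{e}\,\|\mu\|_M$, and passes to a weak* limit in $L^{\infty}$ using the coefficientwise convergence \eqref{a16}), and a duality argument of the kind you sketch can be made to work — but the step you yourself flag as the genuine obstacle is where your argument actually breaks. The norm of $\nu_j$ as a functional on $\big(C(\Omega_A),\|\cdot\|_{\infty}\big)$ is its total variation norm; if $\nu_j=g\,d\mathbb{P}_A$ this is $\|g\|_{L^1(\mathbb{P}_A)}$, \emph{not} $\|g\|_{L^{\infty}}$, so your closing claim that the $C(\Omega_A)^*$-bound "is precisely" an $L^{\infty}$-bound on the density is false. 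The $\Lambda(2)$/Plancherel detour does not repair this: it yields $\hat{\nu}_j\in\ell^2$ and hence a density in $L^2$, and combined with the total-variation bound you still only control the $L^1$-norm of that density.

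The fix is to run the same duality computation against the correct predual. For a Walsh polynomial $f$, estimate
\begin{equation*}
\Big|\int_{B_{l^2}}\Big(\int_{\Omega_A} f\,\Theta_j({\bf{x}})\,d\mathbb{P}_A\Big)\mu(d{\bf{x}})\Big|
\ \leq\ \int_{B_{l^2}}\|f\|_{L^1}\,\|\Theta_j({\bf{x}})\|_{L^{\infty}}\,|\mu|(d{\bf{x}})
\ \leq\ 2\delta^{j-1}\sqrt{e}\,\|\mu\|_M\,\|f\|_{L^1},
\end{equation*}
i.e.\ apply H\"older with $\|f\|_{L^1}\|\Theta_j({\bf{x}})\|_{L^{\infty}}$ rather than $\|f\|_{L^{\infty}}\|\Theta_j({\bf{x}})\|_{L^1}$. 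Since Walsh polynomials are dense in $L^1(\Omega_A,\mathbb{P}_A)$, the series in \eqref{a17} defines a bounded linear functional on $L^1$ of norm at most $2\delta^{j-1}\sqrt{e}\,\|\mu\|_M$, hence an element of $L^{\infty}=(L^1)^*$ with the claimed bound and the claimed Walsh coefficients; no appeal to $\Lambda(2)$-sparseness or absolute continuity is needed. (Your measurability argument via weak continuity of ${\bf{x}}\mapsto\widehat{\Theta_j({\bf{x}})}(w)$ is exactly what the paper does, and the assembly of Proposition \ref{P6} from the blocks is as routine as you say.)
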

\begin{proof} \ By the definition of ${\bf{x}}^{(j)}$ in \eqref{g2}, by Lemma \ref{L3W}, and by induction on $j$, the $l^2$-valued function 
 
 \begin{equation*}
 {\bf{x}} \mapsto {\bf{x}}^{(j)}, \ \ \ {\bf{x}} \in B_{l^2},
 \end{equation*}\\\
 is weakly continuous on $B_{l^2}$.  Therefore, by the definition of $\Theta_j$ in \eqref{com2}, the (scalar-valued) integrands on the right side of \eqref{a17} are continuous on $B_{l^2}$ (with respect to the weak topology), which implies that the (numerical) integrals on the right side of \eqref{a17} are well defined.

For ${\bf{x}} \in B_{l^2},$ and $n = 1, \ldots,$   let
\begin{equation} \label{a24}
\rho_n{\bf{x}} = \mathbf{1}_{\{|{\bf{x}}| \geq \frac{1}{n}\}}{\bf{x}}, 
\end{equation}
whose supports are (obviously!) finite.   Then, we have the Walsh polynomial
\begin{equation} \label{a14}
 \int_{B_{l^2}} \Theta_j(\rho_n{\bf{x}}) \mu (dx)  =    \sum_{w \in W_{A_j}} \left ( \int_{B_{l^2}} \big(\Theta_j(\rho_n{\bf{x}}) \big )^{\wedge}(w) \mu(d{\bf{x}}) \right )w,\\\\
\end{equation}
and by \eqref{a45},\\\
\begin{equation} \label{a15}
\big \| \int_{B_{l^2}} \Theta_j(\rho_n{\bf{x}}) \mu (dx) \big \|_{L^{\infty}} \leq 2\delta^{j-1}  \sqrt{e} \| \mu \|_M, \ \ \  \ n = 1, \ldots \ .\\\\
\end{equation}\\\
Letting $n \rightarrow \infty$, we have $\rho_n{\bf{x}} \rightarrow {\bf{x}}$  in the $l^2$-norm, and therefore (e.g., by Lemma \ref{L3W}) for every $w \in W_{A_j}$,\\\
\begin{equation}\label{a16}
\int_{B_{l^2}} \big(\Theta_j(\rho_n{\bf{x}}) \big )^{\wedge}(w) \mu(d{\bf{x}}) \ \underset{n \to \infty}{\longrightarrow} \  \int_{B_{l^2}} \widehat{\Theta_j({\bf{x}})}(w) \mu(d{\bf{x}}).\\\\\
\end{equation}\\\
Finally, we deduce from  \eqref{a15} and \eqref{a16} that the polynomials in \eqref{a14} converge in the weak* topology of $L^{\infty}(\Omega_A,\mathbb{P}_A)$ to an element in  $L^{\infty}(\Omega_A,\mathbb{P}_A)$,  whose Walsh series is the right side of \eqref{a17}. 
\end{proof}

\subsection{Integrability of $\Phi \otimes \Phi$} \label {sbm} \ Next we check vector-valued integrability of $\Phi \otimes \Phi$, where
\begin{equation*}
(\Phi \otimes \Phi)({\bf{x}},{\bf{y}}) = \Phi({\bf{x}})\Phi({\bf{y}}), \ \ \ ({\bf{x}},{\bf{y}}) \in B_{l^2} \times B_{l^2}.
\end{equation*}
   Mimicking integration with respect to measures (Proposition \ref{P6} above), we integrate analogously over $B_{l^2} \times B_{l^2}$ with respect to $\mathcal{F}_2$-\emph{measures} (known also as \emph{bimeasures}).  These are scalar-valued set-functions $\mu$ defined on  "rectangles" $(E, F) \in \mathscr{B}_{l^2}\times \mathscr{B}_{l^2}$, such that for every $E \in \mathscr{B}_{l^2}$ and $ F \in \mathscr{B}_{l^2}$,  
\begin{equation*}
\mu(E, \cdot) \ \ \ \text{and} \ \ \ \mu(\cdot, F)
\end{equation*}
are, respectively, complex measures on $\mathscr{B}_{l^2}$.  The space of such set-functions is a generalization of  $\mathcal{F}_2(X \times Y)$ defined in \eqref{b1}, and is denoted by  $\mathcal{F}_2(\mathscr{B}_{l^2}, \mathscr{B}_{l^2})$.  Given bounded scalar-valued measurable functions $f$ and $g$ on $B_{l^2}$, and $\mu \in \mathcal{F}_2(\mathscr{B}_{l^2}, \mathscr{B}_{l^2})$,  we have a Lebesgue-type  bilinear integral 
\begin{equation} \label{a20}
\int_{B_{l^2} \times B_{l^2}} f({\bf{x}}) g({\bf{y}}) \mu(d{\bf{x}},d{\bf{y}}),
\end{equation}
which is computed iteratively:  first with respect to ${\bf{x}}$ and then with respect to ${\bf{y}}$, or vise versa.  The $\mathcal{F}_2$-norm of $\mu$ is 
\begin{equation} \label{a18}
\|\mu\|_{\mathcal{F}_2} := \sup \bigg \{\big|\int_{B_{l^2} \times B_{l^2}} f({\bf{x}}) g({\bf{y}}) \mu(d{\bf{x}},d{\bf{y}}) \big|: \|f\|_{\mathcal{L}^{\infty}} \leq 1, \   \|g\|_{\mathcal{L}^{\infty}} \leq 1\bigg \},\\\
\end{equation}\\\
where $f$ and $g$  in \eqref{a18} are measurable functions on $B_{l^2}$, and $\|\cdot\|_{\mathcal{L}^{\infty}}$ denotes  supremum over $B_{l^2}$.   ($\mathcal{F}$ is for  Maurice Fr\'echet, first to study bilinear analogs of measures, and in particular, first to construct integrals of the type in \eqref{a20}; see ~\cite{Frechet:1915},  ~\cite[Ch. VI]{blei:2001}, and also the discussion leading to Lemma \ref{L1}.)  

To verify the two-variable analog of Proposition \ref{P6}, we use

\begin{theorem}[The Grothendieck factorization theorem] \label{T5}
\ If  $\mu \in \mathcal{F}_2(\mathscr{B}_{l^2}, \mathscr{B}_{l^2})$, then there exist probability measures $\nu_1$ and $\nu_2$ on $\mathscr{B}_{l^2}$ such that for all bounded measurable functions $f$ and $g$ on $B_{l^2}$,
\begin{equation} \label{a19} 
\big|\int_{B_{l^2} \times B_{l^2}} f({\bf{x}}) g({\bf{y}}) \mu(d{\bf{x}},d{\bf{y}}) \big| \leq \mathcal{K}_G \|\mu\|_{\mathcal{F}_2} \|f\|_{L^2(\nu_1)}  \|g\|_{L^2(\nu_2)},
\end{equation}
where $\mathcal{K}_G$ is the constant in \eqref{e16}. 
\end{theorem}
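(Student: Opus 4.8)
The plan is to deduce this from the Grothendieck inequality \eqref{i1} (equivalently, Proposition \ref{P0} or Theorem \ref{G2}), along the classical route to the factorization theorem: first a ``square function'' estimate for finite sums, and then a minimax argument that extracts the two probability measures. Throughout write $T(f,g):=\int_{B_{l^2}\times B_{l^2}} f({\bf x})\,g({\bf y})\,\mu(d{\bf x},d{\bf y})$; this is a bilinear form on the bounded measurable functions on $B_{l^2}$ with $\sup\{|T(f,g)|:\|f\|_{\mathcal{L}^{\infty}}\le 1,\ \|g\|_{\mathcal{L}^{\infty}}\le 1\}=\|\mu\|_{\mathcal{F}_2}$ by \eqref{a18}.

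\textbf{Step 1 (square function estimate).} First I would prove that for all finite families $f_1,\dots,f_n$ and $g_1,\dots,g_n$ of bounded measurable functions on $B_{l^2}$,
\[
\Big|\sum_{k=1}^{n}T(f_k,g_k)\Big|\ \le\ \mathcal{K}_G\,\|\mu\|_{\mathcal{F}_2}\,\Big\|\big(\sum_{k}|f_k|^{2}\big)^{1/2}\Big\|_{\mathcal{L}^{\infty}}\Big\|\big(\sum_{k}|g_k|^{2}\big)^{1/2}\Big\|_{\mathcal{L}^{\infty}}.
\]
Since bounded measurable functions are uniform limits of simple functions and $T$ is $\|\mu\|_{\mathcal{F}_2}$-bounded in the sup norms of its arguments, it suffices to take $f_k=\sum_j f_{kj}\mathbf{1}_{E_j}$ and $g_k=\sum_l g_{kl}\mathbf{1}_{F_l}$ subordinate to common finite measurable partitions $\{E_j\}$, $\{F_l\}$ of $B_{l^2}$. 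Then $\sum_k T(f_k,g_k)=\sum_{j,l}\mu(E_j,F_l)\,\langle {\bf u}_j,{\bf v}_l\rangle$, where ${\bf u}_j=(f_{kj})_k$ and ${\bf v}_l=(\overline{g_{kl}})_k$ lie in a finite-dimensional subspace of $l^2(A)$. With $\rho=\max_j\|{\bf u}_j\|_2=\|(\sum_k|f_k|^2)^{1/2}\|_{\mathcal{L}^{\infty}}$ and $\sigma=\max_l\|{\bf v}_l\|_2=\|(\sum_k|g_k|^2)^{1/2}\|_{\mathcal{L}^{\infty}}$, apply \eqref{i1} to the array $\big(\mu(E_j,F_l)\big)$ (whose $\mathcal{F}_2$-norm is at most $\|\mu\|_{\mathcal{F}_2}$) and the unit vectors ${\bf u}_j/\rho$, ${\bf v}_l/\sigma$; multiplying back by $\rho\sigma$ yields the estimate.

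\textbf{Step 2 (extraction of $\nu_1,\nu_2$).} Give $B_{l^2}$ its weak topology, in which it is compact Hausdorff, and let $K$ be the weak*-compact convex set of pairs $(\nu_1,\nu_2)$ of Radon probability measures on $B_{l^2}$. For $t>0$ and continuous $f,g$ on $B_{l^2}$ with $\|f\|_{\infty},\|g\|_{\infty}\le 1$, set
\[
\Theta_{t,f,g}(\nu_1,\nu_2):=2\,|T(f,g)|-\mathcal{K}_G\|\mu\|_{\mathcal{F}_2}\Big(t^{2}\int_{B_{l^2}}|f|^{2}\,d\nu_1+t^{-2}\int_{B_{l^2}}|g|^{2}\,d\nu_2\Big),
\]
which is affine and weak*-continuous in $(\nu_1,\nu_2)$. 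Combining Step 1 with the identity $T(\sqrt{\lambda_i}\,t_if_i,\sqrt{\lambda_i}\,t_i^{-1}g_i)=\lambda_iT(f_i,g_i)$ and the arithmetic--geometric mean inequality, one checks that every finite convex combination $\sum_i\lambda_i\Theta_{t_i,f_i,g_i}$ takes a value $\le 0$ at the pair of point masses located at maximizers of the continuous functions $\sum_i\lambda_it_i^{2}|f_i|^{2}$ and $\sum_i\lambda_it_i^{-2}|g_i|^{2}$, so each such combination has infimum $\le 0$ over $K$. Sion's minimax theorem then produces $(\nu_1^*,\nu_2^*)\in K$ with $\Theta_{t,f,g}(\nu_1^*,\nu_2^*)\le 0$ for all admissible $t,f,g$; optimizing over $t>0$ gives $|T(f,g)|\le\mathcal{K}_G\|\mu\|_{\mathcal{F}_2}\|f\|_{L^2(\nu_1^*)}\|g\|_{L^2(\nu_2^*)}$ for $\|f\|_{\infty},\|g\|_{\infty}\le 1$, hence, by homogeneity in each of $f$ and $g$, for all continuous $f,g$. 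The extension to bounded measurable $f,g$ is by approximation: Lusin's theorem supplies uniformly bounded continuous $f_n\to f$, $g_n\to g$ converging $\nu_i^*$-almost everywhere (so the $L^2(\nu_i^*)$-norms converge), and the bounded-convergence theorem for Fr\'echet bilinear integrals gives $T(f_n,g_n)\to T(f,g)$.

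The main obstacle is Step 2. Step 1 is a routine application of \eqref{i1}, but Step 2 must manufacture two \emph{fixed} probability measures valid simultaneously for all $f$ and $g$, and it is exactly the minimax theorem over the compact set $K$ that delivers this; the crux is the setup of the functions $\Theta_{t,f,g}$ and the verification that their convex combinations have nonpositive infimum over $K$. A secondary technical point is the final passage from continuous to bounded measurable integrands, which must be carried out without perturbing the $L^2(\nu_i^*)$-norms or the value of $T(f,g)$, and there one relies on the regularity of the $\nu_i^*$ and on the convergence theory of bimeasure (Fr\'echet) integrals.
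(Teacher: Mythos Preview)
The paper does not actually prove Theorem \ref{T5}. Immediately after stating it, the paper writes: ``Theorem \ref{T5} is equivalent to the Grothendieck inequality: either statement is derivable from the other, with the same constant $\mathcal{K}_G$ in both; e.g., see \cite[Ch.\ V]{blei:2001}.'' So there is nothing to compare your argument against; the theorem is quoted as a known black box.

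Your proposal is the classical proof of the factorization theorem (Pietsch--Lindenstrauss style): Step 1 is the standard ``little'' square-function inequality, obtained by discretizing $f_k,g_k$ into simple functions over common partitions and reading off \eqref{i1} for the finite matrix $\big(\mu(E_j,F_l)\big)$; Step 2 is the Ky Fan/Sion separation that upgrades the pointwise estimate to a pair of fixed probability measures. Both steps are set up correctly, and the crux --- checking that every convex combination $\sum_i\lambda_i\Theta_{t_i,f_i,g_i}$ has nonpositive infimum on $K$ by choosing point masses at maximizers and invoking Step 1 plus $2ab\le a^2+b^2$ --- is exactly the right maneuver.

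Two small comments on execution. First, in Step 2 you are really using Ky Fan's minimax (concave--convex on a compact set) applied to the convex hull of the $\Theta$'s; phrasing it that way makes the logic cleaner than invoking Sion directly on a non-convex index set. Second, the passage from continuous to bounded measurable $f,g$ deserves a word more: once \eqref{a19} is known for continuous $f,g$, the bilinear form $T$ extends by $L^2(\nu_1^*)\times L^2(\nu_2^*)$-continuity; to identify this extension with the original bimeasure integral on bounded measurable $f,g$, fix $g$ bounded measurable and note that $E\mapsto\int g\,\mu(E,d{\bf y})$ is an ordinary complex measure, so $f_n\to f$ boundedly and $\nu_1^*$-a.e.\ (Lusin) forces $T(f_n,g)\to T(f,g)$ by dominated convergence in one variable, and then iterate. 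This avoids appealing to a ``bounded-convergence theorem for bimeasures,'' which is not generally available in the form you suggest.
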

\noindent
Theorem \ref{T5} is equivalent to the Grothendieck inequality: either statement  is derivable from the other, with the same constant $\mathcal{K}_G$ in both; e.g., see ~\cite[Ch. V]{blei:2001}.

Fixing an arbitrary $\mu \in \mathcal{F}_2(\mathscr{B}_{l^2}, \mathscr{B}_{l^2})$,  we write (formally) 
\begin{equation}\label{a21}
\begin{split}
\int_{B_{l^2} \times B_{l^2}}&\Phi({\bf{x}})\Phi({\bf{y}}) \mu(d{\bf{x}},d{\bf{y}})  \\\
&:=  \sum_{\substack{j,k = 1 \\[1mm] j \neq k}}^{\infty}  \int_{B_{l^2} \times B_{l^2}} \Theta_j({\bf{x}})  \Theta_k({\bf{y}})  \mu(d{\bf{x}},d{\bf{y}}) \\\
& \ \ \ \ \ \ \ \ \ \ \ + \sum_{k=1}^{\infty}  \int_{B_{l^2} \times B_{l^2}} \Theta_k({\bf{x}})  \Theta_k({\bf{y}})  \mu(d{\bf{x}},d{\bf{y}}),
\end{split}
\end{equation}\\
and proceed to verify that the integrals in the two sums on the right side of \eqref{a21} are elements in $L^{\infty}(\Omega_A,\mathbb{P}_A)$, and that the sums converge in the $L^{\infty}$-norm.

The first sum on the right side of \eqref{a21} is handled by the following.

\begin{lemma} \label{L9}
For $\mu \in \mathcal{F}_2(\mathscr{B}_{l^2}, \mathscr{B}_{l^2})$ and positive integers  $j \neq k$,  the Walsh series\\\
\begin{equation}\label{a22}
\begin{split}
&\int_{B_{l^2} \times B_{l^2}} \Theta_j({\bf{x}})  \Theta_k({\bf{y}})  \mu(d{\bf{x}},d{\bf{y}}) \\\
 &:= \sum_{w_1 \in W_{A_j}, \ w_2 \in W_{A_k}} \left ( \int_{B_{l^2} \times B_{l^2}}   \widehat{\Theta_j({\bf{x}})}(w_1)\widehat{\Theta_k({\bf{y}})}(w_2)\mu(d{\bf{x}},d{\bf{y}}) \right )w_1w_2
\end{split}
\end{equation}\\\
represents an element of $L^{\infty}(\Omega_A, \mathbb{P}_A)$, and\\\\
\begin{equation} \label{a23c}
 \bigg \| \int_{B_{l^2} \times B_{l^2}} \Theta_j({\bf{x}})  \Theta_k({\bf{y}})  \mu(d{\bf{x}},d{\bf{y}})   \bigg \|_{L^{\infty}} \leq  4e \delta^{j + k -2} \| \mu \|_{\mathcal{F}_2} .
\end{equation}\\\
\end{lemma}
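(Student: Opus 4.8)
The plan is to mimic the proof of Lemma \ref{L8}, using the Grothendieck factorization theorem (Theorem \ref{T5}) in place of the trivial bound by the total variation norm. First I would observe that, as in the proof of Lemma \ref{L8}, the $l^2$-valued maps $\mathbf{x} \mapsto \mathbf{x}^{(j)}$ and $\mathbf{x} \mapsto \mathbf{x}^{(k)}$ are weakly continuous on $B_{l^2}$ (induction on the recursion \eqref{g2}, via Lemma \ref{L3W}), so the scalar coefficients $\widehat{\Theta_j(\mathbf{x})}(w_1)$ and $\widehat{\Theta_k(\mathbf{y})}(w_2)$ are bounded weakly continuous functions on $B_{l^2}$; hence the iterated (bimeasure) integrals defining the coefficients on the right side of \eqref{a22} are well defined.

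Next, for the bound, I would introduce the truncations $\rho_n \mathbf{x}$ of \eqref{a24} (finite support), so that $\Theta_j(\rho_n \mathbf{x})$ and $\Theta_k(\rho_n \mathbf{y})$ are Walsh polynomials, and form the Walsh polynomial
\begin{equation*}
P_{n} := \int_{B_{l^2} \times B_{l^2}} \Theta_j(\rho_n{\bf{x}}) \Theta_k(\rho_n{\bf{y}}) \mu(d{\bf{x}},d{\bf{y}}) = \sum_{w_1 \in W_{A_j}, \, w_2 \in W_{A_k}} c^{(n)}_{w_1 w_2}\, w_1 w_2,
\end{equation*}
where $c^{(n)}_{w_1 w_2} = \int \widehat{\Theta_j(\rho_n{\bf{x}})}(w_1)\widehat{\Theta_k(\rho_n{\bf{y}})}(w_2)\,\mu(d{\bf{x}},d{\bf{y}})$. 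Because $j \neq k$, the groups $W_{A_j}$ and $W_{A_k}$ are independent, so $\{w_1 w_2 : w_1 \in W_{A_j}, w_2 \in W_{A_k}\}$ is a set of distinct characters and $\|P_n\|_{L^\infty} = \sup_{(\omega_1,\omega_2)} |\sum_{w_1,w_2} c^{(n)}_{w_1 w_2} w_1(\omega_1) w_2(\omega_2)|$, which is exactly an $\mathcal{F}_2$-type supremum. To estimate it, fix $\omega_1, \omega_2$; the quantity inside is $\int_{B_{l^2}\times B_{l^2}} f_{\omega_1}(\mathbf{x}) g_{\omega_2}(\mathbf{y})\, \mu(d\mathbf{x}, d\mathbf{y})$ with $f_{\omega_1}(\mathbf{x}) = \Theta_j(\rho_n\mathbf{x})(\omega_1)$ and $g_{\omega_2}(\mathbf{y}) = \Theta_k(\rho_n\mathbf{y})(\omega_2)$. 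Apply Theorem \ref{T5}: this is bounded by $\mathcal{K}_G \|\mu\|_{\mathcal{F}_2} \|f_{\omega_1}\|_{L^2(\nu_1)} \|g_{\omega_2}\|_{L^2(\nu_2)}$. Now $\|f_{\omega_1}\|_{L^2(\nu_1)}^2 = \int_{B_{l^2}} |\Theta_j(\rho_n\mathbf{x})(\omega_1)|^2 \nu_1(d\mathbf{x}) \le \sup_{\mathbf{x}} \|\Theta_j(\rho_n\mathbf{x})\|_{L^\infty}^2 \le (2\delta^{j-1}\sqrt{e})^2 = 4e\delta^{2(j-1)}$ by \eqref{a45}, and similarly for $g_{\omega_2}$. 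Taking the product and sup over $\omega_1,\omega_2$ gives $\|P_n\|_{L^\infty} \le \mathcal{K}_G \|\mu\|_{\mathcal{F}_2}\cdot 2\sqrt{e}\,\delta^{j-1}\cdot 2\sqrt{e}\,\delta^{k-1} = 4e\,\mathcal{K}_G\, \delta^{j+k-2}\|\mu\|_{\mathcal{F}_2}$, which is $\le 4e\delta^{j+k-2}\|\mu\|_{\mathcal{F}_2}$ up to the constant $\mathcal{K}_G$ — I would double-check whether the stated bound \eqref{a23c} absorbs $\mathcal{K}_G$ or whether a cruder $L^\infty$ estimate on $f_{\omega_1}$ already suffices; in fact $\|f_{\omega_1}\|_{L^2(\nu_1)} \le \|f_{\omega_1}\|_\infty$ used directly with the definition of $\|\mu\|_{\mathcal{F}_2}$ may give the cleaner constant, and that is the route I would present.

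Finally, I would let $n \to \infty$: since $\rho_n\mathbf{x} \to \mathbf{x}$ and $\rho_n\mathbf{y}\to\mathbf{y}$ in the $l^2$-norm, Lemma \ref{L3W} (and the induction on the recursion) gives $\widehat{\Theta_j(\rho_n\mathbf{x})}(w_1) \to \widehat{\Theta_j(\mathbf{x})}(w_1)$ pointwise on $B_{l^2}$ and boundedly, so by dominated convergence for the bimeasure integral $c^{(n)}_{w_1 w_2} \to \int \widehat{\Theta_j(\mathbf{x})}(w_1)\widehat{\Theta_k(\mathbf{y})}(w_2)\mu(d\mathbf{x},d\mathbf{y})$ for each $(w_1,w_2)$. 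Combined with the uniform bound $\|P_n\|_{L^\infty} \le 4e\delta^{j+k-2}\|\mu\|_{\mathcal{F}_2}$, the polynomials $P_n$ converge in the weak* topology of $L^\infty(\Omega_A,\mathbb{P}_A)$ to an element whose Walsh series is the right side of \eqref{a22}, and that limit inherits the norm bound \eqref{a23c}. I expect the main obstacle to be the bookkeeping around the bimeasure (iterated) integral — justifying that $\mathbf{x}\mapsto\widehat{\Theta_j(\mathbf{x})}(w_1)$ is $\mathscr{B}_{l^2}$-measurable (it is, being weakly continuous), that the iterated integral of a product of such functions is well defined and bilinear, and that the passage to the limit in $n$ is legitimate under the $\mathcal{F}_2$-norm — rather than the harmonic-analytic core, which is a direct application of Theorem \ref{T5} plus the uniform bound \eqref{a45}.
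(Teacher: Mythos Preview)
Your proposal is correct and matches the paper's approach, which explicitly says the proof is ``nearly identical'' to that of Lemma \ref{L8}, with the bilinear $\mathcal{F}_2$-integral replacing the measure integral. The one clarification: your second route---bounding $\|f_{\omega_1}\|_\infty \le 2\sqrt{e}\,\delta^{j-1}$ and $\|g_{\omega_2}\|_\infty \le 2\sqrt{e}\,\delta^{k-1}$ and invoking the \emph{definition} \eqref{a18} of $\|\mu\|_{\mathcal{F}_2}$ directly---is exactly the intended argument and gives the constant in \eqref{a23c} without any factor of $\mathcal{K}_G$. Theorem \ref{T5} is \emph{not} needed here; the independence of $W_{A_j}$ and $W_{A_k}$ for $j\neq k$ is precisely what makes the pointwise value of $P_n$ a bilinear integral of a product $f_{\omega_1}(\mathbf{x})g_{\omega_2}(\mathbf{y})$, so the $\mathcal{F}_2$-norm bound applies immediately. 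The Grothendieck factorization is reserved for Lemma \ref{L10}, where $j=k$ and the two factors share the same spectral set $W_{A_k}$, so the product no longer separates into independent Walsh variables and one must control an infinite sum of bilinear integrals over $\{(w_1,w_2):w_1w_2=w\}$.
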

\begin{proof}[Sketch of proof] \ 
For $w_1 \in W_{A_j}$,  $w_1^{\prime} \in W_{A_j}$, and $w_2 \in W_{A_k}$,  $w_2^{\prime} \in W_{A_k}$, \\
\begin{equation*}
w_1 w_2 = w_1^{\prime} w_2^{\prime} \ \ \  \Longrightarrow \ \ \ w_1 = w_1^{\prime}, \ \  w_2 = w_2^{\prime},
\end{equation*}\\\
because $W_{A_j}$ and $W_{A_k}$ are mutually independent.  Therefore, the series on the right side of \eqref{a22} is a \emph{bona fide} Walsh series.  

The proof of Lemma \ref{L9} is nearly identical to the proof Lemma \ref{L8}:    $\mu \in M(B_{l^2})$ and the integrals with respect to it in Lemma \ref{L8}  are replaced by  $\mu \in \mathcal{F}_2(\mathscr{B}_{l^2}, \mathscr{B}_{l^2})$  and the bilinear integrals with respect to it  (as per \eqref{a20} and  \eqref{a18}).  Details are omitted.

\end{proof}

The second sum on the right side of \eqref{a21} requires an intervention of Theorem \ref{T5}.

\begin{lemma} \label{L10}
For $\mu \in \mathcal{F}_2(\mathscr{B}_{l^2}, \mathscr{B}_{l^2})$ and $k \in \mathbb{N}$,  the Walsh series\\\
\begin{equation}\label{a29}
\begin{split}
&\int_{B_{l^2} \times B_{l^2}}  \Theta_k({\bf{x}})  \Theta_k({\bf{y}})  \mu(d{\bf{x}},d{\bf{y}}) \\\
 &:= \sum_{w \in W_{A_k}} \bigg ( \sum_{\substack{w_1 \in W_{A_k}, \ w_2 \in W_{A_k} \\[1mm] w_1 w_2 = w}}   \int_{B_{l^2} \times B_{l^2}}   \widehat{\Theta_k({\bf{x}})}(w_1)\widehat{\Theta_k({\bf{y}})}(w_2)\mu(d{\bf{x}},d{\bf{y}})    \bigg )w
\end{split}
\end{equation}\\\
represents an element of $L^{\infty}(\Omega_A, \mathbb{P}_A)$, and\\
\begin{equation} \label{a23}
 \bigg \| \int_{B_{l^2} \times B_{l^2}}\Theta_k({\bf{x}})  \Theta_k({\bf{y}})  \mu(d{\bf{x}},d{\bf{y}})    \bigg \|_{L^{\infty}} \leq  4e \delta^{2k-2} \| \mu \|_{\mathcal{F}_2} .
\end{equation}
\end{lemma}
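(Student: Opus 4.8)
The plan is to follow the same template that proved Lemma \ref{L8}, but to insert the Grothendieck factorization theorem (Theorem \ref{T5}) at the crucial step, since here the "diagonal" terms $\Theta_k \otimes \Theta_k$ cannot be estimated by a bare $\mathcal{F}_2$-norm bound together with a uniform sup bound on the integrands, as was done for $j \ne k$. First I would record that the series on the right side of \eqref{a29} is a genuine Walsh series: since $w_1, w_2 \in W_{A_k}$, the product $w_1 w_2$ lies in $W_{A_k}$ as well, and grouping by $w = w_1 w_2$ only collects finitely many pairs for each $w$ (in fact the inner sum is finite when the integrands come from a fixed finitely-supported approximant, see below), so the coefficients are well defined. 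Then, as in Lemma \ref{L8}, I would replace $\Phi$ (hence $\Theta_k$) evaluated at ${\bf{x}}$ by its evaluation at the finitely supported truncations $\rho_n{\bf{x}}$ of \eqref{a24}, producing genuine Walsh polynomials $\int_{B_{l^2}\times B_{l^2}}\Theta_k(\rho_n{\bf{x}})\Theta_k(\rho_m{\bf{y}})\,\mu(d{\bf{x}},d{\bf{y}})$.

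The heart of the argument is the uniform $L^\infty$ bound on these polynomials. For fixed $\omega \in \Omega_A$, evaluating the polynomial at $\omega$ gives
\begin{equation*}
\int_{B_{l^2}\times B_{l^2}} \Theta_k(\rho_n{\bf{x}})(\omega)\,\Theta_k(\rho_m{\bf{y}})(\omega)\,\mu(d{\bf{x}},d{\bf{y}}),
\end{equation*}
which is exactly a Fr\'echet bilinear integral of the form \eqref{a20} with $f = \Theta_k(\rho_n\cdot)(\omega)$ and $g = \Theta_k(\rho_m\cdot)(\omega)$ — bounded measurable functions on $B_{l^2}$. Now apply Theorem \ref{T5}: this is bounded by $\mathcal{K}_G\|\mu\|_{\mathcal{F}_2}\|f\|_{L^2(\nu_1)}\|g\|_{L^2(\nu_2)}$ for probability measures $\nu_1,\nu_2$ depending only on $\mu$. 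To control $\|f\|_{L^2(\nu_1)}$ uniformly in $\omega$, I would integrate $\omega$ out using Fubini: $\int_{\Omega_A}\|f\|_{L^2(\nu_1)}^2\,d\mathbb{P}_A = \int_{B_{l^2}}\int_{\Omega_A}|\Theta_k(\rho_n{\bf{x}})(\omega)|^2\,d\mathbb{P}_A(\omega)\,\nu_1(d{\bf{x}}) = \int_{B_{l^2}}\|\Theta_k(\rho_n{\bf{x}})\|_{L^2}^2\,\nu_1(d{\bf{x}})$; by \eqref{a45} (or the sharper $L^2$-version implicit in Lemma \ref{L2}) each $\|\Theta_k(\rho_n{\bf{x}})\|_{L^2} \le 2\delta^{k-1}\sqrt{e}$, so the integral is $\le 4e\delta^{2k-2}$. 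The same bound holds for $g$. This shows $\|f\|_{L^2(\nu_1)}\|g\|_{L^2(\nu_2)} \le 4e\delta^{2k-2}$ — but this is an $L^2(\mathbb{P}_A)$-in-$\omega$ statement, not a pointwise one, so directly it gives an $L^2(\Omega_A)$-norm bound on the integral rather than an $L^\infty$ bound. To upgrade to $L^\infty$, I would instead use that the integral is itself a Walsh polynomial with spectrum in $W_{A_k}$ whose coefficients are bilinear integrals of Fourier coefficients; bounding each coefficient via $|\widehat{\Theta_k({\bf{x}})}(w_1)| \le \delta^{k-1}\cdot(\text{something summable in } w_1)$ and applying Theorem \ref{T5} coefficient-wise, one gets that the $\ell^1$-norm of the coefficient sequence — hence the sup-norm of the polynomial — is bounded. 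Actually the cleaner route, mirroring Lemma \ref{L8}: bound $\|\int\Theta_k(\rho_n{\bf{x}})\Theta_k(\rho_m{\bf{y}})\mu\|_{L^\infty}$ by testing against $L^1$ functions, reducing to $\sup_{\omega_1,\omega_2}|\int r_{\bf x}(\omega_1)\cdots|$ type quantities, and at that point invoke Theorem \ref{T5} with $f,g$ the (bounded) slices $\Theta_k(\cdot)(\omega_1), \Theta_k(\cdot)(\omega_2)$; then the $L^2(\nu_i)$ norms are bounded by the Riesz-product $L^\infty$ estimate \eqref{e40}, namely $\|Q_{A_k}(\cdot)\|_{L^\infty}\le\sqrt e$, giving $\|f\|_{L^2(\nu_1)}\le\|\Theta_k(\cdot)(\omega_1)\|_\infty\le 2\delta^{k-1}\sqrt e$ uniformly. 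This yields \eqref{a23} with constant $4e\delta^{2k-2}$ directly.

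Finally, to pass from the polynomials back to the claimed Walsh series, I would let $n,m \to \infty$: since $\rho_n{\bf{x}}\to{\bf{x}}$ and $\rho_m{\bf{y}}\to{\bf{y}}$ in $l^2$-norm, weak continuity of ${\bf{x}}\mapsto{\bf{x}}^{(j)}$ (Lemma \ref{L3W}, as used in Lemma \ref{L8}) and continuity of $\Theta_k$ give convergence of each Fourier coefficient $\int_{B_{l^2}\times B_{l^2}}\widehat{\Theta_k(\rho_n{\bf{x}})}(w_1)\widehat{\Theta_k(\rho_m{\bf{y}})}(w_2)\,\mu$ to the corresponding coefficient of \eqref{a29} — here I would invoke the iterated (Fubini-type) structure of the bilinear integral and dominated convergence in each variable separately. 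Combined with the uniform bound \eqref{a23}, the polynomials converge weak* in $L^\infty(\Omega_A,\mathbb{P}_A)$ to an element whose Walsh series is the right side of \eqref{a29}, and the $L^\infty$-norm of the limit inherits the bound $4e\delta^{2k-2}\|\mu\|_{\mathcal{F}_2}$. The main obstacle I anticipate is the bookkeeping needed to apply Theorem \ref{T5} at a fixed $\omega$ (or fixed pair $\omega_1,\omega_2$ after the $L^1$-duality reduction) while keeping the resulting $L^2(\nu_i)$ bounds uniform: one must be careful that the factorizing measures $\nu_1,\nu_2$ do not depend on $\omega$, which is exactly what Theorem \ref{T5} provides, and that the slices $\Theta_k(\cdot)(\omega)$ are genuinely bounded measurable functions on $B_{l^2}$ — which follows from the weak continuity of ${\bf{x}}\mapsto{\bf{x}}^{(k)}$ and \eqref{a45}.
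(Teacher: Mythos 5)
There is a genuine gap, and it comes from misplacing where Theorem \ref{T5} is actually needed. The uniform $L^{\infty}$ bound on the truncated objects does \emph{not} require Theorem \ref{T5} at all: since $\|\Theta_k(\rho_n{\bf{x}})\|_{L^{\infty}} \leq 2\delta^{k-1}\sqrt{e}$ by \eqref{a45}, evaluating at a fixed $\omega$ and using the definition \eqref{a18} of $\|\mu\|_{\mathcal{F}_2}$ gives \eqref{a30} directly, exactly as in \eqref{a15}. Your ``cleaner route'' invokes Theorem \ref{T5} and then bounds $\|f\|_{L^2(\nu_i)}$ by $\|f\|_{\infty}$, which makes the factorization theorem redundant and, if kept, inserts a spurious factor $\mathcal{K}_G$ into \eqref{a23}.

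The step your proposal does not secure is the one for which Theorem \ref{T5} is indispensable: the absolute convergence of the inner sum in \eqref{a29}. Since $W_{A_k}$ is a group, the set $\{(w_1,w_2)\in W_{A_k}^2: w_1w_2=w\}=\{(w_1,w_1w): w_1\in W_{A_k}\}$ is \emph{infinite}, so your claim that grouping by $w=w_1w_2$ collects finitely many pairs is false for the limit object (and even for the truncations after integration in $({\bf{x}},{\bf{y}})$, because the support of $\rho_n{\bf{x}}$ moves with ${\bf{x}}$). Without a summable majorant for
$\sum_{w_1\in W_{A_k}} \big|\int_{B_{l^2}\times B_{l^2}}\widehat{\Theta_k({\bf{x}})}(w_1)\widehat{\Theta_k({\bf{y}})}(w_1w)\,\mu(d{\bf{x}},d{\bf{y}})\big|$,
term-by-term convergence of the coefficients under $\rho_n\to\mathrm{id}$ does not let you identify the Walsh series of the weak* limit with the right side of \eqref{a29}. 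The paper obtains this majorant by applying Theorem \ref{T5} \emph{per Fourier coefficient} $w_1$ (with $f=\widehat{\Theta_k(\cdot)}(w_1)$, $g=\widehat{\Theta_k(\cdot)}(w_1w)$, and $\nu_1,\nu_2$ independent of $w_1$), then summing over $w_1$ via Cauchy--Schwarz, interchanging sum and integral, and using Plancherel together with \eqref{a45} to get the bound $4e\delta^{2(k-1)}\mathcal{K}_G\|\mu\|_{\mathcal{F}_2}$. Your first, abandoned attempt (Fubini in $\omega$ plus Plancherel) is close in spirit, but the argument must be run on the Fourier side, coefficient by coefficient, not pointwise in $\omega$.
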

\begin{proof}
 We first verify that for fixed $w \in W_{A_k}$,  the sum on the right side of \eqref{a22}
\begin{equation}\label{a26}
\begin{split}
&\sum_{\substack{w_1 \in W_{A_k}, \ w_2 \in W_{A_k} \\[1mm] w_1 w_2 = w}}  \bigg ( \int_{B_{l^2} \times B_{l^2}}   \widehat{\Theta_k({\bf{x}})}(w_1)\widehat{\Theta_k({\bf{y}})}(w_2)\mu(d{\bf{x}},d{\bf{y}}) \bigg )w_1w_2 \\\\
&= \sum_{w_1 \in W_{A_k}} \bigg ( \int_{B_{l^2} \times B_{l^2}}   \widehat{\Theta_k({\bf{x}})}(w_1)\widehat{\Theta_k({\bf{y}})}(w_1w)\mu(d{\bf{x}},d{\bf{y}}) \bigg )w_1\\
&
\end{split}
\end{equation} 
 converges absolutely.  To this end, let $\nu_1$ and $\nu_2$ be probability measures on $\mathscr{B}_{l^2}$ associated with $\mu$, as per Theorem \ref{T5}.  Then by \eqref{a19},  for every $w_1 \in W_{A_k}$ we estimate 
\begin{equation}\label{a25}
\begin{split}
 &  \big | \int_{B_{l^2} \times B_{l^2}}   \widehat{\Theta_k({\bf{x}})}(w_1)\widehat{\Theta_k({\bf{y}})}(w_1w)\mu(d{\bf{x}},d{\bf{y}}) \big | \\\\
& \leq  \   \mathcal{K}_G \|\mu\|_{\mathcal{F}_2} \bigg ( \int_{B_{l^2}}   \big | \widehat{\Theta_k({\bf{x}})}(w_1) \big|^2 \nu_1(d{\bf{x}}) \bigg )^{\frac{1}{2}} \bigg ( \int_{B_{l^2}} \big |\widehat{\Theta_k({\bf{y}})}(w_1w) \big|^2 \nu_2(d{\bf{y}}) \bigg )^{\frac{1}{2}}.\\  
\end{split}
\end{equation} \\\
Applying the estimate in  \eqref{a25} to the summands on the right side of \eqref{a26}, and then applying Cauchy-Schwarz, Plancherel, and \eqref{a45},  we obtain\\
\begin{equation}
\begin{split}
 &\sum_{w_1 \in W_{A_k}} \big | \int_{B_{l^2} \times B_{l^2}}   \widehat{\Theta_k({\bf{x}})}(w_1)\widehat{\Theta_k({\bf{y}})}(w_1w)\mu(d{\bf{x}},d{\bf{y}}) \big | \\\\
 & \leq \   \mathcal{K}_G \|\mu\|_{\mathcal{F}_2} \bigg (\sum_{w_1 \in W_{A_k}} \int_{B_{l^2}}   \big | \widehat{\Theta_k({\bf{x}})}(w_1) \big|^2 \nu_1(d{\bf{x}}) \bigg )^{\frac{1}{2}} \bigg (\sum_{w_1 \in W_{A_k}} \int_{B_{l^2}} \big |\widehat{\Theta_k({\bf{y}})}(w_1w) \big|^2 \nu_2(d{\bf{y}}) \bigg )^{\frac{1}{2}}\\\\
 &= \   \mathcal{K}_G \|\mu\|_{\mathcal{F}_2} \left ( \int_{B_{l^2}} \bigg ( \sum_{w_1 \in W_{A_k}} \big | \widehat{\Theta_k({\bf{x}})}(w_1) \big|^2 \bigg ) \nu_1(d{\bf{x}}) \right )^{\frac{1}{2}} \left (  \int_{B_{l^2}} \bigg ( \sum_{w_1 \in W_{A_k}} \big |\widehat{\Theta_k({\bf{y}})}(w_1w) \big|^2 \bigg ) \nu_1(d{\bf{y}}) \right )^{\frac{1}{2}}\\\\
 & \leq \  \mathcal{K}_G \|\mu\|_{\mathcal{F}_2} \left ( \int_{B_{l^2}} \big \| \Theta_k({\bf{x}})\big \|_{L^{\infty}(\Omega_A, \mathbb{P}_A)}^2 \ \nu_1(d{\bf{x}}) \right )^{\frac{1}{2}} \left ( \int_{B_{l^2}} \big \|\Theta_k({\bf{y}})\big \|_{L^{\infty}(\Omega_A, \mathbb{P}_A)}^2 \ \nu_1(d{\bf{y}}) \right )^{\frac{1}{2}}.\\\\
 &\leq \  4 e \delta^{2(k-1)} \mathcal{K}_G \|\mu\|_{\mathcal{F}_2},
\end{split}
\end{equation}\\\
which verifies that \eqref{a26} converges absolutely.

We now proceed as in the proofs of Lemmas \ref{L8} and  \ref{L9}.  For $w \in W_{A_k} \ w_1 \in W_{A_k}$,\\ 
\begin{equation}
\begin{split}
\int_{B_{l^2} \times B_{l^2}} \big(\Theta_k(\rho_n{\bf{x}}) &\big )^{\wedge}(w_1) \big(\Theta_k(\rho_n{\bf{y}}) \big )^{\wedge}(w_1w)\mu(d{\bf{x}},d{\bf{y}})\\\
& \underset{n \to \infty}{\longrightarrow} \  \ \int_{B_{l^2} \times B_{l^2}} \widehat{\Theta_k({\bf{x}})}(w_1) \widehat{\Theta_k({\bf{y}})}(w_1w)\mu(d{\bf{x}},d{\bf{y}}),
\end{split}
\end{equation}\\\
where $\rho_n$ is defined in \eqref{a24}; cf. \eqref{a16}.  Therefore, via the absolute convergence of  \eqref{a26}, we obtain
\begin{equation}\label{a27}
\begin{split}
\bigg (\int_{B_{l^2} \times B_{l^2}} &\Theta_k(\rho_n{\bf{x}}) \Theta_k(\rho_n{\bf{y}})  \mu(d{\bf{x}},d{\bf{y}}) \bigg )^{\wedge}(w) \\\
&\underset{n \to \infty}{\longrightarrow} \  \  \sum_{\substack{w_1 \in W_{A_k}, \ w_2 \in W_{A_k} \\[1mm] w_1 w_2 = w}} \int_{B_{l^2} \times B_{l^2}} \widehat{\Theta_k({\bf{x}})}(w_1)  \widehat{\Theta_k({\bf{y}})}(w_2)\mu({d\bf{x}},d{\bf{y}})  \\\
  & \ \ \ \ \ \ \ \ \ \ \ \ \ \ \ \ \ \ \ \ \ \ \ \ \ \ \ \ = \ \ \bigg (\int_{B_{l^2} \times B_{l^2}}  \Theta_k({\bf{x}})  \Theta_k({\bf{y}})  \mu(d{\bf{x}},d{\bf{y}})  \bigg )^{\wedge}(w).
\end{split}
\end{equation}\\\
Finally, by combining \eqref{a27} with the estimate\\
\begin{equation}\label{a30}
\bigg \| \int_{B_{l^2} \times B_{l^2}} \Theta_k(\rho_n{\bf{x}}) \Theta_k(\rho_n{\bf{y}})  \mu(d{\bf{x}},d{\bf{y}}) \bigg \|_{L^{\infty}} \leq 4e \delta^{2(k-1)} \|\mu\|_{\mathcal{F}_2}
\end{equation}\\\
 (cf. \eqref{a15} and \eqref{a16}), we deduce that the Walsh polynomials
 
\begin{equation*}  
\int_{B_{l^2} \times B_{l^2}} \Theta_k(\rho_n{\bf{x}}) \Theta_k(\rho_n{\bf{y}})  \mu(d{\bf{x}},d{\bf{y}}), \ \ \ n = 1, \ldots,
\end{equation*}\\\
converge in the weak* topology of $L^{\infty}(\Omega_A,\mathbb{P}_A)$ to an element in $L^{\infty}(\Omega_A,\mathbb{P}_A)$,  whose Walsh series is the right side of \eqref{a29}.  The $L^{\infty}$-bound in \eqref{a23} is a consequence of the  estimate in \eqref{a30}.

\end{proof}

 Applying Lemmas \ref{L9} and  \ref{L10} to \eqref{a21}, we obtain\\
 
 \begin{proposition} \label{P7} \ For $\mu \in \mathcal{F}_2(\mathscr{B}_{l^2},\mathscr{B}_{l^2})$,
\begin{equation}\label{a28}
\int_{B_{l^2} \times B_{l^2}}  \Phi({\bf{x}})\Phi({\bf{y}}) \mu(d{\bf{x}},d{\bf{y}})  
:=  \sum_{j,k=1}^{\infty}  \int_{B_{l^2} \times B_{l^2}}\Theta_j({\bf{x}})  \Theta_k({\bf{y}})  \mu(d{\bf{x}},d{\bf{y}}) \\
\end{equation}\\
is an element of $L^{\infty}(\Omega_A, \mathbb{P}_A)$, where the integrals on the right side of \eqref{a28} are given by Lemmas \ref{L9} and \ref{L10}.  Moreover,
\begin{equation} \label{a31}
 \bigg \| \int_{B_{l^2} \times B_{l^2}}  \Phi({\bf{x}})\Phi({\bf{y}}) \mu(d{\bf{x}},d{\bf{y}})   \bigg \|_{L^{\infty}} \leq \bigg ( \frac{2\sqrt{e}}{1 - \delta} \bigg)^2 \| \mu \|_{\mathcal{F}_2} \ .
\end{equation}\\
 \end{proposition}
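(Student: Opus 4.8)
The plan is to assemble Proposition \ref{P7} directly from Lemmas \ref{L9} and \ref{L10}, which together cover every term in the formal expansion \eqref{a21}. Write the double sum $\sum_{j,k=1}^\infty \int \Theta_j({\bf{x}})\Theta_k({\bf{y}})\mu(d{\bf{x}},d{\bf{y}})$ as the union of the off-diagonal part ($j\neq k$), governed by Lemma \ref{L9} with bound $4e\,\delta^{j+k-2}\|\mu\|_{\mathcal{F}_2}$, and the diagonal part ($j=k$), governed by Lemma \ref{L10} with bound $4e\,\delta^{2k-2}\|\mu\|_{\mathcal{F}_2}$. In both cases the relevant Walsh series represents an element of $L^\infty(\Omega_A,\mathbb{P}_A)$, so the first thing to check is that the full series of these elements converges in the $L^\infty$-norm; this is where the geometric decay in $\delta<1$ does all the work.

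The key convergence estimate is the computation
\begin{equation*}
\sum_{j,k=1}^\infty 4e\,\delta^{j+k-2}\|\mu\|_{\mathcal{F}_2}
= 4e\,\|\mu\|_{\mathcal{F}_2}\Bigl(\sum_{j=1}^\infty \delta^{j-1}\Bigr)^2
= \frac{4e}{(1-\delta)^2}\,\|\mu\|_{\mathcal{F}_2}
= \Bigl(\frac{2\sqrt{e}}{1-\delta}\Bigr)^2\|\mu\|_{\mathcal{F}_2},
\end{equation*}
which simultaneously proves that the series defining the integral in \eqref{a28} is absolutely (hence unconditionally) convergent in $L^\infty(\Omega_A,\mathbb{P}_A)$, that its sum is a well-defined element of $L^\infty(\Omega_A,\mathbb{P}_A)$, and that it satisfies the norm bound \eqref{a31}. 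One should note that the spectra $W_{A_j}\cdot W_{A_k}$ appearing for distinct pairs $(j,k)$ are genuinely distinct (the $W_{A_j}$ are mutually independent families of characters, exactly as used inside the proofs of Lemmas \ref{L9} and \ref{L10}), so the Walsh series of the sum is obtained simply by concatenating the Walsh series of the individual summands, with no cancellation or ambiguity; this justifies writing the right side of \eqref{a28} as a bona fide Walsh series and identifies it with $\int_{B_{l^2}\times B_{l^2}}\Phi({\bf{x}})\Phi({\bf{y}})\,\mu(d{\bf{x}},d{\bf{y}})$ in the sense declared by \eqref{a21}.

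There is essentially no deep obstacle here: the substance is entirely in Lemma \ref{L10} (and hence in Theorem \ref{T5}, the Grothendieck factorization theorem), which was already proved. The only genuine point to articulate carefully is that the partial sums converge in the \emph{weak*} topology to an $L^\infty$ element whose Walsh transform agrees with the stated Walsh series — i.e., that one may pass from the $L^\infty$-norm convergence of the tail to identification of the limit's spectrum termwise. This follows as in the closing paragraphs of Lemmas \ref{L8}, \ref{L9}, and \ref{L10}: the partial sums $\sum_{j,k\le N}\int \Theta_j({\bf{x}})\Theta_k({\bf{y}})\mu(d{\bf{x}},d{\bf{y}})$ are uniformly bounded in $L^\infty$ by $(2\sqrt{e}/(1-\delta))^2\|\mu\|_{\mathcal{F}_2}$ and their Walsh coefficients stabilize (each coefficient is eventually constant in $N$, since a given $w\in W_A$ lies in $W_{A_j}\cdot W_{A_k}$ for at most one pair $(j,k)$), so a weak* limit point exists and is unique, with the predicted Walsh series. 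Collecting these observations yields \eqref{a28} and \eqref{a31}, completing the proof of Proposition \ref{P7}.

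\begin{proof}
By Lemmas \ref{L9} and \ref{L10}, for each pair of positive integers $(j,k)$ the Walsh series
\begin{equation*}
\int_{B_{l^2} \times B_{l^2}} \Theta_j({\bf{x}})  \Theta_k({\bf{y}})  \mu(d{\bf{x}},d{\bf{y}})
\end{equation*}
represents an element of $L^{\infty}(\Omega_A, \mathbb{P}_A)$, with
\begin{equation*}
\bigg\| \int_{B_{l^2} \times B_{l^2}} \Theta_j({\bf{x}})  \Theta_k({\bf{y}})  \mu(d{\bf{x}},d{\bf{y}}) \bigg\|_{L^{\infty}} \leq 4e\,\delta^{j+k-2}\,\|\mu\|_{\mathcal{F}_2},
\end{equation*}
where for $j \neq k$ this is the bound \eqref{a23c} and for $j = k$ it is the bound \eqref{a23}. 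Because the families $W_{A_1}, W_{A_2}, \ldots$ are mutually independent, a given $w \in W_A$ belongs to $W_{A_j} \cdot W_{A_k}$ for at most one pair $(j,k)$; hence the Walsh series of the sum on the right side of \eqref{a28} is obtained by concatenating the Walsh series of the individual summands, and each of its Walsh coefficients is a finite sum of coefficients coming from finitely many terms.

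Summing the norm estimates,
\begin{equation*}
\sum_{j,k=1}^{\infty} 4e\,\delta^{j+k-2}\,\|\mu\|_{\mathcal{F}_2} = 4e\,\|\mu\|_{\mathcal{F}_2}\bigg(\sum_{j=1}^{\infty}\delta^{j-1}\bigg)^2 = \frac{4e}{(1-\delta)^2}\,\|\mu\|_{\mathcal{F}_2} = \bigg(\frac{2\sqrt{e}}{1-\delta}\bigg)^2\|\mu\|_{\mathcal{F}_2} < \infty,
\end{equation*}
so the series $\sum_{j,k=1}^{\infty} \int_{B_{l^2} \times B_{l^2}} \Theta_j({\bf{x}})  \Theta_k({\bf{y}})  \mu(d{\bf{x}},d{\bf{y}})$ converges absolutely in the $L^{\infty}(\Omega_A,\mathbb{P}_A)$-norm. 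Its sum is therefore an element of $L^{\infty}(\Omega_A, \mathbb{P}_A)$, which we denote $\int_{B_{l^2} \times B_{l^2}} \Phi({\bf{x}})\Phi({\bf{y}}) \mu(d{\bf{x}},d{\bf{y}})$ as in \eqref{a21} and \eqref{a28}. The partial sums over $j,k \leq N$ are uniformly bounded in $L^{\infty}$ by $(2\sqrt{e}/(1-\delta))^2\|\mu\|_{\mathcal{F}_2}$, and, as noted above, each of their Walsh coefficients is eventually constant in $N$; hence they converge in the weak* topology of $L^{\infty}(\Omega_A,\mathbb{P}_A)$ to that same element, whose Walsh series is the right side of \eqref{a28}. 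Passing to the limit in the uniform bound yields
\begin{equation*}
\bigg\| \int_{B_{l^2} \times B_{l^2}}  \Phi({\bf{x}})\Phi({\bf{y}}) \mu(d{\bf{x}},d{\bf{y}})   \bigg\|_{L^{\infty}} \leq \bigg( \frac{2\sqrt{e}}{1 - \delta} \bigg)^2 \| \mu \|_{\mathcal{F}_2},
\end{equation*}
which is \eqref{a31}.
\end{proof}
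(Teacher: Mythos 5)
Your proof is correct and follows exactly the route the paper takes: Proposition \ref{P7} is presented there as an immediate consequence of Lemmas \ref{L9} and \ref{L10} applied to \eqref{a21}, and your geometric-series computation $\sum_{j,k}4e\,\delta^{j+k-2}\|\mu\|_{\mathcal{F}_2}=4e\|\mu\|_{\mathcal{F}_2}/(1-\delta)^2=\bigl(2\sqrt{e}/(1-\delta)\bigr)^2\|\mu\|_{\mathcal{F}_2}$ is precisely the arithmetic behind \eqref{a31}. (One cosmetic point: a character $w$ can appear in the spectra of both the $(j,k)$ and $(k,j)$ terms, so ``at most one pair'' should read ``at most one unordered pair''; but your follow-up statement that each Walsh coefficient is a finite sum over finitely many terms is the operative fact, and the absolute $L^{\infty}$-norm convergence you establish makes the identification of the limit unambiguous regardless.)
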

 
  \subsection{Integrability of the inner product} \label{sint}
The classical Grothendieck inequality is in effect the statement that the inner product $\langle \cdot, \cdot \rangle$ in a Hilbert space $H$ is canonically integrable with respect to all discrete $\mathcal{F}_2$-measures on $B_H \times B_H$.  Namely,   
by the Grothendieck inequality and basic harmonic analysis, if $a = (a_{jk}) \in \mathcal{F}_2(\mathbb{N},\mathbb{N})$, i.e.,
\begin{equation} \label{a32}
\sup \bigg \{\big |\sum_{j,k=1}^N a_{jk}s_jt_k \big |:  (s_j,t_k) \in [-1,1]^2, \  (j,k) \in [N]^2, \  N \in \mathbb{N} \bigg \} := \|a\|_{\mathcal{F}_2} < \infty,
\end{equation}\\\
then for all  ${\bf{x}}_j \in B_H$ and $ {\bf{y}}_k \in B_H,  \ j \in \mathbb{N},\  k \in \mathbb{N}$, 
\begin{equation} \label{a33}
\begin{split}
\lim_{N \rightarrow \infty} \sum_{j,k=1}^N a_{jk} \langle {\bf{x}}_j, {\bf{y}}_k \rangle & = \sum_{j=1}^{\infty} \bigg (\sum_{k=1}^{\infty}  a_{jk} \langle {\bf{x}}_j, {\bf{y}}_k \rangle \bigg ) \\\
& =  \sum_{k=1}^{\infty} \bigg ( \sum_{j=1}^{\infty}  a_{jk} \langle {\bf{x}}_j, {\bf{y}}_k \rangle \bigg ) \\\
& := \sum_{j,k = 1}^{\infty}a_{jk} \langle {\bf{x}}_j, {\bf{y}}_k \rangle,
\end{split}
\end{equation}
 and
\begin{equation} \label{a34}
\big |\sum_{j,k = 1}^{\infty}a_{jk} \langle {\bf{x}}_j, {\bf{y}}_k \rangle \big | \leq \mathcal{K}_G \|a\|_{\mathcal{F}_2}.
\end{equation}\\ 
The statements in \eqref{a33} and \eqref{a34}  can be equivalently rephrased as follows.  Let $\mathscr{B}_{B_H}$ be the Borel field generated by the weak topology on $B_H$.  Given $a = (a_{jk}) \in \mathcal{F}_2(\mathbb{N},  \mathbb{N})$, \  $\{{\bf{x}}_j\} \subset B_H$, and $\{{\bf{y}}_k\} \subset B_H$, we define $\mu_a \in \mathcal{F}_2(\mathscr{B}_{B_H} , \mathscr{B}_{B_H})$ by
\begin{equation} \label{a35}
\mu_a = \sum_{j,k}a_{jk} \delta_{{\bf{x_j}}} \otimes \delta_{{\bf{y_k}}},
\end{equation}
where $\delta_{{\bf{x}}}$  denotes the usual point mass measure at ${\bf{x}}  \in B_H$; \ i.e., 
\begin{equation*}
\mu_a(E,F) \  = \sum_{\{{\bf{x_j}} \in E, \ {\bf{y_k}} \in F\}} a_{jk}, \ \ \  (E,F) \in \mathscr{B}_{B_H} \times \mathscr{B}_{B_H},
\end{equation*}
whence $$\|\mu_a\|_{\mathcal{F}_2(\mathscr{B}_{B_H} , \mathscr{B}_{B_H})} = \|a\|_{\mathcal{F}_2(\mathbb{N},  \mathbb{N})}.$$\\  Then, the function $\langle \cdot, \cdot \rangle$ on $B_H \times B_H$ is integrable with respect to $\mu_a$,
\begin{equation} \label{a36}
\int_{B_H \times B_H} \langle {\bf{x}}, {\bf{y}} \rangle  \ \mu_a(d {\bf{x}}, d {\bf{y}}) := \sum_{j,k = 1}^{\infty}a_{jk} \langle {\bf{x}}_j, {\bf{y}}_k \rangle,
\end{equation}
\and
\begin{equation} \label{a37}
\big |\int_{B_H \times B_H} \langle {\bf{x}}, {\bf{y}} \rangle  \ \mu_a(d {\bf{x}}, d {\bf{y}}) \big | \leq \ \mathcal{K}_G \|\mu_a\|_{\mathcal{F}_2}.
\end{equation}\\\

For arbitrary $\mu \in \mathcal{F}_2(\mathscr{B}_{B_H} , \mathscr{B}_{B_H})$, if  $H$ is separable and  $A = \{{\bf{e}}_j\}_{j \in \mathbb{N}}$ is an orthonormal basis for it,  then Theorem \ref{T5} implies that the limit
\begin{equation} \label{b21}
\int_{B_H \times B_H} \langle {\bf{x}}, {\bf{y}} \rangle  \ \mu(d {\bf{x}}, d {\bf{y}}) := \lim_{n \rightarrow \infty} \sum_{j=1}^n \int_{B_H \times B_H}\langle {\bf{x}}, {\bf{e}}_j\rangle  \ \langle {\bf{y}}, {\bf{e}}_j\rangle\ \mu(d {\bf{x}}, d {\bf{y}})
\end{equation}
exists, and is independent of the basis $A$.  (E.g., see \cite{bowers2010measure}.)
 
Given a non-separable $H$, we  fix an orthonormal basis $A$ for it, and take $\Phi_A := \Phi$ to be the map supplied by Theorem \ref{T2}.  Then, by mimicking the argument in \eqref{i5} with $\mu \in \mathcal{F}_2(\mathscr{B}_{B_H}, \mathscr{B}_{B_H})$,  we  formally have 
\begin{equation} \label{a46}
\begin{split}
\int_{B_H \times B_H} \langle {\bf{x}}, {\bf{y}} \rangle  \ \mu(d {\bf{x}}, d {\bf{y}}) & = \int_{B_H \times B_H} \bigg (\int_{\Omega_A} \Phi_A({\bf{x}}) \Phi_A(\bar{{\bf{y}}}) d \mathbb{P}_A \bigg) \mu(d {\bf{x}}, d {\bf{y}}) \\\\
& := \int_{\Omega_A} \bigg (\int_{B_{l^2} \times B_{l^2}} \Phi_A({\bf{x}}) \Phi_A(\bar{{\bf{y}}})  \mu(d {\bf{x}}, d {\bf{y}}) \bigg ) d \mathbb{P}_A,
\end{split}
\end{equation}\\
and\\\
\begin{equation} \label{a41}
\begin{split}
\bigg| \int_{B_H \times B_H} \langle {\bf{x}},{\bf{y}} \rangle \ \mu({\bf{x}},d{\bf{y}}) \bigg | &\leq \ \bigg \| \int_{B_{l^2} \times B_{l^2}} \Phi_A({\bf{x}}) \Phi_A(\bar{{\bf{y}}})  \mu(d {\bf{x}}, d {\bf{y}}) \bigg \|_{L^{\infty}} \\\\
& \leq \bigg ( \frac{2\sqrt{e}}{1 - \delta} \bigg)^2 \| \mu \|_{\mathcal{F}_2} .
\end{split}
\end{equation} \\\\

\begin{problem} If $A$ and $A^{\prime}$ are orthonormal bases of $H$, and  $\Phi_A$ and $\Phi_{A^{\prime}}$ are the respective maps supplied  by Theorem \ref{T2}, then does it follow that for every $\mu \in \mathcal{F}_2(\mathscr{B}_{B_H}, \mathscr{B}_{B_H})$, \\\
\begin{equation} \label{a39}
\begin{split}
\int_{\Omega_A} \bigg (\int_{B_{l^2(A)} \times B_{l^2(A)}}  \Phi_A({\bf{x}})\Phi_A({\bf{y}})& \mu(d{\bf{x}},d{\bf{y}}) \bigg) d\mathbb{P}_A\\ & = \int_{\Omega_{A^{\prime}}} \bigg (\int_{B_{l^2(A^{\prime})} \times B_{l^2(A^{\prime})}}  \Phi_{A^{\prime}}({\bf{x}})\Phi_{A^{\prime}}({\bf{y}}) \mu(d{\bf{x}},d{\bf{y}}) \bigg) d\mathbb{P}_{A^{\prime}} \ ?
\end{split}
\end{equation} \\
\end{problem}
\noindent
Indeed, the question whether the inner product in a non-separable  $H$ is integrable with respect to arbitrary $\mu \in \mathcal{F}_2(\mathscr{B}_{B_H},\mathscr{B}_{B_H})$ -- say, via \eqref{a46} -- appears to be open.\\\

\section{\bf{A Parseval-like formula for $\langle {\bf{x}}, {\bf{y}}\rangle$, \  ${\bf{x}} \in l^p$, \ ${\bf{y}} \in l^q$}} \label{Snonh}

 Next we consider a variant of Theorem \ref{T2} concerning an integral representation of the dual action between $l^p$ and $l^q$ for  \  $1 \leq p \leq  2 \leq q \leq \infty$, \ $\frac{1}{p} + \frac{1}{q} = 1$. 
  
  The Grothendieck inequality, an assertion about the dual action between $l^2$ and $l^2$, cannot be extended verbatim to the general $l^p - l^q$ setting.  To wit,  if we take $(a_{jk})$ in  \eqref{i1}  to be the matrix in \eqref{e17}, and  ${\bf{x}}_j$  to be the $j$-th basic unit vector  ${\bf{e}}_j$  in $l^p(\mathbb{N})$,  then, maximizing over vectors ${\bf{y}}_k$ in the unit ball of $l^q(\mathbb{N})$, we obtain that the left side of \eqref{i1} majorizes
  \begin{equation}
  \sum_{k=1}^N \big \| \sum_{j=1}^N a_{jk}{\bf{e}}_j \big \|_{l^p} = N^{\frac{1}{p} - \frac{1}{2}},
    \end{equation}
 and, therefore, that \eqref{i1} fails for $1 \leq p < 2$.  (Cf. Remark \ref{R2}.i, and also ~\cite{Lindenstrauss:1968}, Corollary 1 on p. 289.)  In particular, there can be no integral representation of  $\langle {\bf{x}}, {\bf{y}}\rangle$, ${\bf{x}} \in l^p$, \  ${\bf{y}} \in l^q$, with uniformly bounded integrands  for  \  $1 \leq p < 2 < q \leq \infty$, \ $\frac{1}{p} + \frac{1}{q} = 1$.
 
To derive a Parseval-like formula in the $l^p-l^q$ setting, we modify the scheme used to prove Theorem \ref{T2}.  We let  $A$ be an infinite set, and define
\begin{equation}
L^{\infty}_{(p)}(\Omega_A, \mathbb{P}_A) = \big \{f \in L^{\infty}(\Omega_A, \mathbb{P}_A): \hat{f} \in l^p(W_A) \big \}, \ \ 1 \leq p \leq 2,
\end{equation}
and 
\begin{equation}
M_{(p)}(\Omega_A) = \big \{\mu \in M(\Omega_A): \hat{\mu} \in l^p(W_A) \big \}, \ \ 2 < p \leq \infty, 
\end{equation}\\
with their respective norms
\begin{equation}
\|f\|_{L^{\infty}_{(p)}} = \max\{\|f\|_{L^{\infty}}, \|\hat{f}\|_p \}, \ \ f \in L^{\infty}_{(p)}(\Omega_A, \mathbb{P}_A), \ \ 1 \leq p \leq 2, 
\end{equation}\\
and
\begin{equation}
\|\mu\|_{M_{(p)}} = \max\{\|\mu\|_M, \|\hat{\mu}\|_p \}, \ \ \mu \in M_{(p)}(\Omega_A), \ \ 2 < p  \leq \infty.
\end{equation}\\
The extremal instances are:\\
\begin{equation}
\begin{split}
L^{\infty}_{(1)}(\Omega_A, \mathbb{P}_A) &= \mathbb{A}(\Omega_A) := \big \{\rm{absolutely \  convergent \ Walsh \ series \ on} \ \Omega_A \big \};\\\\\
L^{\infty}_{(2)}(\Omega_A, \mathbb{P}_A) &= L^{\infty}(\Omega_A, \mathbb{P}_A); \\\\
 M_{(\infty)}(\Omega_A) &= M(\Omega_A).\\\
\end{split}
\end{equation}
Note that if $f \in L^{\infty}_{(p)}(\Omega_A, \mathbb{P}_A)$, \  $\mu \in M_{(q)}(\Omega_A)$, \ $1 \leq p \leq 2 \leq q  \leq \infty, \ \frac{1}{p} + \frac{1}{q} = 1$,  then we have 
\begin{equation}
f \convolution \mu \in  \mathbb{A}(\Omega_A),
\end{equation}\\
with the  usual Parseval formula 
\begin{equation} \label{Parpq}
  \big(f \convolution \mu \big )(\boldsymbol{\omega}_0) =  \int_{\Omega_{A}} fd\mu = \sum_{w \in W_A} \hat{f}(w)\hat{\mu}(w),
  \end{equation}\\
where  $\boldsymbol{\omega}_0$ denotes the identity element in $\Omega_A$. 

We proceed to construct norm-bounded maps
\begin{equation}
\Phi^{(p)}: B_{l^p(A)} \  \rightarrow \ L^{\infty}_{(p)}(\Omega_A, \mathbb{P}_A), \ \ \ 1 \leq p \leq 2,
\end{equation}
and
\begin{equation}
\Phi^{(p)}: B_{l^p(A)} \ \rightarrow \ M_{(p)}(\Omega_A), \ \ \ 2 < p \leq \infty,
\end{equation}
such that
\begin{equation} \label{rep2}
\begin{split}
\sum_{\alpha \in A} {\bf{x}}(\alpha) \overline{{\bf{y}}(\alpha)} &= \int_{\Omega_A} \Phi^{(p)}({\bf{x}})  d\Phi^{(q)}(\overline{{\bf{y}}})\\\ 
& = \sum_{w \in W_A} \widehat{\Phi^{(p)}({\bf{x}})}(w)  \widehat{\Phi^{(q)}(\overline{{\bf{y}}})}(w),\ \ \  \ {\bf{x}} \in B_{l^p(A)}, \ {\bf{y}} \in B_{l^q(A)},
\end{split}
\end{equation} 
for all $1 \leq p \leq 2 \leq q  \leq \infty, \ \frac{1}{p} + \frac{1}{q} = 1$.  For $p \in [1, \infty]$, the maps  $\Phi^{(p)}$ turn out to be weakly continuous (weak topology on the domain and weak* topology on the range), and   $\widehat{ \Phi^{(p)}}$ will be  $\big (l^p(A) \rightarrow l^p(W_A) \big)$-continuous (norm topologies on domain and range).  For \  $p = q = 2$,  we let  $d\Phi^{(2)} := \Phi^{(2)} d\mathbb{P}_A$, and recover Theorem \ref{T2}. 

We consider first  the case $1 \leq p \leq 2.$    Following the setup in \S5, given ${\bf{x}} \in B_{l^p_{\mathbb{R}}(A)}$, we define ${\bf{x}}^{(1)} \in l^p_{\mathbb{R}}(A_1)$ by
\begin{equation} \label{g1p}
\textbf{x}^{(1)}(\tau_0\alpha) = \textbf{x}(\alpha), \ \ \alpha \in A.
\end{equation}
For $p=1$, define
\begin{equation} 
\Phi^{(1)}(\textbf{x}) = \sum_{\alpha \in A_1} {\bf{x}}^{(1)}(\alpha)r_{\alpha}, \ \ \ {\bf{x}} \in B_{l^1_{\mathbb{R}}(A)}.
\end{equation}\\
Let $1 < p \leq 2$, and note that if ${\bf{x}} \in B_{l^p_{\mathbb{R}}(A)}$,   then 
\begin{equation} \label{e40p}
\|Q_A(\rm{\bf{x}})\|_{L^{\infty}}   \leq e^{\frac{1}{2}\|{\bf{x}}\|_2^2} \leq e^{\frac{1}{2}\|{\bf{x}}\|_p^2}  \leq \sqrt{e}, 
\end{equation}\\\
and
 \begin{equation} \label{e42p}
  \|\widehat{Q_A(\rm{\bf{x}})}|_{R_F^c} \|_p   \leq (\sinh(1) - 1)^{\frac{1}{p}} = \delta^{\frac{2}{p}} < 1.
 \end{equation}\\\
(Cf. \eqref{e40}, \eqref{e42}  and \eqref{e42e}.)  Following \eqref{g1p}, we  define $\textbf{x}^{(j)} \in B_{l^p_{\mathbb{R}}(A_j)}$,  $ j = 2, \dots,$   by   
\begin{equation} \label{g2p}
\textbf{x}^{(j)}(\tau_{j-1}w) \  =  \  \delta^{\frac{2(j-2)}{p}}\bigg({Q_{A_{j-1}}\big(\textbf{x}^{(j-1)}/\delta^{\frac{2(j-2)}{p}}\big)}\bigg)^{\wedge}(w), \ \ \ w \in C_{j-1}.
\end{equation}
From \eqref{g1p}, \eqref{g2p}, \eqref{e42p}, we obtain\\ 
\begin{equation} \label{g4p}
\|\textbf{x}^{(j)}\|_p \leq \delta^{\frac{2(j-1)}{p}}, \ \ \ j \geq 1.
\end{equation}\\
By \eqref{e39} and  \eqref{e41} in Lemma \ref{L2}, and \eqref{e40p} and \eqref{g4p} above, we have for $j \geq 1, $
\begin{align}
\text{spect}\bigg(Q_{A_j}\big( \textbf{x}^{(j)}/\delta^{\frac{2(j-1)}{p}}\big)\bigg)  & \subset W_{A_j} \ ; \label{g6p}\\\
\big \|Q_{A_j}\big(\textbf{x}^{(j)}/\delta^{\frac{2(j-1)}{p}}\big) \big \|_{L^{\infty}} & \leq \sqrt{e} \ ; \label{g3p}\\\
\delta^{\frac{2(j-1)}{p}}  \bigg(Q_{A_j}\big( \textbf{x}^{(j)}/\delta^{\frac{2(j-1)}{p}}\big)\bigg)^{\wedge} (r_{\alpha}) & = \textbf{x}^{(j)}(\alpha), \ \ \ \alpha \in A_j \  .
\end{align}
Now define
\begin{equation} \label{g5p}
\Phi^{(p)}(\textbf{x}) = \sum_{j=1}^{\infty}\big(i\delta^{\frac{2}{p}}\big)^{j-1} \ Q_{A_j}\big(\textbf{x}^{(j)}/\delta^{\frac{2(j-1)}{p}}\big), \ \ \ {\bf{x}} \in B_{l^p_{\mathbb{R}}(A)}.
\end{equation}\\
(Cf. \eqref{g5}.)  The series in \eqref{g5p} is absolutely convergent in the $L^{\infty}$-norm, and  by  \eqref{g3p},\\
\begin{equation} \label{comp1}
\|\Phi^{(p)}(\textbf{x})\|_{L^{\infty}} \leq \ \frac{\sqrt{e}}{1 - \delta^{\frac{2}{p}}} \ .
\end{equation}\\
By the spectral analysis in \eqref{e45}, and by adapting the estimates in \eqref{e44} and \eqref{e44a} to the case ${\bf{x}} \in B_{l_{\mathbb{R}}^p(A)}$, we deduce
\begin{equation}
\sum_{w \in W_{A_j}} \big| \big(Q_{A_j}({\bf{x}}^{(j)})/\delta^{\frac{2(j-1)}{p}}\big)^{\wedge}(w) \big|^p \leq 1 + \delta^2,
\end{equation}
and therefore,
\begin{equation} \label{comp2}
\| \widehat{\Phi^{(p)}(\textbf{x})} \|_p \leq \bigg(\frac{1 + \delta^2}{1 - \delta^2}\bigg)^{\frac{1}{p}} .
\end{equation}
From \eqref{comp1}, \eqref{comp2}, and the definition of $\delta$, we obtain
\begin{equation}
\|\Phi^{(p)}(\textbf{x})\|_{L^{\infty}_{(p)}} \  \leq \  \max \bigg\{\frac{\sqrt{e}}{1 - \delta^{\frac{2}{p}}},\bigg (\frac{1 + \delta^2}{1 - \delta^2}\bigg)^{\frac{1}{p}} \bigg \} \ = \  \frac{\sqrt{e}}{1 - \delta^{\frac{2}{p}}}.
\end{equation}\\

Next let  $2 < p  \leq \infty$, and  ${\bf{x}} \in B_{l_{\mathbb{R}}^p(A)}$.  Recalling Lemma \ref{L2a},  for $F \subset A$ we have \\ 
\begin{equation}
\begin{split}
P_F({\bf{x}}) &= \mathfrak{R}_F\big(\frac{{\bf{x}}}{2}\big) - \mathfrak{R}_F\big(\frac{-{\bf{x}}}{2}\big) \\\\
&\sim \   \sum_{k=0}^{\infty}\frac{1}{4^k} \bigg(\sum_{{w \in W_{F,2k+1}} \atop{w = r_{\alpha_1} \cdots r_{\alpha_{2k+1}}}}\textbf{x}(\alpha_1) \cdots  \textbf{x}(\alpha_{2k+1}) r_{\alpha_1} \cdots r_{\alpha_{2k+1}}\bigg),
\end{split}
\end{equation}\\
where $\mathfrak{R}_F({\bf{x}})$ is the Riesz product in \S \ref{SS4}.i.  Then, 
\begin{equation} \label{e40a}
\|P_F({\bf{x}})\|_{M(\Omega_A)}   \leq 2 \ ;
\end{equation}\
\begin{equation} \label{e41a}
 \widehat{P_F({\bf{x}})}(r_{\alpha})   =   {\bf{x}}(\alpha), \ \ \alpha \in F \ ;
 \end{equation}\
 \begin{equation} \label{e42a}
  \|\widehat{P_F({\bf{x}})}|_{R_F^c} \|_p   \leq \bigg(2\sinh\big(\frac{1}{2}\big) - 1\bigg)^{\frac{1}{p}} < \delta^{\frac{2}{p}} < 1, \  \ \ \ 2 < p < \infty;
 \end{equation}\
 \begin{equation} \label{e43a}
 \|\widehat{P_F({\bf{x}})}|_{R_F^c} \|_{\infty} \leq \frac{1}{4}.
 \end{equation} 
 We define ${\bf{x}}^{(1)} \in B_{l_{\mathbb{R}}^p(A_1)}$ as in \eqref{g1p}.  For  $ j \geq 2$,  and  $p \in (2, \infty)$,  let 
 \begin{equation} \label{g2q}
\textbf{x}^{(j)}(\tau_{j-1}w) =  \delta^{\frac{2(j-2)}{p}}\bigg({P_{A_{j-1}}\big( \textbf{x}^{(j-1)}}/\delta^{\frac{2(j-2)}{p}}\big )\bigg)^{\wedge}(w), \ \ \ w \in C_{j-1}, 
\end{equation}\
and for $p = \infty$, \  let
 \begin{equation} \label{g2qq}
\textbf{x}^{(j)}(\tau_{j-1}w) =  \frac{1}{4^{j-2}}\bigg({P_{A_{j-1}}\big(4^{j-2} \textbf{x}^{(j-1)}}\big )\bigg)^{\wedge}(w), \ \ \ w \in C_{j-1}.
\end{equation}
By iterating \eqref{e42a} and \eqref{e43a}, we obtain for $p \in (2, \infty)$,
\begin{equation} \label{g4pa}
\|\textbf{x}^{(j)}\|_p \leq    \bigg(2\sinh\big(\frac{1}{2}\big) - 1\bigg)^{\frac{j-1}{p}}  \leq  \delta^{\frac{2(j-1)}{p}}, \ \ \  \ \ j \geq 1,
\end{equation}\\
and for $p = \infty$,
\begin{equation} \label{g4paa}
\|\textbf{x}^{(j)}\|_{\infty} \leq \bigg(\frac{1}{4}\bigg)^{j-1}, \ \ \  \ j \geq 1. \\\\
\end{equation}  
We define 
 \begin{equation} \label{g5pa}
 \Phi^{(p)}(\textbf{x}) = \sum_{j=1}^{\infty}\big(i\delta^{\frac{2}{p}}\big)^{j-1}P_{A_j}\big( \textbf{x}^{(j)}/\delta^{\frac{2(j-1)}{p}}\big), \ \  \ 2 < p <  \infty,  \ \ {\bf{x}} \in B_{l^p_{\mathbb{R}}(A)},
 \end{equation}
and  
\begin{equation} \label{g5paa}
 \Phi^{(\infty)}(\textbf{x}) = \sum_{j=1}^{\infty}\big(1/4\big)^{j-1}P_{A_j}\big( 4^{j-1}\textbf{x}^{(j)}\big), \ \ \ {\bf{x}} \in B_{l^{\infty}_{\mathbb{R}}(A)}.
 \end{equation}
We thus obtain $\Phi^{(p)}(\textbf{x}) \in M_{(p)}(\Omega_A)$ with  
\begin{equation} \label{g6pa}
\|\Phi^{(p)}(\textbf{x})\|_{M_{(p)}} \leq  \ 2\bigg (\frac{1 + \delta^2}{1 - \delta^2}\bigg)^{\frac{1}{p}}, \ \ \ 2 < p < \infty,
\end{equation}
 and
 \begin{equation} \label{g6paa}
\|\Phi^{(\infty)}(\textbf{x})\|_M \leq  \ \frac{8}{3} \ .
\end{equation}
 
 For  ${\bf{x}} \in B_{l^p(A)}$,  $1 \leq p \leq \infty$,   write $${\bf{x}} = {\bf{u}} + i {\bf{v}}, \ \  {\bf{u}} \in B_{l^p_{\mathbb{R}}(A)}, \  {\bf{v}} \in B_{l^p_{\mathbb{R}}(A)},$$ and then define\\
\begin{equation} \label{comp}
\Phi^{(p)}(\textbf{x}) := \Phi^{(p)}(\textbf{u}) + i \Phi^{(p)}(\textbf{v}).
\end{equation}\\
 
 To prove  \eqref{rep2} in the case $p = 1$ and $q = \infty$, observe that if ${\bf{x}} \in B_{l^1(A)}$, then $\text{spect}\big(\Phi^{(1)}(\textbf{x})\big) \subset A_1$, 
 \begin{equation}
 \big(\Phi^{(1)}(\textbf{x})\big)^{\wedge}(r_{\alpha}) = {\bf{x}}^{(1)}(\alpha), \ \ \ \alpha \in A_1,
 \end{equation}
 and that for ${\bf{y}} \in B_{l^{\infty}(A)}$,   
 \begin{equation}
 \big(\Phi^{(\infty)}(\textbf{y})\big)^{\wedge}(r_{\alpha}) = {\bf{y}}^{(1)}(\alpha), \ \ \ \alpha \in A_1.
 \end{equation}
 Then,  \eqref{rep2} in this instance is a simple consequence of the usual Parseval formula.   
 
 The proof of \eqref{rep2} in the case $1 < p \leq q < \infty$, which is similar to that of  
\eqref{e20} in \S \ref{SS5}, uses an alternating series argument based on iterating the Parseval formula\\
 \begin{equation}
 \begin{split}
 \big (Q_F({\bf{x}}) \convolution& P_F({\bf{y}})\big )(\boldsymbol{\omega}_0) 
  =  \sum_{k=0}^{\infty}\frac{1}{4^k} \bigg(\sum_{{w \in W_{F,2k+1}} \atop{w = r_{\alpha_1} \cdots r_{\alpha_{2k+1}}}}  \textbf{x}(\alpha_1) \cdots  \textbf{y}(\alpha_{2k+1}) \textbf{y}(\alpha_1) \cdots  \textbf{x}(\alpha_{2k+1}) \bigg ),\\\  &F \subset A, \ \ {\bf{x}} \in  l^p(F), \ \ {\bf{y}} \in  l^q(F), \ \ \frac{1}{p} + \frac{1}{q} = 1, \ \ 1 < p \leq  2 \leq q < \infty.
 \end{split}
 \end{equation}
  
  The proof that\begin{equation} \label{e43pp}
   \Phi^{(p)}(-{\bf{x}}) = -  \Phi^{(p)}({\bf{x}}), \ \ \ \ p \in [1, \infty], \ \  {\bf{x}} \in B_{l^p(A)}, 
  \end{equation}
is essentially the same as the proof of \eqref{e43}.

For $1< p < \infty$, the weak continuity of  $\Phi^{(p)}$ and the $\big (l^p(A) \rightarrow l^p(W_A) \big)$-continuity of  $\widehat{\Phi^{(p)}}$  follow from extensions of Lemmas \ref{L3W} and \ref{L3}.   The weak continuity of $\Phi^{(1)}$ and the $\big (l^1(A) \rightarrow l^1(W_A) \big)$-continuity of $\widehat{\Phi^{(1)}}$ are obvious;  the weak continuity of $\Phi^{(\infty)}$ and the $\big (l^{\infty}(A) \rightarrow l^{\infty}(W_A) \big)$-continuity of $\widehat{\Phi^{(\infty)}}$ follow from the spectral analysis of $P_A$.

 We summarize:

\begin{theorem} \label{T2pq}
  Let $A$ be an infinite set.  There exist one-one maps
  \begin{equation} \label{norgp}
\Phi^{(p)}: \ B_{l^p(A)}  \rightarrow \left \{
\begin{array}{lcc}
 L^{\infty}_{(p)}(\Omega_{A}, \mathbb{P}_{A}), & \quad  1 \leq p \leq 2,  \\\\
 M_{(p)}(\Omega_{A}), & \quad   2  < p \leq \infty,
\end{array} \right.
\end{equation}
 such that   
 \begin{equation} \label{e31p}
  \begin{split}
  \|\Phi^{(p)}(\textbf{\emph{x}})\|_{L^{\infty}_{(p)}} &\leq \ \frac{2\sqrt{e}}{1 - \delta^{\frac{2}{p}}},  \ \ \ \textbf{\emph{x}} \in B_{l^p(A)}, \ \ 1 \leq p \leq 2, \\\\
  \|\Phi^{(p)}(\textbf{\emph{x}})\|_{M_{(p)}} &\leq \ 4\bigg (\frac{1 + \delta^2}{1 - \delta^2}\bigg)^{\frac{1}{p}}, \ \ \ \textbf{\emph{x}} \in B_{l^p(A)}, \ \ 2 < p \leq \infty,\\\\
  \end{split}
  \end{equation}\\
  \begin{equation} \label{e43p}
   \Phi^{(p)}(-{\bf{x}}) = -  \Phi^{(p)}({\bf{x}}), \ \ \  \ {\bf{x}} \in B_{l^p(A)}, \ \ 1 \leq p \leq \infty,
  \end{equation}\\
  and
\begin{equation} \label{e20pq}
  \begin{split}
  \sum_{\alpha \in A} {\bf{x}}(\alpha)\overline{{\bf{y}}(\alpha)} &=  \int_{\Omega_{A}} \Phi^{(p)}({\bf{x}})d\Phi^{(q)}(\overline{{\bf{y}}}),\\\
  & {\bf{x}} \in  B_{l^p(A)}, \ \ {\bf{y}} \in  B_{l^q(A)}, \ \ \frac{1}{p} + \frac{1}{q} = 1, \ \ 1 \leq p \leq  2 \leq q \leq \infty, \\\\
  \end{split}
  \end{equation}\\\ 
where $d\Phi^{(2)} := \Phi^{(2)} d\mathbb{P}_A.$  \  Moreover,  for every $p \in [1, \infty],$ \ $\Phi^{(p)}$ is weakly continuous and $ \widehat{\Phi^{(p)}}$ is $\big(l^p(W) \rightarrow l^p(W_A) \big)$-continuous.\\

    \end{theorem}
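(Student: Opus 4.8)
The plan is to follow exactly the scheme used to prove Theorem \ref{T2}, but running the recursion in the appropriate sequence spaces ($l^p$ rather than $l^2$) and alternating between the odd part of the Riesz product $Q_F$ and the difference of Riesz products $P_F$ according to whether $p\le 2$ or $p>2$. First I would isolate the case $p=1,\ q=\infty$, which is elementary: the maps $\Phi^{(1)}$ and $\Phi^{(\infty)}$ are defined directly (one a single Rademacher series, the other a single difference of Riesz products), both have spectrum in $W_{A_1}$ on the relevant Rademacher coordinates, and \eqref{rep2} is then just the ordinary Parseval formula $\int f\,d\mu=\sum_w\hat f(w)\hat\mu(w)$. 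The norm bounds come straight from Lemma \ref{L2a}: $\|\Phi^{(\infty)}(\mathbf x)\|_M\le 8/3$ after complexification, and $\|\widehat{\Phi^{(1)}(\mathbf x)}\|_1\le 1$.

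Next I would treat $1<p<\infty$. The construction is the recursion \eqref{g1p}, \eqref{g2p}, \eqref{g2q}, \eqref{g2qq}: given $\mathbf x\in B_{l^p_{\mathbb R}(A)}$, transplant it to $A_1$, then recursively feed the tail of the spectrum (the part of $\widehat{Q_{A_{j-1}}}$ or $\widehat{P_{A_{j-1}}}$ supported off $R_{A_{j-1}}$) into $A_j$ via the bijections $\tau_{j-1}:C_{j-1}\to A_j$. The crucial quantitative input is that the relevant off-Rademacher $l^p$-norm is a contraction: by the estimate following \eqref{e45} adapted to $l^p$ data one gets $\|\widehat{Q_A(\mathbf x)}|_{R_F^c}\|_p\le(\sinh 1-1)^{1/p}=\delta^{2/p}<1$ in \eqref{e42p}, and for $P_F$ the bound $(2\sinh(1/2)-1)^{1/p}<\delta^{2/p}<1$ in \eqref{e42a}, with the $p=\infty$ case handled by the factor $1/4$ in \eqref{e43a}. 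Iterating gives $\|\mathbf x^{(j)}\|_p\le\delta^{2(j-1)/p}$ (resp.\ $(1/4)^{j-1}$), so the series \eqref{g5p}, \eqref{g5pa}, \eqref{g5paa} defining $\Phi^{(p)}$ converge absolutely, yielding the norm bounds in \eqref{e31p} (the extra factor $2$ coming from complexification \eqref{comp}). The oddness \eqref{e43p} is proved exactly as \eqref{e43} was: $Q_F$ and $P_F$ have spectrum in odd-order Walsh characters, so $(-\mathbf x)^{(j)}=-\mathbf x^{(j)}$ by induction on \eqref{g2p}.

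For the Parseval-like identity \eqref{e20pq} in the range $1<p\le 2\le q<\infty$, I would reduce to real $\mathbf x,\mathbf y$ and run the telescoping computation of \S\ref{SS5} verbatim, now pairing $\Phi^{(p)}(\mathbf x)$ against $d\Phi^{(q)}(\mathbf y)$ via $\int f\,d\mu=\sum_w\hat f(w)\hat\mu(w)$: at level $j$ the Walsh transforms of $Q_{A_j}(\cdot)$ and $P_{A_j}(\cdot)$ are supported in $W_{A_j}$, the $W_{A_j}$ are pairwise independent, and the Rademacher part at level $j$ contributes $\sum_{\alpha\in A_j}\mathbf x^{(j)}(\alpha)\mathbf y^{(j)}(\alpha)$ while the off-Rademacher part is precisely, by the recursive definitions \eqref{g2p}/\eqref{g2q}, $\sum_{\alpha\in A_{j+1}}\mathbf x^{(j+1)}(\alpha)\mathbf y^{(j+1)}(\alpha)$; the remainder after $N$ steps is $O((t\delta_t)^{2(N-1)})\to 0$. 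One must check that the duality $L^\infty_{(p)}$--$M_{(q)}$ makes $f\ast\mu$ land in $\mathbb A(\Omega_A)$ with the stated Parseval formula \eqref{Parpq}, which is Hölder in $l^p$--$l^q$ on the transform side. Finally, weak continuity of $\Phi^{(p)}$ and $(l^p(A)\to l^p(W_A))$-continuity of $\widehat{\Phi^{(p)}}$ follow by induction from the $l^p$-analogues of Lemmas \ref{L3W} and \ref{L3}; for $p=1$ and $p=\infty$ these continuity statements are immediate from the one-step spectral description of $\Phi^{(1)}$ and $\Phi^{(\infty)}$.

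The main obstacle I anticipate is \textbf{not} the algebra of the telescoping argument — that is a routine transcription of \S\ref{SS5} — but rather establishing the $l^p$-version of the contraction estimate \eqref{e42p} and especially the $l^p$-continuity lemma underpinning \eqref{g2p}: the proof of Lemma \ref{L3} uses Plancherel ($L^2$) in an essential way, and for $p\ne 2$ one instead needs the $L^2$--$L^p$ hypercontractive inequalities of \cite{bonami1970etude} (as in Lemma \ref{L4}) combined with the combinatorial bound \eqref{e44} on the level-$k$ coefficients, to control $\|\widehat{Q_A(\mathbf x)}-\widehat{Q_A(\mathbf y)}\|_{l^p(W_A)}$ — and similarly for $P_A$, where one also must track the extra $1/4^k$ factors. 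Getting the recursion to close (i.e.\ verifying $t\delta_t<1$, or here $\delta^{2/p}<1$, uniformly and with the right constants so that the series in \eqref{g5p}--\eqref{g5paa} converge) is the delicate point; once that is in hand the rest is bookkeeping parallel to the $p=2$ case already proved.
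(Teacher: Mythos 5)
Your proposal follows essentially the same route as the paper's proof of Theorem \ref{T2pq}: the $p=1,\ q=\infty$ case via ordinary Parseval, the $Q_F$ recursion with the contraction $\|\widehat{Q_A({\bf{x}})}|_{R_F^c}\|_p\le\delta^{2/p}<1$ for $1<p\le 2$, the $P_F$ recursion with \eqref{e42a}--\eqref{e43a} for $2<p\le\infty$, the telescoping alternating-series argument for \eqref{e20pq}, and oddness from the odd-order spectrum. The one small correction: the continuity assertion concerns $\widehat{\Phi^{(p)}}$ as a map into $l^p(W_A)$, so the required lemma is a coefficient-level $l^p$ analogue of the telescoping estimate in the proof of Lemma \ref{L3} (applied directly to the explicit products ${\bf{x}}(\alpha_1)\cdots{\bf{x}}(\alpha_{2k+1})$), and the $L^2$--$L^p$ hypercontractive inequalities you invoke are not actually needed here.
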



\section{\bf{Grothendieck-like theorems in dimensions $> 2$?}} \label{s6}
The general focus so far has been on scalar-valued functions of two variables and their representations by functions of one variable.  Next we consider functions of $n$ variables, $n \geq 2$, and examine, analogously, the feasibility of representing them by functions that depend on $k$ variables, $0 < k < n$.  

\subsection{A multidimensional version of Question \ref{Q2}} \ \ We begin with a natural extension of Question \ref{Q2} in the case  $n \geq 2$ and $k = 1$.  Let  $X_1, \ldots, X_n$ be sets, and denote
 \begin{equation*}
  \emph{\bf{X}}^{[n]} := X_1 \times \cdots \times X_n,
 \end{equation*}
 where $[n] = \{1,\ldots,n\}$.
\begin{question} \label{Q5}
 Let   $f \in l^{\infty}( \emph{\bf{X}}^{[n]})$.   Can we find a probability space $(\Omega,  \mu)$, and functions $g_{\omega}^{(i)}$  indexed by $\omega \in \Omega$ and defined on   $X_i$, $i \in [n]$,   such that for  all $\emph{\bf{x}}  = (x_1, \ldots, x_n) \in \emph{\bf{X}}^{[n]}$ and $i \in [n]$,
\begin{equation*}
\omega \mapsto  g_{\omega}^{(i)}(x_i) \ \ \  \text{and} \ \ \   \omega \mapsto \|g_{\omega}^{(i)}\|_{\infty} 
\end{equation*} \\\
are $\mu$-measurable on $\Omega,$ and 
\begin{equation} \label{g20}\\\
f(\emph{\bf{x}}) = \int_{\Omega} g_{\omega}^{(1)} (x_1)  \cdots  g_{\omega}^{(n)}(x_n) \mu(d \omega)
\end{equation}
under the constraint
\begin{equation} \label{g21}
\sup_{\emph{\bf{x}} \in \emph{\bf{X}}^{[n]}} \|g_{\omega}^{(1)}(x_1) \|_{L^{\infty}(\mu)} \cdots  \|g_{\omega}^{(n)}(x_n) \|_{L^{\infty}(\mu)}  < \infty?
\end{equation}
(As before, we refer to  $\Omega$ as an indexing space, to $\mu$ as an indexing measure, and to $g_{\omega}^{(i)}$  as representing functions.)
\end{question}
Matters are formalized as usual.    Given $f \in l^{\infty}( \emph{\bf{X}}^{[n]})$, let  $\| f\|_{\tilde{\mathcal{V}}_n}$ be the infimum of the left side of \eqref{g21} taken over all probability spaces $(\Omega,  \mu)$, and all families of functions on $X_i$ indexed by  $(\Omega,  \mu)$ that represent $f$ by \eqref{g20}.  We  obtain the Banach algebra
\begin{equation*}
\tilde{\mathcal{V}}_n( \emph{\bf{X}}^{[n]}) = \{f\in l^{\infty}( \emph{\bf{X}}^{[n]}):\| f\|_{\tilde{\mathcal{V}}_n} < \infty \},
\end{equation*}
wherein algebra multiplication is given by point-wise multiplication on ${\bf{X}}^{[n]}$.  It follows from the case $n=2$ that if at least two of the $X_i$ are infinite, then $\tilde{\mathcal{V}}_n( \emph{\bf{X}}^{[n]}) \subsetneq l^{\infty}( \emph{\bf{X}}^{[n]}$.

Proposition \ref{P1} has a straightforward extension.  Take the compact Abelian group
\begin{equation*}
{\bf{\Omega}}_{ \emph{\bf{X}}^{[n]}} := \Omega_{X_1}\times \cdots \times \Omega_{X_n},
\end{equation*}\\\  
and the spectral set ($n$-fold Cartesian product of Rademacher systems)\\\
\begin{equation*}
\begin{split}
{\emph{\bf{R}}}_{ \emph{\bf{X}}^{[n]}} & = R_{X_1} \times \cdots \times R_{X_n} \\\
& = \big \{r_{x_1}\otimes \cdots \otimes r_{x_n}: (x_1, \ldots, x_n) \in \emph{\bf{X}}^{[n]} \big \} \subset {\widehat{\bf{\Omega}}}_{ \emph{\bf{X}}^{[n]}}.\\\
\end{split}
\end{equation*}
 Then, by identifying\\\ 
\begin{equation}\label{dua1}
 \begin{split}
  B({\emph{\bf{R}}}_{ \emph{\bf{X}}^{[n]}})  &:= \widehat{M}({\bf{\Omega}}_{ \emph{\bf{X}}^{[n]}} )/\{\hat{\lambda} \in \widehat{M}({\bf{\Omega}}_{ \emph{\bf{X}}^{[n]}}): \hat{\lambda} = 0 \ \text{on} \ {\emph{\bf{R}}}_{ \emph{\bf{X}}^{[n]}}\}\\\\
& = \{\text{restrictions of Walsh transforms to} \ {\emph{\bf{R}}}_{ \emph{\bf{X}}^{[n]}} \}\\\ 
\end{split}
\end{equation} 
as the dual space of\\\  
\begin{equation}\label{dua2}
C_{{\emph{\bf{R}}}_{ \emph{\bf{X}}^{[n]}}}({\bf{\Omega}}_{ \emph{\bf{X}}^{[n]}} ) := \{\text{continuous functions on} \ {\bf{\Omega}} _{ \emph{\bf{X}}^{[n]}} \  \text{with spectrum in} \ {\emph{\bf{R}}}_{ \emph{\bf{X}}^{[n]}} \},
\end{equation}\\\
 we obtain (omitting the proof)\\\  
\begin{proposition} \label{P4}
  \begin{equation*}
    \tilde{\mathcal{V}}_n( \emph{\bf{X}}^{[n]}) = B({\emph{\bf{R}}}_{ \emph{\bf{X}}^{[n]}}).
   \end{equation*}\\     
    \end{proposition}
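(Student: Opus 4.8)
The proof of Proposition \ref{P4} is a direct transcription of the argument used for Proposition \ref{P1} (the case $n=2$), so the plan is to follow the same two-step inclusion-chasing, being careful only about the bookkeeping of $n$ factors instead of two. First I would prove $B({\bf R}_{{\bf X}^{[n]}}) \subset \tilde{\mathcal{V}}_n({\bf X}^{[n]})$: if $f \in B({\bf R}_{{\bf X}^{[n]}})$, then by definition there is a regular complex measure $\lambda$ on ${\bf \Omega}_{{\bf X}^{[n]}} = \Omega_{X_1} \times \cdots \times \Omega_{X_n}$ with
\begin{equation*}
f(x_1,\ldots,x_n) = \int_{{\bf \Omega}_{{\bf X}^{[n]}}} r_{x_1}(\omega_1) \cdots r_{x_n}(\omega_n) \, \lambda(d\omega_1,\ldots,d\omega_n), \quad (x_1,\ldots,x_n) \in {\bf X}^{[n]}.
\end{equation*}
Taking the indexing space to be $\big({\bf \Omega}_{{\bf X}^{[n]}}, |\lambda|/\|\lambda\|\big)$, one sets $g^{(1)}_{(\omega_1,\ldots,\omega_n)}(x_1) = \|\lambda\| \frac{d\lambda}{d|\lambda|}(\omega_1,\ldots,\omega_n) r_{x_1}(\omega_1)$ and $g^{(i)}_{(\omega_1,\ldots,\omega_n)}(x_i) = r_{x_i}(\omega_i)$ for $i = 2,\ldots,n$; these are $\{-1,1\}$-valued except for the first, which absorbs the Radon--Nikodym density, so the constraint \eqref{g21} holds with value $\|\lambda\|$, giving $\|f\|_{\tilde{\mathcal{V}}_n} \le \|\lambda\|$ and hence $f \in \tilde{\mathcal{V}}_n({\bf X}^{[n]})$.

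For the reverse inclusion $\tilde{\mathcal{V}}_n({\bf X}^{[n]}) \subset B({\bf R}_{{\bf X}^{[n]}})$, suppose $f \in \tilde{\mathcal{V}}_n({\bf X}^{[n]})$ with representing families $\{g^{(i)}_\omega\}$ satisfying \eqref{g20} and \eqref{g21}. The key point is that the norm-equivalence \eqref{e8} generalizes to $n$ factors: for a finitely supported scalar array $a = (a_{x_1 \cdots x_n})$, the Walsh polynomial $\hat a = \sum a_{x_1\cdots x_n} r_{x_1} \otimes \cdots \otimes r_{x_n}$ on ${\bf \Omega}_{{\bf X}^{[n]}}$ satisfies $\|\hat a\|_\infty \le \|a\|_{\mathcal{F}_n} \le 4^{n-1} \|\hat a\|_\infty$ (iterated Littlewood/Khinchin), where $\|a\|_{\mathcal{F}_n}$ is the appropriate $n$-linear supremum norm. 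Then one estimates, exactly as in \eqref{b2},
\begin{equation*}
\Big| \sum a_{x_1\cdots x_n} f(x_1,\ldots,x_n) \Big| = \Big| \int_\Omega \sum a_{x_1\cdots x_n} g^{(1)}_\omega(x_1) \cdots g^{(n)}_\omega(x_n) \, \mu(d\omega) \Big| \le 4^{n-1} \|\hat a\|_\infty \, \|f\|_{\tilde{\mathcal{V}}_n}.
\end{equation*}
Since Walsh polynomials are norm-dense in $C_{{\bf R}_{{\bf X}^{[n]}}}({\bf \Omega}_{{\bf X}^{[n]}})$ (by the cited \cite[Ch.~VII]{blei:2001}), this shows $f$ extends to a bounded linear functional on that space; by the Riesz representation theorem and Hahn--Banach there is $\lambda \in M({\bf \Omega}_{{\bf X}^{[n]}})$ representing it, so $f \in B({\bf R}_{{\bf X}^{[n]}})$.

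The only step requiring any care — and the likeliest source of a hidden constant or a reference gap — is the $n$-fold norm equivalence $\|\hat a\|_\infty \le \|a\|_{\mathcal{F}_n} \le 4^{n-1}\|\hat a\|_\infty$, i.e.\ the fact that ${\bf R}_{{\bf X}^{[n]}}$ behaves like a "near-Sidon" set at the level of the $\mathcal{F}_n$-norm; this is the multilinear analog of \eqref{e6}--\eqref{e8} and is standard in the tensor-algebra literature (e.g.\ \cite[Ch.~I, Ch.~VII]{blei:2001}), obtained by applying the Khinchin inequality once in each of the $n$ coordinates. Everything else is formal: density of Walsh polynomials, Parseval, and the Riesz--Hahn--Banach duality, all used verbatim as in the proof of Proposition \ref{P1}. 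For this reason the author writes "(omitting the proof)" — the argument is a routine extension with no new ideas beyond the bookkeeping just described.
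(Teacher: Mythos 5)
Your proposal is correct and is exactly the argument the paper has in mind (the paper omits the proof precisely because it is the verbatim $n$-fold transcription of Proposition \ref{P1}, via the duality $B({\bf R}_{{\bf X}^{[n]}}) = \big(C_{{\bf R}_{{\bf X}^{[n]}}}({\bf \Omega}_{{\bf X}^{[n]}})\big)^*$). One cosmetic remark: the norm equivalence $\|\hat a\|_\infty \le \|a\|_{\mathcal{F}_n} \le 2^n\|\hat a\|_\infty$ comes from multilinearity plus extreme points of $B_{l^\infty(X_i)}$ (splitting complex test functions into real and imaginary parts, one factor of $2$ per coordinate), not from Khinchin's inequality; any finite constant suffices for the argument, so this does not affect the proof.
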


  \subsection{A multidimensional version of Question \ref{Q3}} \ \ Next we consider integral representations of  $f \in l^{\infty}( \emph{\bf{X}}^{[n]})$ with constraints weaker than \eqref{g21}, and ask whether the feasibility of such representations implies the feasibility of integral representations under the stronger constraint in \eqref{g21}.  That is, are there Grothendieck-like theorems in dimensions  $> 2$?  (Cf. discussion preceding Theorem \ref{T1}, and also Remark \ref{R2}.i.)  
  
  Attempting first the obvious, we extend Question \ref{Q3} by replacing in it the role of Cauchy-Schwarz with that of a multi-linear H\"older inequality.
  
 \begin{question} \label{Q6}
 Let  $f \in l^{\infty}( \emph{\bf{X}}^{[n]})$, and let $\emph{\bf{p}} = (p_1, \dots, p_n)$ be a conjugate vector,  i.e.,   $\emph{\bf{p}} = (p_1, \dots, p_n) \in [1,\infty]^n$ such that 
 \begin{equation*}
  \frac{1}{p_1} + \cdots + \frac{1}{p_n} = 1.
  \end{equation*}\\\
 Can we find a probability space $(\Omega,  \mu)$, and functions $g_{\omega}^{(i)}$  indexed by $\omega \in \Omega$ and defined on   $X_i$, $i \in [n]$,  such that for $\emph{\bf{x}}  = (x_1, \ldots, x_n) \in \emph{\bf{X}}^{[n]}$ and $i \in [n]$,
\begin{equation*}
\omega \mapsto  g_{\omega}^{(i)}(x_i)  
\end{equation*}\\\ 
determine elements in $L^{p_i}(\Omega, \mu)$, and\\\  
\begin{equation}\label{g22}\\\
f(\emph{\bf{x}}) = \int_{\Omega} g_{\omega}^{(1)}(x_i) \cdots  g_{\omega}^{(n)}(x_n) \mu(d \omega)
\end{equation}\\\
subject to\\\
\begin{equation} \label{g23}
\sup_{\emph{\bf{x}} \in \emph{\bf{X}}^{[n]}} \|g_{\omega}^{(1)}(x_1) \|_{L^{p_1}(\mu)} \cdots  \|g_{\omega}^{(n)}(x_n) \|_{L^{p_n}(\mu)}  < \infty?
\end{equation}\\\
\end{question}
\noindent
Given $f \in l^{\infty}( \emph{\bf{X}}^{[n]})$, define  $\|f\|_{\mathcal{G}_{\emph{\bf{p}}}}$ as the infimum of the left side of \eqref{g23} over all integral representations of $f$ by \eqref{g22}.  We  obtain the Banach algebra\\\
\begin{equation}
\mathcal{G}_{\emph{\bf{p}}}( \emph{\bf{X}}^{[n]}) = \{f\in l^{\infty}( \emph{\bf{X}}^{[n]}):\| f\|_{\mathcal{G}_{\emph{\bf{p}}}} < \infty \},
\end{equation}\\\
 and note the inclusion \\\
\begin{equation} \label{g24}
\tilde{\mathcal{V}}_n({\emph{\bf{X}}}^{[n]})\subset \mathcal{G}_{\emph{\bf{p}}}( \emph{\bf{X}}^{[n]}), \ \ \ n \geq 2.
\end{equation}\\\

If  $n = 2$ and $\emph{\bf{p}} = (2,2)$,  then also the reverse inclusion holds; this is the gist of Theorem \ref{T1}.   Otherwise, if $X_1$ and $X_2$ are infinite and  $\emph{\bf{p}} \neq (2,2)$, then \eqref{g24} is a proper inclusion;  see Remark \ref{R2}.i, and also the start of \S \ref{Snonh}. 

 For $n > 2$ and infinite $X_i, \ i \in [n]$,  the inclusion in  \eqref{g24} is \emph{always} proper.  We demonstrate this in the case $n = 3$, which suffices.   We use a trilinear analog of the Fourier matrix in \eqref{e17},
\begin{equation}\label{3dfourier}
e^{2\pi i \frac{(j+k)l}{N} }, \  \ \ (j,k,l) \in [N]^3, \ \  N \in \mathbb{N},
\end{equation}
and a corresponding lemma (phrased somewhat differently in \cite[Exer. IV.2.ii]{blei:2001}). 
\begin{lemma} \label{3dgauss}
For all integers $N > 1$, \ $f \in B_{l^{\infty}([N])}$, \ $g \in B_{l^{\infty}([N])}$, and \ $h \in B_{l^{\infty}([N])}$, 
\begin{equation} \label{g25}
\big | \frac{1}{N^2} \sum_{j, k, l =1}^N e^{2\pi i \frac{(j+k)l}{N} } f(j)g(k)h(l)\big | \leq 1.
\end{equation}\\\
\end{lemma}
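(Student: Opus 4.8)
\textbf{Proof plan for Lemma \ref{3dgauss}.} The plan is to reduce the triple sum to a single inner sum over $l$ by recognizing it as a Gauss-type exponential sum and exploiting orthogonality. First I would fix $f, g, h$ in the respective unit balls and write
\begin{equation*}
S := \frac{1}{N^2} \sum_{j,k,l=1}^N e^{2\pi i (j+k)l/N} f(j)g(k)h(l) = \frac{1}{N^2}\sum_{l=1}^N h(l) \left( \sum_{j=1}^N f(j) e^{2\pi i jl/N} \right)\left(\sum_{k=1}^N g(k) e^{2\pi i kl/N} \right).
\end{equation*}
Denoting $\hat f(l) = \sum_{j} f(j) e^{2\pi i jl/N}$ and similarly $\hat g(l)$, we get $|S| \le \frac{1}{N^2} \sum_{l=1}^N |\hat f(l)| \, |\hat g(l)|$ since $|h(l)| \le 1$. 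By Cauchy--Schwarz on the sum over $l$, this is at most $\frac{1}{N^2}\big(\sum_l |\hat f(l)|^2\big)^{1/2}\big(\sum_l |\hat g(l)|^2\big)^{1/2}$, and by the (discrete) Parseval identity for the finite Fourier transform, $\sum_{l=1}^N |\hat f(l)|^2 = N \sum_{j=1}^N |f(j)|^2 \le N^2$, and likewise for $g$. Hence $|S| \le \frac{1}{N^2}\cdot N \cdot N = 1$, which is the claim.

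The key steps, in order: (i) separate the $l$-dependence by interchanging the order of summation, writing the inner double sum over $(j,k)$ as a product of two finite Fourier transforms evaluated at $l$; (ii) bound $|h(l)| \le 1$ to discard $h$; (iii) apply Cauchy--Schwarz in $l$ to split the product $|\hat f(l)|\,|\hat g(l)|$; (iv) invoke the discrete Parseval/Plancherel identity together with $\|f\|_\infty, \|g\|_\infty \le 1$ (so $\sum_j |f(j)|^2 \le N$) to bound each factor by $N$. An alternative route, matching the "Gauss sum" flavor, is to keep the sum over $l$ innermost: $\sum_{l=1}^N e^{2\pi i (j+k)l/N} = N \cdot \mathbf{1}_{\{j+k \equiv 0 \bmod N\}}$, which collapses the triple sum to $\frac{1}{N}\sum_{j+k\equiv 0} f(j)g(k)h(\cdot)$—but this overcounts the $h$ contribution awkwardly, so the Fourier-analytic argument above is cleaner.

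I do not expect a genuine obstacle here; the statement is elementary once the finite-Fourier orthogonality is brought to bear. The only point requiring a small amount of care is the normalization of the discrete Fourier transform and the constant in Parseval: with the unnormalized transform $\hat f(l) = \sum_j f(j) e^{2\pi i jl/N}$ one has $\sum_l |\hat f(l)|^2 = N\sum_j |f(j)|^2$, and it is this factor of $N$—appearing twice, once for $f$ and once for $g$—that exactly cancels the $N^{-2}$ prefactor and produces the bound $1$ with no slack beyond what $\|h\|_\infty \le 1$ already costs. I would present the Cauchy--Schwarz version as the proof, since it makes the role of each hypothesis transparent and generalizes immediately to the $n$-dimensional analog $e^{2\pi i (j_1 + \cdots + j_{n-1}) j_n / N}$ needed later.
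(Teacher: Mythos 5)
Your proposal is correct and is essentially the paper's own argument: isolate the sum over $l$, recognize the inner double sum as a product of two finite Fourier transforms, and then combine Cauchy--Schwarz in $l$ with the discrete Parseval identity and $\|h\|_\infty \le 1$. The only difference is a normalization convention for the transform (the paper uses $\hat f(l) = \frac{1}{N}\sum_j f(j)e^{2\pi i jl/N}$, so the factors of $N$ are absorbed rather than cancelled at the end), which is immaterial.
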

\begin{proof}  We take $[N]$ to be the compact Abelian group of $N^{\text{th}}$-roots of unity,
\begin{equation}
j \leftrightarrow e^{\frac{2\pi ij}{N}}, \ \ \ j \in [N],
\end{equation}
with its usual algebraic structure, and with the uniform probability measure as its Haar measure.  For $f \in l^{\infty}([N])$ and  $g \in l^{\infty}([N])$,
\begin{align*}
\hat{f}(l) = &  \frac{1}{N} \sum_{j=1}^N e^{ \frac{2\pi ijl}{N} } f(j), \\\
\hat{g}(l) = &  \frac{1}{N} \sum_{k=1}^N e^{ \frac{2\pi ikl}{N} } g(k), \ \ \ \ \ l \in [N],
\end{align*}
and therefore, for $h \in l^{\infty}([N])$,
\begin{align*}
\bigg | \frac{1}{N^2} \sum_{j, k, l =1}^N e^{2\pi i \frac{(j+k)l}{N} } f(j)g(k)h(l)\bigg | &= \bigg | \sum_{l=1}^N \bigg ( \frac{1}{N^2} \sum_{j, k = 1}^N e^{2\pi i \frac{(j+k)l}{N} } f(j)g(k)\bigg) h(l) \bigg | \\\\
& = \big |\sum_{l=1}^N \hat{f}(l) \hat{g}(l) h(l) \big | \leq \|\hat{f}\|_2 \|\hat{g}\|_2 \|h\|_{\infty} \leq \|f\|_{\infty}\|g\|_{\infty}\|h\|_{\infty}.
\end{align*}
\end{proof} \ \\\

\noindent
Let $\emph{\bf{p}} = (p_1, p_2, p_3)  \in [1,\infty]^3$ be a conjugate vector, and suppose $p_3 < \infty$.  For an arbitrary integer $N > 1$, take  $X_i = [N]$,  $ i = 1, 2, 3$, and  $\Omega = [N]$ with the uniform probability measure $\mu$  on it.  Take representing functions\\\
\begin{equation*}
g_{\omega}^{(1)}(j) = e^{\frac{-2\pi  i j \omega}{N}}, \ \  \ g_{\omega}^{(2)}(k) = e^{\frac{-2\pi  i k \omega}{N}}, \ \ \ \omega \in \Omega,  \ j \in X_1, \  k \in  X_2,
\end{equation*}\\\
 and
\begin{equation*}
 g_{\omega}^{(3)}(l) =  \left \{
\begin{array}{cccc}
0 & \quad  \text{if} & \quad  \omega  \not= l  & \quad \\
N^{\frac{1}{p_3}} & \quad   \text{ if} & \quad  \omega = l, & \quad  \ \ \ \omega \in \Omega, \  l \in X_3.
\end{array} \right. 
\end{equation*}\\\ 
Then,\\\
\begin{equation*}\\\
f(j,k,l) := \int_{\Omega}g_{\omega}^{(1)}(j) g_{\omega}^{(2)}(k)g_{\omega}^{(3)}(l) \mu(d \omega) = N^{\frac{1}{p_3} - 1}e^{-2\pi i \frac{(j+k)l}{N} }, \ \ \ (j,k,l) \in \emph{\bf{X}}^{[3]},
\end{equation*}\\\
and\\\
\begin{equation*}
\sup_{(j,k,l) \in \emph{\bf{X}}^{[3]}} \|g_{\omega}^{(1)}(j) \|_{L^{p_1}(\mu)} \|g_{\omega}^{(2)}(k) \|_{L^{p_2}(\mu)}  \|g_{\omega}^{(3)}(l) \|_{L^{p_3}(\mu)} = 1.
\end{equation*}\\\
 By duality and \eqref{g25}, we obtain\\\ 
\begin{equation*}
\|f\|_{\tilde{\mathcal{V}}_3} \geq  \big |\frac{1}{N^2}\sum_{j,k,l = 1}^N  e^{2\pi i \frac{(j+k)l}{N} }f(j,k,l) \big | =  N^{ \frac{1}{p_3}} \ \underset{N \to \infty}{\longrightarrow} \infty,
\end{equation*}\\\
which implies 
\begin{proposition}
For all conjugate vectors \  $\emph{\bf{p}} = (p_1, p_2, p_3)  \in [1,\infty]^3$, and infinite sets $X_1$, $X_2$, and $X_3$,\\\ 
\begin{equation*}
\tilde{\mathcal{V}}_3( \emph{\bf{X}}^{[3]}) \subsetneq \mathcal{G}_{(p_1,p_2,p_3)}( \emph{\bf{X}}^{[3]}) \subsetneq l^{\infty}( \emph{\bf{X}}^{[3]}) \ \ \ (\text{proper inclusions}).
\end{equation*}\\\
\end{proposition}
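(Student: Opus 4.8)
The proof splits into two inclusions, and the harder of the two is the strict inclusion $\tilde{\mathcal{V}}_3(\mathbf{X}^{[3]}) \subsetneq \mathcal{G}_{(p_1,p_2,p_3)}(\mathbf{X}^{[3]})$; the inclusion $\subset$ itself is already recorded as \eqref{g24}, so only \emph{properness} needs an argument. The plan is to exhibit, for each conjugate vector $\mathbf{p}=(p_1,p_2,p_3)$, a family of functions $f_N \in \mathcal{G}_{\mathbf{p}}(\mathbf{X}^{[3]})$ with $\|f_N\|_{\mathcal{G}_{\mathbf{p}}} \leq 1$ but $\|f_N\|_{\tilde{\mathcal{V}}_3} \to \infty$. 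The construction above does exactly this when $p_3 < \infty$: one builds $f_N(j,k,l) = N^{1/p_3 - 1} e^{-2\pi i (j+k)l/N}$ as an explicit integral over $\Omega = [N]$ with uniform measure, checks the $L^{p_1}$-$L^{p_2}$-$L^{p_3}$ norm product of the representing functions equals $1$ (a one-line computation, since $g^{(1)}_\omega, g^{(2)}_\omega$ are unimodular and $\|g^{(3)}_\omega\|_{L^{p_3}} = N^{1/p_3} \cdot N^{-1/p_3} = 1$), and then lower-bounds $\|f_N\|_{\tilde{\mathcal{V}}_3}$ by pairing $f_N$ against the rescaled trilinear Fourier matrix $N^{-2} e^{2\pi i (j+k)l/N}$, whose $\mathcal{F}_3$-type norm is $\leq 1$ by Lemma \ref{3dgauss}. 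This pairing evaluates to $N^{1/p_3} \to \infty$. So for $p_3 < \infty$ the argument is complete as written.

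The one case still to address is the \emph{remaining conjugate vectors}, i.e.\ those in which one (or more) of the coordinates is $\infty$. By permuting the roles of $X_1, X_2, X_3$ — the functional $\|\cdot\|_{\tilde{\mathcal{V}}_3}$ and the space $\mathcal{G}_{\mathbf{p}}$ are symmetric under simultaneous permutation of the variables and of $\mathbf{p}$ — we may assume $p_3 = \infty$, so that $\frac{1}{p_1} + \frac{1}{p_2} = 1$; note that then at most one of $p_1,p_2$ can be $\infty$, and in fact if, say, $p_1 = \infty$ then $p_2 = 1$ and we are in the borderline $\mathbf{p} = (\infty,1,\infty)$-type situation. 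The fix is simply to replace $g^{(3)}_\omega(l)$ by the $L^\infty$-normalized indicator: take $g^{(3)}_\omega(l) = \mathbf{1}_{\{\omega = l\}}$ (so $\|g^{(3)}_\omega(l)\|_{L^\infty(\mu)} = 1$), which forces $f(j,k,l) = N^{-1} e^{-2\pi i (j+k)l/N}$, hence the constraint \eqref{g23} still gives $\|f\|_{\mathcal{G}_{\mathbf{p}}} \leq 1$, while the same Lemma \ref{3dgauss} pairing now yields $\|f\|_{\tilde{\mathcal{V}}_3} \geq N^{1} \to \infty$ — an even stronger blow-up. Thus in every case we get a sequence witnessing properness of the first inclusion.

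The second strict inclusion, $\mathcal{G}_{\mathbf{p}}(\mathbf{X}^{[3]}) \subsetneq l^\infty(\mathbf{X}^{[3]})$, follows from the fact that $\|\cdot\|_{\tilde{\mathcal{V}}_3} \leq \mathrm{(const)} \cdot \|\cdot\|_{\mathcal{G}_{\mathbf{p}}}$ would be \emph{false} — indeed the same examples show $\mathcal{G}_{\mathbf{p}}$ is a proper subspace of $l^\infty$: one verifies directly that the rescaled Fourier function $h_N(j,k,l) = N^{-1/2} e^{-2\pi i(j+k)l/N}$ lies in $B_{l^\infty(\mathbf{X}^{[3]})}$ but, by the duality in Proposition \ref{P4} together with Lemma \ref{3dgauss} (which bounds the relevant injective-type norm of the testing matrix by $1$), satisfies $\|h_N\|_{\mathcal{G}_{\mathbf{p}}} \gtrsim \|h_N\|_{\tilde{\mathcal{V}}_3} \gtrsim N^{1/2 - \varepsilon}$ for a suitable fixed choice forcing divergence; more simply, since $\tilde{\mathcal{V}}_3 \subsetneq l^\infty$ already when two coordinates are infinite (as noted just after the definition of $\tilde{\mathcal{V}}_n(\mathbf{X}^{[n]})$, this reducing to the case $n=2$), and since $\mathcal{G}_{\mathbf{p}} \subset C_b$-type boundedness still constrains the functions, one exhibits a bounded function — e.g.\ a suitable Sidon-failure witness built from the matrices $e^{2\pi i (j+k)l/N}$ — that is not in $\mathcal{G}_{\mathbf{p}}$. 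I expect the main obstacle to be purely bookkeeping: making sure the permutation reduction covers \emph{all} conjugate vectors with an $\infty$-coordinate (including $\mathbf{p}$ with a single finite coordinate), and confirming in each such case that the $L^{p_i}$-normalization of the indicator-type representing function is exactly $1$ regardless of which $p_i = \infty$; none of these steps is deep, but the case-splitting must be exhaustive.
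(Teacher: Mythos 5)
Your main-case argument ($p_3<\infty$) is exactly the paper's, and it is fine. But both places where you go beyond it contain real errors. The ``$\infty$-coordinate'' situation is not a leftover case, and your treatment of it is wrong: since $\frac{1}{p_1}+\frac{1}{p_2}+\frac{1}{p_3}=1$, every conjugate vector has at least one \emph{finite} coordinate, and the permutation symmetry should be used to move that finite coordinate into the third slot — this already covers all conjugate vectors, which is why the paper's hypothesis ``suppose $p_3<\infty$'' is a genuine without-loss-of-generality. You instead permute so that $p_3=\infty$, which is precisely the position where the construction degenerates, and then miscompute: with $g^{(3)}_\omega(l)=\mathbf{1}_{\{\omega=l\}}$ one gets $f(j,k,l)=N^{-1}e^{-2\pi i(j+k)l/N}$, and the pairing with $N^{-2}e^{2\pi i(j+k)l/N}$ evaluates to $N^{-2}\cdot N^{-1}\cdot N^{3}=1$, not $N$. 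There is no blow-up in that branch; it proves nothing (and, fortunately, is vacuous once you permute correctly).

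The second proper inclusion is where the proposal genuinely fails. The inclusion \eqref{g24} is norm-decreasing in the direction $\|h\|_{\mathcal{G}_{\mathbf{p}}}\leq\|h\|_{\tilde{\mathcal{V}}_3}$, so your inequality $\|h_N\|_{\mathcal{G}_{\mathbf{p}}}\gtrsim\|h_N\|_{\tilde{\mathcal{V}}_3}$ is exactly backwards: a large $\tilde{\mathcal{V}}_3$-norm gives no lower bound at all on the $\mathcal{G}_{\mathbf{p}}$-norm. For the same reason, ``$\tilde{\mathcal{V}}_3\subsetneq l^\infty$'' cannot by itself locate $\mathcal{G}_{\mathbf{p}}$, which sits between the two spaces. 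What is required is a bounded function together with a \emph{lower} bound on its $\mathcal{G}_{\mathbf{p}}$-norm, and the natural mechanism (left implicit in the paper as well) is a reduction to two variables: freezing one coordinate, say $x_1=x_1^0$, and absorbing $g^{(1)}_\omega(x_1^0)$ into $g^{(3)}_\omega$ via H\"older, any representation of $f$ in $\mathcal{G}_{(p_1,p_2,p_3)}$ yields a representation of the slice $f(x_1^0,\cdot,\cdot)$ in $\mathcal{G}_{(p_2,r)}(X_2\times X_3)$ with $\frac{1}{r}=\frac{1}{p_1}+\frac{1}{p_3}$, so that $(p_2,r)$ is a conjugate pair, uniformly in $x_1^0$; one then exhibits a bounded function of the remaining two variables outside that two-dimensional space, choosing which coordinate to freeze so that the resulting pair is one for which two-dimensional properness is actually available (e.g.\ $(2,2)$, handled by Littlewood's example \eqref{V1} and Theorem \ref{T1}). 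As written, your proposal establishes only the first proper inclusion, and only after the permutation is done the right way around.
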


A question remains:  what are multidimensional analogues of the "two-dimensional" inclusion  \\\
\begin{equation*}
\mathcal{G}_{(2,2)}(\emph{\bf{X}}^{[2]}) \subset \tilde{\mathcal{V}}_2(\emph{\bf{X}}^{[2]})?
\end{equation*}\\\

\section{\bf{Fractional Cartesian products and multilinear functionals on a Hilbert space}} \label{s7}
We return to a Hilbert space setting as staging grounds for Grothendieck-like inequalities.
\vskip0.4cm
\subsection{Projective boundedness and projective continuity} \ \  To start, we note that the (two-dimensional) Grothendieck theorem is an assertion about integral representations of bilinear functionals on a Hilbert space:  if $\eta$ is a bounded bilinear functional on a Hilbert space $H$, then there exist bounded mappings $\phi_1$ and $\phi_2$ from $B_H$ into $L^{\infty}(\Omega, \mu)$, for some probability space  $(\Omega, \mu)$, such that
\begin{equation*}
\eta(\emph{\bf{x}}, \emph{\bf{y}}) = \int_{\Omega} \phi_1(\emph{\bf{x}}) \phi_2(\emph{\bf{y}}) d\mu, \ \ \ (\emph{\bf{x}},\emph{\bf{y}}) \in (B_H)^2 .
\end{equation*}
(Cf. \eqref{i6} in Proposition \ref{P0}.)   Theorem \ref{T2} provides an upgrade, asserting that $\phi_1$ and $\phi_2$ can be designed to be continuous.  
\begin{definition} \label{Q7}
 Let $\eta$ be a bounded $n$-linear functional on a Hilbert space $H$, i.e., 
 \begin{equation*}
 \eta: \underbrace{H \times \cdots \times H}_n  \longrightarrow \mathbb{C}
 \end{equation*} 
 is linear in each coordinate and
\begin{equation*}
\|\eta\|_{\infty} := \sup \{ |\eta(\emph{\bf{x}}_1, \ldots,  \emph{\bf{x}}_n)|:  (\emph{\bf{x}}_1, \dots, \emph{\bf{x}}_n) \in (B_H)^n  \} < \infty.
\end{equation*}
\noindent
{\bf{i.}}   $\eta$ is \emph{projectively bounded} if there exist a probability space $(\Omega, \mu)$ and bounded mappings $\phi_1, \ldots, \phi_n$ from $B_H$ into $L^{\infty}(\Omega, \mu)$, such that
\begin{equation} \label{z15}
\eta(\emph{\bf{x}}_1, \ldots,  \emph{\bf{x}}_n) = \int_{\Omega} \phi_1(\emph{\bf{x}}_1) \cdots  \phi_n(\emph{\bf{x}}_n) d\mu, \ \ \ (\emph{\bf{x}}_1, \dots, \emph{\bf{x}}_n) \in (B_H)^n.
\end{equation}\\\
Equivalently, $\eta$ is projectively bounded if
\begin{equation} \label{z16}
\|\eta\|_{\tilde{\mathcal{V}}_n(B_H \times \cdots \times B_H)} < \infty.
\end{equation}\\\
{\bf{ii.}}  $\eta$ is \emph{projectively continuous} if there exist a probability space $(\Omega, \mu)$ and bounded mappings $\phi_1, \ldots, \phi_n$ from $B_H$ into $L^{\infty}(\Omega, \mu)$ that satisfy \eqref{z15}, and are $(H \rightarrow L^2(\Omega, \mu))$-continuous.  Equivalently,  $\eta$ is projectively continuous if 
\begin{equation} \label{z17}
\|\eta\|_{\tilde{V}_n(B_H \times \cdots \times B_H)} < \infty,
\end{equation}\\\
where the $\|\cdot\|_{\tilde{V}_n({ \emph{\bf{X}}^{[n]}})}$-norm is the $n-$dimensional extension of the $\|\cdot\|_{\tilde{V}_2(X_1 \times X_2)}$-norm in Definition \ref{D1}.
\end{definition}
\vskip0.4cm
\noindent
(The notion of projective boundedness is equivalent, through duality, to the notion in  \cite[Definition 1.1]{Blei:1979fk}.)  We  have \\\
\begin{equation} \label{g26}
\|\eta\|_{\tilde{\mathcal{V}}_n(B_H \times \cdots \times B_H)}  \leq \|\eta\|_{\tilde{V}_n(B_H \times \cdots \times B_H)}.
\end{equation}\\\
Projective continuity easily implies projective boundedness, but I do not know whether the converse holds. (See \S \ref{q10}.)

In the case $n = 1$, every bounded linear functional on a Hilbert space $H$ is projectively continuous; this is an obvious instance of the  "two-dimensional" Theorem \ref{T2}.  But standing alone, this "one-dimensional" instance is not obvious.  Indeed, the assertion that every bounded linear functional on a Hilbert space is projectively bounded is equivalent to the \emph{little Grothendieck Theorem} ~\cite[Theorem 5.1]{pisier2012grothendieck};  see Remark \ref{RLG} below.  The ostensibly stronger assertion, that every such functional is projectively continuous, is a direct consequence of Lemma \ref{L3}. 
\begin{proposition} \label {P8}
If \  ${\bf{y}} \in B_H$, and
\begin{equation}\label{fun1}
\eta_{{\bf{y}}} ({\bf{x}}) = \langle {\bf{x}}, {\bf{y}} \rangle, \ \ {\bf{x}} \in B_H,
\end{equation}
where $\langle {\cdot, \cdot} \rangle$ is the inner product in $H$, then
\begin{equation}
\|\eta_{{\bf{y}}}\|_{\tilde{V}_1(B_H)} \leq 2e^{\frac{1}{2}}.
\end{equation}
\end{proposition}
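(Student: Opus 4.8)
The plan is to realize the linear functional $\eta_{\mathbf y}$ as an integral of products of one-variable functions built from the imaginary part $Q_A$ of a Riesz product, exactly as in the two-variable construction but now with one slot occupied by a fixed element. First I would fix an orthonormal basis $A$ of $H$ and identify $H$ with $l^2(A)$, so that $\eta_{\mathbf y}(\mathbf x)=\langle\mathbf x,\mathbf y\rangle=\sum_{\alpha\in A}\mathbf x(\alpha)\overline{\mathbf y(\alpha)}$. Since $\mathbf y\in B_H$ is fixed, I would like to use the iterated Riesz-product scheme of \S\ref{s4} but applied only to $\mathbf x$; the key observation is that if one sets $\phi(\mathbf x):=\Phi(\mathbf x)$ from Theorem \ref{T2} but replaces the role of the second factor by a fixed $L^\infty$ function $\psi_{\mathbf y}$ depending on $\overline{\mathbf y}$, then Parseval gives $\langle\mathbf x,\mathbf y\rangle=\int_{\Omega_A}\phi(\mathbf x)\,\psi_{\mathbf y}\,d\mathbb P_A$ provided $\widehat{\psi_{\mathbf y}}$ agrees with $\overline{\mathbf y}$ on the Rademacher characters and $\psi_{\mathbf y}$ is spectrally supported where $\phi(\mathbf x)$ can see it.

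Concretely, the cleanest route is: take the partition $\{A_j\}$ and bijection $\tau_0:A\to A_1$ as in \S\ref{s4}, and for $\mathbf x\in B_{l^2_{\mathbb R}(A)}$ set $\phi_1(\mathbf x)=Q_{A_1}(\mathbf x^{(1)})$ where $\mathbf x^{(1)}(\tau_0\alpha)=\mathbf x(\alpha)$, which by Lemma \ref{L2} satisfies $\|\phi_1(\mathbf x)\|_{L^\infty}\le e^{1/2}$, has spectrum in $\bigcup_k W_{A_1,2k+1}$, and has $\widehat{\phi_1(\mathbf x)}(r_{\tau_0\alpha})=\mathbf x(\alpha)$. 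For fixed $\mathbf y\in B_{l^2_{\mathbb R}(A)}$ set $\phi_2(\mathbf x):=\phi_2^{\mathbf y}=\sum_{\alpha\in A}\mathbf y(\alpha)r_{\tau_0\alpha}$, a Walsh polynomial of order one with $\|\phi_2^{\mathbf y}\|_{L^2}=\|\mathbf y\|_2\le 1$; but to stay in $L^\infty$ one instead takes $\phi_2^{\mathbf y}=Q_{A_1}(\mathbf y)$ transported by $\tau_0$, again bounded by $e^{1/2}$ in $L^\infty$. Then by Parseval on $\Omega_{A_1}$ and the orthogonality of the higher-order Walsh characters to the single Rademacher layer, $\int_{\Omega_A}Q_{A_1}(\mathbf x^{(1)})\,Q_{A_1}(\mathbf y^{(1)})\,d\mathbb P_A=\sum_{w\in W_{A_1}}\widehat{Q_{A_1}(\mathbf x^{(1)})}(w)\widehat{Q_{A_1}(\mathbf y^{(1)})}(w)$, which is $\sum_\alpha\mathbf x(\alpha)\mathbf y(\alpha)$ plus a tail over $R_{A_1}^c$; the tail is precisely what the iteration in \S\ref{s4} was designed to cancel, but for a single linear functional there is no second variable to iterate into, so one must instead kill the tail directly. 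The fix is to use, in place of $Q_{A_1}(\mathbf y)$, the exact "dual" function whose Walsh transform is supported on $R_{A_1}$ alone and equals $\mathbf y$ there — namely $\psi_{\mathbf y}=\sum_\alpha\mathbf y(\alpha)r_{\tau_0\alpha}$, the linear Walsh polynomial — noting that this need not be $L^\infty$-small, but one does not care: one only needs $\phi_1(\mathbf x)$ to be bounded, since the $\tilde V_1$ norm is the sup over $\mathbf x$ of $\|\phi_1(\mathbf x)\|_{L^\infty}$ times $\|\psi_{\mathbf y}\|_{L^\infty}$, and $\|\psi_{\mathbf y}\|_{L^\infty}$ is a fixed finite number. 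But that fixed number can be as large as $\sqrt{|{\rm supp}\,\mathbf y|}$, so this naive choice fails to give a uniform bound. Hence the real plan: use $\phi_1(\mathbf x)=Q_{A_1}(\mathbf x^{(1)})$ and $\phi_2=\psi_{\mathbf y}$ a probability measure on $\Omega_{A_1}$ with $\widehat{\psi_{\mathbf y}}(r_{\tau_0\alpha})=\mathbf y(\alpha)$ and $\|\psi_{\mathbf y}\|_M\le 2$ — this is exactly Lemma \ref{L2a} applied to $P_{A_1}(2\mathbf y)/2$ (or rather $P_{A_1}$ itself, whose order-one coefficients are $\mathbf y(\alpha)$ and whose total variation is $\le 2$). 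Then $\eta_{\mathbf y}(\mathbf x)=\int_{\Omega_A}Q_{A_1}(\mathbf x^{(1)})\,d\,P_{A_1}(\mathbf y)$ by Parseval, because $Q_{A_1}(\mathbf x^{(1)})$ has no order-one-complement overlap with the order-one part of $P_{A_1}(\mathbf y)$... no: both sides have higher-order terms. The genuinely correct statement is that $P_{A_1}(\mathbf y)$ has spectrum in the odd-order Walsh characters just like $Q_{A_1}(\mathbf x^{(1)})$, so their Parseval pairing is $\sum_{\alpha}\mathbf x(\alpha)\mathbf y(\alpha)+\text{(odd }k\ge3\text{ terms)}$, which is still not $\langle\mathbf x,\mathbf y\rangle$.

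Given that tension, the approach I would actually commit to is to run the \emph{full} iteration from \S\ref{s4} against a \emph{fixed} second argument: define $\Phi(\mathbf x)$ exactly as in Theorem \ref{T2}, and observe that the telescoping identity in \S\ref{SS5}, specialized to $\mathbf y$ fixed, still gives $\int_{\Omega_A}\Phi(\mathbf x)\Phi(\mathbf y)\,d\mathbb P_A=\langle\mathbf x,\mathbf y\rangle$ for all $\mathbf x\in B_{l^2(A)}$; then $\phi_1:=\Phi$ and $\phi_2:=$ the constant map $\mathbf x\mapsto\Phi(\mathbf y)$ represent $\eta_{\mathbf y}$ with $\|\phi_1(\mathbf x)\|_{L^\infty}\|\phi_2(\mathbf x)\|_{L^\infty}\le K^2$, giving $\|\eta_{\mathbf y}\|_{\tilde V_1(B_H)}\le K^2$ — but that is worse than the claimed $2e^{1/2}$. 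To get the sharp constant $2e^{1/2}$ I would instead not iterate at all: take $\phi_1(\mathbf x)=Q_{A_1}(\mathbf x^{(1)})$, bounded by $e^{1/2}$, and take $\phi_2(\mathbf x)\equiv\mu_{\mathbf y}$ where $\mu_{\mathbf y}\in M(\Omega_{A_1})$ is a measure of total variation $\le 2$ whose order-one Walsh coefficients recover $\mathbf y$ \emph{and all of whose higher odd-order coefficients are orthogonal to those of $Q_{A_1}(\mathbf x^{(1)})$} — but since $Q_{A_1}(\mathbf x^{(1)})$ ranges over everything as $\mathbf x$ varies, such a $\mu_{\mathbf y}$ cannot exist unless its higher coefficients vanish, i.e. $\mu_{\mathbf y}$ is the linear polynomial, bringing us back to the non-uniform bound. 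The resolution, and the main obstacle to state cleanly, is that $\tilde V_1$ is a $1$-linear object: the constraint $\sup_{\mathbf x}\|\phi_1(\mathbf x)\|_{L^\infty}\|\mu_{\mathbf y}\|$ does \emph{not} require $\phi_2$ to vary with $\mathbf x$, so one is free to pick $\phi_1(\mathbf x)$ to depend on $\mathbf y$ as well. Thus: put $\phi_1(\mathbf x)=Q_{A}(\mathbf x)$ with $\|\phi_1(\mathbf x)\|_\infty\le e^{1/2}$ and $\widehat{Q_A(\mathbf x)}(r_\alpha)=\mathbf x(\alpha)$, put $\phi_2=$ the \emph{single} measure $P_A(\mathbf y)$ from Lemma \ref{L2a} with $\|P_A(\mathbf y)\|_M\le 2$ and $\widehat{P_A(\mathbf y)}(r_\alpha)=\mathbf y(\alpha)$; then by Parseval $\int_{\Omega_A}Q_A(\mathbf x)\,dP_A(\mathbf y)=\sum_\alpha\mathbf x(\alpha)\mathbf y(\alpha)+\sum_{k\ge1}(-1)^k 4^{-k}\sum_{|w|=2k+1}(\text{products})$. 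I now realize the higher terms do \emph{not} cancel, so the honest plan is: the statement in the paper is proved by noting $\|\eta_{\mathbf y}\|_{\tilde V_1}\le e^{1/2}\cdot 2$ via a \emph{corrected} second factor, namely replacing $P_A(\mathbf y)$ by the measure $\widetilde P_A(\mathbf y)$ obtained by subtracting off the higher-order part — which is still a measure of controlled norm because $Q_A$'s higher coefficients decay like $(\sinh-1)^{1/2}$ — and iterating once more if needed; but since the paper asserts the clean bound $2e^{1/2}$, the intended proof must be the one-step one where the higher terms genuinely drop out. The main obstacle I foresee is pinning down exactly which pair $(\phi_1,\phi_2)$ achieves $2e^{1/2}$; I expect it is $\phi_1(\mathbf x)=Q_A(\mathbf x)$ combined with the measure $P_A(\mathbf y)$ together with the identity from Lemma \ref{L2a} that the higher-order coefficients of $Q_A$ and $P_A$ pair to zero because $Q_A$ uses $i$-coefficients $(-1)^k$ and $P_A$ uses real coefficients $4^{-k}$ with opposite parity structure — verifying that cancellation, via Parseval and the spectral formulas \eqref{e39a} and \eqref{e45}, is the one computation I would actually carry out, and everything else (boundedness of $\phi_1$ by \eqref{e40}, total variation of $\phi_2$ by \eqref{e40aa}, hence $\|\eta_{\mathbf y}\|_{\tilde V_1}\le e^{1/2}\cdot 2$) is immediate.
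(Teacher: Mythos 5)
There is a genuine gap: your proposal circles the correct construction twice but rejects it for the wrong reason, and the device you finally bank on --- a cancellation between the higher-order Walsh coefficients of $Q_A({\bf{x}})$ and those of $P_A({\bf{y}})$ --- does not exist. The coefficient of $r_{\alpha_1}\cdots r_{\alpha_{2k+1}}$ in the Parseval pairing of $Q_A({\bf{x}})$ with $P_A({\bf{y}})$ is $(-1)^k 4^{-k}\,{\bf{x}}(\alpha_1){\bf{y}}(\alpha_1)\cdots{\bf{x}}(\alpha_{2k+1}){\bf{y}}(\alpha_{2k+1})$, and these terms for $k\geq 1$ do not vanish or telescope; so this pairing does not equal $\langle{\bf{x}},{\bf{y}}\rangle$.

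The paper's proof is exactly the option you discarded: take $\phi_1({\bf{x}})=Q_A({\bf{x}})$, with $\|Q_A({\bf{x}})\|_{L^\infty}\leq e^{1/2}$ by \eqref{e40} and $(l^2\to L^2)$-continuity by Lemma \ref{L3}, and pair it against the fixed linear Walsh series $g_{{\bf{y}}}=\sum_\alpha {\bf{y}}(\alpha)r_\alpha$. Since $\mathrm{spect}(g_{{\bf{y}}})\subset R_A$, Parseval annihilates every Walsh character of order $\geq 3$ in $Q_A({\bf{x}})$ and returns exactly $\sum_\alpha{\bf{x}}(\alpha){\bf{y}}(\alpha)$ --- no tail, no iteration, no cancellation between higher-order terms is needed. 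Your objection that $\|g_{{\bf{y}}}\|_{L^\infty}$ can be of order $\sqrt{|\mathrm{supp}\,{\bf{y}}|}$ misreads what $\tilde V_1$ demands: for a $1$-linear functional the representation is $\eta({\bf{x}})=\int_\Omega\phi_1({\bf{x}})\,d\mu$ with a \emph{single} family required to be uniformly bounded in $L^\infty$; the fixed factor $g_{{\bf{y}}}$ is not a second family but is absorbed into the indexing measure (normalize $d\mu=|g_{{\bf{y}}}|\,d\mathbb{P}_A/\|g_{{\bf{y}}}\|_{L^1}$ and fold $\|g_{{\bf{y}}}\|_{L^1}\,\overline{\mathrm{sgn}}(g_{{\bf{y}}})$ into $\phi_1$, exactly as in \eqref{rep}), so all one needs is $\|g_{{\bf{y}}}\|_{L^1}\leq\|g_{{\bf{y}}}\|_{L^2}=\|{\bf{y}}\|_2\leq 1$. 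This yields $\|\eta_{{\bf{y}}}\|_{\tilde V_1}\leq e^{1/2}$ in the real case; the extra factor $2$ in the statement accounts for splitting ${\bf{x}}={\bf{u}}+i{\bf{v}}$ in the complex case.
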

\begin{proof}
For simplicity (and with no loss of generality) assume $H = l^2_{\mathbb{R}}(A)$.  Define 
\begin{equation}
g_ {{\bf{y}}} = \sum_{\alpha \in A} {\bf{y}} (\alpha) r_{\alpha}. 
\end{equation}
Then, $g_ {{\bf{y}}} \in L^2_{R_A}(\Omega_A, \mathbb{P}_A)$, and 
\begin{equation}
\|{\bf{y}}\|_2 = \|g_ {{\bf{y}}}\|_{L^2} \leq 1.
\end{equation}
By Parseval's formula and the spectral analysis of Riesz products (as per \S \ref{s3}), we have
\begin{equation} \label{replg}
\eta_{{\bf{y}}} ({\bf{x}}) = \sum_{\alpha \in A} {\bf{x}}(\alpha) {\bf{y}}(\alpha) = \int_{\Omega_A} Q_A({\bf{x}})  g_ {{\bf{y}}}   d \mathbb{P}_A, \ \ \ {\bf{x}} \in B_{l^2},
\end{equation}
which, by \eqref{e40} and Lemma \ref{L3},  implies
\begin{equation}
\|\eta_{{\bf{y}}}\|_{\tilde{V}_1(B_H)}\leq e^{\frac{1}{2}}.
\end{equation}\\\
\end{proof}

In the case $n = 2$, Theorem \ref{T2} implies that every bounded \emph{bilinear} functional on a Hilbert space is projectively continuous.   

In the case $n = 3$, there exist bounded trilinear functionals that are \emph{not} projectively bounded \cite{varopoulos1974inequality}, and \emph{a fortiori} not projectively continuous.  (See Remark \ref{R8}.)  

\begin{problem} \label{QP7}
  For an infinite-dimensional Hilbert space $H$, and for $n > 2$, which are the projectively bounded, and which are the projectively continuous $n$-linear functionals on $H$?\\
\end{problem}
\noindent
We may as well take $H$ to be $l^2(A)$, where $A$ is an infinite indexing set.  Then, given a bounded $n$-linear functional \ $\eta$ \ on  $l^2(A)$, we write (first formally)\\
\begin{equation} \label{za17}
\begin{split}
\eta({\bf{x}}_1, \ldots, {\bf{x}}_n) = \sum_{(\alpha_1,\ldots, \alpha_n) \in A^n} \theta_{A,\eta}(\alpha_1,& \ldots, \alpha_n){\bf{x}}_1(\alpha_1) \cdots {\bf{x}}_n(\alpha_n),\\\
&{\bf{x}}_1 \in l^2(A), \  \ldots, \ {\bf{x}}_n \in l^2(A),
\end{split}
\end{equation}
where
\begin{equation}
\theta_{A,\eta}(\alpha_1, \ldots, \alpha_n) := \eta({\bf{e}}_{\alpha_1},  \ldots, {\bf{e}}_{\alpha_n}), \  \ \ \ (\alpha_1,\ldots, \alpha_n) \in A^n,\\\
\end{equation}\\\
and $\{{\bf{e}}_{\alpha}: \alpha \in A\}$ is the standard basis of $l^2(A)$;  cf. \eqref{ker}.  In general, the sum on the right side of \eqref{za17} converges \emph{conditionally}, and is computed iteratively, coordinate-by-coordinate.  That is,

\begin{equation} \label{za18}
\begin{split}
\eta({\bf{x}}_1, \ldots, {\bf{x}}_n) = \sum_{\alpha_1 \in A} \bigg ( \cdots \bigg ( \sum_{\alpha_n \in A}\theta_{A,\eta}(\alpha_1,& \ldots, \alpha_n) {\bf{x}}_1(\alpha_1) \cdots {\bf{x}}_n(\alpha_n) \bigg) \cdots \bigg )\\\
&{\bf{x}}_1 \in l^2(A), \  \ldots, \ {\bf{x}}_n \in l^2(A),
\end{split}
\end{equation}
where sums over the respective coordinates are performed iteratively, in any order.  
In this work, we consider bounded $n$-linear functionals for which the right side of \eqref{za17} is absolutely summable, and focus specifically on kernels $\theta_{A,\eta}$ that are supported by \emph{fractional Cartesian products}. 

\begin{remark} \label{RLG}
\emph{\ The (so-called) \emph{little Grothendieck Theorem} is equivalent to the assertion that there exists a constant $K > 1$ such that for every finite scalar array  $(a_{jk})$, 
\begin{equation} \label{lg}
\begin{split}
 \sup \bigg \{\big |\sum_{j,k}a_{jk}  \langle{\bf{x}}_j, {\bf{y}}_k \rangle \big |:&  \ {\bf{x}}_j  \in l^2, \ {\bf{y}}_k \in l^2, \ \|{\bf{x}}_j\|_2 \leq 1, \  \sum _k \|{\bf{y}}_k\|_2^2 \leq 1\bigg \}  \\\
 & \leq  K \sup \bigg \{\big |\sum_{j,k}a_{jk}s_jt_k \big |:  |s_j|  \leq 1, \ \sum_k |t_k|^2 \leq 1 \bigg \}.\
 \end{split}
 \end{equation}
 (Cf. ~\cite[Theorem 5.2]{pisier2012grothendieck}.)  The inequality in \eqref{lg} is of course a quick consequence of the inequality in \eqref{i1}.  But it is implied also, though not as quickly, by  the  "one-dimensional" precursor to the Grothendieck inequality, 
 \begin{equation} \label{lc}
 \sup\{\|\eta_{{\bf{y}}}\|_{\tilde{\mathcal{V}}_1(B_H)}: {\bf{y}} \in B_H\} := k_G < \infty,
 \end{equation} 
  where $\eta_{{\bf{y}}}$ is defined in \eqref{fun1};  e.g., \eqref{lg} is implied by the integral representation of $\eta_{{\bf{y}}}$ in \eqref{replg}. 
 }
 
 \emph{Conversely,  the assertion that \eqref{lg} holds for every finite scalar array implies that every bounded linear functional on a Hilbert space is projectively bounded.  Indeed, $k_G$ in \eqref{lc} is the "smallest" possible $K$ in \eqref{lg}.  In particular, 
 by identifying a separable Hilbert space with the $L^2$-closure of the linear span of independent standard normal random variables, we obtain (by re-writing the proof of Proposition \ref{P8}) that \ $k_G := k_G^{\mathbb{C}} = \sqrt{4/\pi}$ \  if scalars are complex numbers, and  \  $k_G := k_G^{\mathbb{R}} = \sqrt{\pi/2}$ \ in the case of  real scalars. (Cf. ~\cite[Lemma 5.3]{pisier2012grothendieck}.)}
 
 \emph{If we add the continuity requirement, that  $\eta_{{\bf{y}}}$ be represented by integrals with uniformly bounded \emph{norm-continuous} integrands, as in \eqref{replg}, then we have 
 \begin{equation} \label{lcc}
 k_G \ \leq \ \sup\{\|\eta_{{\bf{y}}}\|_{\tilde{V}_1(B_H)}: {\bf{y}} \in B_H, \  \text{Hilbert space} \  H\} \ := k_{CG} \ \leq \  2e^{\frac{1}{2}}.
 \end{equation}\\
 I do not know which of the inequalities above are strict; cf. \eqref{g26} and Problem \ref{num}. 
  }\\
\end{remark}

  \subsection{Fractional Cartesian products} \ \ 
    Let $m$ be a positive integer.  A \emph{covering sequence} of $[m]$ is a set-valued sequence   \ $\mathcal{U} = (S_1, \ldots, S_n)$, such that $\emptyset \neq S_i \subset [m]$,  and
\begin{equation*}
\bigcup_{i=1}^nS_i = [m].
\end{equation*}
 Given sets $X_1, \ldots, X_m$,  \ and $S \subset [m]$, we consider the projection
\begin{equation} \label{g35}
\pi_S: \bigtimes_{j=1}^m X_j \rightarrow  \bigtimes_{j \in S} X_j := \emph{\bf{X}}^S,
\end{equation}
 defined by
\begin{equation} \label{g37}
\pi_S(\emph{\bf{x}}) = (x_j: j \in S), \ \ \emph{\bf{x}} = (x_1, \ldots, x_m) \in \bigtimes_{j=1}^m X_j.
\end{equation}
Then, given a covering sequence \ $\mathcal{U}= (S_1, \ldots, S_n)$ of $[m]$, we define \\\ 
\begin{equation} \label{g27}
\begin{split}
\emph{\bf{X}}^{\mathcal{U}} &:= \big\{ \big(\pi_{S_1}(\emph{\bf{x}}), \ldots, \pi_{S_n}(\emph{\bf{x}}) \big): \emph{\bf{x}} \in  \emph{\bf{X}}^{[m]} \big\}\\\\
& \subset \emph{\bf{X}}^{S_1} \times \cdots \times \emph{\bf{X}}^{S_n} := \emph{\bf{X}}^{[\mathcal{U}]}.
\end{split}
\end{equation}\\\
 We refer to  $\emph{\bf{X}}^{\mathcal{U}}$   as a  \emph{fractional Cartesian product} (based on $\mathcal{U}$), and to $ \emph{\bf{X}}^{[\mathcal{U}]}$ as its \emph{ambient product}. 

 \begin{example}[\emph{$3/2$-product}] \label{E1}
\emph{\ If \ $\mathcal{U} = (S_1, \ldots, S_n)$ covers $[m]$, and the $S_i$ are pairwise disjoint, then $\emph{\bf{X}}^{\mathcal{U}} = \emph{\bf{X}}^{[\mathcal{U}]}$.  For our purposes, we will focus on sequences \ $\mathcal{U}$ with the property that every $i \in [m]$ appears in at least two elements of \ $\mathcal{U}$.  The simplest nontrivial  example is 
\begin{equation}\label{b19}
\mathcal{U} = \big( \{1,2\}, \{2,3\}, \{1,3  \} \big). 
\end{equation}
In this instance, given sets $X_1$, $X_2,$  $X_3,$ we have
\begin{equation} \label{b4}
\emph{\bf{X}}^{\mathcal{U}} =  \big \{\big ((x_1,x_2), (x_2,x_3),(x_1,x_3) \big ): (x_1,x_2,x_3) \in X_1 \times X_2 \times X_3 \big \},
\end{equation}
and 
\begin{equation} \label{b5}
\emph{\bf{X}}^{[\mathcal{U}]} = (X_1 \times X_2) \times (X_2 \times X_3) \times (X_1 \times X_3).
\end{equation} 
We view $\emph{\bf{X}}^{\mathcal{U}}$ as a "3/2-fold" Cartesian product, a subset of the ambient   $3$-fold product  $\emph{\bf{X}}^{[\mathcal{U}]}$, and will use it throughout as a running example to illustrate workings of general definitions and arguments.  (The appearance of  the fraction $3/2$ is explained in Remarks \ref{R5}.i and \ref{R6}.i.) }\\

\end{example} 
Fractional Cartesian products provide a natural framework for the study of representing functions of $m$ variables, $m > 2$,  by functions of $k$ variables, $1 < k < m$. 
\begin{question} [cf. Question \ref{Q6}] \label{Q8} \ 
 Let  \  $\mathcal{U} = (S_1, \ldots, S_n)$ be a a covering sequence of $[m]$.  Given   $f \in l^{\infty}( \emph{\bf{X}}^{[m]})$,  can we find a probability space $(\Omega,  \mu)$, and representing functions $g_{\omega}^{(i)}$, indexed by $\omega \in \Omega$ and defined on   $\emph{\bf{X}}^{S_i}$ for $i \in [n]$,  such that for  all $\emph{\bf{x}} \in  \emph{\bf{X}}^{[m]}$ 

\begin{equation*}
\omega \mapsto \big( g_{\omega}^{(i)} \circ \pi_{S_i} \big)(\emph{\bf{x}})  \ \ \  \text{and} \ \ \   \omega \mapsto \|g_{\omega}^{(i)}\|_{\infty} 
\end{equation*} \\\
determine $\mu$-measurable functions on $\Omega,$ and
\begin{equation}\label{g28}\\\
f(\emph{\bf{x}}) = \int_{\Omega} \big( g_{\omega}^{(1)} \circ \pi_{S_1} \big)(\emph{\bf{x}})  \cdots  \big( g_{\omega}^{(n)} \circ \pi_{S_n} \big)(\emph{\bf{x}}) \mu(d \omega)
\end{equation}\\\
under the constraint
\begin{equation}\label{g29}
\int_{\Omega}  \|g_{\omega}^{(1)}\|_{\infty} \cdots  \|g_{\omega}^{(n)}\|_{\infty} \mu(d\omega) < \infty?
\end{equation}\\\
\end{question}
\noindent
We define  $\| f\|_{\tilde{\mathcal{V}}_{\mathcal{U}}}$  to be the infimum of the left side of \eqref{g29} taken over all representations of $f$ by \eqref{g28}, and 
\begin{equation} \label{a2}
\tilde{\mathcal{V}}_{\mathcal{U}}( \emph{\bf{X}}^{[m]}) := \{f\in l^{\infty}( \emph{\bf{X}}^{[m]}):\| f\|_{\tilde{\mathcal{V}}_{\mathcal{U}}} < \infty \}.
\end{equation}
Extending Proposition \ref{P4}, we identify  $\tilde{\mathcal{V}}_{\mathcal{U}}( \emph{\bf{X}}^{[m]})$ with an algebra of restrictions of Walsh transforms.  Specifically, we take the compact Abelian group
\begin{equation} \label{b6}
{\bf{\Omega}}_{\emph{\bf{X}}^{[\mathcal{U}]}} := \Omega_{\emph{\bf{X}}^{S_1}}\times \cdots \times \Omega_{\emph{\bf{X}}^{S_n}},
\end{equation}  
and the spectral set 
\begin{equation} \label{b7}
(\emph{\bf{R}}_{\emph{\bf{X}}})^{\mathcal{U}} := \big \{r_{\pi_{S_1}(\emph{\bf{x}})}\otimes  \cdots \otimes r_{\pi_{S_1}(\emph{\bf{x}})}: \emph{\bf{x}} \in  \emph{\bf{X}}^{[m]} \big \} ,
\end{equation} 
which is a fractional Cartesian product inside the ambient product 
\begin{equation} \label{b8}
(\emph{\bf{R}}_{{\emph{\bf{X}}}})^{[\mathcal{U}]} = R_{{\bf{X}}^{S_1}} \times \cdots \times R_{{\bf{X}}^{S_n}}.
\end{equation}
(See \eqref{g27}.)  For example, in the case of the $3/2$-product  (Example \ref{E1}) we have \\
\begin{equation} \label{b9}
{\bf{\Omega}}_{\emph{\bf{X}}^{[\mathcal{U}]}} = \{-1,1\}^{X_1\times X_2} \times \{-1,1\}^{X_2\times X_3} \times \{-1,1\}^{X_1\times X_3},
\end{equation}\\\
and\\\
\begin{equation} \label{b10}
(\emph{\bf{R}}_{\emph{\bf{X}}})^{\mathcal{U}} = \big \{r_{(x_1,x_2)}\otimes r_{(x_2,x_3)}\otimes r_{(x_1,x_3)}: (x_1,x_2,x_3) \in X_1 \times X_2 \times X_3 \}
\end{equation}\\\ 
with its ambient product\\\
\begin{equation} \label{b11}
(\emph{\bf{R}}_{\emph{\bf{X}}})^{[\mathcal{U}]} = R_{X_1\times X_2} \times R_{X_2\times X_3} \times R_{X_1\times X_3}. 
\end{equation}\\\
Observe, via Riesz representation and Hahn-Banach, that the dual space of \ $C_{(\emph{\bf{R}}_{\emph{\bf{X}}})^{\mathcal{U}}}({\bf{\Omega}}_{\emph{\bf{X}}^{[\mathcal{U}]}})$ (= \{continuous functions on ${\bf{\Omega}}_{\emph{\bf{X}}^{[\mathcal{U}]}}$ with spectrum in $(\emph{\bf{R}}_{\emph{\bf{X}}})^{\mathcal{U}}$\})  is \\\
    \begin{equation*} 
  B\big((\emph{\bf{R}}_{\emph{\bf{X}}})^{\mathcal{U}}\big)  := \widehat{M}({\bf{\Omega}}_{\emph{\bf{X}}^{[\mathcal{U}]}})/\{\hat{\lambda} \in \widehat{M}({\bf{\Omega}}_{\emph{\bf{X}}^{[\mathcal{U}]}}): \hat{\lambda} = 0 \ \text{on} \ (\emph{\bf{R}}_{\emph{\bf{X}}})^{\mathcal{U}}\} 
    \end{equation*}\\\
( = \{restrictions of Walsh transforms of measures on $M({\bf{\Omega}}_{\emph{\bf{X}}^{[\mathcal{U}]}})$ to $(\emph{\bf{R}}_{\emph{\bf{X}}})^{\mathcal{U}}$\}), and obtain
  \begin{proposition} \label{P5}
 \begin{equation*}
    \tilde{\mathcal{V}}_{\mathcal{U}}( \emph{\bf{X}}^{[m]}) = B\big((\emph{\bf{R}}_{\emph{\bf{X}}})^{\mathcal{U}}\big).
   \end{equation*}    
    \end{proposition}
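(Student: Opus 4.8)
The plan is to mimic the proofs of Proposition \ref{P1} and Proposition \ref{P4} verbatim, the only new ingredient being the bookkeeping needed to pass between a function on $\emph{\bf{X}}^{[m]}$ and a Walsh transform on the group ${\bf{\Omega}}_{\emph{\bf{X}}^{[\mathcal{U}]}}$ restricted to the fractional spectral set $(\emph{\bf{R}}_{\emph{\bf{X}}})^{\mathcal{U}}$. As in the earlier arguments, the proof splits into the two inclusions, and the heart of the matter is a dual pairing between $\tilde{\mathcal{V}}_{\mathcal{U}}$-type integral representations and Walsh polynomials supported on $(\emph{\bf{R}}_{\emph{\bf{X}}})^{\mathcal{U}}$.

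First I would prove $B\big((\emph{\bf{R}}_{\emph{\bf{X}}})^{\mathcal{U}}\big) \subset \tilde{\mathcal{V}}_{\mathcal{U}}( \emph{\bf{X}}^{[m]})$. Suppose $f \in B\big((\emph{\bf{R}}_{\emph{\bf{X}}})^{\mathcal{U}}\big)$; then there is $\lambda \in M({\bf{\Omega}}_{\emph{\bf{X}}^{[\mathcal{U}]}})$ with
\begin{equation*}
f(\emph{\bf{x}}) = \int_{{\bf{\Omega}}_{\emph{\bf{X}}^{[\mathcal{U}]}}} r_{\pi_{S_1}(\emph{\bf{x}})}(\omega_1) \cdots r_{\pi_{S_n}(\emph{\bf{x}})}(\omega_n) \, \lambda(d\omega_1, \ldots, d\omega_n), \ \ \ \emph{\bf{x}} \in \emph{\bf{X}}^{[m]}.
\end{equation*}
Take the indexing space $\big({\bf{\Omega}}_{\emph{\bf{X}}^{[\mathcal{U}]}}, |\lambda|/\|\lambda\| \big)$, put $\psi = d\lambda/d|\lambda|$, and set the representing functions to be
\begin{equation*}
g^{(1)}_{(\omega_1,\ldots,\omega_n)}(\emph{\bf{t}}) = \|\lambda\| \, \psi(\omega_1,\ldots,\omega_n) \, r_{\emph{\bf{t}}}(\omega_1), \quad g^{(i)}_{(\omega_1,\ldots,\omega_n)}(\emph{\bf{t}}) = r_{\emph{\bf{t}}}(\omega_i) \ \ (2 \leq i \leq n),
\end{equation*}
for $\emph{\bf{t}} \in \emph{\bf{X}}^{S_i}$. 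Then each $g^{(i)}$ is bounded by $\|\lambda\|$ (for $i=1$) or $1$ (for $i \geq 2$), the constraint \eqref{g29} holds with value $\leq \|\lambda\|$, and composing with the projections $\pi_{S_i}$ reproduces \eqref{g28}; hence $f \in \tilde{\mathcal{V}}_{\mathcal{U}}(\emph{\bf{X}}^{[m]})$ with $\|f\|_{\tilde{\mathcal{V}}_{\mathcal{U}}} \leq \|\lambda\|_M$. Taking the infimum over representing $\lambda$ gives the norm-decreasing inclusion.

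For the reverse inclusion, suppose $f \in \tilde{\mathcal{V}}_{\mathcal{U}}(\emph{\bf{X}}^{[m]})$, with $(\Omega,\mu)$ and representing functions $\{g^{(i)}_\omega\}$ satisfying \eqref{g28} and \eqref{g29}. Let $\hat{a}$ be a Walsh polynomial with spectrum in $(\emph{\bf{R}}_{\emph{\bf{X}}})^{\mathcal{U}}$, say $\hat{a} = \sum_{\emph{\bf{x}} \in E} a_{\emph{\bf{x}}} \, r_{\pi_{S_1}(\emph{\bf{x}})} \otimes \cdots \otimes r_{\pi_{S_n}(\emph{\bf{x}})}$ for a finite $E \subset \emph{\bf{X}}^{[m]}$. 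Pairing $f$ against the coefficient array $a$, interchanging the finite sum with $\int_\Omega$, and using that for each fixed $\omega$ the function $\emph{\bf{x}} \mapsto (g^{(1)}_\omega \circ \pi_{S_1})(\emph{\bf{x}}) \cdots (g^{(n)}_\omega \circ \pi_{S_n})(\emph{\bf{x}})$ has spectrum (as a function on ${\bf{\Omega}}_{\emph{\bf{X}}^{[\mathcal{U}]}}$ via the natural identification of $g^{(i)}_\omega$ with a point on $\Omega_{\emph{\bf{X}}^{S_i}}$-valued... ) lying in the ambient cross-norm setting, one gets
\begin{equation*}
\Big| \sum_{\emph{\bf{x}} \in E} a_{\emph{\bf{x}}} f(\emph{\bf{x}}) \Big| \leq \int_\Omega \Big| \sum_{\emph{\bf{x}} \in E} a_{\emph{\bf{x}}} \prod_{i=1}^n (g^{(i)}_\omega \circ \pi_{S_i})(\emph{\bf{x}}) \Big| \mu(d\omega) \leq C_n \|\hat{a}\|_\infty \int_\Omega \prod_{i=1}^n \|g^{(i)}_\omega\|_\infty \, \mu(d\omega),
\end{equation*}
where $C_n$ is the constant (a power of $4$, or $1$ in the Steinhaus-group normalization of Remark \ref{R1}.i) relating the $\mathcal{F}_n$-norm of a coefficient array to the sup-norm of its Walsh polynomial on an $n$-fold product group, applied with the $g^{(i)}_\omega(\cdot)$ playing the role of $l^\infty$-unit-ball test functions on the respective $\emph{\bf{X}}^{S_i}$. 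Since Walsh polynomials with spectrum in $(\emph{\bf{R}}_{\emph{\bf{X}}})^{\mathcal{U}}$ are norm-dense in $C_{(\emph{\bf{R}}_{\emph{\bf{X}}})^{\mathcal{U}}}({\bf{\Omega}}_{\emph{\bf{X}}^{[\mathcal{U}]}})$ (Corollary 9 of Chapter VII in \cite{blei:2001}), $f$ extends to a bounded linear functional on that space, so by Riesz representation and Hahn-Banach $f \in B\big((\emph{\bf{R}}_{\emph{\bf{X}}})^{\mathcal{U}}\big)$, and the norm estimate runs the other way.

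The one step that is not purely mechanical — and which I expect to be the main obstacle to a fully rigorous write-up — is justifying that, for each fixed $\omega$, the ``slice'' function $\emph{\bf{x}} \mapsto \prod_i (g^{(i)}_\omega \circ \pi_{S_i})(\emph{\bf{x}})$ is controlled by the $\emph{ambient}$ $n$-fold Fréchet norm even though its natural domain $\emph{\bf{X}}^{\mathcal{U}}$ is a proper subset of the ambient product $\emph{\bf{X}}^{[\mathcal{U}]}$. This is exactly where the structure of the fractional product enters: one restricts the test inequality on $\emph{\bf{X}}^{S_1} \times \cdots \times \emph{\bf{X}}^{S_n}$ to the diagonal-type subset $\emph{\bf{X}}^{\mathcal{U}}$, and the point is that restriction to a subset of a Cartesian product only \emph{decreases} the relevant sup, so the ambient-product estimate dominates. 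I would phrase this as the observation that the bilinear (resp. $n$-linear) Parseval/duality estimate $\|\hat a\|_\infty \le \|a\|_{\mathcal F_n} \le C_n\|\hat a\|_\infty$ on the ambient group, combined with the fact that a coefficient array on $\emph{\bf{X}}^{\mathcal U}$ is a special case of one on $\emph{\bf{X}}^{[\mathcal U]}$ (supported on the image of the diagonal embedding), yields the needed inequality with the same constant $C_n$. Everything else is a routine transcription of the $n=2$ proofs, which is presumably why the author writes ``Extending Proposition \ref{P4}'' and states the result without proof.
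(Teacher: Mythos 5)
Your proposal is correct and follows essentially the route the paper intends: the proof of Proposition \ref{P1} transcribed to the group ${\bf{\Omega}}_{\emph{\bf{X}}^{[\mathcal{U}]}}$ and the spectral set $(\emph{\bf{R}}_{\emph{\bf{X}}})^{\mathcal{U}}$, with the measure-to-representing-functions direction via the Radon--Nikodym derivative and the converse via the $\mathcal{F}_n$ duality estimate, density of Walsh polynomials, Riesz representation, and Hahn--Banach. You also correctly isolate and resolve the only genuinely new point, namely that the pairing over the diagonal set $\emph{\bf{X}}^{\mathcal{U}}$ is a special case of the ambient $n$-fold estimate on $\emph{\bf{X}}^{[\mathcal{U}]}$ (the coefficient array supported on the injective image of $\emph{\bf{x}} \mapsto (\pi_{S_1}(\emph{\bf{x}}),\ldots,\pi_{S_n}(\emph{\bf{x}}))$ has the same Walsh polynomial $\hat a$), so the same constant $C_n$ applies.
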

\noindent Proposition \ref{P4} is the instance \ $\mathcal{U} = \big( \{1\}, \dots, \{n\} \big)$.\\\

\begin{remark} \label{R5} \ \ \\\
\em{\textbf{i.} Given infinite sets $E_1, \ldots, E_d$,  \  and  $F \subset E_1 \times \cdots \times E_d,$ 
we let \\\
\begin{equation} \label{ba19}
\begin{split}
\Psi_F(s) := \max \big \{|F\cap(A_1 \times \cdots \times A_d)|: A_i \subset E_i, \ |A_i| = s, \ i & \in [d] \big \},\\\\
 \ \ \ & s = 1,2, \ldots,
 \end{split} 
\end{equation}\\\\
and define the \emph{combinatorial dimension} of $F$ (relative to $E_1 \times \cdots \times E_d$) to be 
 \begin{equation}
 \dim F = \limsup_{s \rightarrow \infty} \frac{\log \Psi_F(s)}{\log s}. 
 \end{equation}  \\\
 If $\dim F = \alpha$, and
\begin{equation} \label{ba20}
0 < \liminf_{s \rightarrow \infty} \frac{\Psi_F(s)}{s ^{\alpha}}  \leq \limsup_{s \rightarrow \infty} \frac{\Psi_F(s)}{s ^{\alpha}} < \infty,
\end{equation}\\\
then we say that $F$ is an $\alpha$-\emph{product}.
}

 Given a covering sequence $\mathcal{U}$ of $[m]$, consider the solution to the linear programming problem,\\\  
\begin{equation} \label{b18} 
 \max \big \{v_1 + \cdots + v_m: \sum_{j \in S} v_j \leq 1,  \  S \in \mathcal{U}, \  v_i \geq 0, \  i \in [m] \big\} :=  \alpha(\mathcal{U})  = \alpha.\\\
\end{equation}\\\     
The main result in  ~\cite{Blei:1994} asserts that if  $X_j$  ($j \in [m]$) are infinite,  then $\emph{\bf{X}}^{\mathcal{U}}$ is an $\alpha$-product, and in particular,
\begin{equation} \label{ba18}
\dim\emph{\bf{X}}^{\mathcal{U}} = \alpha(\mathcal{U}).
\end{equation}
 Specifically, we have
\begin{equation} \label{a3}
\tilde{\mathcal{V}}_{\mathcal{U}}( \emph{\bf{X}}^{[m]}) = l^{\infty}( \emph{\bf{X}}^{[m]})  \  \  \Longleftrightarrow  \  \ \alpha(\mathcal{U}) = 1,
\end{equation} 
which is a consequence of precise relationships between combinatorial measurements and  $p-$\emph{Sidonicity} in a context of harmonic analysis; e.g., see ~\cite[Remark ii, p.492]{blei:2001}.  Moreover, for all covering sequences \ $\mathcal{U}_1$ and \  $\mathcal{U}_2$ of $[m]$, 
\begin{equation} \label{a4}
\alpha(\mathcal{U}_1) \neq \alpha(\mathcal{U}_2) \ \  \Longrightarrow  \ \ \tilde{\mathcal{V}}_{\mathcal{U}_1}( \emph{\bf{X}}^{[m]}) \neq  \tilde{\mathcal{V}}_{\mathcal{U}_2}( \emph{\bf{X}}^{[m]}).
\end{equation}
For example, for the sequence  $\mathcal{U}$ in \eqref{b19},  $\alpha(\mathcal{U}) = 3/2$ (by inspection), and then by \eqref{a3},
\begin{equation}\label{b20}
\tilde{\mathcal{V}}_{\mathcal{U}}( \emph{\bf{X}}^{[3]}) \subsetneq l^{\infty}( \emph{\bf{X}}^{[3]}). 
\end{equation}

The proper inclusion in  \eqref{b20}  can be proved directly, by verifying that for every positive integer $N$  there exist $\theta_N \in l^{\infty}([N]^3)$ so that
\begin{equation*}
\|\theta_N\|_{\infty} = 1,
\end{equation*}
and
\begin{equation} \label{z23}
\|\theta_N\|_{\mathcal{V}_{\mathcal{U}}([N]^3)} \underset{N \to \infty}{\longrightarrow} \infty.
\end{equation}\\\
The existence of such $\theta_N$  is guaranteed by the Kahane-Salem-Zygmund probabilistic estimates (e.g.,  \cite[ pp. 68-9]{Kahane1994some}), which imply that for every  $N > 0$ there exist \\
\begin{equation}
\theta_N(i,j,k) := \epsilon_{ijk} \in \{-1,1\}, \ \ \ (i,j,k) \in [N]^3
\end{equation}
so that 
\begin{equation} \label{z22}
\big \| \sum_{(i,j,k) \in [N]^3}  \epsilon_{ijk}  \ r_{ij}\otimes r_{jk}\otimes r_{ik} \big \|_{L^{\infty}} \leq K N^2,
\end{equation}\\
wherein Rademacher functions  are defined on  $\Omega_{\mathbb{N}^2}$, and $K > 0$ is an absolute constant.  We then let 
\begin{equation}
f_N = \frac{1}{KN^2}\sum_{(i,j,k) \in [N]^3}  \epsilon_{ijk}  \ r_{ij}\otimes r_{jk}\otimes r_{ik},
\end{equation}
and deduce, by applying duality (Proposition \ref{P5}),
\begin{equation} \label{z24}
 \sum_{(i,j,k) \in [N]^3}  \hat{f}_N(r_{ij}\otimes r_{jk}\otimes r_{ik}) \ \theta_N(i,j,k)  = \frac{N}{K} \leq \| \theta_N\|_{\mathcal{V}_{\mathcal{U}}([N]^3)}.
\end{equation}
 
But more is true:  if ${\bf{X}}^{\mathcal{U}}$ is an $\alpha$-product,
then
\begin{equation} \label{z25}
l^{\frac{2\alpha}{\alpha-1}}({\bf{X}}^{[m]}) \subset \tilde{\mathcal{V}}_{\mathcal{U}}( \emph{\bf{X}}^{[m]}),
\end{equation}
and if each $X_j$  is infinite ($j \in [m]$), then  
\begin{equation} \label{z26}
l^p( \emph{\bf{X}}^{[m]}) \not \subset \tilde{\mathcal{V}}_{\mathcal{U}}( \emph{\bf{X}}^{[m]}), \ \ \ p > \frac{2\alpha}{\alpha - 1},
\end{equation}\\ 
which, in particular, implies \eqref{a4}.  (As usual, $l^p(A)$  denotes the space of all scalar-valued functions ${\bf{x}}$ on a domain $A$ such that $\|{\bf{x}}\|_p := \big(\sum_{\alpha \in A}|{\bf{x}}(\alpha)|^p \big)^{\frac{1}{p}}  < \infty.$)   In the specific case of the $3/2$-product,  \eqref{z26}  can be obtained by modifying the proof (above) of \eqref{b20}, which also suggests how \eqref{z26} can be proved in the general case. 

All this (and more) is detailed in \cite[Ch. XII, Ch. XIII]{blei:2001};  see also the survey article \cite{blei2011measurements}. \\\\
\textbf{ii.} Suppose \ $\mathcal{U} = (S_1, \ldots, S_n)$ covers $[m]$.   If $f \in \tilde{\mathcal{V}}_n(\emph{\bf{X}}^{[\mathcal{U}]})$, then the restriction of $f$ to $\emph{\bf{X}}^{\mathcal{U}}$ is \emph{a fortiori} in $ \tilde{\mathcal{V}}_{\mathcal{U}}( \emph{\bf{X}}^{[m]})$, and we have
\begin{equation} \label{a5}
\|f\|_{\tilde{\mathcal{V}}_{\mathcal{U}}( \emph{\bf{X}}^{[m]})} \leq \|f\|_{ \tilde{\mathcal{V}}_n(\emph{\bf{X}}^{[\mathcal{U}]})}.\\\\\
\end{equation}
For example, if  \  $\mathcal{U} = \big (\{(1,2\},\{2,3\},\{1,3\} \big )$, and $$f \in \tilde{\mathcal{V}}_3\big ((X_1 \times X_2) \times (X_2 \times X_3) \times (X_1 \times X_3) \big ),$$
then by Proposition \ref{P4},  there exists $\lambda \in M({\bf{\Omega}}_{\emph{\bf{X}}^{[\mathcal{U}]}})$, where ${\bf{\Omega}}_{\emph{\bf{X}}^{[\mathcal{U}]}}$ is given by \eqref{b9}, such that\\\
\begin{equation}
f\big( (x_1,x_2),(x_3,x_4),(x_5,x_6) \big) = \hat{\lambda}\big(r_{(x_1,x_2)}\otimes r_{(x_3,x_4)}\otimes r_{(x_5,x_6)}\big), \ \ \ \ \ \ \ \ \ \ \ \ \ \ \\\
\end{equation}
 $$ \ \ \ \ \ (x_1,x_2) \in X_1 \times X_2, \ \ \ (x_3,x_4) \in X_3 \times X_4, \ \ \ (x_5,x_6)  \in X_1 \times X_3.$$ \\
Then (obviously!), 
\begin{equation}
f\big( (x_1,x_2),(x_2,x_3),(x_1,x_3) \big) = \hat{\lambda}\big(r_{(x_1,x_2)}\otimes  r_{(x_2,x_3)} \otimes r_{(x_1,x_3)}\big), \ \ \ \ \ \ \ \ \ \ \ \ \ \ \ \ \ \ \ \ \ \ \ \\\\
\end{equation}
 $$\ \ \ \ \ \ \ \ \ \ \ \ \ \ \ \ \ \ \ \ \ \ \ \ \ \ \ \ \ \ \ \  (x_1,x_2,x_3 ) \in X_1 \times X_2 \times X_3,$$
\\
which, by Proposition \ref{P5}, implies $f \in \tilde{\mathcal{V}}_{\mathcal{U}}({\bf{X}}^{[3]})$ and
$\|f\|_{\tilde{\mathcal{V}}_{\mathcal{U}}( \emph{\bf{X}}^{[3]})} \leq \|f\|_{ \tilde{\mathcal{V}}_3(\emph{\bf{X}}^{[\mathcal{U}]})}.$\\\

\end{remark}

\subsection{A characterization of projectively continuous functionals} \ \ We return to multilinear functionals on a Hilbert space.  Let  $m$ be a positive integer, and let $$\mathcal{U} = (S_1, \ldots, S_n )$$ be a covering sequence of $[m]$.   Given a set $A$ and $\theta \in l^{\infty}(A^m)$, we define (formally) an $n$-linear functional $\eta_{\mathcal{U},\theta}$ on \\\  $$l^2(A^{S_1}) \times \cdots \times l^2(A^{S_n})$$ by
\begin{equation} \label{g30}
 \eta_{\mathcal{U},\theta}(\emph{\bf{x}}_1, \ldots,  \emph{\bf{x}}_n) := \sum_{\bm{\alpha} \in A^m}  \theta(\bm{\alpha}) \  \emph{\bf{x}}_1\big(\pi_{S_1}(\boldsymbol{\alpha}) \big) \cdots \emph{\bf{x}}_n\big(\pi_{S_n}(\boldsymbol{\alpha}) \big),
 \end{equation}\\\
 \begin{equation*}
\ \ \ \ ( \emph{\bf{x}}_1, \ldots,  \emph{\bf{x}}_n) \in l^2(A^{S_1}) \times \cdots \times l^2(A^{S_n}).
\end{equation*}\\\
We refer to the functionals defined by \eqref{g30} as multilinear functionals \emph{based} on $\mathcal{U}$.  Note that the definition of $\eta_{\mathcal{U},\theta}$ at this point is completely formal:  modes of summation and convergence in \eqref{g30} have not been specified.  Denote
\begin{equation} \label{g32}
k_j(\mathcal{U}) := |\{i: j \in S_i\}|, \ \ j \in [m],
\end{equation}
 (\emph{incidence} of $j$ in $\mathcal{U}$), and
\begin{equation} \label{g32a}
I_{\mathcal{U}} := \min\{k_j(\mathcal{U}): j \in [m]\}.
\end{equation}
 \begin{lemma}[cf. {~\cite[Lemma 1.2]{Blei:1979fk}}] \label{L5}
Let \ $\mathcal{U} = (S_1, \ldots, S_n )$  be a covering sequence of $[m]$ with $I_{\mathcal{U}} \geq 2$, and  let   $\theta \in l^{\infty}(A^m)$.  Then\\\
\begin{equation*}
\begin{split}
\sum_{\bm{\alpha} \in A^m} \big| \theta(\bm{\alpha}) \  \emph{\bf{x}}_1\big(\pi_{S_1}(\boldsymbol{\alpha}) & \big)\cdots \emph{\bf{x}}_n\big(\pi_{S_n}(\boldsymbol{\alpha}) \big) \big|  \leq \|\theta\|_{\infty},\\\
& (\emph{\bf{x}}_1,\ldots,  \emph{\bf{x}}_n) \in B_{l^2(A^{S_1})} \times \cdots \times B_{l^2(A^{S_n})}.
\end{split}
\end{equation*}\\\
 That is,  $\eta_{\mathcal{U},\theta}$ is a bounded $n$-linear functional on $l^2(A^{S_1}) \times \cdots \times l^2(A^{S_n})$, and    
 \begin{equation*}
 \|\eta_{\mathcal{U},\theta}\|_{\infty}  \leq \|\theta\|_{\infty}.
 \end{equation*}
 \end{lemma}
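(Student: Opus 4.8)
\textbf{Proof plan for Lemma \ref{L5}.}

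The plan is to bound the sum coordinate-by-coordinate, peeling off one factor $\mathbf{x}_i$ at a time by Cauchy--Schwarz, using the hypothesis $I_{\mathcal{U}} \geq 2$ to guarantee that after each peeling step there is still a ``fresh'' factor available to absorb the leftover $l^2$-mass of the coordinates just summed. Concretely, I would fix $(\mathbf{x}_1,\ldots,\mathbf{x}_n)$ in the product of unit balls, replace $\theta$ by the constant bound $\|\theta\|_\infty$, and prove by induction on $m$ (the number of underlying coordinates) that
\[
\sum_{\boldsymbol{\alpha} \in A^m} \big| \mathbf{x}_1(\pi_{S_1}\boldsymbol{\alpha}) \cdots \mathbf{x}_n(\pi_{S_n}\boldsymbol{\alpha})\big| \ \leq\ \prod_{i=1}^n \|\mathbf{x}_i\|_2 \ \leq\ 1.
\]

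The key inductive step: pick a coordinate, say $j = m$, and let $J = \{i : m \in S_i\}$, so $|J| = k_m(\mathcal{U}) \geq I_{\mathcal{U}} \geq 2$. Sum first over $\alpha_m \in A$. The factors not indexed by $i \in J$ do not depend on $\alpha_m$, so they pull out; for the factors with $i \in J$, I would apply a generalized H\"older / Cauchy--Schwarz inequality in the variable $\alpha_m$: since $|J| \geq 2$, writing each $\mathbf{x}_i(\pi_{S_i}\boldsymbol{\alpha})$ for $i\in J$ as a function of $\alpha_m$ (with the other coordinates frozen), their product summed over $\alpha_m$ is at most $\prod_{i \in J}\big(\sum_{\alpha_m}|\mathbf{x}_i(\pi_{S_i}\boldsymbol{\alpha})|^{|J|}\big)^{1/|J|} \leq \prod_{i\in J}\big(\sum_{\alpha_m}|\mathbf{x}_i(\pi_{S_i}\boldsymbol{\alpha})|^{2}\big)^{1/2}$, the last step because $|J|\geq 2$ and the entries have modulus controlled by the $l^2$-norm. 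After this, each surviving factor $\mathbf{x}_i$, $i \in J$, has been ``partially contracted'' in the $\alpha_m$-slot, leaving a function on $A^{S_i \setminus \{m\}}$ whose remaining $l^2$-mass over the other coordinates is no larger than before; the covering sequence $\mathcal{U}$ with $m$ removed from each $S_i$ still covers $[m-1]$ and still has all incidences $\geq 2$ (removing one element cannot drop an incidence, since $m$ was in exactly the sets indexed by $J$ and those sets simply lose the element $m$). Then I would invoke the induction hypothesis on $[m-1]$.

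To make the bookkeeping clean I would actually prove a slightly stronger statement carrying along arbitrary $l^2$ functions (not just unit-ball ones), so that the ``leftover mass'' produced at each step slots directly into the inductive hypothesis; the base case $m$ small (or $m=1$, where some $S_i = \{1\}$ forces a single sum bounded by $\|\mathbf{x}_i\|_2 \leq 1$, or all $S_i$ equal $\{1\}$ and one applies H\"older directly) is immediate. The boundedness and norm estimate $\|\eta_{\mathcal{U},\theta}\|_\infty \leq \|\theta\|_\infty$ then follow at once, since absolute summability of \eqref{g30} makes $\eta_{\mathcal{U},\theta}$ well defined and the triangle inequality gives the bound. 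I expect the main obstacle to be purely organizational: setting up the induction so that the repeated Cauchy--Schwarz applications compose correctly and the hypothesis $I_{\mathcal{U}} \geq 2$ is used exactly where needed (it is what lets each contraction land two or more factors in the same slot, which is precisely what keeps the exponents summing to $1$). There is no deep analytic difficulty — this is the standard combinatorial-dimension counting argument underlying \cite[Lemma 1.2]{Blei:1979fk}, re-cast for the kernel $\theta$.
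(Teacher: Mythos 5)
Your proposal is correct and is essentially the paper's own argument: induction on $m$, peeling off the last coordinate and using a generalized H\"older inequality together with $\|\cdot\|_{k}\leq\|\cdot\|_{2}$ for $k = k_m(\mathcal{U})\geq I_{\mathcal{U}}\geq 2$, with the base case $m=1$ handled by the $n$-linear H\"older inequality. The only (cosmetic) difference is the order of operations --- you apply H\"older to the innermost sum over $\alpha_m$ and then the inductive hypothesis to the resulting partial $l^2$-norms, whereas the paper applies the inductive hypothesis to the inner sum over $A^{[m-1]}$ for each frozen value $u$ of $\alpha_m$ and only then H\"older to the outer sum over $u$; both orderings give the same bound $\prod_{i}\|\mathbf{x}_i\|_2$.
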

\begin{proof}[Proof (by induction on m).] It suffices to verify 
\begin{equation} \label{g31}
\sum_{\bm{\alpha} \in A^m} \big |\emph{\bf{x}}_1\big(\pi_{S_1}(\boldsymbol{\alpha}) \big) \cdots \emph{\bf{x}}_n\big(\pi_{S_n}(\boldsymbol{\alpha}) \big) \big| \leq \|\emph{\bf{x}}_1\|_2 \cdots \|\emph{\bf{x}}_n\|_2,
\end{equation}
$$ (\emph{\bf{x}}_1,\ldots,  \emph{\bf{x}}_n) \in l^2(A^{S_1}) \times \cdots \times  l^2(A^{S_n}).$$ 
Suppose  $m=1$,  and  $$\mathcal{U} = (\underbrace{\{1\},\ldots, \{1\}}_n),  \ \ n = I_{\mathcal{U}} \geq 2.$$  In this case, apply to \eqref{g31} the $n$-linear H\"older inequality with conjugate vector $(\frac{1}{n}, \ldots, \frac{1}{n})$, and  then apply $\|\cdot\|_{I_{\mathcal{U}}} \leq \|\cdot\|_2$ (convexity):
  \begin{align*}
 \sum_{\alpha \in A} |\emph{\bf{x}}_1(\alpha) \cdots \emph{\bf{x}}_n(\alpha)| & \leq \|\emph{\bf{x}}_1\|_n \cdots \|\emph{\bf{x}}_n\|_n \\
 & \leq \|\emph{\bf{x}}_1\|_2 \cdots \|\emph{\bf{x}}_n\|_2,  \  \ \ \ \ \ ( \emph{\bf{x}}_1, \ldots,  \emph{\bf{x}}_n) \in  l^2(A) \times \cdots \times  l^2(A).
 \end{align*}\\
 Now take $m > 1$, and let  $\mathcal{U} = (S_1, \ldots, S_n)$ be a covering sequence of  $[m]$.  Let
  \begin{equation*}
  S_i^{\prime} = S_i \setminus \{m\}, \ \ i = 1, \ldots n.
  \end{equation*}
  Then,  $\mathcal{U}^{\prime} = (S_1^{\prime}, \ldots, S_n^{\prime})$ covers $[m-1]$, and  $I_{\mathcal{U}^{\prime}} \geq 2$.   Let $\emph{\bf{x}}_1 \in l^2(A^{S_1}), \ldots, \emph{\bf{x}}_n \in l^2(A^{S_n})$.  Denote 
\begin{equation}\label{ba12}  
  T := \{i:  m \in S_i\}.
 \end{equation} 
 For $i \in T$ and $u \in A$, define $\emph{\bf{x}}_{u,i} \in l^2(A^{S_i^{\prime}})$ by 
\begin{equation*}
\emph{\bf{x}}_{u,i}(\bm{\alpha}) = \emph{\bf{x}}_i(\bm{\alpha},u),  \ \ \ \bm{\alpha} \in A^{S_i^{\prime}}.
\end{equation*}  
 By applying the induction hypothesis, and then \eqref{g31} in the case $m=1$ with $n = |T|$, we obtain\\\
  \begin{align*}
 & \sum_{\bm{\alpha} \in A^m} \big|\emph{\bf{x}}_1\big(\pi_{S_1}(\boldsymbol{\alpha}) \big) \cdots \emph{\bf{x}}_n\big(\pi_{S_n}(\boldsymbol{\alpha}) \big)\big| = & \\
   & \ \ \ \ \ \ \ \ \ \ =  \sum_{u \in A} \  \sum_{\bm{\alpha} \in A^{[m-1]}}  \ \prod_{i \in [n] \setminus T}\big|\emph{\bf{x}}_i\big(\pi_{S_i^{\prime}}(\bm{\alpha})\big)\big| \  \prod_{i \in T}|\emph{\bf{x}}_{u,i}\big(\pi_{S_i^{\prime}}(\bm{\alpha}) \big)\big| 
\end{align*}\\\
\begin{equation*}
   \ \ \ \ \ \ \ \ \ \ \ \ \ \ \ \ \ \  \leq \prod_{i \in [n] \setminus T}\|\emph{\bf{x}}_i \|_2 \sum_{u \in A} \ \prod_{i \in T}\|\emph{\bf{x}}_{u,i}\|_2 \  \leq \ \|\emph{\bf{x}}_1\|_2 \cdots \|\emph{\bf{x}}_n\|_2.\\\
  \end{equation*}\\
 \end{proof}
 \begin{remark} \label{R6} \  \\\
 {\bf{i.}} \ \emph{We illustrate the proof of Lemma \ref{L5}  in the case $$\mathcal{U} = \big( \{1,2\}, \{2,3\}, \{1,3\} \big ).$$  Taking $\theta = 1$, we have 
 \begin{equation} \label{b12}
 \begin{split}
 \eta_{\mathcal{U}}({\bf{x}}_1,{\bf{x}}_2,{\bf{x}}_3)  = \sum_{(\alpha_1,\alpha_2,\alpha_3) \in A^3}{\bf{x}}_1&(\alpha_1,\alpha_2){\bf{x}}_2(\alpha_2,\alpha_3){\bf{x}}_3(\alpha_1,\alpha_3),\\\
 &{\bf{x}}_j \in B_{l^2(A^2)}, \ \ j = 1, 2, 3.\\\
 \end{split}
 \end{equation}
 Then, by applying Cauchy-Schwarz three times in succession to the sums over $\alpha_1, \alpha_2,$ and $ \alpha_3$, we obtain
 \begin{equation} \label{b13}
 \begin{split}
 \sum_{\alpha_3 \in A} \  \sum_{\alpha_2 \in A} \ \sum_{\alpha_1 \in A}|{\bf{x}}_1(\alpha_1,\alpha_2){\bf{x}}_2(\alpha_2,\alpha_3){\bf{x}}_3&(\alpha_1,\alpha_3)| \leq  1,\\\
 &{\bf{x}}_j \in B_{l^2(A^2)}, \ \ j = 1, 2, 3,
 \end{split}
 \end{equation}\\\
 which implies Lemma \ref{L5} in this instance.}
 
 \emph{Given a positive integer $s$, and arbitrary $s$-sets $E_j \subset A^2, \ j =1, 2, 3$ (i.e., $|E_j| = s$),  we put in \eqref{b13}
 \begin{equation}
 {\bf{x}}_j = \mathds{1}_{E_j}/\sqrt{s}, \ \ \ \ j = 1, 2, 3,
 \end{equation}
  and deduce
 \begin{equation}\label{b14}
 |{\bf{A}}^{\mathcal{U}} \cap (E_1 \times E_2 \times E_3)| \leq  s^{\frac{3}{2}},
 \end{equation}
 where, as in \eqref{b4} and \eqref{b5}, 
 \begin{equation} \label{b15}
 \begin{split}
 {\bf{A}}^{\mathcal{U}} &= \big\{ \big((\alpha_1,\alpha_2), (\alpha_2,\alpha_3),(\alpha_1,\alpha_3) \big ): (\alpha_1, \alpha_2, \alpha_3) \in A^3 \big \}\\\\
 & \subset A^2 \times A^2 \times A^2 = {\bf{A}}^{[\mathcal{U}]}.
 \end{split}
 \end{equation}
 }

\emph{To show that the estimate in \eqref{b14} is asymptotically best possible, we take  $F \subset A$ to be a $k$-set, where $k$ is an arbitrary positive integer, and  $E_1 = E_2 = E_3 = F^2$.  We take $s := k^2 = |E_j|, \ j = 1,2,3,$ \ and then obtain
\begin{equation} \label{b16}
|{\bf{A}}^{\mathcal{U}} \cap (E_1 \times E_2 \times E_3)| = |F^3| = s^{\frac{3}{2}}.
\end{equation}
}

\emph{Recalling the combinatorial gauge $ \Psi$ in \eqref{ba19}, we have in this case \\
\begin{equation} \label{b16a}
\begin{split}
\Psi_{{\bf{A}}^{\mathcal{U}}}(s) = \max \big \{|{\bf{A}}^{\mathcal{U}} \cap (E_1 \times E_2 \times E_3)|: s\text{-sets} \ E_j \subset A^2, & \ j =1, 2, 3 \big \}\\\
& s = 1, 2, \ldots \ .
\end{split} 
\end{equation}\\
Combining \eqref{b14} and \eqref{b16}, we obtain
\begin{equation} \label{b17}
\lim_{s \rightarrow \infty} \frac{\Psi_{{\bf{A}}^{\mathcal{U}}}(s)}{s^{\frac{3}{2}}} = 1,
\end{equation}\\
which means that  ${\bf{A}}^{\mathcal{U}}$  is a  $3/2$-product;  cf. \eqref{ba20}. The statement in \eqref{b17} is a special case of the general result in \eqref{ba18}.\\\ 
}

\noindent
\emph{{\bf{ii.}} \ The functionals  $\eta_{\mathcal{U}, \theta}$ of Lemma \ref{L5} can be classified according to their underlying \  $\mathcal{U}$ put in standard form.  To illustrate, take
\begin{equation} \label{b26}
\mathcal{U} = \big ( \{1,2\}, \{1\}, \{2\} \big ),
\end{equation}
and let $\theta \in l^{\infty}(A^2)$.  The evaluations of the corresponding trilinear functional $$\eta_{\mathcal{U},\theta}({\bf{x}}, {\bf{y}}, {\bf{z}}), \ \ ({\bf{x}}, {\bf{y}}, {\bf{z}}) \in l^2(A^2) \times l^2(A) \times l^2(A),$$ can be  realized as evaluations of  $\eta_{\mathcal{U}^{\prime},\theta}$ on $l^2(A^2) \times l^2(A^2) \times l^2(A^2)$,  where 
\begin{equation}
\mathcal{U}^{\prime} = \big ( \{1,2\}, \{1,2\}, \{1,2\} \big ).
\end{equation}
Specifically, for ${\bf{x}} \in l^2(A)$, define
\begin{equation} \label{b22}
 \tilde{\bf{x}}(\alpha_1,\alpha_2) =  \left \{
\begin{array}{lll}
{\bf{x}}(\alpha) &  \quad  \alpha_1 = \alpha_2  & \quad \\\\
0 & \quad \alpha_1 \not = \alpha_2,  \ \ \ (\alpha_1,\alpha_2) \in A^2,
\end{array} \right. 
\end{equation}\\\
and then write
\begin{equation} \label{b23}
\begin{split}
\eta_{\mathcal{U},\theta}({\bf{x}}, {\bf{y}}, {\bf{z}}) = \eta_{\mathcal{U}^{\prime},\theta}({\bf{x}}, \tilde{{\bf{y}}}, \tilde{{\bf{z}}}), \\\
&({\bf{x}}, {\bf{y}}, {\bf{z}}) \in l^2(A^2) \times l^2(A) \times l^2(A).
\end{split} 
\end{equation}
Next we identify $\{1,2\}$ with $\{1\}$, and replace $\mathcal{U}^{\prime}$ by  
\begin{equation}
\mathcal{U}^{\prime\prime} = \big (\{1\}, \{1\},\{1\} \big ).
\end{equation}
The evaluations of $\eta_{\mathcal{U},\theta}$ can now be realized as evaluations of a trilinear functional based on $\mathcal{U}^{\prime\prime}$. Specifically, write $B = A^2$, and rewrite \eqref{b22} and \eqref{b23} accordingly:  For ${\bf{x}} \in l^2(A^2)$, let
\begin{equation}
\tilde{{\bf{x}}}(\beta) = {\bf{x}}(\alpha_1,\alpha_2), \ \ \beta = (\alpha_1,\alpha_2),
\end{equation}
and for  ${\bf{x}} \in l^2(A)$, let
\begin{equation}
\tilde{\bf{x}}(\beta) =  \left \{
\begin{array}{lll}
{\bf{x}}(\alpha)  \quad & \text{if}  \quad & \beta = (\alpha, \alpha)  \ \ \ \alpha \in A \\
0  \quad  & \text{if}  \quad & \beta = (\alpha_1, \alpha_2), \ \ \ \alpha_1 \not = \alpha_2, \ \ \  (\alpha_1,\alpha_2) \in A^2. 
\end{array} \right. 
\end{equation}\\\
Write
\begin{equation}
\tilde{\theta}(\beta) = \theta(\alpha_1,\alpha_2), \ \ \ \beta = (\alpha_1, \alpha_2).
\end{equation}
Then,
\begin{equation} 
\begin{split}
\eta_{\mathcal{U},\theta}({\bf{x}}, {\bf{y}}, {\bf{z}}) = \eta_{\mathcal{U}^{\prime\prime},\tilde{\theta}}(\tilde{{\bf{x}}}, \tilde{{\bf{y}}}, \tilde{{\bf{z}}}), \\\
&({\bf{x}}, {\bf{y}}, {\bf{z}}) \in l^2(A^2) \times l^2(A) \times l^2(A),
\end{split} 
\end{equation}
where $ \eta_{\mathcal{U}^{\prime\prime},\tilde{\theta}}$ is the trilinear functional defined on $l^2(B) \times l^2(B) \times l^2(B)$.  Similarly, any trilinear functional based on   
\begin{equation} \label{b25}
 \mathcal{V} = \big(\{1,2,3\}, \{2,3\},\{3\} \big)
 \end{equation}
is "subsumed" by a trilinear functional based also on $\big(\{1\}, \{1\},\{1\} \big)$;  that is, any trilinear functional based on $ \mathcal{V}$ can be realized as a trilinear functional based on $\mathcal{U}^{\prime\prime}$.   In this sense,  respective trilinear functionals based on \ $\mathcal{U}$ in \eqref{b26} and $\mathcal{V}$  in \eqref{b25} belong to the same class.
}

\emph{We say that a covering sequence $\mathcal{U} = (S_1, \ldots, S_n)$ of $[m]$  is in \emph{standard form} (or, $\mathcal{U}$ is \emph{standard})  if for all $j$ and  $k$ in $[n]$, 
\begin{equation}
S_j \neq S_k \  \ \Rightarrow \  \ S_j \triangle S_k  \not = \emptyset,
\end{equation}
($\triangle = $ symmetric difference), and if for all $j$ and $k$ in $[m]$ \  ($j \not = k$), there exists $S_i \in \mathcal{U}$ such that
\begin{equation}
j \in S_i \  \ \text{and} \ \  k \not \in S_i.
\end{equation}
Moreover, we consider two standard covering sequences of $[m]$  $$\mathcal{U} = (S_1, \ldots, S_n)  \ \ \text{and} \ \  \mathcal{V}= (T_1, \ldots, T_n)$$ to be equivalent if there exist permutations $\tau$ of $[m]$ and $\sigma$ of $[n]$ such that $\tau[S_i] = T_{\sigma(i)}$ for every $i \in [n]$.  For example,
\begin{equation}
\begin{split}
\mathcal{U} &= \big (\{1,2,3\}, \{2,3,4\},\{1,3,4\} \big)\\\  \text{and}\\   \mathcal{V} &= \big (\{1,2,3\}, \{1,2,4\}, \{1,3,4\} \big)
\end{split}
\end{equation}
are in standard form, and are equivalent via the cycles $\tau = (123)$ and $\sigma = (23)$.}

\emph{We say that a covering sequence $\mathcal{U}$ is \emph{subsumed} by a covering sequence $\mathcal{V}$, and write
$\mathcal{U} \prec \mathcal{V}$, if every multilinear functional based on $\mathcal{U}$ can be realized  as a multilinear functional based on $\mathcal{V}$. (We forego a precise definition.)
} 

\emph{ Every covering sequence  $\mathcal{U} = (S_1, \ldots, S_n)$ of $[m]$ is subsumed by a standard sequence $\mathcal{U}^{\prime \prime}$.  To obtain such $\mathcal{U}^{\prime \prime},$  first let $\mathcal{U}^{\prime}$ be a sequence derived from $\mathcal{U}$ by replacing $S_j$ with $S_k$ whenever $S_j \subsetneq S_k$ for $j$ and $k$ in $[n]$.  Next we consider $j$ and $k$  in $[m]$  to be equivalent (relative to $\mathcal{U}^{\prime}$) if  for all $S_i \in \mathcal{U}^{\prime}$,
\begin{equation} \label{b24}
j \in S_i \Leftrightarrow k \in S_i,  
\end{equation}\\
and then take  $\{j_1, \ldots, j_{\ell}\} \subset [m]$ to be a list of equivalence class representatives.  Now let
\begin{equation}
S_i^{\prime \prime} = \{k: j_k \in S_i \}, \ \ i \in [n],
\end{equation}
and
\begin{equation}
\mathcal{U}^{\prime \prime} = (S_1^{\prime \prime}, \ldots, S_n^{\prime \prime}).
\end{equation}
Then, $\mathcal{U}^{\prime \prime}$ is a standard covering sequence of $[\ell]$,   
 $$\alpha(\mathcal{U}^{\prime \prime }) =  \alpha(\mathcal{U}^{\prime }) = \alpha(\mathcal{U}), \ \ \  I_{\mathcal{U}^{\prime \prime}} = I_{\mathcal{U}^{\prime}}  \geq I_{\mathcal{U}}, \ \  \text{and} \ \  \mathcal{U} \prec\mathcal{U}^{\prime \prime}.$$  Observe that $\mathcal{U}^{\prime \prime}$ need not be unique (even up to equivalence).  
} 

 \emph{For every integer $n \geq 2$, there is a finite number $\mathcal{S}(n)$ of standard covering sequences $\mathcal{U}$ with $n$ terms and $I_{\mathcal{U}} \geq 2$.   For $n = 2$ we have only  $\mathcal{U} = \big (\{1\}, \{1\} \big)$.  For $n = 3$, we have (up to equivalence)
\begin{equation}\label{st3}
\begin{split}
\mathcal{U}_1 &= \big(\{1\}, \{1\},\{1\} \big ),\\\
\mathcal{U}_2 &= \big(\{1,2\}, \{2,3\},\{1,3\} \big ),\\\
\mathcal{U}_3 &= \big(\{1,2,4\}, \{2,3,4\},\{1,3,4\} \big ),\\\
\text{and}\\\
\mathcal{U}_4 &= \big(\{1,2,4\}, \{2,3,4\},\{1,3\} \big ),\\\
\end{split}
\end{equation}
and no more.  Note that $\alpha(\mathcal{U}_1) = 1$, whereas \\\
\begin{equation} \label{st4}
\alpha(\mathcal{U}_2) = \alpha(\mathcal{U}_3) = \alpha(\mathcal{U}_4) = \frac{3}{2}.\\\\
\end{equation}\\\
Note also that $\mathcal{U}_1 \prec \mathcal{U}_i$ for $i = 2, 3, 4,$ and $\mathcal{U}_2 \prec \mathcal{U}_i$ for $ i = 3, 4.$  Otherwise, $\mathcal{S}(n)$ grows rapidly to infinity as $n$ increases.}

 \end{remark}
\vskip0.40cm

The theorem below addresses the question:   when are the  functionals in Lemma \ref{L5} projectively continuous? 
\begin{theorem} \label{T3}
 Let  \ $\mathcal{U} = (S_1, \ldots, S_n )$  be a covering sequence of $[m]$ with $I_{\mathcal{U}} \geq 2$.  Let $A$ be an infinite set, and let $\theta \in l^{\infty}(A^m)$. 
  
  If $\theta \in \tilde{\mathcal{V}}_{\mathcal{U}}(A^m)$, then $\eta_{\mathcal{U},\theta}$ is projectively continuous. 
 
 If  $\eta_{\mathcal{U},\theta}$ is projectively bounded, then $\theta \in \tilde{\mathcal{V}}_{\mathcal{U}}(A^m)$, and (therefore) $\eta_{\mathcal{U},\theta}$ is projectively continuous.
 
 Moreover,
  \begin{equation} \label{g46}
  K^{-\beta_{\mathcal{U}}} \|\eta_{\mathcal{U},\theta}\|_{\tilde{V}_n(\emph{\bf{B}}^{[\mathcal{U}]})} \leq \|\theta\|_{\tilde{\mathcal{V}}_{\mathcal{U}}(A^m)} \leq \|\eta_{\mathcal{U},\theta}\|_{\tilde{\mathcal{V}}_n(\emph{\bf{B}}^{[\mathcal{U}]})},
  \end{equation} 
  where $$\emph{\bf{B}}^{[\mathcal{U}]} := B_{l^2(A^{S_1})} \times \cdots \times B_{l^2(A^{S_n})},$$
  \begin{equation*}
  \beta_{\mathcal{U}} := \sum_{j=1}^m k_j(\mathcal{U}) \  \ \big(= \sum_{i=1}^n|S_i| \  \big),
  \end{equation*}
  and $K >1$ is an absolute constant.
\end{theorem}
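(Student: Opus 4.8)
\textbf{Proof strategy for Theorem \ref{T3}.}

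The plan is to prove this by induction on $m$, the number of variables underlying the covering sequence $\mathcal{U}$, running in parallel with the structure of the proof of Lemma \ref{L5}. The base case $m = 1$ is essentially Lemma \ref{L6} (the $n$-linear Parseval-like formula for $\Delta_n$), which furnishes the map $\Phi_n$ of \eqref{mapn}--\eqref{mapnbd}; when $\mathcal{U} = (\{1\},\dots,\{1\})$ with $n = I_{\mathcal{U}} \geq 2$ terms, the functional $\eta_{\mathcal{U},\theta}$ is (after normalizing $\theta$) a "diagonal" $n$-linear form on $l^2(A)$, and composing the representing maps $\Phi_n$ with $\theta$ as a multiplier gives the integral representation $\eta_{\mathcal{U},\theta}(\mathbf{x}_1,\dots,\mathbf{x}_n) = \int_{\Omega_A^{n-1}} \prod_i (\phi_i(\mathbf{x}_i))\, d\mathbb{P}_A^{n-1}$ with uniformly bounded, $(l^2 \to L^2)$-continuous integrands; here the hypothesis $\theta \in \tilde{\mathcal{V}}_{\mathcal{U}}(A^m)$ is automatic since $m = 1$ forces $\tilde{\mathcal{V}}_{\mathcal{U}}(A) = l^\infty(A)$. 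The numerical bookkeeping gives the constant $K^{\beta_{\mathcal{U}}}$, where $\beta_{\mathcal{U}} = \sum_i |S_i| = n$ in this case, matching the exponent in \eqref{g46}.

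For the inductive step, I would single out the coordinate $m$, set $S_i' = S_i \setminus \{m\}$ and $\mathcal{U}' = (S_1',\dots,S_n')$, which covers $[m-1]$ with $I_{\mathcal{U}'} \geq 2$, and let $T = \{i : m \in S_i\}$ (so $|T| \geq 2$). Writing a generic $\bm{\alpha} \in A^m$ as $(\bm{\beta}, u)$ with $u \in A$, one expresses $\eta_{\mathcal{U},\theta}$ as a sum over $u$ of functionals based on $\mathcal{U}'$ applied to the "slices" $\mathbf{x}_{u,i}$ (as in the proof of Lemma \ref{L5}), and then views the $u$-sum itself as a $|T|$-fold diagonal contraction of the slice-data, handled by the base-case machinery (Lemma \ref{L6} with $n = |T|$). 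The crucial input is that $\theta \in \tilde{\mathcal{V}}_{\mathcal{U}}(A^m) = B((\mathbf{R}_A)^{\mathcal{U}})$ (Proposition \ref{P5}): this lets us write $\theta(\bm{\alpha}) = \hat{\lambda}(r_{\pi_{S_1}(\bm{\alpha})} \otimes \cdots \otimes r_{\pi_{S_n}(\bm{\alpha})})$ for some measure $\lambda$, i.e. it factors $\theta$ as an integral over a product group of a product of one-variable Rademacher characters indexed by the faces $S_i$. Feeding this factorization of $\theta$ into the slice decomposition, and then applying the inductive hypothesis to $\eta_{\mathcal{U}', \theta_u}$ (where the "kernel" $\theta_u$ inherits membership in $\tilde{\mathcal{V}}_{\mathcal{U}'}(A^{m-1})$ from the Rademacher factorization) and the base case to the $u$-contraction, assembles the continuous, uniformly bounded integrand representation for $\eta_{\mathcal{U},\theta}$, hence projective continuity. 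The constant compounds multiplicatively, yielding $K^{\beta_{\mathcal{U}}}$ with $\beta_{\mathcal{U}} = \sum_j k_j(\mathcal{U}) = \sum_i |S_i|$.

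The converse direction — that projective boundedness of $\eta_{\mathcal{U},\theta}$ forces $\theta \in \tilde{\mathcal{V}}_{\mathcal{U}}(A^m)$ — I would obtain by a duality/restriction argument: a projectively bounded representation $\eta_{\mathcal{U},\theta}(\mathbf{x}_1,\dots,\mathbf{x}_n) = \int_\Omega \phi_1(\mathbf{x}_1)\cdots\phi_n(\mathbf{x}_n)\,d\mu$ evaluated on the standard basis vectors $\mathbf{e}_{\bm{\gamma}_i}$, $\bm{\gamma}_i \in A^{S_i}$, produces representing functions $g_{\omega}^{(i)}(\bm{\gamma}_i) = \phi_i(\mathbf{e}_{\bm{\gamma}_i})(\omega)$ on $A^{S_i}$ satisfying exactly the constraint \eqref{g28}--\eqref{g29} for the kernel $\theta = \theta_{A,\eta_{\mathcal{U},\theta}}$ restricted to the fractional Cartesian product $\mathbf{A}^{\mathcal{U}}$; the subtlety is that the sum defining $\eta_{\mathcal{U},\theta}$ converges only conditionally in general, so I would first invoke Lemma \ref{L5} ($I_{\mathcal{U}} \geq 2$ gives absolute summability of the defining series) to justify interchanging the basis-expansion with the integral. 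The left-hand inequality in \eqref{g46} then follows by combining the forward direction ($\|\eta_{\mathcal{U},\theta}\|_{\tilde{V}_n} \leq K^{\beta_{\mathcal{U}}} \|\theta\|_{\tilde{\mathcal{V}}_{\mathcal{U}}}$) with \eqref{g26}, and the right-hand inequality $\|\theta\|_{\tilde{\mathcal{V}}_{\mathcal{U}}} \leq \|\eta_{\mathcal{U},\theta}\|_{\tilde{\mathcal{V}}_n(\mathbf{B}^{[\mathcal{U}]})}$ from the basis-restriction argument just sketched together with Remark \ref{R5}.ii.

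The main obstacle I anticipate is the inductive step's bookkeeping: correctly tracking how the Rademacher factorization of $\theta$ (living on the ambient product $\mathbf{R}_A^{[\mathcal{U}]}$) interacts with both the slice decomposition over $u \in A$ and the two distinct invocations of the continuous-representation machinery (inductive hypothesis for the $[m-1]$-part, base case Lemma \ref{L6} for the diagonal $|T|$-contraction), so that the resulting integrand is genuinely a product $\prod_{i=1}^n (\text{function of } \pi_{S_i})$ and not merely a function of all $m$ variables. Ensuring the continuity ($(l^2 \to L^2)$-continuity) survives the composition and the $u$-summation — which requires a dominated-convergence / Plancherel estimate uniform in the slices, much as in the proof that $\Phi$ is $(l^2 \to L^2)$-continuous in \S\ref{s4} — is the technically delicate point, but it is structurally forced by the same martingale/orthogonality mechanism used there.
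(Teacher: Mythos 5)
Your converse direction and the right-hand inequality in \eqref{g46} are essentially the paper's argument: evaluate $\eta_{\mathcal{U},\theta}$ on the standard basis vectors of the spaces $l^2(A^{S_i})$, observe that this recovers $\theta$ on the fractional product $\textbf{A}^{\mathcal{U}}$ and $0$ off it, and pass through Remark \ref{R5}.ii, with Lemma \ref{L5} justifying the interchanges. That part is sound.

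The forward direction, however, has a genuine gap at exactly the point you flag as the "main obstacle." You propose to run the induction on $m$ with a general kernel $\theta$ carried along, slicing over the last coordinate $u\in A$ and applying the inductive hypothesis to the slice functionals $\eta_{\mathcal{U}',\theta_u}$. This does not close: each slice $\theta_u$ comes with its own ($u$-dependent) representing maps on its own ($u$-dependent) indexing space, and summing these over $u$ does not reassemble into a single integrand of the form $\prod_{i=1}^n\phi_i({\bf{x}}_i)$ with $\phi_i$ defined once and for all on $B_{l^2(A^{S_i})}$ — which is what projective continuity requires. The recombination step in the paper's induction (passing from the $u$-sum of products of slice transforms to transforms of the full $\Phi_{{\bf{k}}_i}({\bf{x}}_i)$, via Lemma \ref{L6} with $k=|T|$ applied to the auxiliary vectors ${\bf{v}}_{i,\bm{\chi}}$) works only because there is no kernel in the way; a general $\theta(\bm{\beta},u)$ does not factor across the $u$-contraction.

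The paper's resolution is to decouple the two difficulties. The induction on $m$ (Theorem \ref{T4}) is carried out only for $\theta\equiv 1$, producing the maps $\Phi_{{\bf{k}}_i}$ and the identity $\eta_{\mathcal{U}}({\bf{x}}_1,\ldots,{\bf{x}}_n)=\convolution_{\mathcal{U}}\big(\Phi_{{\bf{k}}_1}({\bf{x}}_1),\ldots,\Phi_{{\bf{k}}_n}({\bf{x}}_n)\big)$. A general $\theta$ is then handled by writing $\theta(\bm{\alpha})=\hat{\mu}\big(r_{\pi_{S_1}(\bm{\alpha})}\otimes\cdots\otimes r_{\pi_{S_n}(\bm{\alpha})}\big)$ via Proposition \ref{P5} and observing that
\begin{equation*}
\eta_{\mathcal{U},\theta}({\bf{x}}_1,\ldots,{\bf{x}}_n)=\int_{\bm{\zeta}\in\bm{\Omega}_{{\bf{A}}^{[\mathcal{U}]}}}\eta_{\mathcal{U}}\big({\bf{x}}_1\bullet\zeta_1,\ldots,{\bf{x}}_n\bullet\zeta_n\big)\,\mu(d\bm{\zeta}),
\end{equation*}
where $({\bf{x}}\bullet\zeta)(\beta)={\bf{x}}(\beta)\zeta(\beta)$ is a norm-preserving randomization of the arguments. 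The multiplier $\theta$ is thus pulled entirely out of the inductive argument and absorbed, together with $\mu$, into the indexing space of the final representation; the $\theta\equiv 1$ formula is applied pointwise inside the integral. This averaging step — not a modification of the induction — is the missing idea in your outline; you have the factorization of $\theta$ in hand but feed it into the slices rather than into the arguments.
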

\vskip0.30cm 
\section{\bf{Proof of Theorem \ref{T3}}}\label{s8}
The proof has three parts. \

\subsection{A multilinear Parseval-like formula} \ \    Let  $m$ be a positive integer, and let  $\mathcal{U} = (S_1, \ldots, S_n)$ be a covering sequence of $[m]$ with $I_{\mathcal{U}} \geq 2$.  Taking $\theta = 1$, we denote 
\begin{equation} \label{g45}
 \eta_{\mathcal{U}}(\emph{\bf{x}}_1, \ldots,  \emph{\bf{x}}_n) := \sum_{\bm{\alpha} \in A^m}  \emph{\bf{x}}_1\big(\pi_{S_1}(\boldsymbol{\alpha}) \big) \cdots \emph{\bf{x}}_n\big(\pi_{S_n}(\boldsymbol{\alpha}) \big),
 \end{equation}
 \begin{equation*}
\ \ \ \ ( \emph{\bf{x}}_1, \ldots,  \emph{\bf{x}}_n) \in l^2(A^{S_1}) \times \cdots \times l^2(A^{S_n}).
\end{equation*}\\\
We proceed to show that $\eta_{\mathcal{U}}$ is projectively continuous.
\begin{lemma}[cf. Theorem \ref{T2}, Corollalry \ref{full}] \label{L6}
Let $A$ be an infinite set.  For every integer $k \geq 2$, there exists a one-one map
  \begin{equation*}
\Phi_k : l^2(A) \rightarrow L^{\infty}(\Omega_{A}, \mathbb{P}_{A}),
  \end{equation*}
 which is continuous with respect to the  $l^2(A)$-norm (on its domain) and the $L^2(\Omega_{A}, \mathbb{P}_{A})$-norm (on its range), and has the following properties:

  \begin{equation}\label{g36}
  \|\Phi_k(\textbf{\emph{x}})\|_{L^{\infty}} \leq K \|\textbf{\emph{x}}\|_2, \ \ \ \textbf{\emph{x}} \in l^2(A),
  \end{equation}
  where $K > 1$ is an absolute constant;

  \begin{equation} \label{z27}
   \Phi_k(\xi\textbf{\emph{x}}) = \xi \Phi_k(\textbf{\emph{x}}), \ \  \xi \in \mathbb{C}, \ \  {\bf{x}} \in l_{\mathbb{R}}^2(A);
  \end{equation}

  \begin{equation}\label{g34}
  \begin{split}
 & \sum_{\alpha \in A}  \textbf{\emph{x}}_1(\alpha) \cdots \textbf{\emph{x}}_k(\alpha)  =  \big(\Phi_k(\textbf{\emph{x}}_1)\convolution \cdots \convolution \Phi_k(\textbf{\emph{x}}_k)\big)(\textbf{\emph{e}})\\\\
  &= \int_{(\Omega_A)^{k-1}} \left( \big( \prod_{j=1}^{k-1}\Phi_k(\textbf{\emph{x}}_j)(\omega_j) \big)  \Phi_k(\textbf{\emph{x}}_k)(\omega_1 \cdots \omega_{k-1}) \right) d\omega_1 \cdots  d\omega_{k-1}\\\\
  &= \sum_{\gamma \in \widehat{\Omega}_A} \widehat{\big(\Phi_k(\textbf{\emph{x}}_1)}(\gamma) \cdots \widehat{\big(\Phi_k(\textbf{\emph{x}}_k)}(\gamma),
  \end{split}
  \end{equation}
  \begin{equation*}
  \ \ \ \textbf{\emph{x}}_1 \in  l^2(A),  \ \ldots, \ \textbf{\emph{x}}_k \in  l^2(A),
  \end{equation*}\\\
where $d\omega_1 \cdots  d\omega_{k-1}$ stands for $\mathbb{P}_A(d\omega_1) \times \cdots \times  \mathbb{P}_A(d\omega_{k-1})$;  $\convolution$ denotes convolution over $\Omega_A$, and $\textbf{\emph{e}}(\alpha) = 1$ for $ \alpha \in A$.
\end{lemma}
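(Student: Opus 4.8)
\textbf{Proof proposal for Lemma \ref{L6}.}

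The plan is to obtain $\Phi_k$ by a $k$-fold iteration of the construction behind Theorem \ref{T2} (in the sharpened, homogeneous form supplied by Corollary \ref{full} at $t=1$), exactly as the map $\Phi_{(2,2)}$ was obtained by a two-fold iteration in the introduction. Concretely, first I would set $\Phi_2 := \Phi_2(\cdot\,;1)$ from Corollary \ref{full}: this is the desired map for $k=2$, since \eqref{gg8} with $\bar{\mathbf{y}}$ in place of $\mathbf{y}$ gives $\sum_\alpha \mathbf{x}(\alpha)\mathbf{y}(\alpha) = \int_{\Omega_A}\Phi_2(\mathbf{x})\Phi_2(\mathbf{y})\,d\mathbb{P}_A$, it is $(l^2\to L^2)$-continuous, it satisfies $\|\Phi_2(\mathbf{x})\|_{L^\infty}\le 2K(1)\|\mathbf{x}\|_2$, and by \eqref{homo} it is homogeneous on $l^2_{\mathbb{R}}(A)$. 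For $k\ge 3$, I would build $\Phi_k$ recursively: having $\Phi_{k-1}$ with the stated properties, define $\Phi_k$ so that convolving $k$ copies over $\Omega_A$ and evaluating at $\mathbf{e}$ reproduces $\sum_\alpha \mathbf{x}_1(\alpha)\cdots\mathbf{x}_k(\alpha)$. The natural device is to apply $\Phi_2$ "on the spectral side": writing the $(k-1)$-fold sum $\sum_\alpha \mathbf{x}_1(\alpha)\cdots\mathbf{x}_{k-1}(\alpha)\,\mathbf{z}(\alpha)$ as a bilinear pairing of $\mathbf{z}$ against the pointwise product, and then feeding the pointwise-product vector through $\Phi_2$ while feeding each $\mathbf{x}_j$ through $\Phi_{k-1}$; the bookkeeping of indexing sets is handled by fixing, as in the proof of Theorem \ref{T2}, bijections between $A$ and appropriate spectral sets $W_{A_j}$ of the disjoint pieces of a partition of $A$, so that the Walsh spectra produced at different stages live in mutually independent subgroups.

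The key steps, in order, would be: (1) record the base case $k=2$ as above; (2) set up the recursion — partition $A$, fix the bijections $\tau_0,\tau_j$ exactly as in \S\ref{s4}, and define $\Phi_k(\mathbf{x})$ by a weighted sum $\sum_j i^{j-1}\lambda_j\, Q_{A_j}(\sigma\,\mathbf{x}^{(j)})$ whose $\mathbf{x}^{(j)}$ are generated from $\mathbf{x}$ by the same recursive rule \eqref{gg2}, but with the geometric ratio adjusted so that the $k$-linear Parseval identity \eqref{g34} telescopes; (3) verify the norm bound \eqref{g36} by summing the geometric series of $L^\infty$-norms of the summands (using \eqref{gg3} and the analogue of \eqref{ggg4}), which gives an absolute constant $K$ independent of $k$ after one checks the ratio stays below $1$ — here the point is that the "loss" at each stage is governed by the fixed constant $\delta=\sqrt{\sinh 1-1}<1$ from \eqref{e42e}, and the iteration multiplies $k-1$ such factors, so the bound is controlled by $1/(1-\delta)$ raised to a fixed power; (4) verify homogeneity \eqref{z27} by the same argument that gave \eqref{ggg10}–\eqref{homo}, namely that on real vectors $\mathbf{x}^{(j)} = \|\mathbf{x}\|_2\,(\sigma\mathbf{x})^{(j)}$ and that $Q_A$ together with the construction is odd, so scalars factor out; (5) verify the three equalities in \eqref{g34}: the convolution identity $\big(\Phi_k(\mathbf{x}_1)\convolution\cdots\convolution\Phi_k(\mathbf{x}_k)\big)(\mathbf{e}) = \sum_{\gamma}\prod_j \widehat{\Phi_k(\mathbf{x}_j)}(\gamma)$ is just the Fourier-side description of convolution on the compact group $\Omega_A$ evaluated at the identity (this is the same mechanism as in \eqref{dn}), and the integral expression is the definition of the $(k-1)$-fold convolution; the content is that this common value equals $\sum_\alpha \mathbf{x}_1(\alpha)\cdots\mathbf{x}_k(\alpha)$, which I would prove by the telescoping/alternating-series computation used in \S\ref{SS5} and again in the proof of Corollary \ref{full}, now iterated $k-1$ times and using that $W_{A_j}$ are pairwise disjoint and mutually independent so that only the "diagonal" Rademacher terms $r_\alpha$ survive the Parseval pairing at each level; (6) derive $(l^2\to L^2)$-continuity by induction, invoking Lemma \ref{L3} at each stage exactly as in the proof of Theorem \ref{T2}(v) and Corollary \ref{full}, and note injectivity from the fact that $\mathbf{x}_1\ne\mathbf{x}_2$ forces $\widehat{\Phi_k(\mathbf{x}_1)}\ne\widehat{\Phi_k(\mathbf{x}_2)}$ on $R_{A_1}$.

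I expect the main obstacle to be step (5): organizing the iterated telescoping so that the $k$-fold convolution collapses cleanly to the diagonal sum, while simultaneously keeping the geometric ratio in step (3) strictly below $1$ with a constant $K$ that does \emph{not} blow up with $k$. The delicate point is that each application of the recursion inserts a new layer of spectrum (the "$C_j$" tail beyond the Rademacher characters), and one must check both that the extra tail terms telescope away in the limit $N\to\infty$ (as in \eqref{ggg9}) and that their $l^2$-mass, controlled by $\delta_t$ from \eqref{gg4}, does not accumulate. A secondary subtlety is that \eqref{g34} is asserted for the \emph{full} Hilbert space $l^2(A)$, not just the ball, so I must use the homogeneous version of the construction throughout — which is precisely why Corollary \ref{full} at $t=1$, rather than the sharper-but-inhomogeneous $t\ne 1$ maps, is the right base case (cf. Remark \ref{constants}); the complex case then follows by splitting $\mathbf{x}=\mathbf{u}+i\mathbf{v}$ and using multilinearity, exactly as in \eqref{com2}.
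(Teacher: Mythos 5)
There is a genuine gap, and it sits exactly at the point you flagged as the main obstacle. Your step (2) proposes the direct formula $\sum_j i^{j-1}\lambda_j\,Q_{A_j}(\sigma\mathbf{x}^{(j)})$ with the same recursion \eqref{gg2} for the $\mathbf{x}^{(j)}$, adjusting only the "geometric ratio" $\lambda_j$. But the magnitudes $\lambda_j$ are forced: they must be $\|\mathbf{x}^{(j)}\|_2$ so that the Rademacher coefficients at level $j$ recover $\mathbf{x}^{(j)}$ and the $C_j$-tail feeds level $j+1$. What actually makes the $k$-linear identity \eqref{g34} telescope is the \emph{phase}. Since the $W_{A_j}$ are pairwise disjoint, the sum $\sum_{\gamma}\prod_{l=1}^{k}\widehat{\Phi_k(\mathbf{x}_l)}(\gamma)$ collapses to a sum over levels $j$, and the level-$j$ contribution carries the coefficient $c^{k(j-1)}$, where $c$ is the base phase; the telescoping $D_j+D_{j+1}$ cancellation requires $c^{k(j-1)}=(-1)^{j-1}$, i.e.\ $c^k=-1$. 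With $c=i$ this holds only for $k\equiv 2\pmod 4$, so your formula fails already at $k=3$: the level coefficients cycle through $1,-i,-1,i,\dots$ and no choice of positive weights $\lambda_j$ can restore the alternating real signs. The paper's proof consists essentially of this one observation: replace $i$ in \eqref{gg5} by $e^{\pi i/k}$ (see \eqref{g47}), after which everything else — the norm bound, homogeneity, $(l^2\to L^2)$-continuity, injectivity — goes through verbatim as in Corollary \ref{full}, exactly as you describe in steps (3)–(6).

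A secondary problem is the recursion-on-$k$ sketched in your first paragraph (feeding the pointwise product through $\Phi_2$ and each $\mathbf{x}_j$ through $\Phi_{k-1}$): this cannot prove the lemma as stated, because \eqref{g34} requires a \emph{single} map $\Phi_k$ applied to all $k$ arguments symmetrically, whereas your device assigns different maps to different slots. That asymmetric idea is the right mechanism for the amalgam maps $\Phi_{\mathbf{k}}$ of Theorem \ref{T4}, but not for Lemma \ref{L6} itself. Once you adopt the phase $e^{\pi i/k}$, the direct one-shot construction is correct and no recursion on $k$ is needed.
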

\begin{proof}[Sketch of proof]  For $k = 2$,  $\Phi_2$ is the map in Corollary \ref{full}.  To produce $\Phi_k$ for $k > 2$,  replace the imaginary $i$ in \eqref{gg5} with $e^{\frac{\pi i}{k}}$.  That is, let
\begin{equation} \label{g47}
\Phi_k(\textbf{x}) = \sum_{j=1}^{\infty}(e^{\frac{\pi i}{k}})^{(j-1)}\|\textbf{x}^{(j)}\|_2 \ Q_{A_j}(\sigma \textbf{x}^{(j)}), \ \ \   \textbf{x} \in  l_{\mathbb{R}}^2(A).
\end{equation}
 Note that the presence of $e^{\frac{\pi i}{k}}$  in \eqref{g47}  guarantees a representation of 
\begin{equation}
 \big(\Phi_k({\bf{x}}_1)\convolution \cdots \convolution \Phi_k({\bf{x}}_k)\big)(\boldsymbol{\omega}_0)\\\\ 
\end{equation}
by alternating series analogous to \eqref{gg8} with $t = 1$.  The proof now is similar to that of  Corollary \ref{full}. 
\end{proof}

\begin{remark} \label{Kn}
\emph{By an estimate nearly identical to that in \eqref{approx}, we obtain \eqref{g36} with 
\begin{equation} \label{approxn}
K \  = \  \frac{2\sqrt{2e}}{\sqrt{2} - \sqrt{e - e^{-1} - 2}} \ .
\end{equation}\\
If we "perturb" the definition of $\Phi_k$ by $t > c > 0$, as in  \S \ref{sf}  (and thereby lose the homogeneity in \eqref{z27}), then we  obtain
\begin{equation}
\begin{split}
K \  = \ 2K(t_{\text{min}}) &= \min\bigg \{ \frac{2t e^{1/2t^2}}{1-t \sqrt{\sinh(1/t^2) - 1/t^2}}: \  \  t > c \bigg \} \\\\
& <  \  \frac{2\sqrt{2e}}{\sqrt{2} - \sqrt{e - e^{-1} - 2}} \ . 
\end{split}
\end{equation} 
}
\end{remark}
\vskip0.3cm

Next, given a covering sequence $\mathcal{U} = (S_1,\ldots,S_n)$ of $[m]$, we derive an integral representation of $\eta_{\mathcal{U}}$ that extends the usual (bilinear) Parseval formula
\begin{equation}
\begin{split}
(f \convolution g)(\textbf{e}) &= \int_{\omega \in \Omega_A}f(\omega)g(\omega)\mathbb{P}_A(d\omega) \\\\\
& = \sum_{\gamma \in \Hat{\Omega}_A} \hat{f}(\gamma)\hat{g}(\gamma)\\\\\
&= \eta_{(\{1\},\{1\})}(\hat{f},\hat{g}),  \ \ \ \ \ (f,g) \in L^2(\Omega_A, \mathbb{P}_A) \times L^2(\Omega_A, \mathbb{P}_A).
\end{split}
\end{equation} \\\\
\noindent
To this end, for $j \in [m]$ let
\begin{equation*}
\kappa_{\mathcal{U}}(j) = \kappa(j) := \max\{i: j \in S_i\}
\end{equation*}
(the index of the "last" term in $\mathcal{U}$ that contains $j$), and 
\begin{equation*}
T_j = \{i: j \in S_i, \ i < \kappa(j) \}.
\end{equation*}\\\
Recall that $k_j = k_j(\mathcal{U})$ is the incidence of $j$  in $\mathcal{U}$ (defined in \eqref{g32}),   and note  that  $|T_j| = k_j -1.$  For $ j \in [m]$,  denote \\\ $$\bm{\omega}_j := \big (\omega_{\ell j}:  \ \ell \in T_j \big) \in \Omega_A^{T_j},$$\\\ and then for $i \in [n]$, let
\begin{equation*}
 \xi_{ij}(\bm{\omega}_1, \ldots, \bm{\omega}_m)  =   \left \{
\begin{array}{ccc}
\omega_{ij}& \quad  \text{if} \quad  i  \in T_j  \\\
\\\
 \prod_{\ell \in T_j} \omega_{\ell j} & \quad   \text{ if} \quad   i  = \kappa(j). 
\end{array} \right.
\end{equation*}\\\\
Given $$(f_1, \ldots, f_n) \in L^2\big (\Omega_A^{S_1},\mathbb{P}_A^{S_1}\big) \times  \cdots \times L^2 \big(\Omega_A^{S_n},\mathbb{P}_A^{S_n}\big),$$ define\\\
\begin{equation} \label{g33}
\begin{split}
&\convolution_{\mathcal{U}}(f_1, \ldots, f_n) :=\\\\
& \int_{ \bm{\omega}_1 \in \Omega_A^{T_1}} \bigg (  \cdots \bigg (\int_{ \bm{\omega}_m \in \Omega_A^{T_m}}  \big \{ \prod_{i=1}^n f_i \big (\xi_{ij}(\bm{\omega}_1, \dots, \bm{\omega}_m): j \in S_i \big )\big \}d\bm{\omega}_m \bigg) \cdots \bigg )d\bm{\omega}_1,
\end{split}
\end{equation}\\\
where  $$d \bm{\omega}_j :=  \prod_{i \in T_j} d \omega_{ij}$$ stands for $$ \mathbb{P}_A^{T_j}(d\bm{\omega}_j), \ \ \ j = 1, \ldots, m.$$\\\

For the covering sequence $\mathcal{U} = \big (\overbrace{ \{1\}, \ldots, \{1\} }^n\big )$ of $[1]$,  the right side of \eqref{g33} is the usual $n$-fold convolution of $f_1 \in L^2(\Omega_A, \mathbb{P}_A), \ldots, f_n \in L^2(\Omega_A, \mathbb{P}_A), $ evaluated at $\textbf{e}$  (as in \eqref{g34} with $n = k$).  In this instance, 
\begin{equation*}
\convolution_{\mathcal{U}}(f_1,\ldots,f_n) = \sum_{\gamma \in \widehat{\Omega}_A} \hat{f}_1(\gamma) \cdots \hat{f}_n(\gamma).
\end{equation*}
In the general case, the iterated integrals in \eqref{g33} form a succession of $k_j$-fold convolutions ($j \in [m]$).  The lemma below -- a "fractional-multilinear" extension of  Parseval's  formula -- can be proved by induction on $m \geq 1$.  (Proof is omitted.)  

\begin{lemma}\label{L7} Let  $m$ be a positive integer, and let  \  $\mathcal{U} = (S_1, \ldots, S_n)$ be a covering sequence of $[m]$ with $I_{\mathcal{U}} \geq 2$.  Then, for $f_i \in L^2(\Omega_A^{S_i},\mathbb{P}_A^{S_i}), \ \ i \in [n]$,\\
\begin{equation*}
\begin{split}
\convolution_{\mathcal{U}}(f_1, \ldots, f_n) &= \sum_{\bm{\chi} \in \widehat{\Omega}_A^m}\hat{f}_1\big(\pi_{S_1}(\bm{\chi}) \big) \cdots \hat{f}_n\big(\pi_{S_n}(\bm{\chi})\big)\\\\
&= \eta_{\mathcal{U}}(\hat{f_1}, \ldots, \hat{f_n}),\\\\
\end{split}
\end{equation*}
where $$\pi_{S_i}: (\widehat{\Omega}_A)^m \rightarrow (\widehat{\Omega}_A)^{S_i}$$ are the projections in \eqref{g35} defined by \eqref{g37} with $X_j = \widehat{\Omega}_A, \ \ j \in [m]$.\\\
\end{lemma}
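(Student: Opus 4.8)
The statement asserts that the iterated convolution $\convolution_{\mathcal{U}}(f_1,\dots,f_n)$ equals $\eta_{\mathcal{U}}(\hat f_1,\dots,\hat f_n)$, i.e. the $\mathcal{U}$-indexed sum $\sum_{\bm\chi\in\widehat\Omega_A^m}\hat f_1(\pi_{S_1}(\bm\chi))\cdots\hat f_n(\pi_{S_n}(\bm\chi))$. The plan is to prove this by induction on $m$. For $m=1$ the only possibility (given $I_{\mathcal{U}}\ge 2$) is $\mathcal{U}=(\{1\},\dots,\{1\})$ with $n$ terms, and \eqref{g33} collapses to the ordinary $n$-fold convolution of $f_1,\dots,f_n$ over $\Omega_A$ evaluated at $\mathbf{e}$; by iterating the bilinear Parseval formula $(f\convolution g)(\mathbf e)=\sum_{\gamma}\hat f(\gamma)\hat g(\gamma)$ (and using that the convolution of $L^2$-functions on a compact group lands in $L^1$ so everything converges absolutely), this equals $\sum_{\gamma\in\widehat\Omega_A}\hat f_1(\gamma)\cdots\hat f_n(\gamma)=\eta_{\mathcal{U}}(\hat f_1,\dots,\hat f_n)$. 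This is exactly the computation already carried out in \eqref{g34} of Lemma \ref{L6}, so the base case is essentially recorded.

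\textbf{Inductive step.} Assume the formula holds for all covering sequences of $[m-1]$ with incidence $\ge 2$. Given $\mathcal{U}=(S_1,\dots,S_n)$ covering $[m]$ with $I_{\mathcal{U}}\ge2$, single out the coordinate $j=m$. Let $\kappa=\kappa(m)=\max\{i:m\in S_i\}$ and $T_m=\{i:m\in S_i,\ i<\kappa\}$, so $|T_m|=k_m-1\ge1$. In \eqref{g33} the innermost integral is over $\bm\omega_m=(\omega_{\ell m}:\ell\in T_m)\in\Omega_A^{T_m}$, and the integrand's dependence on $\bm\omega_m$ enters only through the factors $f_i$ with $i\in T_m$ (via $\xi_{im}=\omega_{im}$) and the factor $f_\kappa$ (via $\xi_{\kappa m}=\prod_{\ell\in T_m}\omega_{\ell m}$). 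First I would freeze all variables $\bm\omega_j$ with $j<m$ and view each such $f_i$ ($i\in T_m\cup\{\kappa\}$) as a function of its $m$-th argument alone, with the remaining arguments fixed; concretely, set $S_i'=S_i\setminus\{m\}$ and define, for fixed values of the $S_i'$-arguments, the one-variable slices $g_i(\,\cdot\,)=f_i(\cdots,\,\cdot\,)$ on $\Omega_A$. The inner integral $\int_{\bm\omega_m}\prod_{i\in T_m\cup\{\kappa\}}g_i(\xi_{im})\,d\bm\omega_m$ is then precisely a $k_m$-fold convolution evaluated at $\mathbf e$, which by the base-case computation equals $\sum_{\chi_m\in\widehat\Omega_A}\prod_{i\in T_m\cup\{\kappa\}}\hat g_i(\chi_m)$. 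Substituting the Walsh-transform expansion of each $f_i$ (valid since $f_i\in L^2$, so the series converge in $L^2$ and, after the convolutions, absolutely), the net effect is: the sum over $\bm\chi\in\widehat\Omega_A^m$ restricted by requiring all $S_i$-projections with $m\in S_i$ to agree in the $m$-th slot becomes a single free sum over $\chi_m\in\widehat\Omega_A$, while the remaining structure is exactly $\convolution_{\mathcal{U}'}$ for the induced covering sequence $\mathcal{U}'=(S_1',\dots,S_n')$ of $[m-1]$. One checks $\mathcal{U}'$ still covers $[m-1]$ and still has $I_{\mathcal{U}'}\ge2$ (each $j\le m-1$ keeps all its incidences since only the label $m$ was removed). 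Applying the induction hypothesis to $\convolution_{\mathcal{U}'}$ and reassembling the sum over $\chi_m$ yields $\sum_{\bm\chi\in\widehat\Omega_A^m}\hat f_1(\pi_{S_1}(\bm\chi))\cdots\hat f_n(\pi_{S_n}(\bm\chi))=\eta_{\mathcal{U}}(\hat f_1,\dots,\hat f_n)$, completing the induction.

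\textbf{Main obstacle.} The routine-looking but genuinely delicate point is the bookkeeping in the inductive step: one must verify that peeling off coordinate $m$ correctly converts the constraint "all $\pi_{S_i}(\bm\chi)$ that involve coordinate $m$ share the same value there" into one free summation variable $\chi_m$ multiplied into a $\mathcal{U}'$-structure, and that this is compatible with Fubini — i.e. that the iterated integrals in \eqref{g33} may be rearranged and the Walsh series interchanged with the integrals. This is where $I_{\mathcal{U}}\ge2$ is used: it guarantees, via Lemma \ref{L5} applied to $|\hat f_i|$ regarded as an element of $\ell^2$, that $\sum_{\bm\chi}|\hat f_1(\pi_{S_1}(\bm\chi))\cdots\hat f_n(\pi_{S_n}(\bm\chi))|\le\|\hat f_1\|_2\cdots\|\hat f_n\|_2<\infty$, so the full multiple series converges absolutely and all interchanges of summation and integration are legitimate. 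Once absolute convergence is in hand, the combinatorial identification of indices is purely formal, and I would present it by induction as above rather than through an explicit global reindexing. The paper omits the proof, so a clean writeup need only record the base case (already in \eqref{g34}), the absolute-convergence remark (from Lemma \ref{L5}), and the one-coordinate peeling step.
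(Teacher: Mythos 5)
Your proof is correct and follows exactly the route the paper indicates (the proof is omitted there with the remark that it goes by induction on $m$): the base case is the iterated one-variable Parseval formula already recorded in \eqref{g34}, and the inductive step peels off the coordinate $m$ via the sets $S_i' = S_i\setminus\{m\}$ and $T_m$, precisely mirroring the inductions in Lemma \ref{L5} and in part (2) of Theorem \ref{T4}. Your use of Lemma \ref{L5} applied to $|\hat f_i|\in\ell^2$ (via Plancherel) to secure absolute convergence and justify the interchanges is also the mechanism the paper itself invokes elsewhere (e.g.\ in the proof of the left-side inequality of \eqref{g46}), so nothing is missing.
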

\vskip0.4cm
The result below is an extension of Lemma \ref{L6}.  Its proof uses induction in a framework of fractional Cartesian products.  In Remark \ref{R7}.i, we illustrate the proof by running the arguments in the archetypal case of a $3/2$-product.

\begin{theorem} \label{T4}
Let $A$ be an infinite set.  For every integer ${\ell} > 0$, and  ${\ell}$-tuple
\begin{equation*}
\textbf{\emph{k}} = (k_1, \ldots, k_{\ell}) \in \mathbb{N}^{\ell}
\end{equation*}
with $k_i \geq 2$,  $i = 1, \ldots, {\ell}$,
there exists a one-one map 
 \begin{equation} \label{z4}
\Phi_{\textbf{\emph{k}}} : l^2(A^{\ell}) \rightarrow L^{\infty}(\Omega_A^{\ell}, \mathbb{P}_A^{\ell}),
  \end{equation}
 with the following properties:\\\\
 (1) \ \  $\Phi_{\textbf{\emph{k}}}$  is  $\big ( l^2(A^{\ell}) \rightarrow L^2(\Omega_A^{\ell}, \mathbb{P}_A^{\ell}) \big )$-continuous, and \\
  \begin{equation} \label{g38}
  \|\Phi_{\textbf{\emph{k}}}(\textbf{\emph{x}})\|_{L^{\infty}} \leq K^{\ell} \|\textbf{\emph{x}}\|_2, \ \ \ \textbf{\emph{x}} \in l^2(A^{\ell}),
  \end{equation}\\
  where $K > 1$ is the absolute constant in \eqref{g36}.\\\\
 (2) \ \  Let  $m$ be a positive integer, and let  $\mathcal{U} = (S_1, \ldots, S_n)$ be a covering sequence of $[m]$ with $I_{\mathcal{U}} \geq 2$.  Let
 \begin{equation} \label{z1}
\textbf{\emph{k}}_i = (k_j(\mathcal{U}): j \in S_i), \ \ i \in [n],
\end{equation}\\\ 
where $k_j(\mathcal{U})$ is the incidence of $j$ in $\mathcal{U}$, defined in \eqref{g32}.  Then, for $\textbf{\emph{x}}_i \in l^2(A^{S_i}), \ i \in [n]$,\\\  
  \begin{equation} \label{g41}
  \begin{split}
\eta_{\mathcal{U}}(\textbf{\emph{x}}_1, \ldots, \textbf{\emph{x}}_n) & =  \convolution_{\mathcal{U}}\big(\Phi_{\textbf{\emph{k}}_1}(\textbf{\emph{x}}_1),  \ldots,   \Phi_{\textbf{\emph{k}}_n}(\textbf{\emph{x}}_n)\big),\\\\
 & = \sum_{\bm{\chi} \in \widehat{\Omega}_A^m}\widehat{\Phi_{\textbf{\emph{k}}_1}(\textbf{\emph{x}}_1)}\big(\pi_{S_1}(\bm{\chi}) \big) \cdots \widehat{\Phi_{\textbf{\emph{k}}_n}(\textbf{\emph{x}}_n)}\big(\pi_{S_n}(\bm{\chi})\big). \\\\
\end{split}
 \end{equation}
\end{theorem}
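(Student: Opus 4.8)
The strategy is an induction on $m$, the size of the index set covered by $\mathcal{U}$. The base case $m=1$ is precisely Lemma \ref{L6}: for $\mathcal{U} = (\{1\},\ldots,\{1\})$ with $n$ terms (so $I_{\mathcal{U}} = n \geq 2$), the single incidence is $k_1(\mathcal{U}) = n$, and one takes $\Phi_{(n)} := \Phi_n$ from Lemma \ref{L6}. The properties (1) and (2) are then exactly \eqref{g36}, \eqref{z27}, and \eqref{g34}, with $\convolution_{\mathcal{U}}$ reducing to the $n$-fold convolution over $\Omega_A$ evaluated at $\textbf{e}$, as noted after \eqref{g33}. For the construction of $\Phi_{\textbf{k}}$ itself on $l^2(A^{\ell})$ for a general $\ell$-tuple $\textbf{k} = (k_1,\ldots,k_{\ell})$, I would iterate the one-variable maps $\Phi_{k_i}$ of Lemma \ref{L6} coordinate by coordinate: roughly, expand $\textbf{x} \in l^2(A^{\ell})$ as a sum over the last coordinate of elements of $l^2(A^{\ell-1})$, apply the inductively-built $\Phi_{(k_1,\ldots,k_{\ell-1})}$ in those coordinates and $\Phi_{k_{\ell}}$ in the last, and then take the appropriate tensor/composition. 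This is the $\Phi_{(2,2)}$-type construction foreshadowed in the introduction (see the remarks around \eqref{hsp}–\eqref{hst} and Remark \ref{R7}.i), and the bound $\|\Phi_{\textbf{k}}(\textbf{x})\|_{L^{\infty}} \leq K^{\ell}\|\textbf{x}\|_2$ follows by multiplying the $\ell$ one-dimensional estimates, each contributing a factor $K$; $(l^2 \to L^2)$-continuity is inherited from the $(l^2 \to L^2)$-continuity of each $\Phi_{k_i}$ composed with the (weakly, hence norm-) continuous coordinate decompositions, and injectivity from injectivity of the building blocks together with the disjointness of the spectral blocks $W_{A_j}$.

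The inductive step for part (2) is where the real work lies. Given a standard covering sequence $\mathcal{U} = (S_1,\ldots,S_n)$ of $[m]$ with $I_{\mathcal{U}} \geq 2$, I would single out the element $m \in [m]$ and its "last" occurrence $\kappa(m)$, and split each $S_i$ into $S_i' = S_i \setminus \{m\}$. Then $\mathcal{U}' = (S_1',\ldots,S_n')$ (after discarding empty entries / merging as needed, i.e. passing to standard form) covers $[m-1]$ with $I_{\mathcal{U}'} \geq 2$, and the induction hypothesis supplies maps $\Phi_{\textbf{k}_i'}$ for the reduced incidences. The key identity is that $\eta_{\mathcal{U}}$ can be written as an integral over the $\Omega_A^{T_m}$-variables of the reduced functional $\eta_{\mathcal{U}'}$ applied to "slices," exactly mirroring the definition of $\convolution_{\mathcal{U}}$ in \eqref{g33}: one integrates over $\bm{\omega}_m \in \Omega_A^{T_m}$ last, having folded the $m$-th coordinate into a $k_m$-fold convolution via the variables $\xi_{ij}$. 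Concretely, for each $i$ with $m \in S_i$ one replaces the map $\Phi_{\textbf{k}_i'}$ acting on the $S_i'$-coordinates by $\Phi_{\textbf{k}_i}$ acting on all of $S_i$, the extra coordinate being handled by $\Phi_{k_m(\mathcal{U})}$ of Lemma \ref{L6}; the homogeneity property \eqref{z27} (needed precisely here — cf. Remark \ref{constants}) is what allows the scalar normalizations $\|\cdot\|_2$ to be absorbed cleanly so that the iterated integral collapses to $\eta_{\mathcal{U}}$ on the noses. The bookkeeping that the Walsh-spectral supports of the various $\Phi_{k_j}$-factors in distinct blocks $W_{A_j}$ remain mutually independent (so Parseval/Plancherel over $\Omega_A^m$ applies termwise, as in \S\ref{SS5} and Lemma \ref{L7}) is what makes \eqref{g41} come out, via Lemma \ref{L7} with $f_i = \widehat{\Phi_{\textbf{k}_i}(\textbf{x}_i)}^{\;\vee}$ — i.e. one applies $\convolution_{\mathcal{U}}$ to the functions $\Phi_{\textbf{k}_i}(\textbf{x}_i)$ and reads off the spectral form from Lemma \ref{L7}.

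\textbf{Main obstacle.} The delicate point is not any single estimate but the combinatorial matching: showing that the recursively constructed $\Phi_{\textbf{k}}$ (built by a fixed coordinate-iteration scheme independent of $\mathcal{U}$) actually "plugs in" correctly to $\convolution_{\mathcal{U}}$ for \emph{every} covering sequence $\mathcal{U}$ whose incidence vectors are the prescribed $\textbf{k}_i$. In the induction step one peels off the coordinate $m$, but the map $\Phi_{\textbf{k}_i}$ is defined with a prescribed order of coordinates, whereas $m$ need not be "last" in $S_i$; so one must check that the construction is symmetric enough (or reorder coordinates harmlessly) that the reduction to $\mathcal{U}'$ is legitimate. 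Relatedly, when passing $\mathcal{U} \to \mathcal{U}'$ one may need to merge coincident sets or drop singletons, i.e. invoke the "standard form" reduction of Remark \ref{R6}.ii, and one must verify the incidences transform as claimed in \eqref{z1} ($k_j(\mathcal{U}) = k_j(\mathcal{U}')$ for $j \neq m$). I would handle this by first reducing $\mathcal{U}$ to standard form at the outset, then proving the identity \eqref{g41} by the induction with $m$ chosen as any fixed element of maximal convenience, using Lemma \ref{L7} to verify the spectral identity rather than computing the iterated integral by hand. The $3/2$-product case $\mathcal{U} = (\{1,2\},\{2,3\},\{1,3\})$ serves as the guiding example: there $\textbf{k}_1 = \textbf{k}_2 = \textbf{k}_3 = (2,2)$, $\Phi_{(2,2)}$ is the two-fold iterate of $\Phi_2$, and the three nested $2$-fold convolutions in $\convolution_{\mathcal{U}}$ reproduce $\eta$ of \eqref{tri} after three applications of the $k=2$ Parseval formula — exactly the pattern the general induction abstracts.
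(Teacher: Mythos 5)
Your proposal is correct and follows essentially the same route as the paper: $\Phi_{\mathbf{k}}$ is built by induction on $\ell$ as an iterated composition of the one-variable maps $\Phi_{k_i}$ of Lemma \ref{L6} applied to coordinate slices (yielding the bound $K^{\ell}$ and $(l^2\to L^2)$-continuity by dominated convergence), and \eqref{g41} is proved by induction on $m$, peeling off the coordinate $m$, applying the induction hypothesis to $\mathcal{U}'=(S_1\setminus\{m\},\ldots,S_n\setminus\{m\})$, and using Lemma \ref{L6} with $k=k_m(\mathcal{U})=|T|$ together with Lemma \ref{L7} to reassemble the spectral sum. The coordinate-ordering concern you flag is real but is resolved exactly as you suggest, by the fixed ordering convention in the recursive definition of $\Phi_{\mathbf{k}_i}$.
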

\begin{proof}
 The map  
\begin{equation*}
\Phi_{\textbf{k}}: l^2(A^{\ell}) \rightarrow L^{\infty}(\Omega_A^{\ell}, \mathbb{P}_A^{\ell})
  \end{equation*}\\\
is constructed by successive applications of $\Phi_{k_i}$.  Given $\textbf{k} = (k_1, \ldots, k_{\ell}) \in \mathbb{N}^{\ell}$ with $k_i \geq 2$ \ ($i = 1, \ldots, \ell$), we formally define
\begin{equation} \label{b19a}
\begin{split}
 \Phi_{\textbf{k}}(\textbf{x})  &:= \big(\Phi_{k_{\ell}} \circ \cdots \circ \Phi_{k_1}\big)(\textbf{x}) \\\
 & := \Phi_{k_{\ell}}\big( \cdots \big(\Phi_{k_1}(\textbf{x})\big) \cdots \big), \ \ \ \ \textbf{x} \in l^2(A^{\ell}), 
 \end{split}
  \end{equation}\\\
where  the $\Phi_{k_i}$  of Lemma \ref{L6} are iteratively applied  to "slices" of vectors in $l^2(A^{\ell - i +1})$,  whose coordinates are elements in  $L^{\infty}(\Omega_A^{i-1},\mathbb{P}_A^{i-1})$. 

The construction of $\Phi_{\textbf{k}}$ is by induction on $\ell$.   For $\ell = 1$ and $k \geq 2$,  we take the $\Phi_{k}$ provided by Lemma \ref{L6}.  For $\ell > 1$, we assume the $(\ell -1)$-step:  that for every $(\ell - 1)$-tuple 
\begin{equation*}
\textbf{k} = (k_1, \ldots, k_{\ell-1}) \in \mathbb{N}^{\ell-1}
\end{equation*}
with $k_i \geq 2$ ($i = 1, \ldots, \ell -1$),
we have  
 \begin{equation} \label{za4}
\Phi_{\textbf{k}} : l^2(A^{\ell-1}) \rightarrow L^{\infty}(\Omega_A^{\ell-1}, \mathbb{P}_A^{\ell-1}),
  \end{equation}
 such that  $\Phi_{\textbf{k}}$  is  $\big ( l^2(A^{\ell-1}) \rightarrow L^2(\Omega_A^{\ell -1}, \mathbb{P}_A^{\ell -1}) \big )$-continuous, and \\
  \begin{equation} \label{ga38}
  \|\Phi_{\textbf{k}}(\textbf{x})\|_{L^{\infty}} \leq K^{\ell-1} \|\textbf{x}\|_2, \ \ \ \textbf{x} \in l^2(A^{\ell-1}),
  \end{equation}\\
  where $K > 1$ is the absolute constant in \eqref{g36}.  Now let 
  \begin{equation*}
\textbf{k} = (k_1, \ldots, k_{\ell}) \in \mathbb{N}^{\ell},
\end{equation*}
with $k_i \geq 2$  ($i = 1, \ldots, \ell$), and 
\begin{equation} \label{z7}
\textbf{k}^{\prime} = (k_1, \ldots, k_{\ell -1}).
\end{equation}
For $\textbf{x} \in l^2(A^{\ell})$ and fixed $\alpha \in A$, define $\textbf{x}^{({\alpha})} \in l^2(A^{\ell-1})$ by
\begin{equation}\label{za6}
\textbf{x}^{({\alpha})}(\alpha_1, \ldots, \alpha_{\ell -1}) = \textbf{x}(\alpha, \alpha_1, \ldots, \alpha_{\ell -1}), \ \ \ \ (\alpha_1,\ldots, \alpha_{\ell-1}) \in l^2(A^{\ell-1}).
\end{equation}
 (The definition in \eqref{za6}  is temporary:  $\textbf{x}^{({\alpha})}$ is not the same as $\textbf{x}^{(j)}$ defined in \eqref{g2} or \eqref{gg2}.)  We apply the map $\Phi_{\textbf{k}^{\prime}}$ (provided by the $(\ell -1)$-step) to $\textbf{x}^{({\alpha})}$,  and thus obtain
\begin{equation}
  \|\Phi_{\textbf{k}^{\prime}}\big(\textbf{x}^{({\alpha})}\big)\|_{L^{\infty}(\Omega_A^{\ell-1}, \mathbb{P}_A^{\ell-1})} \ \leq \ K^{\ell-1} \|\textbf{x}^{({\alpha})}\|_2.
  \end{equation}\\
  Then for almost all $(\omega_1, \ldots, \omega_{\ell-1}) \in (\Omega_A^{\ell-1}, \mathbb{P}_A^{\ell-1})$, we have\\\
  \begin{equation}\label{za5}
  \begin{split}
  \sum_{\alpha \in A} \big |\Phi_{\textbf{k}^{\prime}}\big(\textbf{x}^{({\alpha})}\big)(\omega_1, \ldots, \omega_{\ell-1}) \big|^2  \ &\leq \ K^{2(\ell-1)}  \sum_{\alpha \in A} \|\textbf{x}^{({\alpha})}\|^2_{l^2(A^{\ell-1})}\\\\
  & \leq  \ K^{2(\ell-1)} \|\textbf{x}\|^2_{l^2(A^{\ell})}.\\\\
  \end{split}
  \end{equation}
  We then apply  $\Phi_{k_{\ell}}$ to $\Phi_{\textbf{k}^{\prime}}\big(\textbf{x}^{(\cdot)}\big)(\omega_1, \ldots, \omega_{\ell-1})$ (Lemma \ref{L6}), and thus obtain\\\
  \begin{equation} \label{g39}
 \Phi_{\textbf{k}}(\textbf{x}) := \Phi_{k_{\ell}} \big(\Phi_{\textbf{k}^{\prime}}\big(\textbf{x}^{(\cdot)}\big) \ \in \ L^{\infty}(\Omega_A^{\ell}, \mathbb{P}_A^{\ell}),
 \end{equation}
 with the estimate
 \begin{equation}\label{ga39}
 \|\Phi_{\textbf{k}}(\textbf{x}) \|_{L^{\infty}} \leq K^{\ell} \|\textbf{x}\|_2.
 \end{equation}

 The  $\big(l^2(A^{\ell}) \rightarrow L^2(\Omega_A^{\ell}, \mathbb{P}_A^{\ell})\big)$-continuity of $\Phi_{\textbf{k}}$ follows also by induction on $\ell \geq 1.$  The case $\ell = 1$ is Lemma \ref{L6}.  Let $\ell > 1$, and assume continuity in the case $\ell - 1.$  To prove the inductive step, we assume that a sequence $(\textbf{x}_j)$ in $l^2(A^{\ell})$ converges to $\textbf{x}$ in the $l^2(A^{\ell})$-norm, and   proceed to verify that there is a subsequence $(\textbf{x}_{j_i})$ such that $\Phi_{\textbf{k}}(\textbf{x}_{j_i})$  converges to $\Phi_{\textbf{k}}(\textbf{x})$ in the $L^2(\Omega_A^{\ell},\mathbb{P}_A^{\ell})$-norm. 
 
 For fixed $\alpha \in A$, by the assumption and by the definition in \eqref{za6}, we have $\textbf{x}_j^{(\alpha)}$ converging to $\textbf{x}^{(\alpha)}$ in $l^2(A^{\ell - 1})$.  Therefore, by the induction hypothesis,
 \begin{equation}
 \Phi_{\textbf{k}^{\prime}}\big(\textbf{x}_j^{({\alpha})}\big) \  \underset{j \to \infty}{\longrightarrow} \ \Phi_{\textbf{k}^{\prime}}\big(\textbf{x}^{({\alpha})}\big)  \ \ \ \text{in} \ L^2(\Omega_A^{\ell -1}, \mathbb{P}_A^{\ell -1}),
 \end{equation}
 with $\textbf{k}^{\prime}$ defined in \eqref{z7}.  By dominated convergence, 
 \begin{equation}
 \sum_{\alpha \in A} \int_{\bm{\omega} \in \Omega_A^{\ell -1}}\big | \Phi_{\textbf{k}^{\prime}}\big(\textbf{x}_j^{({\alpha})}\big)(\bm{\omega}) - \Phi_{\textbf{k}^{\prime}}\big(\textbf{x}^{({\alpha})}\big)(\bm{\omega})\big|^2 \  \mathbb{P}_A^{\ell -1}(d \bm{\omega}) \underset{j \to \infty}{\longrightarrow} 0,
 \end{equation}
and therefore, by interchanging sum and integral, 
\begin{equation}
  \int_{\bm{\omega} \in \Omega_A^{\ell -1}} \bigg (\sum_{\alpha \in A}\big | \Phi_{\textbf{k}^{\prime}}\big(\textbf{x}_j^{({\alpha})}\big)(\bm{\omega}) - \Phi_{\textbf{k}^{\prime}}\big(\textbf{x}^{({\alpha})}\big)(\bm{\omega})\big|^2 \bigg) \mathbb{P}_A^{\ell -1}(d \bm{\omega}) \underset{j \to \infty}{\longrightarrow} 0.
 \end{equation}
 Therefore, there exists a subsequence $(j_i: i \in \mathbb{N})$ with the property that for almost every $\bm{\omega} \in (\Omega_A^{\ell-1}, \mathbb{P}_A^{\ell -1}),$
\begin{equation}
\Phi_{\textbf{k}^{\prime}}\big(\textbf{x}_{j_i}^{(\cdot)}\big)(\bm{\omega}) \ \underset{i \to \infty}{\longrightarrow} \ \Phi_{\textbf{k}^{\prime}}\big(\textbf{x}^{(\cdot)}\big)(\bm{\omega}) \ \ \ \text{in} \ \ l^2(A).
\end{equation}
Therefore, by $(l^2 \rightarrow L^2)$-continuity in the case $\ell = 1$ (Lemma \ref{L6}), for almost every $\bm{\omega} \in (\Omega_A^{\ell-1}, \mathbb{P}_A^{\ell -1}),$
\begin{equation}
\int_{\omega \in \Omega_A} \big| \Phi_{k_{\ell}} \big(\Phi_{\textbf{k}^{\prime}}\big(\textbf{x}_{j_i}^{(\cdot)}\big)(\bm{\omega})\big) -  \Phi_{k_{\ell}} \big(\Phi_{\textbf{k}^{\prime}}\big(\textbf{x}^{(\cdot)}\big)(\bm{\omega})\big) \big|^2 \  \mathbb{P}_A(d\omega) \ \underset{i \to \infty}{\longrightarrow} \ 0.
\end{equation}
Therefore, by dominated convergence and the definition of $\Phi_{\textbf{k}}$ (cf. \eqref{g39}), we conclude\\
 \begin{equation} \label{g40}
 \|\Phi_{\textbf{k}}(\textbf{x}_{j_i}) - \Phi_{\textbf{k}}(\textbf{x})\|_{L^2}^2 \ \underset{i \to \infty}{\longrightarrow} \ 0.
 \end{equation}\\

Next, we prove  \eqref{g41} by induction on $m$ (cf. proof of Lemma \ref{L5}).  The case $m=1$ is Lemma \ref{L6}.   Suppose $m > 1$, and that $\mathcal{U} = (S_1, \ldots, S_n)$ is a covering sequence of $[m]$ with $I_{\mathcal{U}} \geq 2$.  For $i \in ]n]$, let
  \begin{equation*}
  S_i^{\prime} = S_i \setminus \{m\}, 
  \end{equation*}
  and
  \begin{equation*}
  \textbf{k}_i^{\prime} = (k_j(\mathcal{U}^{\prime}): j \in S_i^{\prime}).
  \end{equation*}
  Then,  $\mathcal{U}^{\prime} = (S_1^{\prime}, \ldots, S_n^{\prime})$ is a covering sequence of $[m-1]$ with  $I_{\mathcal{U}^{\prime}} \geq 2$.  Let $$T := \{i:  m \in S_i\}$$   (as per \eqref{ba12}), and note that if $i \notin T$, then  $S_i^{\prime} = S_i$ and $ \textbf{k}_i^{\prime} = \textbf{k}_i$.   Let $$\emph{\bf{x}}_1 \in l^2(A^{S_1}), \ldots, \emph{\bf{x}}_n \in l^2(A^{S_n}).$$ For $i \in T$ and $u \in A$, define $\emph{\bf{x}}_i^{(u)} \in l^2(A^{S_i^{\prime}})$ by 
\begin{equation*}
\emph{\bf{x}}_i^{(u)}(\bm{\alpha}) = \emph{\bf{x}}_i(\bm{\alpha},u),  \ \ \ \bm{\alpha} \in A^{S_i^{\prime}}.
\end{equation*}  
 (Cf. \eqref{za6}.)  Then, by the induction hypothesis and Lemma \ref{L7}, 
  \begin{equation} \label{g43}
  \begin{split}
 & \eta_{\mathcal{U}}(\textbf{x}_1, \ldots, \textbf{x}_n) =  \\\\
   &   \ \ \ =  \sum_{u \in A} \  \sum_{\bm{\alpha} \in A^{[m-1]}}  \ \prod_{i \in [n] \setminus T}\emph{\bf{x}}_i\big(\pi_{S_i^{\prime}}(\bm{\alpha})\big) \  \prod_{i \in T}\emph{\bf{x}}_i^{(u)}\big(\pi_{S_i^{\prime}}(\bm{\alpha}) \big)\\\\
   &   \ \ \ =  \sum_{u \in A} \  \sum_{\bm{\chi} \in \widehat{\Omega}_A^{m-1}} \prod_{i \in [n] \setminus T}\big(\Phi_{\textbf{k}_i}(\textbf{x}_i)\big)^{\wedge}\big(\pi_{S_i}(\bm{\chi}) \big) \ \prod_{i \in T} \big(\Phi_{\textbf{k}_i^{\prime}}(\textbf{x}_i^{(u)})\big)^{\wedge}\big(\pi_{S_i^{\prime}}(\bm{\chi})\big)\\\\
&   \ \ \ =  \sum_{\bm{\chi} \in \widehat{\Omega}_A^{m-1}} \prod_{i \in [n] \setminus T}\big(\Phi_{\textbf{k}_i}(\textbf{x}_i)\big)^{\wedge}\big(\pi_{S_i}(\bm{\chi}) \big) \  \sum_{u \in A} \  \prod_{i \in T}\big(\Phi_{\textbf{k}_i^{\prime}}(\textbf{x}_i^{(u)})\big)^{\wedge}\big(\pi_{S_i^{\prime}}(\bm{\chi})\big).
\end{split}
\end{equation}\\\
For $\bm{\chi} \in \widehat{\Omega}_A^{m-1}$ and $i \in T$, define (for typographical convenience) $\textbf{v}_{i,\bm{\chi}} \in l^2(A)$  by
\begin{equation*}
\textbf{v}_{i,\bm{\chi}}(u) = \bigg(\Phi_{\textbf{k}_i^{\prime}}(\textbf{x}_i^{(u)})\bigg)^{\wedge}\big(\pi_{S_i^{\prime}}(\bm{\chi})\big).
\end{equation*}\\\
From Lemma \ref{L6} and the recursive definition of $\Phi_{\textbf{k}_i}$, we obtain\\\
\begin{equation} \label{g44}
\begin{split}
\sum_{u \in A} \  \prod_{i \in T} \bigg(\Phi_{\textbf{k}_i^{\prime}}(\textbf{x}_i^{(u)})\bigg)^{\wedge}\big(\pi_{S_i^{\prime}}(\bm{\chi})\big)  & = \sum_{u \in A} \  \prod_{i \in T}  \textbf{v}_{i,\bm{\chi}}(u) \\\\
& = \sum_{\gamma \in \widehat{\Omega}_A} \prod_{i \in T} \bigg(\Phi_{k_m}(\textbf{v}_{i,\bm{\chi}}) \bigg)^{\wedge}(\gamma) \\
(\text{by Lemma \ref{L6}, with \ }  k = k_m := k_m(\mathcal{U}) = |T|)\\\\
& =  \sum_{\gamma \in \widehat{\Omega}_A} \prod_{i \in T} \big(\Phi_{\textbf{k}_i}(\textbf{x}_i)\big)^{\wedge}(\pi_{S_i}((\bm{\chi},\gamma)))\\
(\text{by the recursive definition of \ } \Phi_{\textbf{k}_i}).
\end{split}
\end{equation}\\\
 Substituting  \eqref{g44} in \eqref{g43}, we obtain\\\\
\begin{equation*}
\begin{split}
 \eta_{\mathcal{U}}(\textbf{x}_1, \ldots, \textbf{x}_n) & =  \sum_{\bm{\chi} \in \widehat{\Omega}_A^{m-1}} \prod_{i \in [n] \setminus T}\big(\Phi_{\textbf{k}_i}(\textbf{x}_i)\big)^{\wedge}\big(\pi_{S_i}(\bm{\chi}) \big) \ \  \sum_{\gamma \in \widehat{\Omega}_A} \prod_{i \in T} \big(\Phi_{\textbf{k}_i}(\textbf{x}_i)\big)^{\wedge}(\pi_{S_i}\big((\bm{\chi},\gamma)\\\\
 & = \sum_{\bm{\chi} \in \widehat{\Omega}_A^m} \ \prod_{i=1}^n\ \big(\Phi_{\textbf{k}_i}(\textbf{x}_i)\big)^{\wedge}\big(\pi_{S_i}(\bm{\chi}) \big),
 \end{split}
 \end{equation*}
 and thus the integral representation in \eqref{g41}.
\end{proof}
\vskip0.5cm

\begin{corollary} \label{C5}
For every integer  $m \geq 1$, and every covering sequence \ $\mathcal{U} = (S_1, \ldots, S_n )$ of $[m]$ with $I_{\mathcal{U}} \geq 2$,  the multilinear functional $\eta_{\mathcal{U}}$ (defined in \eqref{g45}) is projectively continuous, and
\begin{equation} \label{g50}
\|\eta_{\mathcal{U}}\|_{\tilde{V}_n({\bf{B}}^{[\mathcal{U}]})} \leq K^{\beta_{\mathcal{U}}},
\end{equation}
where $$\emph{\bf{B}}^{[\mathcal{U}]} := B_{l^2(A^{S_1})} \times \cdots \times B_{l^2(A^{S_n})},$$
  \begin{equation}\label{z18}
  \beta_{\mathcal{U}} := \sum_{j=1}^m k_j(\mathcal{U}) \  \  \big(= \sum_{i=1}^n|S_i|  \ \big),
  \end{equation}
  and $K > 1$ is the constant in \eqref{approxn}.
  \end{corollary}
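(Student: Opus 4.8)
\textbf{Proof proposal for Corollary \ref{C5}.}

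The plan is to obtain Corollary \ref{C5} as a direct consequence of Theorem \ref{T4}, essentially by reading off the integral representation in \eqref{g41} and interpreting it in the language of $L^2$-continuous representing families. Fix a positive integer $m$ and a covering sequence $\mathcal{U} = (S_1,\ldots,S_n)$ of $[m]$ with $I_{\mathcal{U}} \geq 2$. For each $i \in [n]$ set $\textbf{k}_i = (k_j(\mathcal{U}): j \in S_i) \in \mathbb{N}^{S_i}$, noting $k_j(\mathcal{U}) \geq I_{\mathcal{U}} \geq 2$ so each entry of $\textbf{k}_i$ is at least $2$; then Theorem \ref{T4} supplies maps $\Phi_{\textbf{k}_i}: l^2(A^{S_i}) \to L^{\infty}(\Omega_A^{|S_i|}, \mathbb{P}_A^{|S_i|})$, each $\big(l^2 \to L^2\big)$-continuous, satisfying $\|\Phi_{\textbf{k}_i}(\textbf{x})\|_{L^{\infty}} \leq K^{|S_i|}\|\textbf{x}\|_2$, and the Parseval-like formula \eqref{g41}:
\begin{equation*}
\eta_{\mathcal{U}}(\textbf{x}_1,\ldots,\textbf{x}_n) = \convolution_{\mathcal{U}}\big(\Phi_{\textbf{k}_1}(\textbf{x}_1),\ldots,\Phi_{\textbf{k}_n}(\textbf{x}_n)\big), \qquad \textbf{x}_i \in l^2(A^{S_i}).
\end{equation*}

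The next step is to unwind the definition of $\convolution_{\mathcal{U}}$ in \eqref{g33} and recognize it as an integral over a single probability space with integrand a product of $n$ functions, the $i$-th depending only on $\textbf{x}_i$. Concretely, the iterated integral in \eqref{g33} is performed over $\bm{\omega}_1 \in \Omega_A^{T_1}, \ldots, \bm{\omega}_m \in \Omega_A^{T_m}$ with respect to the product measure $\mathbb{P}_A^{|T_1|} \times \cdots \times \mathbb{P}_A^{|T_m|}$ (a genuine probability measure since each factor is), and since $\sum_{j=1}^m |T_j| = \sum_{j=1}^m (k_j(\mathcal{U}) - 1) = \beta_{\mathcal{U}} - m$, the total index set has cardinality $\beta_{\mathcal{U}} - m$. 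Denote this probability space $(\Omega, \mu) := \big(\Omega_A^{\beta_{\mathcal{U}} - m}, \mathbb{P}_A^{\beta_{\mathcal{U}} - m}\big)$. For $\textbf{x}_i \in B_{l^2(A^{S_i})}$ and $i \in [n]$, define $\phi_i(\textbf{x}_i) \in L^{\infty}(\Omega, \mu)$ to be the function $(\bm{\omega}_1,\ldots,\bm{\omega}_m) \mapsto \Phi_{\textbf{k}_i}\big(\textbf{x}_i\big)\big(\xi_{ij}(\bm{\omega}_1,\ldots,\bm{\omega}_m): j \in S_i\big)$, i.e. the composition of $\Phi_{\textbf{k}_i}(\textbf{x}_i)$ with the measurable (indeed, continuous coordinate-projection-and-multiplication) map $(\bm{\omega}_1,\ldots,\bm{\omega}_m) \mapsto \big(\xi_{ij}: j \in S_i\big) \in \Omega_A^{|S_i|}$. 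Then \eqref{g41} becomes $\eta_{\mathcal{U}}(\textbf{x}_1,\ldots,\textbf{x}_n) = \int_{\Omega} \phi_1(\textbf{x}_1)\cdots\phi_n(\textbf{x}_n)\,d\mu$, which is exactly the integral representation required by the definition of $\tilde{V}_n(\textbf{B}^{[\mathcal{U}]})$ (Definition \ref{D1}, $n$-dimensional version, invoked via Definition \ref{Q7}).

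It then remains to check the two hypotheses of Definition \ref{Q7}(ii): the uniform bound and the $\big(l^2 \to L^2\big)$-continuity of each $\phi_i$. For the bound, the map $\bm{\omega} \mapsto (\xi_{ij}: j \in S_i)$ pushes $\mu$ forward to a measure on $\Omega_A^{|S_i|}$ dominated (in the relevant sense) by $\mathbb{P}_A^{|S_i|}$ — in fact it is measure-preserving onto $\Omega_A^{|S_i|}$ because each $\xi_{ij}$ is either a single coordinate or a product of coordinates over the disjoint index blocks, so that $\|\phi_i(\textbf{x}_i)\|_{L^{\infty}(\Omega,\mu)} \leq \|\Phi_{\textbf{k}_i}(\textbf{x}_i)\|_{L^{\infty}(\Omega_A^{|S_i|},\mathbb{P}_A^{|S_i|})} \leq K^{|S_i|}\|\textbf{x}_i\|_2 \leq K^{|S_i|}$ on $B_{l^2(A^{S_i})}$. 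Multiplying over $i$, $\sup \|\phi_1(\textbf{x}_1)\|_{L^{\infty}}\cdots\|\phi_n(\textbf{x}_n)\|_{L^{\infty}} \leq K^{\sum_i |S_i|} = K^{\beta_{\mathcal{U}}}$, giving \eqref{g50}. For continuity, if $\textbf{x}_i^{(r)} \to \textbf{x}_i$ in $l^2(A^{S_i})$, then $\Phi_{\textbf{k}_i}(\textbf{x}_i^{(r)}) \to \Phi_{\textbf{k}_i}(\textbf{x}_i)$ in $L^2(\Omega_A^{|S_i|},\mathbb{P}_A^{|S_i|})$ by Theorem \ref{T4}(1); composing with the measure-preserving coordinate map preserves $L^2$-norms, so $\phi_i(\textbf{x}_i^{(r)}) \to \phi_i(\textbf{x}_i)$ in $L^2(\Omega,\mu)$, i.e. $\phi_i$ is $\big(H_i \to L^2(\Omega,\mu)\big)$-continuous. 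The main obstacle — really the only point needing care — is verifying that the substitution $\bm{\omega} \mapsto (\xi_{ij}: j \in S_i)$ is measure-preserving from $(\Omega,\mu)$ onto $(\Omega_A^{|S_i|},\mathbb{P}_A^{|S_i|})$; this follows from the independence of the Rademacher coordinates indexed by the disjoint blocks $T_j$ together with the structure of $\xi_{ij}$ (for each fixed $i$, the variables $\{\xi_{ij}: j \in S_i\}$ are built from disjoint coordinate blocks, hence jointly independent and each uniformly distributed on $\Omega_A$), and it is the same computation already used implicitly in the proof of Lemma \ref{L7}. Once this is in hand, the corollary is immediate.
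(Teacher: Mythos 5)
Your proposal is correct and is essentially the argument the paper intends: Corollary \ref{C5} is read off directly from Theorem \ref{T4} by unwinding $\convolution_{\mathcal{U}}$ as a single integral over the product probability space indexed by the blocks $T_1,\ldots,T_m$, taking $\phi_i(\textbf{x}_i)$ to be $\Phi_{\textbf{k}_i}(\textbf{x}_i)$ composed with the substitution $\bm{\omega}\mapsto(\xi_{ij}:j\in S_i)$, and using that for fixed $i$ these $\xi_{ij}$ are independent and Haar-distributed so the substitution is measure-preserving, whence the $L^{\infty}$ bounds multiply to $K^{\beta_{\mathcal{U}}}$ and the $(l^2\to L^2)$-continuity of each $\Phi_{\textbf{k}_i}$ transfers to $\phi_i$. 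The paper states the corollary without a separate proof, treating exactly these verifications as immediate, so your write-up supplies the same argument with the details made explicit.
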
 \ \\  
  
 \begin{remark}  \label{R7} \ \\\
\em{\bf{i}.} \ We illustrate the proof of Theorem \ref{T4} in the case  $m = 3$ and \ $$\mathcal{U} = \big( \{1,2\}, \{2,3\}, \{1,3\} \big),$$  whence  ${\ell} = 2$, and
\begin{equation} \label{z2}
\begin{split}
{\bf{k}}_1 &= \big(k_1(\mathcal{U}), k_2(\mathcal{U}) \big ) = (2,2)\\\  {\bf{k}}_2 &= \big(k_2(\mathcal{U}), k_3(\mathcal{U}) \big ) = (2,2) \\\ {\bf{k}}_3 &= \big(k_1(\mathcal{U}), k_3(\mathcal{U}) \big ) = (2,2).\\
\end{split}
\end{equation} 
(Cf. \eqref{z1}.) \\

 First we construct the map 
\begin{equation} \label{z3}
\Phi_{(2,2)}: l^2(A^2) \rightarrow L^{\infty}(\Omega_A^2, \mathbb{P}_A^2).
\end{equation}
  For  ${\bf{x}} \in l^2(A^2)$ and fixed $\alpha \in A$,  we apply $\Phi_2$ (supplied by Lemma \ref{L6} with $k = 2$)  to ${\bf{x}}^{(\alpha)} :=  {\bf{x}}(\alpha, \cdot) \in l^2(A)$, and deduce that for almost all $\omega \in (\Omega_A, \mathbb{P}_A)$
  \begin{equation} \label{z8}
 \sum_{\alpha \in A} \|\Phi_2 \big(\textbf{x}^{(\alpha)} \big)(\omega)  \|_{l^2(A)}^2
  \leq  \sum_{\alpha \in A} K^2  \|\textbf{x}^{(\alpha)}  \|_2^2 = K^2\|\textbf{x}\|_2^2. 
 \end{equation}\\\ 
 (Cf. \eqref{za5}.)  Now apply $\Phi_2$ to  $\Phi_2 \big(\textbf{x}^{(\cdot)} \big)(\omega) \in l^2(A)$ for almost all $\omega \in (\Omega_A, \mathbb{P}_A)$,  and obtain 
 \begin{equation}
 \Phi_{(2,2)}({\bf{x}}) := \Phi_2 \big(\Phi_2 (\textbf{x}^{(\cdot)}) \big) \in L^{\infty}(\Omega_A^2, \mathbb{P}_A^2),
 \end{equation}\\\ 
 with the estimate
 \begin{equation} \label{z9}
 \|\Phi_{(2,2)}(\textbf{x})\|_{L^{\infty}} \leq K \esssup_{\omega \in \Omega_A}\big  \| \Phi_2 \big(\textbf{x}^{(\cdot)}\big)(\omega)\big \|_{l^2(A)} \leq K^2 \|\textbf{x}\|_2.
 \end{equation}\\\
 (Cf. \eqref{g39} and \eqref{ga39}.)\\

Next, to verify the $(l^2 \rightarrow L^2)$-continuity of $\Phi_{(2,2)}$, we prove that if  $(\textbf{x}_j)$ is a sequence in $l^2(A^2)$ converging to $\textbf{x}$ in the $l^2(A^2)$-norm, then there is a subsequence $(\textbf{x}_{j_i})$ such that $\Phi_{(2,2)}(\textbf{x}_{j_i})$  converges to $\Phi_{(2,2)}(\textbf{x})$ in the $L^2(\Omega_A^2,\mathbb{P}_A^2)$-norm.   First, because ${\bf{x}}_j^{(\alpha)}$ converges to ${\bf{x}}^{(\alpha)}$ in the $l^2(A)$-norm for every $\alpha \in l^2(A)$, we obtain from the $(l^2 \rightarrow L^2)$-continuity  of $\Phi_2$, 
 \begin{equation}
\int_{\omega \in \Omega_A} \big | \Phi_2\big ({\bf{x}}_j^{(\alpha)} \big)(\omega) - \Phi_2\big ({\bf{x}}^{(\alpha)} \big)(\omega) \big |^2 \  \mathbb{P}_A(d\omega)  \underset{j \to \infty}{\longrightarrow} 0.
  \end{equation}\\
  Therefore, by dominated convergence (via \eqref{z8}), and by interchange of sum and integral,  we obtain
  \begin{equation} \label{z12c}
  \int_{\omega \in \Omega_A}\bigg ( \sum_{\alpha \in A} \big | \Phi_2\big ({\bf{x}}_j^{(\alpha)} \big)(\omega) - \Phi_2\big ({\bf{x}}^{(\alpha)} \big)(\omega) \big |^2 \bigg) \mathbb{P}_A(d\omega)  \underset{j \to \infty}{\longrightarrow} 0.
  \end{equation}\\\
Therefore, there exists a subsequence $(j_i)$ such that for almost all $\omega \in (\Omega_A, \mathbb{P}_A)$
\begin{equation*}
\sum_{\alpha \in A} \big | \Phi_2\big ({\bf{x}}_{j_i}^{\alpha)} \big)(\omega) - \Phi_2\big ({\bf{x}}^{(\alpha)} \big)(\omega) \big |^2  \underset{i \to \infty}{\longrightarrow} 0.
\end{equation*}
Therefore, by a second application of the $(l^2 \rightarrow L^2)$-continuity of $\Phi_2$,  we have for almost all $\omega_2 \in (\Omega_A, \mathbb{P}_A)$
\vskip 0.1cm 
\begin{equation*} 
\int_{\omega_1 \in \Omega_A}\big |\Phi_2\big(\Phi_2\big ({\bf{x}}_{j_i}^{(\cdot)} \big)(\omega_2)\big)(\omega_1) - \Phi_2\big(\Phi_2\big ({\bf{x}}^{(\cdot)} \big)(\omega_2)\big)(\omega_1) \big |^2  \ \mathbb{P}_A(d\omega_1) \ \underset{i \to \infty}{\longrightarrow} 0, 
\end{equation*}\ \\
and by dominated convergence we conclude \ \\ 
\begin{equation*} 
 \begin{split}
& \|\Phi_{(2,2)}(\textbf{x}_{j_i}) - \Phi_{(2,2)}(\textbf{x})\|_{L^2}^2  \\\
 &= \int_{\omega_2 \in \Omega_A} \left (\int_{\omega_1 \in \Omega_A}\big |\Phi_2\big(\Phi_2\big ({\bf{x}}_{j_i}^{(\cdot)} \big)(\omega_2)\big)(\omega_1) - \Phi_2\big(\Phi_2\big ({\bf{x}}^{(\cdot)} \big)(\omega_2)\big)(\omega_1)\right ) \mathbb{P}_A(d \omega_2) \ \underset{i \to \infty}{\longrightarrow} 0.
 \end{split}
 \end{equation*} \ \\\
  Finally, we verify \eqref{g41}  by three successive applications of   \eqref{g34} with $k=2$ (in Lemma \ref{L6}).  Specifically, for ${\bf{x}}, \ {\bf{y}}, \ {\bf{z}}$ \  in \ $l^2(A^2)$, 
 \begin{equation} \label{z12}
 \begin{split}
\eta_{\mathcal{U}}({\bf{x}},{\bf{y}},{\bf{z}}) \ & = \  \sum_{\alpha_1,\alpha_2,\alpha_3}{\bf{x}}(\alpha_1,\alpha_2){\bf{y}}(\alpha_2,\alpha_3){\bf{z}}(\alpha_1,\alpha_3)\\\\
&= \ \sum_{\alpha_3}  \ \sum_{\alpha_2} \ {\bf{y}}(\alpha_2,\alpha_3) \ \sum_{\alpha_1} \ {\bf{x}}(\alpha_1,\alpha_2){\bf{z}}(\alpha_1,\alpha_3)\\\\
&= \  \ \sum_{\alpha_3}  \ \sum_{\alpha_2} \ {\bf{y}}(\alpha_2,\alpha_3) \ \int_{\omega_1} \Phi_2 \big( {\bf{x}}(\cdot, \alpha_2) \big)(\omega_1) \ \Phi_2 \big( {\bf{z}}(\cdot, \alpha_3) \big)(\omega_1) \ d{\omega_1}\\\\
&= \ \sum_{\alpha_3} \ \int_{\omega_1} \bigg( \Phi_2 \big( {\bf{z}}(\cdot, \alpha_3) \big)(\omega_1) \ \sum_{\alpha_2} \ {\bf{y}}(\alpha_2,\alpha_3) \ \Phi_2 \big( {\bf{x}}(\cdot, \alpha_2) \big)(\omega_1) \bigg)d{\omega_1}\\\\
& = \ \sum_{\alpha_3} \ \int_{\omega_1} \Phi_2 \big( {\bf{z}}(\cdot, \alpha_3)(\omega_1) \bigg( \int_{\omega_2}  \Phi_2 \big( {\bf{y}}(\cdot, \alpha_3)(\omega_2)\ \Phi_{(2,2)}({\bf{x}})(\omega_1,\omega_2) \ d\omega_2 \bigg) d\omega_1\\\\
& = \   \int_{\omega_1} \int_{\omega_2}\Phi_{(2,2)}({\bf{x}})(\omega_1,\omega_2) \bigg ( \sum_{\alpha_3}\Phi_2 \big( {\bf{y}}(\cdot, \alpha_3)(\omega_2) \ \Phi_2 \big( {\bf{z}}(\cdot, \alpha_3)(\omega_1) \bigg ) d \omega_2 d \omega_1 \\\\
& = \ \int_{\omega_1} \int_{\omega_2}\Phi_{(2,2)}({\bf{x}})(\omega_1,\omega_2) \bigg( \int_{\omega_3} \Phi_{(2,2)}({\bf{y}})(\omega_2,\omega_3) \ \Phi_{(2,2)}({\bf{z}})(\omega_1,\omega_3) \ d\omega_3 \bigg )d \omega_2 d \omega_1 \\\\
& = \ \int_{\omega_1} \int_{\omega_2} \int_{\omega_3}\Phi_{(2,2)}({\bf{x}})(\omega_1,\omega_2) \ \Phi_{(2,2)}({\bf{y}})(\omega_2,\omega_3) \ \Phi_{(2,2)}({\bf{z}})(\omega_1,\omega_3) \ d\omega_3d\omega_2d\omega_1\\\\
& = \convolution_{\mathcal{U}} \big (\Phi_{(2,2)}({\bf{x}}),\Phi_{(2,2)}({\bf{y}}),\Phi_{(2,2)}({\bf{z}}) \big ),
 \end{split}
 \end{equation}
 which is the needed integral representation of  $\eta_{\mathcal{U}}({\bf{x}}, {\bf{y}},{\bf{z}}).$  
 Therefore (cf. Corollary \ref{C5}), \ \\
 \begin{equation}
 \|\eta_{\mathcal{U}}\|_{\tilde{V}_3(B_{l^2}^3)} \leq K^{\beta_{\mathcal{U}}} = K^6.
 \end{equation} \ \\\
 {\bf{ii}.} \ The homogeneity of the "base" map $\Phi_k$ (of Lemma \ref{L6}) implies that the "amalgam" map  $$\Phi_{{\bf{k}}} := \Phi_{(k_1,\ldots,k_{\ell})}: \ l^2(A^{\ell}) \ \rightarrow \ L^{\infty}(\Omega_A^{\ell}, \mathbb{P}_A^{\ell})$$ (of Theorem \ref{T4}), when restricted to a tensor product of Hilbert spaces, admits a natural factorization.  We illustrate this in the case $\ell = 2,$ and ${\bf{k}} = (2,2)$.  Let ${\bf{x}}_1 \in l^2_{\mathbb{R}}(A)$, ${\bf{x}}_2 \in l^2_{\mathbb{R}}(A)$, and consider the elementary tensor ${\bf{x}}_1\otimes{\bf{x}}_2 \in l^2_{\mathbb{R}}(A^2)$, i.e., 
 \begin{equation}
 ({\bf{x}}_1\otimes{\bf{x}}_2)(\alpha_1,\alpha_2) = {\bf{x}}_1(\alpha_1) {\bf{x}}_2(\alpha_2), \ \ \ (\alpha_1,\alpha_2) \in A^2.
 \end{equation}
 To compute $\Phi_{(2,2)}( {\bf{x}}_1\otimes{\bf{x}}_2)$, first fix $\alpha_1 \in A$ and apply $\Phi_2$ to ${\bf{x}}_1(\alpha_1){\bf{x}}_2$  (a scalar ${\bf{x}}_1(\alpha)$ multiplying a vector ${\bf{x}}_2$), thus obtaining
 \begin{equation}
 \Phi_2\big ({\bf{x}}_1(\alpha_1){\bf{x}}_2 \big) = {\bf{x}}_1(\alpha_1) \Phi_2({\bf{x}}_2) \in L^{\infty}(\Omega_A, \mathbb{P}_A)
 \end{equation}
 (by the homogeneity of $\Phi_2$). Then for almost all $\omega_2 \in (\Omega_A,\mathbb{P}_A)$, apply  $\Phi_2$ to $\big(\Phi_2({\bf{x}}_2)\big)(\omega_2) {\bf{x}}_1$  (a scalar $\big(\Phi_2({\bf{x}}_2)\big)(\omega_2)$ multiplying a vector ${\bf{x}}_1$), thus obtaining 
 \begin{equation}
\Phi_2\bigg(\big(\Phi_2({\bf{x}}_2)\big)(\omega_2) {\bf{x}}_1\bigg) = \big(\Phi_2({\bf{x}}_2)\big)(\omega_2) \Phi_2({\bf{x}}_1)
 \end{equation}
 (again by homogeneity).  Therefore,
 \begin{equation}
 \Phi_{(2,2)}( {\bf{x}}_1\otimes{\bf{x}}_2) = \Phi_2({\bf{x}}_1)\otimes\Phi_2({\bf{x}}_2).
 \end{equation}
 
 Next, take the linear span $l^2(A)\otimes l^2(A)$ (\emph{algebraic tensor product}) of elementary tensors in $l^2(A^2)$, wherein two elements are equal if they determine the same linear functional on $l^2(A^2)$. Consider the projective tensor norm on it,
\begin{equation}
\big \| \sum_j {\bf{x}}_j \otimes {\bf{y}}_j \big \|_{\widehat{\otimes}} := \inf \bigg \{ \sum_j \|{\bf{x}}_j^{\prime}\|_2 \|{\bf{y}}_j^{\prime}\|_2: \sum_j {\bf{x}}_j^{\prime} \otimes {\bf{y}}_j^{\prime} =   \sum_j {\bf{x}}_j \otimes {\bf{y}}_j\bigg \}, \ \ \  \sum_j {\bf{x}}_j \otimes {\bf{y}}_j \in l^2(A)\otimes l^2(A) 
\end{equation} \ \\ 
 (the \emph{greatest cross norm}), and let $l^2(A) \widehat{\otimes} l^2(A)$ (\emph{projective tensor product})  be the completion of  $l^2(A)\otimes l^2(A)$ under this norm.   Similarly, we have the projective tensor product 
\begin{equation} 
 L^{\infty}(\Omega_A, \mathbb{P}_A)\widehat{\otimes}L^{\infty}(\Omega_A, \mathbb{P}_A) := \bigg \{f \in L^{\infty}(\Omega_A^2, \mathbb{P}_A^2):  f = \sum_j g_j\otimes h_j, \ \  \sum_j \|g_j\|_{L^{\infty}} \| h_j\|_{L^{\infty}} < \infty \bigg \},
 \end{equation}
 normed by 
 \begin{equation}
 \|f\|_{L^{\infty} \widehat{\otimes} L^{\infty}} :=\inf  \bigg \{\sum_j \|g_j\|_{L^{\infty}} \| h_j\|_{L^{\infty}}: f = \sum_j g_j\otimes h_j \bigg \}, \ \ \ f \in L^{\infty}(\Omega_A, \mathbb{P}_A)\widehat{\otimes}L^{\infty}(\Omega_A, \mathbb{P}_A).
 \end{equation}
 We conclude:   if \  $\sum_j {\bf{x}}_j \otimes {\bf{y}}_j \in l^2(A) \widehat{\otimes} l^2(A)$,  then \ \\
 \begin{equation}
 \Phi_{(2,2)}\big(\sum_j {\bf{x}}_j \otimes {\bf{y}}_j \big) = \sum_j \Phi_2({\bf{x}}_j)\otimes \Phi_2({\bf{y}}_j), 
 \end{equation}
 and 
 \begin{equation}
 \big \|\Phi_{(2,2)}\big(\sum_j {\bf{x}}_j \otimes {\bf{y}}_j \big) \big \|_{L^{\infty} \widehat{\otimes} L^{\infty}} \leq K^2  \big \| \sum_j {\bf{x}}_j \otimes {\bf{y}}_j \big \|_{l^2\widehat{\otimes}l^2} \ .
 \end{equation}
 \end{remark}
\vskip0.5cm

  \subsection{The left-side inequality in \eqref{g46}} \ \ 
 To verify suffciency in Theorem \ref{T3}, we start with $\theta \in \tilde{\mathcal{V}}_{\mathcal{U}}(A^m)$.  By Proposition \ref{P5}, for every $\epsilon > 0$ there exist $$\mu \in M(\Omega_{A^{S_1}} \times \cdots \times \Omega_{A^{S_n}})$$ such that
  \begin{equation*}
 \theta(\bm{\alpha}) = \hat{\mu}(r_{\pi_{S_1}(\bm{\alpha})}\otimes \cdots \otimes r_{\pi_{S_n}(\bm{\alpha})}), \  \ \ \bm{\alpha} \in A^m,
  \end{equation*}
  and 
  \begin{equation} \label{g51}
  \|\mu\|_M  \leq \|\theta\|_{\tilde{\mathcal{V}}_{\mathcal{U}}} + \epsilon.
  \end{equation}\\\
  For $\emph{\bf{x}}_1 \in l^2(A^{S_1}), \ldots, \emph{\bf{x}}_n \in l^2(A^{S_n})$, denote\\
  \begin{equation}
  G(\textbf{x}_1, \dots, \textbf{x}_n)  = \sum_{\bm{\alpha} \in A^m} \emph{\bf{x}}_1\big(\pi_{S_1}(\boldsymbol{\alpha}) \big) \cdots \emph{\bf{x}}_n\big(\pi_{S_n}(\boldsymbol{\alpha}) \big)r_{\pi_{S_1}(\bm{\alpha})} \otimes \cdots \otimes r_{\pi_{S_n}(\bm{\alpha})}.
  \end{equation}\\\
By Lemma \ref{L5}, $G(\textbf{x}_1, \dots, \textbf{x}_n)$ is a continuous function on $\Omega_{A^{S_1}} \times \cdots \times \Omega_{A^{S_n}}$ with absolutely convergent Walsh series.  Therefore, by the usual Parseval formula \eqref{Par},
\begin{equation}\label{g48}
  \int_{\bm{\zeta} \in \bm{\Omega}_{{\bf{A}}^{[\mathcal{U}]}}}G(\textbf{x}_1, \dots, \textbf{x}_n)(\bm{\zeta}) \  \mu(d \bm{\zeta}) = \eta_{\mathcal{U},\theta}(\textbf{x}_1, \ldots, \textbf{x}_n),
\end{equation}
where
\begin{equation*}
\bm{\Omega}_{{\bf{A}}^{[\mathcal{U}]}} := \Omega_{A^{S_1}} \times \cdots \times \Omega_{A^{S_n}}.
\end{equation*}\\\
For a set $B$, \ ${\bf{x}} \in l^2(B)$, and $\zeta \in \Omega_B   \ ( \ = \{-1,1\}^B$ \ ), \ define ${\bf{x}} \bullet \zeta \in l^2(B)$ by
\begin{equation} \label{z19}
({\bf{x}}\bullet \zeta)(\beta) := {\bf{x}}(\beta)\zeta(\beta) \ (\ = {\bf{x}}(\beta)r_{\beta}(\zeta) \ ), \ \ \ \beta \in B,
\end{equation}
whence    
\begin{equation} \label{g52}
\|\textbf{x} \bullet \zeta)\|_2 = \|\textbf{x}\|_2.
\end{equation}
Then by \eqref{g41} (the case $\theta = 1$), for all\\ $$\bm{\zeta} = (\zeta_1, \ldots, \zeta_n) \in \Omega_{A^{S_1}} \times \cdots \times \Omega_{A^{S_n}} = \bm{\Omega}_{{\bf{A}}^{[\mathcal{U}]}},$$\\ we have
\begin{equation} \label{g49}
G(\textbf{x}_1, \dots, \textbf{x}_n)(\bm{\zeta}) = \convolution_{\mathcal{U}}\big(\Phi_{\textbf{k}_1}(\textbf{x}_1\bullet \zeta_1),  \ldots,   \Phi_{\textbf{k}_n}(\textbf{x}_n \bullet \zeta_n)\big).
\end{equation}\\\
Substituting \eqref{g49} in \eqref{g48}, we obtain
\begin{equation} \label{corr}
 \eta_{\mathcal{U},\theta}(\textbf{x}_1, \ldots, \textbf{x}_n) =  \int_{\bm{\zeta} \in \bm{\Omega}_{{\bf{A}}^{[\mathcal{U}]}}} \bigg( \convolution_{\mathcal{U}}\big(\Phi_{\textbf{k}_1}(\textbf{x}_1\bullet \zeta_1),  \ldots,   \Phi_{\textbf{k}_n}(\textbf{x}_n \bullet \zeta_n)\big) \bigg) \mu(d \bm{\zeta}),
 \end{equation}
 which implies the desired integral representation of $\eta_{\mathcal{U},\theta}$.  Combining \eqref{g50},  \eqref{g51} and \eqref{g52}, we obtain the left side inequality in \eqref{g46}. 
 
 \begin{remark} \label{R9}
 \em{To illustrate ideas, we run the proof of the left side of \eqref{g46} in the archetypal case  $$\mathcal{U} = \big ( \{1,2\}, \{2,3\},\{1,3\}\big ).$$  Let  $\theta \in \tilde{\mathcal{V}}_{\mathcal{U}}(A^3)$,}  and (by Proposition \ref{P5}) obtain a measure  $\mu \in M(\Omega_{A^2} \times \Omega_{A^2} \times \Omega_{A^2})$ such that 
 \begin{equation}
 \theta(\alpha_1,\alpha_2,\alpha_3) = \hat{\mu}(r_{(\alpha_1,\alpha_2)}\otimes r_{(\alpha_2,\alpha_3)}\otimes  r_{(\alpha_1,\alpha_3)}), \ \ \ \  (\alpha_1, \alpha_2,\alpha_3) \in A^3.
 \end{equation}
Then by Parseval's formula, for ${\bf{x}}, \ {\bf{y}}, \ {\bf{z}}$ \  in \ $l^2(A^2)$,\\\\ 
\begin{equation} \label{z20}
\begin{split}
\eta_{\mathcal{U},\theta}&({\bf{x}}, {\bf{y}}, {\bf{z}}) = \\\\
  \int_{(\Omega_{A^2})^3}\bigg(\sum_{(\alpha_1, \alpha_2, \alpha_3) \in A^3}&{\bf{x}}(\alpha_1,\alpha_2){\bf{y}}(\alpha_2,\alpha_3){\bf{z}}(\alpha_1,\alpha_3)  r_{(\alpha_1,\alpha_2)}\otimes r_{(\alpha_2,\alpha_3)}\otimes  r_{(\alpha_1,\alpha_3)} \bigg) d \mu = \\\\
 &\int_{(\Omega_{A^2})^3}\bigg(\eta_{\mathcal{U}}\big({\bf{x}}\bullet \zeta_1, \  {\bf{y}}\bullet \zeta_2, \  {\bf{z}}\bullet \zeta_3\big) \bigg) \mu(d \zeta_1, \zeta_2, \zeta_3),
\end{split}
\end{equation}\\\
where \ ${\bf{x}}\bullet \zeta, \  \  {\bf{y}}\bullet \zeta, \ \  {\bf{z}}\bullet \zeta$ \ are the "random" vectors in $l^2(A^2)$, \ $\zeta \in \Omega_{A^2}$, as defined in \eqref{z19}.  Then, by applying \eqref{z12} (in the case $\theta = 1$) to the integrand in \eqref{z20}, we deduce \\\
\begin{equation}
\eta_{\mathcal{U},\theta}({\bf{x}}, {\bf{y}}, {\bf{z}})
= \int_{(\zeta_1,\zeta_2,\zeta_3) \in \Omega_{A^2}} \bigg(\convolution_{\mathcal{U}} \big (\Phi_{(2,2)}({\bf{x}} \bullet \zeta_1),\Phi_{(2,2)}({\bf{y}}\bullet \zeta_2),\Phi_{(2,2)}({\bf{z}} \bullet \zeta_3) \big )\bigg)\mu(d\zeta_1,d\zeta_2, d\zeta_3).
\end{equation}\\\

 \end{remark}
 
 \subsection{The right-side inequality in \eqref{g46}} \ \  To establish necessity in Theorem \ref{T3}, we verify
 \begin{equation} \label{z21}
 \|\theta\|_{\tilde{\mathcal{V}}_{\mathcal{U}}(A^m)} \leq \|\eta_{\mathcal{U},\theta}\|_{\tilde{\mathcal{V}}_n(\emph{\bf{B}}^{[\mathcal{U}]})},
\end{equation}
where 
\begin{equation*}
\emph{\bf{B}}^{[\mathcal{U}]} := B_{l^2(A^{S_1})} \times \cdots \times B_{l^2(A^{S_n})}.
\end{equation*}
 For a set $E$, let $\hat{E} = \{\hat{e}\}_{e \in E}$ denote the standard basis of $l^2(E)$;  that is,  for $e \in E$ and $e^{\prime} \in E$, 
 \begin{equation*}
\hat{e}(e^{\prime}) =  \left \{
\begin{array}{ccc}
1 & \quad  \text{if} \quad e = e^{\prime}  \\
0 & \quad   \text{ if} \quad  e \not = e^{\prime}. 
\end{array} \right. 
\end{equation*}
 Specifically in our setting, for $S \subset [m]$ and $\bm{\alpha} = (\alpha_j: j \in S) \in A^S$, we have
 \begin{equation*}
 \hat{\bm{\alpha}} = (\hat{\alpha}_j: j \in S) \ ;
 \end{equation*}
 that is, for $\bm{\alpha}^{\prime} = (\alpha^{\prime}_j: j \in S) \in A^S$,\\\
 \begin{equation*}
 \begin{split}
 \hat{\bm{\alpha}}(\bm{\alpha}^{\prime}) &= \prod_{j \in S} \hat{\alpha}_j(\alpha^{\prime}_j)\\\\
  &=  \left \{
\begin{array}{ccc}
1 & \quad  \text{if} \quad \bm{\alpha} = \bm{\alpha}^{\prime} \\
0 & \quad    \text{if} \quad \bm{\alpha} \not =  \bm{\alpha}^{\prime}. 
\end{array} \right.\\\\
\end{split}
 \end{equation*}\\
 Restricting $\eta_{\mathcal{U},\theta}$ to $\Hat{A}^{S_1} \times \cdots \times \hat{A}^{S_n} := \hat{\textbf{A}}^{[\mathcal{U}]}$, we have\\
\begin{equation} \label{g55}
\|\eta_{\mathcal{U},\theta}\|_{\tilde{\mathcal{V}}_n(\hat{\textbf{A}}^{[\mathcal{U}]})} \leq  \|\eta_{\mathcal{U},\theta}\|_{\tilde{\mathcal{V}}_n(\emph{\bf{B}}^{[\mathcal{U}]})}.
\end{equation}\\
For $\hat{\bm{\alpha}} = (\hat{\bm{\alpha}}_1, \ldots , \hat{\bm{\alpha}}_n) \in \Hat{\textbf{A}}^{[\mathcal{U}]}$, we have

 \begin{equation} \label{g54}
 \begin{split}
 \eta_{\mathcal{U},\theta}(\hat{\bm{\alpha}}) & = \sum_{\bm{\alpha}^{\prime} \in A^m}  \theta(\bm{\alpha}^{\prime}) \  \hat{\bm{\alpha}}_1\big(\pi_{S_1}(\bm{\alpha}^{\prime})\big) \cdots  \hat{\bm{\alpha}}_n\big(\pi_{S_n}(\bm{\alpha}^{\prime})\big) \\\\
& =  \left \{
\begin{array}{ccc}
\theta(\bm{\alpha}) & \quad  \text{if} \quad \bm{\alpha} \in  \textbf{A}^{\mathcal{U}} \\
0 & \quad    \text{if} \quad \bm{\alpha} \notin  \textbf{A}^{\mathcal{U}}, 
\end{array} \right.  
\end{split}
\end{equation}\\
where $\textbf{A}^{\mathcal{U}}$ is the fractional Cartesian product in \eqref{g27} with $A$ in place of  $X_i,  \ i \in [n]$.  Combining \eqref{g54} and \eqref{g55}, we conclude (via \eqref{a5} in Remark \ref{R5}.ii)\\
\begin{equation} \label{g56}
 \|\theta\|_{\tilde{\mathcal{V}}_{\mathcal{U}}(A^m)} = \|\eta_{\mathcal{U},\theta}\|_{\tilde{\mathcal{V}}_{\mathcal{U}}(\Hat{A}^m)} \leq \|\eta_{\mathcal{U},\theta}\|_{\tilde{\mathcal{V}}_n (\hat{\textbf{A}}^{[\mathcal{U}]})} \leq  \|
  \eta_{\mathcal{U},\theta}\|_{\tilde{\mathcal{V}}_n(\emph{\bf{B}}^{[\mathcal{U}]})}.
\end{equation}
\vskip0.60cm

\begin{remark} \label{R8}  
 \ \em{For $n \geq 3$, except for covering sequences of type $\big (\overbrace{ \{1\}, \ldots, \{1\} }^n \big )$, all other covering sequences \ $\mathcal{U} = (S_1, \ldots, S_n)$ with $I_{\mathcal{U}} \geq 2$, satisfy $\alpha(\mathcal{U}) > 1$.  For such \ $\mathcal{U}$,  by Theorem \ref{T3}, there exist  bounded $n$-linear functionals $\eta_{\mathcal{U}, \theta}$ that are not projectively bounded;  see Remark \ref{R5}.i and \eqref{a3} therein.

For example, if $n = 3$, then we have exactly four types of trilinear functionals $\eta_{\mathcal{U}_i, \theta}$ ($i = 1, 2, 3, 4$), based on the four  standard covering sequences 
\begin{equation*}
\begin{split}
\mathcal{U}_1 &= \big(\{1\}, \{1\},\{1\} \big ),\\\
\mathcal{U}_2 &= \big(\{1,2\}, \{2,3\},\{1,3\} \big ),\\\
\mathcal{U}_3 &= \big(\{1,2,4\}, \{2,3,4\},\{1,3,4\} \big ),\\\
\text{and}\\\
\mathcal{U}_4 &= \big(\{1,2,4\}, \{2,3,4\},\{1,3\} \big ).\\\
\end{split}
\end{equation*}
(See Remark \ref{R6}.ii, and \eqref{st3} therein.)  By \eqref{a3}, by Theorem \ref{T3}, and because $\alpha(\mathcal{U}_1) = 1$, the trilinear functional $\eta_{\mathcal{U}_1, \theta}$ is projectively continuous for every $\theta \in l^{\infty}(A)$, whereas by \eqref{st4}, there exist bounded trilinear functionals $\eta_{\mathcal{U}_i, \theta}$ ($i = 2, 3, 4$) that are not projectively bounded.   Bounded trilinear functionals based on  $\mathcal{U}_2$  that were \emph{not} projectively bounded appeared first in \cite{varopoulos1974inequality}.
}
\end{remark}

\subsection{Multilinear extensions of the Grothendieck inequality} \ The "averaging" argument used to verify \eqref{i6}  $\Rightarrow$  \eqref{i1} in Proposition \ref{P0} can be analogously used, via the left-side of \eqref{g46}, to prove the following multilinear extension of the Grothendieck inequality.  For every integer $n \geq 2$ and $m \geq 1$, and every covering sequence $\mathcal{U} = \big (S_1, \ldots, S_n \big)$ of $[m]$ with $I_{\mathcal{U}} \geq 2$, there is a constant \  $\mathcal{K}_{\mathcal{U}}$,
\begin{equation}\label{op1}
1 < \mathcal{K}_{\mathcal{U}} \leq  K^{\sum_{i=1}^n|S_i|},
\end{equation}
 where $K > 1$ is the constant in \eqref{approxn}, such that for all $\theta \in \tilde{\mathcal{V}}_{\mathcal{U}}(A^m)$ and every finitely supported scalar $n$-array  $\big(a_{j_1\ldots j_n} \big)_{(j_1, \ldots, j_n) \in \mathbb{N}^n}$,\\\
 \begin{equation}\label{ge}
 \begin{split} 
 \sup \bigg \{\big |\sum_{j_1, \ldots, j_n}a_{j_1\ldots j_n}   \eta_{\mathcal{U},\theta}({\bf{x}}_{1,j_1}, \ldots, {\bf{x}}_{n,j_n}) \big |:  ({\bf{x}}_{1,j_1}, \ldots, & {\bf{x}}_{n,j_n}) \in B_{l^2(A^{S_1})} \times \cdots \times B_{l^2(A^{S_n})}\bigg \}\\\
  \leq  \ \mathcal{K}_{\mathcal{U}} \|\theta\|_{\tilde{\mathcal{V}}_{\mathcal{U}}(A^m)}\sup \bigg \{\big |\sum_{j_1, \ldots, j_n}a_{j_1\ldots j_n} s_{1,j_1} \cdots s_{n,j_n}\big |&: (s_{1,j_1}, \ldots, s_{n,j_n})   \in [-1,1]^n \bigg \}.\\\\
 \end{split}
 \end{equation}\\
 
 The inequality in \eqref{ge}, derived here as a consequence of \\ 
\begin{equation}
 \|\eta_{\mathcal{U},\theta}\|_{\tilde{\mathcal{V}}_n(\emph{\bf{B}}^{[\mathcal{U}]})} \leq K^{\sum_{i=1}^n|S_i|} \|\theta\|_{\tilde{\mathcal{V}}_{\mathcal{U}}(A^m)},
\end{equation}\\
seems weaker than the left-side inequality in \eqref{g46}. Namely,  by applying the duality \\
\begin{equation}
\begin{split}
\bigg(C_{{\emph{\bf{R}}}_{ \emph{\bf{X}}^{[n]}}}({\bf{\Omega}}_{ \emph{\bf{X}}^{[n]}} )\bigg)^{\convolution} & = B({\emph{\bf{R}}}_{ \emph{\bf{X}}^{[n]}}) =  \tilde{\mathcal{V}}_n( \emph{\bf{X}}^{[n]}),\\\\
& \text{with} \ \ \ \emph{\bf{X}}^{[n]} = B_{l^2(A^{S_1})} \times \cdots \times  B_{l^2(A^{S_n})} := \emph{\bf{B}}^{[\mathcal{U}]},\\\\
\end{split}
\end{equation}\\
(cf. \eqref{dua1}, \eqref{dua2}, Proposition \ref{P4}), we obtain that the multilinear inequality in \eqref{ge} is equivalent to the existence of a complex measure $$\lambda \in M(\Omega_{B_{l^2(A^{S_1})}} \times \cdots \times  \Omega_{B_{l^2(A^{S_n})}})$$  with $\|\lambda\|_M = \mathcal{K}_{\mathcal{U}} \|\theta\|_{\tilde{\mathcal{V}}_{\mathcal{U}}(A^m)}$, such that 
 \begin{equation} \label{mult}
 \begin{split}
 \eta_{\mathcal{U},\theta} ({\bf{x}}_1, \ldots, {\bf{x}}_n) &=  \int_{\Omega_{B_{l^2(A^{S_1})}} \times \cdots \times  \Omega_{B_{l^2(A^{S_n})}}}r_{{\bf{x}}_1}\otimes \cdots \otimes r_{{\bf{x}}_n} \ d \lambda\\\\
 &= \widehat{\lambda}(r_{{\bf{x}}_1}\otimes \cdots \otimes r_{{\bf{x}}_n}),
  \ \ \ \ \ \ ({\bf{x}}_1, \ldots, {\bf{x}}_n) \in B_{l^2(A^{S_1})} \times \cdots \times  B_{l^2(A^{S_n})}.\\\
 \end{split}
 \end{equation}
This integral representation of $\eta_{\mathcal{U},\theta}$  is ostensibly weaker than the  integral representation of $\eta_{\mathcal{U},\theta}$ in \eqref{corr}; see Problem \ref{Q10}.\\

\begin{remark} \label{R10} \em{\ A simple $n$-linear extension of the Grothendieck inequality ($n \geq 2$), based on  $\theta = 1$ and $$\mathcal{U} = \big ( \{1\}, \ldots, \{n-1\}, \{1, \ldots, n-1\} \big),$$ had appeared in \cite{Blei:1977uq}, and a subsequent notion of projective boundedness with a prototype of Theorem \ref{T3}, dealing specifically with covering sequences  $\mathcal{U} = (S_1, \ldots, S_n)$ of $[m]$ such that $$I_{\mathcal{U}} \geq 2 \ \ \ \text{and} \ \ \ |S_i| =  k > 0, \ \ i \in [n],$$  appeared  later in  \cite{Blei:1979fk}.  

The dependence of $\mathcal{K}_{\mathcal{U}}$ on $k$ and $n$ had been shown in \cite{Blei:1979fk} to be  $\mathcal{O}(n^{kn})$, which turned out to be far from optimal; e.g., compare this growth with the exponential bounds in \eqref{op1}.  To illustrate the significance of $\mathcal{K}_{\mathcal{U}}$'s growth, we take
\begin{equation}\label{hs1}
\mathcal{U}_n = \big(\{1,2\}, \{2,3\},\ldots, \{1,n\}\big), \ \ n \geq 3,
\end{equation}
and then note, via a result in \cite{davie1973quotient},  that the algebra of Hilbert-Schmidt operators  (the Hilbert space $l^2(\mathbb{N}^2)$ with matrix multiplication) is a \emph{Q-algebra} (a quotient of a uniform algebra) if and only if 
\begin{equation}\label{hs2}
\mathcal{K}_{\mathcal{U}_n} = \mathcal{O}\big(K^n\big),
\end{equation}\\
 for some $K > 1$.  That the growth of \ $\mathcal{K}_{\mathcal{U}_n}$ is indeed given by  \eqref{hs2} was  verified, independently, in  \cite{tonge1978neumann} and \cite{kaijser1977} -- in both by an adaptation of a proof of the Grothendieck inequality given in \cite{pisier1978grothendieck}.  Subsequently in \cite{carne1980banach}, the  inequalities involving $\eta_{\mathcal{U}}$ in  \cite{Blei:1979fk}  were reproved and $\mathcal{K}_{\mathcal{U}}$'s growth was shown to be   $ \mathcal{O}\big(K^{kn}\big)$, via an inductive scheme involving the classical Grothendieck inequality.   
}
\end{remark}

\section{\bf{Some loose ends}} \label{s11}

 \subsection{$\tilde{\mathcal{V}}_2(X \times Y)$ \ vs. \ $\tilde{V}_2(X \times Y)$, \ \  $\mathcal{G}_2(X \times Y)$ \ vs. \ $G_2(X \times Y)$} \  \   If $X$ and $Y$ are topological spaces, then
 \begin{equation} \label{inc}
\begin{split}
\tilde{V}_2(X \times Y) \subset C_b(X \times Y) \cap \tilde{\mathcal{V}}_2(X \times Y),\\\
G_2(X \times Y) \subset C_b(X \times Y) \cap \mathcal{G}_2(X \times Y).\\
\end{split}
\end{equation}
\begin{problem} \label{q1}
Are the inclusions in \eqref{inc}  proper inclusions? 
\end{problem}
\noindent 
 See \S3.1 for definitions, and Remark \ref{R3} for a brief discussion.
 
 \subsection{$\mathcal{G}_2(X \times Y) =  \tilde{\mathcal{V}}_2(X \times Y)$ \  vs. \  $G_2(X \times Y) = \tilde{V}_2(X \times Y)$} \ \ The first equality is the Grothendieck theorem for discrete spaces $X$ and $Y$, as per Theorem \ref{T1}, and the second is its "upgraded" version for topological spaces  $X$ and $Y$, as per Corollary \ref{C1}.  
\begin{problem} \label{Q9}
 Are the two equalities distinguishable by the respective constants associated with them? 
 \end{problem}
 \noindent
   Namely, we have the universal Grothendieck constant 
 \begin{equation}
\mathcal{K}_G := \sup\big \{\|f\|_{\tilde{\mathcal{V}}_2(X \times Y)}: f \in B_{\mathcal{G}_2(X \times Y)} \big \},
 \end{equation} 
where $X$ and $Y$ are infinite sets with no \emph{a priori} structures.  Otherwise, if $X$ and $Y$ are topological spaces, then 
\begin{equation}
\mathcal{K}_{GC}(X\times Y) := \sup\big \{\|f\|_{\tilde{V}_2(X \times Y)}: f \in B_{G_2(X \times Y)} \big \} < \infty.
 \end{equation} 
 The problem becomes:  are there topological spaces  $X$ and $Y$, such that
\begin{equation}
\mathcal{K}_G < \mathcal{K}_{GC}(X\times Y)?
\end{equation}

 \subsection{Projective boundedness vs. projective continuity} \label{q10} \ Every projectively continuous functional is projectively bounded, and in all known instances (supplied by Theorem \ref{T3}), every projectively bounded functional is also projectively continuous.
  \begin{problem}[cf. Problem \ref{q1}] \label{Q10}
 Is every projectively bounded multilinear functional on a Hilbert space also projectively continuous?
 \end{problem}
 
 \begin{problem}[cf.  \eqref{lcc}, Theorem \ref{T3}, Problem \ref{Q9}] \label{num}
For integer $m > 0$,  let \ $\mathcal{U} = \{S_1, \ldots, S_n\}$  be a covering sequence of $[m]$ with $I_{\mathcal{U}} \geq 2.$  Let $A$ be an infinite set, and let $\eta_{\mathcal{U}}$  be the $n$-linear functional on $l^2(A^{S_1})\times \cdots \times l^2(A^{S_n})$ defined in \eqref{g45}.  Let \ $\emph{\bf{B}}^{[\mathcal{U}]} := B_{l^2(A^{S_1})} \times \cdots \times B_{l^2(A^{S_n})}.$   Is
\begin{equation}
\|\eta_{\mathcal{U}}\|_{\tilde{\mathcal{V}}_n({\bf{B}}^{[\mathcal{U}]})} < \|\eta_{\mathcal{U}}\|_{\tilde{V}_n({\bf{B}}^{[\mathcal{U}]})}?
\end{equation}\\
\end{problem}

\subsection{A characterization of projective boundedness} \ Let $\eta$ be a bounded $n$-linear functional on a Hilbert space $H$ with an orthonormal basis $A$, and let $ \theta_{A, \eta}$ be its kernel relative to $A$, as defined in \eqref{ker}.  If $\eta$ is projectively bounded, then $ \theta_{A, \eta} \in B(R_A^n)$, i.e., there exists a complex measure $\lambda \in M(\Omega_A^n)$ such that
\begin{equation}
 \eta(\alpha_1, \ldots, \alpha_n) = \widehat{\lambda}(r_{\alpha_1}\otimes \cdots \otimes r_{\alpha_n}), \ \ \ \ \ (\alpha_1, \ldots, \alpha_n) \in A^n.
\end{equation}
(See Remark \ref{R5}.ii.)  Whether the converse holds is an open question:  
\begin{problem} \label{q11}
Suppose $ \theta_{A, \eta} \in B(R_A^n)$.  Is $\eta$ projectively bounded?
\end{problem}

\bibliographystyle{apalike}
\bibliography{blei}
\vspace{.3in}
\end{document}